\documentclass{article}

\usepackage{microtype}
\usepackage{graphicx}
\usepackage{subcaption}
\usepackage{booktabs} 

\usepackage{hyperref}

\newcommand{\prinangle}{\cos\theta}

\usepackage[preprint]{icml2026}

\usepackage{amsmath}
\usepackage{amssymb}
\usepackage{mathtools}
\usepackage{amsthm}

\usepackage[capitalize,noabbrev]{cleveref}

\usepackage{backref} 
\usepackage{stfloats} 
\usepackage{siunitx} 
\usepackage{tcolorbox} 
\usepackage{multirow}
\usepackage{multicol}
\usepackage[table]{xcolor}
\usepackage{enumitem} 
\usepackage{array} 

\usepackage{setspace}              

\setlist[itemize]{noitemsep, topsep=2pt}
\setlist[enumerate]{noitemsep, topsep=2pt}

\theoremstyle{plain}
\newtheorem{theorem}{Theorem}
\newtheorem{proposition}[theorem]{Proposition}
\newtheorem{lemma}[theorem]{Lemma}
\newtheorem{corollary}[theorem]{Corollary}
\theoremstyle{definition}
\newtheorem{definition}[theorem]{Definition}

\theoremstyle{remark}
\newtheorem{remark}[theorem]{Remark}
\newtheorem{example}[theorem]{Example}

\floatname{algorithm}{Example Construction}

\crefname{algorithm}{Example Construction}{Example Constructions}
\Crefname{algorithm}{Example Construction}{Example Constructions}

\usepackage[disable,textsize=tiny]{todonotes}

\icmltitlerunning{Limits of Convergence-Rate Control for Open-Weight Safety}

\begin{document}

\twocolumn[
  \icmltitle{Limits of Convergence-Rate Control for Open-Weight Safety}


  \icmlsetsymbol{equal}{*}

\begin{icmlauthorlist}
\icmlauthor{Domenic Rosati}{1,2}
\icmlauthor{Xijie Zeng}{1,2}
\icmlauthor{Hong Huang}{1}
\icmlauthor{Sebastian Dionicio}{1}
\icmlauthor{Subhabrata Majumdar}{3}
\icmlauthor{Frank Rudzicz}{1,2}
\icmlauthor{Hassan Sajjad}{1}
\end{icmlauthorlist}

\icmlaffiliation{1}{Dalhousie University}
\icmlaffiliation{2}{Vector Institute}
\icmlaffiliation{3}{Indian Institute of Management Bangalore}

\icmlcorrespondingauthor{Domenic Rosati}{domenic.rosati@dal.ca}

  \icmlkeywords{Machine Learning, ICML, Open-Weight Safety, Spectral Methods}

  \vskip 0.3in
]



\printAffiliationsAndNotice{}  


%
\begin{abstract}
 Open-weight foundation models can be fine-tuned for harmful purposes after release, yet no existing training resistance methods provide theoretical guarantees. Treating these interventions as convergence-rate control problems allows us to connect optimization speed to the spectral structure of model weights. We leverage this insight to develop a novel understanding of convergence rate control through spectral reparameterization and derive an algorithm, SpecDef, that can both provably and empirically slow first- and second-order optimization in non-adversarial settings. In adversarial settings, we establish a fundamental limit on a broad class of convergence rate control methods including our own: an attacker with sufficient knowledge can restore fast convergence at a linear increase in model size. In order to overcome this limitation, future works will need to investigate methods that are not equivalent to controlling convergence rate.
\end{abstract}

\section{Introduction}
\label{sec:introduction}
Regardless of how safe foundation models \textit{behave} (\textbf{inference-time safety}), there is currently no known method to definitively prevent \textit{training} open weights \citep{qi2023fine} for purposes such as deepfake generation (\textbf{training-time safety}). Open-weight governance \citep{nevo2024securing} typically involves pro-open interventions like enhanced user licensing \citep{seger2023open} and pro-closed interventions like throttled releases \citep{anderljung2023frontier}. Training-time resistance \citep{tamirisa2025tamperresistantsafeguardsopenweightllms,rosati-etal-2024-immunization,zheng2024imma} offers emerging alternatives, though existing approaches lack theoretical grounding and fail under systematic evaluation (\cref{tab:unlearning-defence-evaluation} and \citealp{qi2024evaluating}).

Training-time safety can be understood as increasing the number of optimization steps required to fine-tune a model for harmful purposes without degrading its original performance. More formally, we consider an open-weight foundation model $f \in \mathcal{F}$, parameterized by $\theta_{t=0}$. We seek an intervention $\tau \in \mathcal{T}: \mathcal{F} \to \mathcal{F}$ applied \textbf{\textit{solely}} at time $t = 0$ that maximizes the number of iterations $t$ required for empirical risk minimization starting with $\tau[f_{\theta_{t=0}}]$ using loss $\mathcal{L}: \mathcal{X} \times \mathcal{Y} \to \mathbb{R}$ over dataset $\{x_i \in \mathcal{X}, y_i \in \mathcal{Y}\}_i^n$ sampled from a distribution representing unsafe behaviour. 

Open-weight \textbf{threat models} \citep{wallace2025estimating, casper2025open} consider an attacker with full access to modify pretrained model weights to make it useful for  e.g., weapons development. Attackers with budget to train from scratch or distill the model are excluded. In this paper, we distinguish between (1) a non-adversarial \textbf{safety regime} where increasing resistance via convergence rate control is desirable for preventing accidental misuse and hardening safety guards (e.g., to enhance responsibility and avoid liability), and (2) an adversarial \textbf{security regime} where motivated attackers exist and may have varying levels of skill and budget.

While recent literature (cf. \citealp{huang2024harmful,che2025model}) proposes a number of intervention methods, they lack a unified theoretical explanation of \textit{why} they provide resistance. This paper provides a foundational theory of training-time safety by framing it as controlling convergence rates in fine-tuning settings. To do this, we utilize classical iteration complexity results for first-order methods (\S~\ref{sec:convergence-rate-control}; second-order extensions in \cref{app:second-order-analysis}) where curvature of the loss landscape determines loss minimization speed. 

\textbf{Our contributions are:}
\begin{itemize}[leftmargin=*,nolistsep]
\item[(i)] We develop a tractable method to characterize and control the Hessian spectrum using weight matrices alone (\S~\ref{sec:hessian_lower_bound}). 
\item[(ii)] We identify a class of symmetric transformations that preserve zeroth-order behaviour under arbitrary manipulation of Hessian spectral values (\S~\ref{sec:spectral-reparameterization}) and derive an algorithm, SpecDef, that implements these.
\item[(iii)] We prove a tight characterization of fundamental limits: a broad class of convergence-rate control methods can be undone at linear cost in model size (\S~\ref{sec:security-analysis-of-convergence-rate-control}).
\item[(iv)] We provide methodological improvements to open-weight safety assessment by evaluating across vision and language domains (\cref{tab:nsfw_prevention,tab:unlearning-defence-evaluation}), introducing curvature-aware optimizers (\cref{tab:optimizer-comparison-main}), and deploying spectral reparameterization as a novel attack (\cref{app:previous-defence-analysis}).

\end{itemize}
At a high level, our approach is: first-order optimizers must choose learning rates inversely proportional to loss curvature. By inflating weight singular values, we force optimizers into a regime where updates are numerically ineffective.
\section{Convergence Rate is Determined By Spectral Values}

In this section, we show how to control convergence rates of first-order methods \textbf{\textit{even after the release of the model}} by manipulating the principal curvature directions of the optimization objective i.e., the Hessian. It is well known that first-order convergence rates are governed by the Hessian spectrum \citep{Wright_Recht_2022} (\cref{fig:quadratic} illustrates this).

Consider the sequence of parameter updates $\{\theta_t\}$ from a gradient descent process. The number of iterations $t=k$ required to reduce (i) distance: $\|\theta_t - \theta_*\|$, (ii) loss: $\|\mathcal{L}_{\theta_t} - \mathcal{L}_{\theta_*}\|$, or (iii) gradient: $\|\nabla \mathcal{L}_{\theta_t}\| \leq \epsilon$ is called the \textit{iteration complexity} of an optimization algorithm. In machine learning, $\theta$ parameterizes a model $f_{\theta}$ and $\theta_*$ is a stationary point of interest such as a $\epsilon$-local minimum  or saddle.

\subsection{Iteration Complexity Bounds}
While stochastic gradient descent and Adam are most commonly used in practice, our analysis focuses on the best-possible first-order iteration complexity given by Nesterov's method of acceleration \citep{nesterov2013introductory}. These results assume local convexity and $L$-smoothness (the first derivative is Lipschitz continuous). For non-convex problems, we can still provide first-order iteration complexity results for $L$-smooth functions. This assumption is reasonable for foundation model fine-tuning since these networks are often compositions of smooth functions, and nonsmooth results do not improve upon presented rates. In the remainder of this section, we summarize how iteration complexity in both cases depends on the constant $L$, which is lower bounded by the largest singular value of the Hessian. We omit adaptive, stochastic, and non-smooth methods (see \citealp{Wright_Recht_2022,de2018convergence, dfossez2022a}) since they yield worse rates than Nesterov's and still depend on $L$. Curvature-aware methods such as AdaHessian \citep{yao2021adahessian}, Muon \citep{jordan2024muon}, and Sophia \citep{liusophia} can achieve better convergence rates, but due to approximation errors and third-order effects, they are still controlled by the methods we develop below (\cref{app:second-order-analysis}).

\textbf{Theory Summary:} the key quantity governing convergence speed is the smoothness constant $L$. Larger curvature forces smaller stable learning rates, increasing the number of required iterations. Both worst-case and optimal first-order convergence bounds depend monotonically on $L$.

\cref{prop:l-smooth-descent-iteration} demonstrates sublinear convergence in worst-case smooth settings. We assume that optimal learning rates are chosen; for $L$-smooth gradient descent, this is $\eta = 1/L$.

\begin{proposition}[$L$-smooth gradient descent iteration complexity]
\label{prop:l-smooth-descent-iteration}
For gradient descent on an $L$-smooth loss function $\mathcal{L}$, the number of iterations $k$ needed to achieve $\underset{0 \leq i \leq k-1}{\min} \|\nabla \mathcal{L}_{\theta_i}\| \le \epsilon$, is \(
 k \ge ( L\|\theta_0 - \theta_*\| )^2/\epsilon^2
\), where $\theta_0$ is the initial model parameters, $i$ is the iteration index, and $\theta_*$ is the parameters at a stationary point.
\end{proposition}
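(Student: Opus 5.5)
The plan is to run the standard descent-lemma argument for $L$-smooth functions with step size $\eta = 1/L$, and then to control the initial suboptimality by the distance $\|\theta_0 - \theta_*\|$, again using $L$-smoothness. Throughout, I read ``the number of iterations needed'' as a sufficient condition: once $k \ge (L\|\theta_0-\theta_*\|)^2/\epsilon^2$, the guarantee $\min_{0\le i\le k-1}\|\nabla \mathcal{L}_{\theta_i}\|\le\epsilon$ holds. This is the usual form of iteration-complexity statements in \citet{Wright_Recht_2022,nesterov2013introductory}.

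\textbf{Step 1 (one-step decrease).} By $L$-smoothness,
\[
\mathcal{L}_{\theta'} \le \mathcal{L}_{\theta} + \langle \nabla\mathcal{L}_\theta, \theta'-\theta\rangle + \tfrac{L}{2}\|\theta'-\theta\|^2 .
\]
Substituting the gradient step $\theta_{i+1} = \theta_i - \tfrac{1}{L}\nabla \mathcal{L}_{\theta_i}$ gives
\[
\mathcal{L}_{\theta_{i+1}} \le \mathcal{L}_{\theta_i} - \tfrac{1}{2L}\|\nabla\mathcal{L}_{\theta_i}\|^2 .
\]

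\textbf{Step 2 (telescope).} Summing the decrease inequality over $i=0,\dots,k-1$ and bounding the sum from below by $k$ times its smallest term gives
\[
k\min_{i<k}\|\nabla\mathcal{L}_{\theta_i}\|^2 \le \sum_{i=0}^{k-1}\|\nabla \mathcal{L}_{\theta_i}\|^2 \le 2L\bigl(\mathcal{L}_{\theta_0} - \mathcal{L}_{\theta_k}\bigr) \le 2L\bigl(\mathcal{L}_{\theta_0} - \mathcal{L}_{\theta_*}\bigr).
\]
The last inequality requires $\mathcal{L}_{\theta_k}\ge \mathcal{L}_{\theta_*}$.

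\textbf{Step 3 (distance bound).} Apply the same quadratic upper bound with base point $\theta_*$. Since $\theta_*$ is stationary, $\nabla\mathcal{L}_{\theta_*}=0$, so
\[
\mathcal{L}_{\theta_0}-\mathcal{L}_{\theta_*}\le \tfrac{L}{2}\|\theta_0-\theta_*\|^2 .
\]

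\textbf{Step 4 (combine).} Inserting Step 3 into Step 2 gives
\[
\min_{i<k}\|\nabla\mathcal{L}_{\theta_i}\|^2 \le \frac{L^2\|\theta_0-\theta_*\|^2}{k}.
\]
Requiring the right-hand side to be at most $\epsilon^2$ is exactly the condition $k\ge (L\|\theta_0-\theta_*\|)^2/\epsilon^2$.

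\textbf{Main obstacle.} The delicate point is the inequality $\mathcal{L}_{\theta_k}\ge\mathcal{L}_{\theta_*}$ used in Step 2. This is immediate if $\theta_*$ is a global minimizer, or more generally if $\mathcal{L}_{\theta_*}\le\inf_i \mathcal{L}_{\theta_i}$. For a generic stationary point, such as a saddle, it can fail. I would handle this in one of two ways:
\begin{enumerate}
\item state explicitly that $\theta_*$ is taken to be a stationary point whose loss lower-bounds the iterates, for example a minimizer within the sublevel set $\{\theta : \mathcal{L}_\theta \le \mathcal{L}_{\theta_0}\}$; or
\item replace $\mathcal{L}_{\theta_*}$ by $\mathcal{L}^\star=\inf\mathcal{L}$ and take $\theta_*$ to be a point attaining it.
\end{enumerate}
Either way, the monotone dependence on $L$ that the section relies on is preserved.
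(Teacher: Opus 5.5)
Your proposal is correct and follows the paper's proof essentially step for step. The steps are the descent lemma with $\eta = 1/L$, telescoping with the min-versus-mean bound, then $\mathcal{L}_{\theta_0} - \mathcal{L}_{\theta_*} \le \frac{L}{2}\|\theta_0 - \theta_*\|^2$, and finally substituting $k$, read as a sufficient condition as in the paper. The caveat you flag, $\mathcal{L}_{\theta_k} \ge \mathcal{L}_{\theta_*}$, is exactly the assumption the paper makes by taking $\theta_*$ to be a local minimum whose loss lower-bounds all the iterates.
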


For best-case settings (e.g., starting from $\theta_0$ in a convex basin), Nesterov's optimal rates for first-order methods apply. While it is known that no purely first-order method can have better rates, the complexity is still dependent on $L$.

\begin{proposition}[Nesterov iteration complexity]
\label{prop:nesterov-iteration}
Let $\mathcal{L}$ be an $L$-smooth convex function. For Nesterov's optimal method, the guarantee is that to obtain $\mathcal{L}_{\theta_k} - \mathcal{L}_{\theta_*} \le \epsilon$, the number of steps is given by \(
 k \ge\sqrt{ \left[ 2L\|\theta_0 - \theta_*\|^2 \right]/\epsilon} - 1 
\).
\end{proposition}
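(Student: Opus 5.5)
We follow the standard estimate-sequence (or Lyapunov-function) analysis of Nesterov's accelerated gradient method and then invert the resulting rate to get an iteration count. We run the method with step size $1/L$ and momentum coefficients $\frac{k-1}{k+2}$; equivalently, we use the sequence $\lambda_k$ defined by $\lambda_{k+1}=\tfrac{1}{2}\bigl(1+\sqrt{1+4\lambda_k^2}\bigr)$. The first ingredient is the descent lemma for $L$-smooth functions, $\mathcal{L}(y - \tfrac{1}{L}\nabla\mathcal{L}(y)) \le \mathcal{L}(y) - \tfrac{1}{2L}\|\nabla \mathcal{L}(y)\|^2$. Combined with convexity, $\mathcal{L}(x)\ge \mathcal{L}(y)+\langle\nabla\mathcal{L}(y),x-y\rangle$, it gives the three-point inequality
\[
\mathcal{L}(y^+) - \mathcal{L}(x) \le L\langle y - y^+, y - x\rangle - \tfrac{L}{2}\|y-y^+\|^2 ,
\]
valid for any $x$, where $y^+ = y-\tfrac1L\nabla\mathcal{L}(y)$.

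Next we build the potential
\[
E_k = \tfrac{2}{L}\,a_k\bigl(\mathcal{L}_{\theta_k}-\mathcal{L}_{\theta_*}\bigr) + \|z_k-\theta_*\|^2 ,
\]
where $a_k$ grows like $(k+1)^2/4$ and $z_k$ is the auxiliary "momentum" sequence. We apply the three-point inequality twice at the extrapolated point $y_k$: once with $x=\theta_k$ and once with $x=\theta_*$. Taking the convex combination with weights $(1-1/\lambda_k, 1/\lambda_k)$ and using $\lambda_k^2-\lambda_k=\lambda_{k-1}^2$, we complete the square and obtain $E_{k+1}\le E_k$. Telescoping then gives
\[
\mathcal{L}_{\theta_k}-\mathcal{L}_{\theta_*} \le \frac{2L\|\theta_0-\theta_*\|^2}{(k+1)^2}.
\]
The main obstacle is the bookkeeping in this step, namely choosing the momentum weights so that the cross terms cancel exactly. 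Rather than rederive it, we can cite Nesterov (2013, Thm.~2.2.2), which gives precisely this bound with the constant $2$.

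Finally we invert the bound. A sufficient condition for $\mathcal{L}_{\theta_k}-\mathcal{L}_{\theta_*}\le\epsilon$ is $\frac{2L\|\theta_0-\theta_*\|^2}{(k+1)^2}\le\epsilon$, which rearranges to
\[
k\ge\sqrt{\frac{2L\|\theta_0-\theta_*\|^2}{\epsilon}}-1 .
\]
This is exactly the claimed iteration count, read as the number of steps that suffices for the guarantee. The dependence on $L$ is monotone, which is the point the paper needs. For optimality, one can add a remark citing Nesterov's worst-case quadratic (a tridiagonal Laplacian), which shows that no first-order method can beat $\Omega\bigl(\sqrt{L/\epsilon}\,\|\theta_0-\theta_*\|\bigr)$. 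The stated bound is therefore tight up to constants, and it is still governed by $L$.
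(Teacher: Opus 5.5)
Your proposal is correct and follows essentially the same route as the paper, which simply quotes the rate $\mathcal{L}_{\theta_k}-\mathcal{L}_{\theta_*}\le 2L\|\theta_0-\theta_*\|^2/(k+1)^2$ from Wright--Recht and solves for $k$ exactly as in your final step; your potential-function sketch is extra detail the paper omits. One small fix: Nesterov's Theorem 2.2.2 states the rate as $4L\|\theta_0-\theta_*\|^2/(k+2)^2$, which does not invert to the stated count, so attribute the constant-$2$, $(k+1)^2$ form to the Beck--Teboulle-style potential argument you sketch (or to Wright--Recht) rather than to that theorem.
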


Both results depend critically on $L$, which satisfies $L \geq \sigma_1(H^{\mathcal{L}}_{\theta})$, where $H^{\mathcal{L}}_{\theta}$ is the loss Hessian (shown in \cref{prop:L-lower-bound-app}). Detailed proofs, and reviews of $L$-smooth gradient descent and Nesterov's method, appear in \cref{app:review-of-gradient-decent-methods}. Analysis of these rates reveals three opportunities for slowing down the iteration complexity in the following section.

\subsection{Controlling Convergence Rates}
\label{remark:why-curvature}

\paragraph{Bad local minima} The first opportunity to control convergence rates is to seek high-loss local minima from which gradient descent cannot escape. However, random matrix theory for deep networks shows that such bad minima are rare and may not always exist \citep{choromanska2015loss, baskerville2022universal, baskerville2022appearance}. Even when they exist, finding them is computationally intractable \citep{bubeck2020trapgradientflow,hollender23a,huanjian2025adaptive}. Furthermore, methods like stochastic gradient Langevin dynamics can escape such local minima \citep{raginsky2017non,tzen2018local}, making this approach easy to defeat.
\paragraph{Weight distance} The second approach is to maximize the weight distance $\|\theta_0 - \theta_{\ast}\|$. However, in foundation model fine-tuning, the pre-trained weights are already close to task-specific optima \citep{neyshabur2020being}, making this distance inherently small. Gradient ascent methods explicitly designed to maximize this distance have proven ineffective \citep{rosati-etal-2024-immunization, golatkar2019eternal}.
\paragraph{Increasing curvature} The final option is to manipulate $L$. Increasing $L$ forces smaller learning rates for convergence. We show later that by making $L$ sufficiently large, learning rates can be driven into a regime where gradient updates become numerically ineffective, resisting convergence.

Beyond these approaches, one might consider obfuscating gradients with high variance to prevent unbiased estimation. However, this has been shown to be easily defeated in \citet{athalye2018obfuscated}. Conversely, naively reducing curvature to create flat landscapes is unlikely to succeed given that it would allow larger stable learning rates. Although analysis of previous methods in \cref{app:previous-defence-analysis} reveals that a more nuanced curvature reduction based on singular vector alignment might provide some training time resistance.

\section{A Hessian Spectral Lower Bound Dependent on Weight Spectrum}
\label{sec:hessian_lower_bound}
We now state our main theoretical result. The Hessian is denoted $H^{\mathcal{L}}_{\theta} \equiv \nabla^2_{\theta}\mathcal{L}  = \left[\frac{\partial^2 \mathcal{L}}{\partial \theta_i \partial \theta_j}\right]_{i, j}$ where, in this context, $\theta$ is the concatenated vector of all parameters and $i,j$ index Hessian blocks or the concatenated vector of a single layer ($\theta_i$). The Hessian, computed for a neural network function $f_\theta(x): x \mapsto  \theta_{n+1} \circ \phi_{n} \circ \theta_{n} \circ \cdots \circ \phi_1 \circ \theta_1x$ with element-wise activations $\phi : \mathbb{R}^d \to \mathbb{R}^d$, admits a lower bound on its largest singular value depending on $\sigma_1(\theta_i)$ of a chosen weight matrix. Controlling $\sigma_1(\theta_i)$ thus controls $\sigma_1(H^{\mathcal{L}}_{\theta})$ and, via \cref{prop:L-lower-bound-app}, the convergence rate. An illustrated intuition of the principal angle quantity $\prinangle(A,B)$ (see \cref{def:principal_angle}) that we use below can be found in \cref{fig:svlb-inequality-illustration}. 

\begin{theorem}[Hessian Singular Value Lower Bound]
\label{thm:hessian-singular-value-bound}
Let $\nabla^2_{\theta}\mathcal{L} \in \mathbb{R}^{n \times n}$ be the Hessian of $\mathcal{L}$ with network function defined earlier. Assume there exists a submatrix of the Hessian $\nabla^2_{\theta_i,\theta_j} \mathcal{L} :=  \left[\frac{\partial^2 \mathcal{L}}{\partial \theta_i \partial \theta_j}\right]_{i, j} \in \mathbb{R}^{p \times q}$ where $p = |\theta_i|, q = |\theta_j|$ that has the matrix product structure $ABC$ with arbitrary matrices $A,C$. Then there exists a $B := \theta_k$ such that  $\sigma_1(\nabla^2_{\theta}\mathcal{L})$ is bounded below as \[
\sigma_1(\nabla^2_{\theta}\mathcal{L}) \geq \sup_{r_1,r_2 \geq 1} \sigma_{r_1}(A)\sigma_{1}(B)\sigma_{r_2}(C) \prinangle_1\prinangle_2,
\] where $\prinangle(\cdot, \cdot)$ is the cosine of the largest principal angle between the subspace as defined in \cref{def:principal_angle} with $\prinangle_1 := \prinangle(A_{r_1},BC_1)$ and $\prinangle_2 :=\prinangle(B_1, C_{r_2})$.
\end{theorem}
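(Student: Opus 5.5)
Our plan has two parts. First we reduce the problem to a single off-diagonal Hessian block. Then we bound that block's top singular value from below using the factorization $ABC$.

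\textbf{Step 1 (reduction to a block).} For any submatrix $M$ of a matrix $H$, we have $\sigma_1(H)\ge\sigma_1(M)$. To see this, let $u,v$ be the unit top singular vectors of $M$, and embed them by zero-padding into $\tilde u,\tilde v$ on the row index set of $\theta_i$ and the column index set of $\theta_j$. Then
\[
\sigma_1(H)\ge \tilde u^\top H\tilde v = u^\top M v=\sigma_1(M).
\]
Applying this with $M=\nabla^2_{\theta_i,\theta_j}\mathcal{L}$ gives $\sigma_1(\nabla^2_\theta\mathcal{L})\ge\sigma_1(ABC)$. The existence of $B:=\theta_k$ comes from the compositional form of $f_\theta$. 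For the Gauss--Newton/cross-layer part of a block $\partial^2\mathcal{L}/\partial\theta_i\partial\theta_j$ with $i<k<j$, the chain rule produces a product of the form (Jacobian of downstream layers and activation derivatives)$\,\cdot\,\theta_k\,\cdot\,$(upstream Jacobian). We take this structure as given by hypothesis and only note that $B$ can be chosen as an intermediate weight matrix.

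\textbf{Step 2 (variational lower bound on $\sigma_1(ABC)$).} Fix $r_1,r_2\ge1$. We use the test vectors $x=c_{r_2}$, the $r_2$-th right singular vector of $C$, and a unit vector $y$ to be chosen. Then
\[
\sigma_1(ABC)\ge \|ABC\,c_{r_2}\| = \sigma_{r_2}(C)\,\|AB\,\gamma_{r_2}\|,
\]
where $\gamma_{r_2}$ is the corresponding left singular vector of $C$.

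Next we decompose $\gamma_{r_2}$ along $b_1$, the top right singular vector of $B$. The component along $b_1$ has magnitude at least $|\langle b_1,\gamma_{r_2}\rangle|\ge\prinangle(B_1,C_{r_2})=\prinangle_2$. The principal-angle cosine for one-dimensional subspaces is exactly this inner product; for larger subspaces it is the minimal one, hence a lower bound.

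To avoid cross terms, we project instead of expanding. Write $\beta_1$ for the top left singular vector of $B$. Then
\[
\|AB\gamma_{r_2}\|\ge |\langle \alpha, AB\gamma_{r_2}\rangle|
\]
for a suitable unit $\alpha$. Concretely, we choose $\alpha=a_{r_1}$, the $r_1$-th left singular vector of $A$, and work with $u=a_{r_1}$, $v=c_{r_2}$:
\[
u^\top ABC v=\sigma_{r_2}(C)\,u^\top A B\gamma_{r_2}.
\]
Writing $u^\top A=\sigma_{r_1}(A)\,\alpha_{r_1}^\top$ gives
\[
u^\top ABCv=\sigma_{r_1}(A)\sigma_{r_2}(C)\,\alpha_{r_1}^\top B\gamma_{r_2}.
\]
The task now reduces to the bilinear term $\alpha_{r_1}^\top B\gamma_{r_2}$. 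When $\gamma_{r_2}$ aligns with $b_1$ up to cosine $\prinangle_2$, and $\alpha_{r_1}$ aligns with the direction $BC_1$ (essentially $\beta_1$) up to cosine $\prinangle_1$, the leading term is $\sigma_1(B)\prinangle_1\prinangle_2$. Taking the supremum over $r_1,r_2$ then yields the claim.

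\textbf{Main obstacle.} The hard step is controlling the cross terms in $\alpha_{r_1}^\top B\gamma_{r_2}$ coming from the other singular directions $\sigma_j(B)\beta_j b_j^\top$ with $j\ge2$. These could cancel the leading term. Our first approach is to read \cref{def:principal_angle} so that $\prinangle_1$ is measured against the image $BC_1$ itself, rather than against $\beta_1$. Then $\alpha_{r_1}^\top B\gamma_{r_2}$ is directly the inner product of $\alpha_{r_1}$ with the vector $B\gamma_{r_2}$, whose norm is at least $\sigma_1(B)\prinangle_2$, provided we bound $\|B\gamma_{r_2}\|\ge|b_1^\top\gamma_{r_2}|\,\sigma_1(B)$ via the $b_1$-component of $B^\top B$ (all terms of $\gamma^\top B^\top B\gamma$ are nonnegative, so no cancellation occurs). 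This yields
\[
|\alpha_{r_1}^\top B\gamma_{r_2}|\ge \|B\gamma_{r_2}\|\prinangle_1\ge\sigma_1(B)\prinangle_1\prinangle_2.
\]
If the definition forces $\prinangle_1$ to be taken against a subspace containing $BC_{r_2}$, we will use the "largest principal angle" (minimal cosine) to cover every direction in it, which makes the bound uniform.
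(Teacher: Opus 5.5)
Your Step 1 is fine: it is the $\sigma_1$ case of the Poincar\'e separation that the paper uses via \cref{corr:block-interlacing-for-hessian}. Step 2, however, has a genuine gap. A single fixed pair of test vectors $u=a_{r_1}$, $v=c_{r_2}$ cannot produce the factors $\prinangle_1\prinangle_2$, and the alignment inequalities you rely on point the wrong way.

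By definition, $\prinangle_1=\prinangle(A_{r_1},BC_1)=\|\Pi_{V_{A_{r_1}}}u_1(BC)\|$. This measures how much of the top left singular vector $u_1(BC)$ of $BC$ lies in the \emph{whole} span of the top $r_1$ right singular vectors of $A$. Your $u$ sees only the single direction $\alpha_{r_1}$, and since $\alpha_{r_1}\in V_{A_{r_1}}$ we have $|\langle\alpha_{r_1},w\rangle|\le\|\Pi_{V_{A_{r_1}}}w\|$, not $\ge$.

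Moreover, your $w=B\gamma_{r_2}/\|B\gamma_{r_2}\|$ is in general not $u_1(BC)$, so $\prinangle_1$ says nothing about it. Your fallback does not help either, because $BC_1$ is the one-dimensional span of $u_1(BC)$. The same reversal affects $|\langle b_1,\gamma_{r_2}\rangle|\ge\prinangle_2$: under the reading the paper's argument actually delivers, $\prinangle_2=\|\Pi_{U_{C_{r_2}}}b_1\|$, which dominates each $|\langle b_1,\gamma_i\rangle|$ for $i\le r_2$.

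A concrete failure: take $A=\mathrm{diag}(2,1)$, $B=\mathrm{diag}(3,1)$, $C=\mathrm{diag}(5,4)$, $r_1=2$, $r_2=1$. Then $\prinangle_1=\prinangle_2=1$, and every reading of $\prinangle_2$ agrees when $r_2=1$. The theorem asks for $\sigma_1(ABC)\ge 1\cdot 3\cdot 5=15$, which is true since $\sigma_1(ABC)=30$. But $\alpha_2^\top B\gamma_1=e_2^\top B e_1=0$, so your claim $|\alpha_{r_1}^\top B\gamma_{r_2}|\ge\sigma_1(B)\prinangle_1\prinangle_2$ reads $0\ge 3$. This happens even though here $w=e_1=u_1(BC)$ exactly.

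The missing idea is to use two different test vectors in two stages. Your nonnegativity observation should be applied to the whole top-$r$ block of $A$ and of $C^\top$, not to a single singular direction of $B$.

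First, group the product as $A(BC)$ and take $x=v_1(BC)$ in Courant--Fischer, so that $BCx=\sigma_1(BC)\,u_1(BC)$. Then
\[
\|A\,u_1(BC)\|^2=\sum_i\sigma_i(A)^2\langle\alpha_i,u_1(BC)\rangle^2\ge\sigma_{r_1}(A)^2\,\|\Pi_{V_{A_{r_1}}}u_1(BC)\|^2,
\]
which gives $\sigma_1(ABC)\ge\sigma_{r_1}(A)\,\prinangle_1\,\sigma_1(BC)$.

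Second, bound $\sigma_1(BC)=\sigma_1(C^\top B^\top)\ge\|C^\top B^\top\beta_1\|=\sigma_1(B)\|C^\top b_1\|$. The same expansion applied to $C^\top$ gives $\|C^\top b_1\|\ge\sigma_{r_2}(C)\|\Pi_{U_{C_{r_2}}}b_1\|$.

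This is exactly the paper's route: \cref{thm:principal-angle-lower-bound} (Courant--Fischer with a chosen test subspace, plus the projection estimate) applied twice, as in case 2 of \cref{lemma:principal-angle-lower-bound-3-matrices}, followed by the supremum over $r_1,r_2$.
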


\begin{proof}[Proof Sketch]
The full proof appears in \cref{app:hessian-lower-bound}; see \cref{app:singular-value-inequalities} for background on singular value inequalities. Hessian spectral values are bounded below by spectral values of submatrices (via Poincaré's separation theorem, \cref{corr:block-interlacing-for-hessian}). These submatrices correspond to second partial derivatives w.r.t.\ parameter matrices $\theta_i$ and $\theta_j$. These block matrices decompose following the algebraic structure above, yielding the lower bound via \cref{thm:principal-angle-lower-bound}.
\end{proof}

Standard architectures (MLPs, CNNs, Transformers) satisfy similar bounds (\cref{app:hessian-lower-bound}). The tightness of the bound depends on the alignment of the matrix products that compose the Hessian and is extensively discussed in \cref{remark:bound-tightness,app:bound-tightness} with the implication that small principal angles resulting from rank deficiency or alignment may require selecting more layers or additional singular value indices. This bound is tighter than classical matrix analysis results \citep{Horn_Johnson_1991, bhatia2013matrix} and can uniquely provide non-vacuous results under rank deficiency.

\begin{example}[Three-layer MLP]
For a three-layer network with ReLU activations, the Hessian block w.r.t.\ $\theta_3,\theta_1$ is given by \[
\frac{\partial^2 f}{\partial \theta_3 \partial \theta_1} =
\underbrace{(x^{\top} \otimes I_{m})^{\top} D_{z_1}}_{A} 
\:
\underbrace{\theta_2^{\top}}_B\:\underbrace{D_{z_2}}_C.
\] Here, $A$ and $C$ are activation ($z_1,z_2$) dependent Jacobian (D) factors, while $B=\theta_2^{\top}$ is the intermediate weight matrix. Consequently, the spectrum of this Hessian block is governed by the singular values of $\theta_2$, together with the spectra of the Jacobian factors and the principal angles.
\end{example}

We provide two empirical validations of the bound. First, \cref{tab:spectral_alignment_main} illustrates \cref{thm:hessian-singular-value-bound} using Hessian derivations from \cref{app:example-hessian-derivations}.
Second, \cref{fig:toy-network-iteration-hessian-experiment} shows how $\sigma_1(H^{\mathcal{L}}_{\theta})$ and convergence rate evolve as $\sigma_1(\theta_i)$ increases in practical architectures. These results confirm that controlling weight matrix spectral values yields convergence rate control. Details on the numerical analysis appear in \cref{app:experimental-details}.

\label{sec:convergence-rate-control}
\begin{figure*}[bht]
    \centering
\includegraphics[width=1\linewidth]{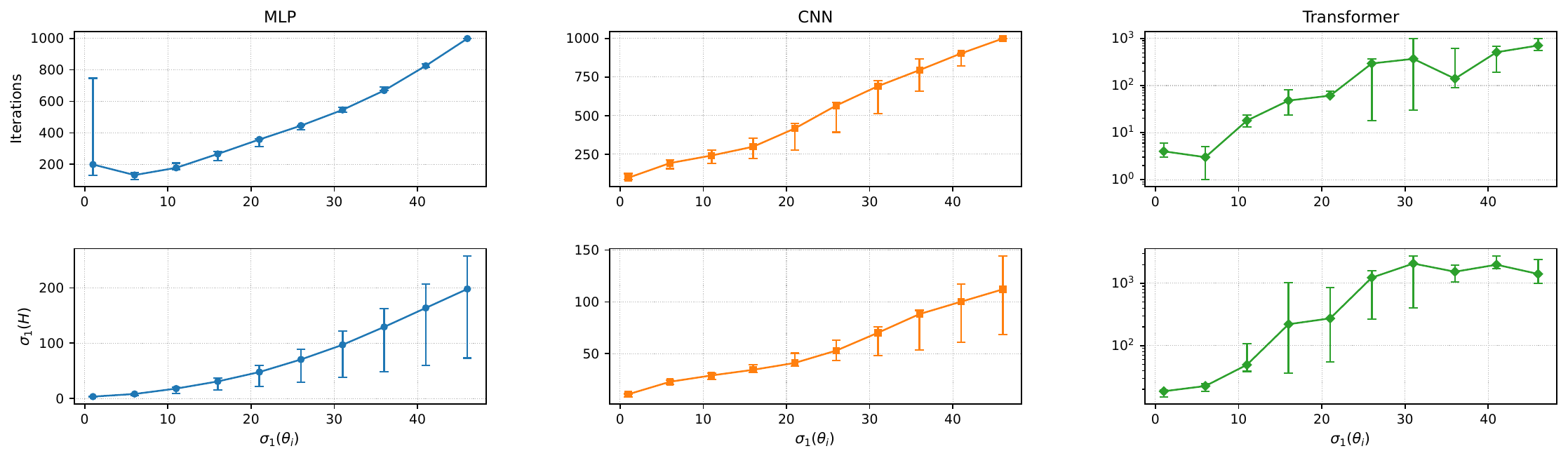}
    \caption{Convergence Rate Control: $\sigma_1(H^{\mathcal{L}}_{\theta})$ and convergence rate (iterations) is increased with $\sigma_1(\theta_i)$}
    \label{fig:toy-network-iteration-hessian-experiment}
\end{figure*}

\section{Spectral Reparameterization for Convergence Control}
\label{sec:spectral-reparameterization}

Having established that convergence rates depend on weight matrix singular values, we now ask: how can we manipulate them algorithmically? We first formalize the class of necessary transformations and derive from it SpecDef (\cref{alg:spectral-deformation}). Arbitrary spectral modifications can destroy model behaviour, which is unacceptable in foundation models. We therefore seek a transformation $\mathcal{T}$ that can: (1) increase the largest Hessian singular value $\sigma_1$ arbitrarily, and (2) keeps zeroth-order behaviour (e.g., the function output or loss values) unchanged. Such a transformation is a second-order spectral symmetry, preserving zeroth-order behaviour under Hessian spectrum deformations.
Formally, let $f(x) = \theta_{n+1} \circ \phi_{n} \circ \theta_{n} \circ \cdots \circ \phi_1 \circ \theta_1x$ be a composition of linear transformations $s_{i} = \theta_{i}\, z_{i - 1}$ with weight matrices $\theta_{i} \in \mathbb{R}^{d_i \times d_{i-1}}$ and twice-differentiable activation functions $\phi_{i}: s_{i} \mapsto z_{i}$, where $i$ is the layer index and $z_{0}= x \in \mathbb{R}^d$. The spectral values of $\theta_{i}$ appear in $\Sigma_i = \mathrm{diag}(\sigma_1, \ldots, \sigma_{\min\{d_i, d_{i-1}\}})$ from the SVD $\theta_{i} = U_{i}\Sigma_{i} V^{\top}_{i}$ with $\theta_i \in \mathbb{R}^{d_i \times d_{i-1}}$.

\begin{definition}[Lower-Max Spectral Reparameterization]
\label{def:upper-max-spec-reparam}
For a given function composition $f$ as defined above, a constant $c > 0$ which will define our minimum spectral value, and $\epsilon \geq 0$ which will define our maximal function distance. A map $\mathcal{T}_c: f_\theta \mapsto f_{\theta^\prime}$ is a lower-max spectral reparameterization of $f$ if
\begin{enumerate}[nolistsep,leftmargin=*]
    \item \textbf{Spectral Control} $\sigma_1(H^{\mathcal{L}}_{\theta^\prime}) \geq c$ where $H^{\mathcal{L}}_{\theta^\prime}$ is the Hessian w.r.t.\ the parameters of $\mathcal{T}_c[f_{\theta}]$.
    \item \textbf{Functional Invariance up to $\epsilon$} Given a distance function $d(f, g)$ on the space of functions $f,g \in \mathcal{F}$, $d(\mathcal{T}_c[f], f) \leq \epsilon $ for a small $\epsilon \geq 0$.
\end{enumerate}
\end{definition}

This definition captures transformations that leave model predictions unchanged while making optimization arbitrarily slow. We consider only a lower bound on the maximum Hessian spectral value---a more general version with upper bounds appears in \cref{def:luk-spectral-reparameterization}, used later for attack construction. Importantly, in this paper we only consider  \textbf{global convergence rate control for all distributions}, though future definition could be relaxed to specific distributions over $\mathcal{L}$.

Our observations build on prior work in symmetric reparameterizations, sharpness, and generalization \citep{dinh2017sharp,foret2020sharpness}. In contrast to scale invariance analyses, control here arises from a small top-$k$ singular value, indicating a deeper symmetry. \citet{kristiadi2023geometry} studied related geometric effects of reparameterization on first- and second-order properties,  but applied them to improving optimization. Symmetry-based perspectives are now standard in optimization for machine learning \citep{simsek2021geometry, zhao2022symmetry}, which we apply to open-weight safety.

\begin{algorithm}[tb]
\caption{Spectral Deformation (SpecDef)}
\begin{algorithmic}[1]
    \STATE Model parameters \(M = \{ \theta_i \in \Theta \}\) where $i$ is a layer index, $n \in \mathbb{N}$ number of layers to select, $k$ is the top $k$ singular values to select, $\alpha$ is the singular value multiplier to use, and layer selection function $\text{Select}: \Theta \times \mathbb{N} \to \Theta$.
    \STATE \(\Theta' \gets \text{Select}(\Theta, n)\) \textit{; e.g., random selection}
    \FOR{$\theta_i \in \Theta'$}
        \STATE $M \gets \left[M_{i+1:}; I; M_{:i}\right]$ \textit{; Inject identity layer}
        \STATE \(U, \Sigma, V \gets \text{SVD}(\theta_i)\)
        \STATE \(T \gets \mathrm{Diag}(\alpha, \ldots, \alpha, 1, \ldots, 1)\) \textit{; Indices $1..k$ get $\alpha$}
        \STATE \(\widetilde{\Sigma} \gets T\Sigma\)
        \STATE \(\theta_i^{\prime} \gets U\widetilde{\Sigma}V^{\top}\)
        \STATE \(\theta_i^{comp} \gets \,U \Sigma \widetilde{\Sigma}^{-1} U^{\top}\) \textit{; or pseudoinverse}
        \STATE $M \gets \left[M_{i+1:}; \theta_i^{comp};\theta_i^{\prime}; M_{:i-1}\right]$ \textit{; replace $I,\theta_i$}
    \ENDFOR
\end{algorithmic}
\label{alg:spectral-deformation}
\end{algorithm}

\cref{alg:spectral-deformation} presents Spectral Deformation (SpecDef) as a spectral reparameterization that can directly be obtained from \cref{thm:spectral-deformation-is-spec-reparam-main} by first inserting identity linear layers adjacent to $\theta_j$, then applying a compensation update. Intuitively, SpecDef works by scaling selected singular values and inserting compensating layers so that the overall function remains unchanged. The algorithm runs once before model release, and is fast: SVD applies only to a few selected layers, which can be batched on GPU, and the matrices are small. For \texttt{GPT-OSS-20b} with 10 layers, this takes under 15 seconds. Further orthogonal decompositions like QR may be used to improve speed of numerical stability.

\begin{theorem}[SpecDef is a (Lower-Max) Spectral Reparameterization]
\label{thm:spectral-deformation-is-spec-reparam-main}
The following spectral deformation is a (lower-max) spectral reparameterization:
For $\theta_i$, set $\widetilde{\Sigma}_{\theta_i} \leftarrow T\Sigma_{\theta_i}$ such that $\sigma_1(U\widetilde{\Sigma}_{\theta_i}V^{\top}) = \alpha$. Where $\alpha$ is set so that $\sigma_1(H_{\theta}^{\mathcal{L}}) \geq c$ (Spectral Control) where $c > 0$ is given. Next, compensate either with $\theta_{j>i} \leftarrow \theta_j U_{\theta_i} \Sigma_{\theta_i} \widetilde{\Sigma}_{\theta_i}^{-1} U^{\top}_{\theta_i}$ or $\theta_{j<i} \leftarrow V_{\theta_i}\widetilde{\Sigma}^{-1}_{\theta_i} \Sigma_{\theta_i} V^{\top}_{\theta_i} \theta_j$ (Functional Invariance). Where $\phi_{i:j}$ and $\phi_{j:i}$ must be degree-$1$ homogeneous and $\widetilde{\Sigma}^{-1}$ is the pseudoinverse in the case of rectangular matrices.
\end{theorem}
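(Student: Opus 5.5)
The argument splits into the two requirements of \cref{def:upper-max-spec-reparam}: functional invariance with $\epsilon = 0$, and then spectral control. Each follows from a direct computation on the SVD $\theta_i = U\Sigma V^{\top}$ combined with \cref{thm:hessian-singular-value-bound}.

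\textbf{Functional invariance.} We treat the two compensation choices separately.

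For the compensation $\theta_{j>i}$, write $\theta_i' = U\widetilde{\Sigma}V^{\top} = UTU^{\top}\theta_i$, where $T=\widetilde\Sigma\Sigma^{-1}$ is diagonal and positive. The compensating factor is $UT^{-1}U^{\top} = U\Sigma\widetilde\Sigma^{-1}U^{\top}$. It therefore suffices to show that
\[
\theta_j\,\phi_{j:i}\bigl(UT^{-1}U^{\top}\,UTU^{\top}s\bigr) = \theta_j\,\phi_{j:i}(s).
\]
In the SpecDef construction the compensator is placed in the injected identity layer immediately after $\theta_i'$. The intervening map $\phi_{j:i}$ is then the identity, so the product collapses exactly: $UT^{-1}U^{\top}UTU^{\top}$ equals the projector onto the range of $U$, which fixes the range of $\theta_i$.

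When $\phi_{j:i}$ is a genuine activation, we use degree-$1$ homogeneity instead. For a positive scalar $\alpha$ we have $\phi(\alpha s) = \alpha\phi(s)$, and the scaling passes through the activation. Concretely, we restrict to $T = \alpha I$ on the top-$k$ block, or equivalently apply the compensation before the nonlinearity. Being precise about this is the main obstacle. Homogeneity only lets scalars, not arbitrary orthogonal conjugations $UTU^{\top}$, commute with element-wise activations. I would therefore state the result for the case where the compensator sits directly adjacent to $\theta_i'$ with no activation in between, as SpecDef does via identity injection. I would treat homogeneity as the condition that justifies passing uniform scalings through $\phi$ in the non-adjacent case.

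The $\theta_{j<i}$ case is symmetric. We have $\theta_i'V\widetilde\Sigma^{-1}\Sigma V^{\top} = U\widetilde\Sigma\widetilde\Sigma^{-1}\Sigma V^{\top} = \theta_i$ on the row space of $\theta_i$. For rectangular or rank-deficient matrices, the pseudoinverse gives the same identity, because $\widetilde\Sigma^{+}\widetilde\Sigma$ is the projector onto the support of $\Sigma$, and $\Sigma$ vanishes outside that support. Hence $d(\mathcal{T}_c[f],f)=0\le\epsilon$.

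\textbf{Spectral control.} By construction $\sigma_1(\theta_i') = \alpha$. In the reparameterized network, take a Hessian block between two parameter matrices sandwiching $\theta_i'$. For example, use layers $i-1$ and $i+1$, as in the three-layer MLP example. This block has the form $AB C$ with $B = \theta_i'^{\top}$, and $A,C$ are Jacobian/activation factors. Since the function is unchanged, all activations are unchanged as well.

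\cref{thm:hessian-singular-value-bound} then gives
\[
\sigma_1(H^{\mathcal{L}}_{\theta'}) \ge \alpha\,\sigma_{r_1}(A)\sigma_{r_2}(C)\prinangle_1\prinangle_2.
\]
The factors $A$ and $C$ are independent of $\alpha$. The singular vectors of $\theta_i'$ are the same as those of $\theta_i$, since only singular values change. Consequently the principal angles are fixed, given that a pair $(r_1,r_2)$ with nonzero cosines is chosen. Provided the constant $\kappa := \sigma_{r_1}(A)\sigma_{r_2}(C)\prinangle_1\prinangle_2$ is positive, choosing $\alpha \ge c/\kappa$ yields $\sigma_1(H^{\mathcal{L}}_{\theta'})\ge c$.

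Nonvanishing of $\kappa$ is the secondary obstacle. I would handle it by assuming it, or by appealing to the tightness discussion in \cref{remark:bound-tightness}, which allows a different layer or index to be selected if needed. Combining the two parts completes the proof.
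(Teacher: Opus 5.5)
You follow the paper's route. Your functional-invariance part is fine: it is exactly the paper's identity $\tilde I\theta_i'=\theta_i$ for the injected compensator $\tilde I=U\Sigma\widetilde\Sigma^{-1}U^{\top}$. Your caveat is also correct and sharper than the statement: a degree-$1$ homogeneous elementwise $\phi$ lets only positive scalars through, so $UTU^{\top}=I+(\alpha-1)U_kU_k^{\top}$ cannot be moved across an activation.

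\textbf{The gap is in spectral control.} You differentiate with respect to $\theta_{i-1}$ and $\theta_{i+1}$ ``as in the three-layer MLP example'' and assert that $A$ and $C$ do not depend on $\alpha$. In the deformed network, however, the compensator sits between $\theta_i'$ and $\theta_{i+1}$. The block is therefore $A\,\theta_i'^{\top}\tilde I^{\top}D_{z_i}\cdots$, so $C$ contains $\tilde I^{\top}=UT^{-1}U^{\top}$, whose singular values on the top-$k$ directions are $1/\alpha$.

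Worse, the block does not change at all. With every other parameter frozen, $\theta_i'$ and $\tilde I$ enter only through $\tilde I\theta_i'=\theta_i$. So $(\theta_{i-1},\theta_{i+1})\mapsto\mathcal{L}$ is literally the undeformed map, and every Hessian block in these two parameters equals the original one; this is just your own remark that all activations are unchanged. The bound of \cref{thm:hessian-singular-value-bound} never exceeds $\sigma_1$ of the block it is built from, and here $\sigma_{r_1}(A)\prinangle_1$ is unchanged because $BC=\theta_i^{\top}D_{z_i}$. Hence the growth of $\sigma_1(B)$ must be offset by $\sigma_{r_2}(C)\prinangle_2$ decaying at least like $1/\alpha$. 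Your $\kappa$ is not a constant, and choosing $\alpha\ge c/\kappa$ proves nothing. The paper's written proof uses this same $(\theta_1,\theta_3)$ block with $C=\tilde I^{\top}D_{z_2}$, so following it will not close the gap.

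\textbf{What works} is to make one of the two differentiated parameters the compensator itself, or whichever layer absorbed the compensation. Only blocks involving $\theta_i'$ or $\tilde I$ can change. In the three-layer ReLU MLP, set $u=\tilde\theta_2 z_1$; then $\partial f/\partial\tilde I=(D_{z_2}\theta_3)u^{\top}$, and with column-major vec
\[
\frac{\partial^2 f}{\partial\,\mathrm{vec}(\tilde I)\,\partial\,\mathrm{vec}(\theta_1)}
=\underbrace{\bigl(I\otimes D_{z_2}\theta_3\bigr)}_{A}\,\underbrace{\tilde\theta_2}_{B}\,\underbrace{D_{z_1}(x^{\top}\otimes I)}_{C}.
\]
Here $A$ and $C$ involve only unchanged activations and weights. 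All singular values of $A$ equal $\|D_{z_2}\theta_3\|$, so $\prinangle_1=1$ for maximal $r_1$. The singular vectors of $B$ are those of $\theta_2$. The bound is therefore genuinely $\alpha$ times a fixed constant, nonzero under the nondegeneracy you already assume. This is also the block, with respect to $\theta_1$ and the injected $I$, that the paper's numerical check measures.

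For activations with $\phi''\neq 0$, an extra term $u\otimes\partial(D_{z_2}\theta_3)/\partial\,\mathrm{vec}(\theta_1)$ also scales with $\alpha$. In that case you would additionally need to rule out cancellation between the two terms.
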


The proof appears in \cref{app:algorithm-details} and follows simply from the application of $\sigma_1(U\widetilde{\Sigma}_{\theta_i}V^{\top}) = \alpha$ implying $\sigma_1(H_{\theta}^{\mathcal{L}}) \geq c$ by \cref{thm:hessian-singular-value-bound}. Standard architectures typically are not composed of purely $1$-homogeneous activation functions preventing cross-layer compensation (at least without adaptation which can be obtained ala \citealp{mirzadeh2023relu}). For such architectures, we use layer injection: inserting an adjacent layer with the identity activation, a special case of \cref{thm:spectral-deformation-is-spec-reparam-main}.

\textbf{Implementation Details} To connect SpecDef to convergence control, the smallest effective learning rate, i.e., the rate below which gradients become ineffective, can be determined empirically (\cref{app:further-ablations}). Operationally, we are forcing the attacker to use this rate, since higher rates would cause divergence. The multiplier $\alpha$ to increase $\sigma_k$ for the top $k$ singular values of selected layers can then start at the reciprocal of this rate. For example, many language models fail to learn below $\eta = 10^{-6}$ using SGD or Adam, so we can start with $10^6$, or higher to account for the other factors in the bound in \cref{thm:hessian-singular-value-bound}. Layer selection and how much to increase the singular value multiplier can be determined empirically with test losses and hyperparameter sweeps as part of a ``certification'' process. In our experiments, we find several cases where the singular value can be much lower than the reciprocal value of the effective learning rate, and only a handful of layers are needed. Setting $k$ to be higher gives more opportunities to align the tail of $\sigma_k$ with the largest singular values in the matrix product (\cref{remark:bound-tightness}). Finally, readers may remark that SpecDef is not secure in that an adversary may combine the compensated layers.  We don't attempt a spectral reparameterization with \textit{stealthy} properties (though this may be obtained) due to the results of \citep{thm:layer-injection-attack} which show a general attack on spectral reparameterization methods.

\begin{table*}[t!]
\centering
\caption{
Relearning attack evaluation. Each entry considers number of training steps and WMDP-bio \citep{li2024wmdp} final accuracy in brackets. 
Dagger ($^\dagger$) indicates perplexity increased by more than 100\%. All methods use \texttt{Llama-3.1-8B-Instruct}. SpecDef is applied to the \texttt{ELM}-unlearned model. The unfiltered base model for \texttt{DeepIgnorance} has an effective LR floor starting at $10^{-5}$.
}
\label{tab:unlearning-defence-evaluation}
\begin{sc}
\resizebox{\textwidth}{!}{%
\begin{tabular}{lcccccc}
\toprule
 & Start Acc & $10^{-6}$ & $5 \times 10^{-6}$ & $8 \times 10^{-6}$ & $10^{-5}$ & $3 \times 10^{-5}$ \\
\midrule
\texttt{ELM} & 0.204 & 120 (0.607 $\pm$ 0.008) & 33 $\pm$ 5 (0.635 $\pm$ 0.004) & 30 (0.634 $\pm$ 0.013) & 23 $\pm$ 5 (0.621 $\pm$ 0.027) & 20 (0.671 $\pm$ 0.014) \\
\texttt{RepNoise} & 0.325 & 510 (0.468 $\pm$ 0.044) & 40 (0.629 $\pm$ 0.005) & 30 (0.616 $\pm$ 0.007) & 30 (0.627 $\pm$ 0.008) & 33 $\pm$ 15 (0.650 $\pm$ 0.023) \\
\texttt{RMU} & 0.259 & 510 (0.558 $\pm$ 0.022) & 40 (0.625 $\pm$ 0.009) & 33 $\pm$ 5 (0.626 $\pm$ 0.018) & 30 (0.634 $\pm$ 0.022) & 23 $\pm$ 5 (0.651 $\pm$ 0.030) \\
\texttt{RR} & 0.251 & 300 $\pm$ 181 (0.595 $\pm$ 0.009) & 33 $\pm$ 5 (0.609 $\pm$ 0.013) & 30 (0.639 $\pm$ 0.020) & 23 $\pm$ 5 (0.610 $\pm$ 0.010) & 23 $\pm$ 5 (0.643 $\pm$ 0.008) \\
\texttt{TAR} & 0.290 & 510 (0.307 $\pm$ 0.008) & 126 $\pm$ 20 (0.609 $\pm$ 0.006) & 76 $\pm$ 5 (0.620 $\pm$ 0.014) & 66 $\pm$ 5 (0.638 $\pm$ 0.035) & 66 $\pm$ 25 (0.618 $\pm$ 0.010) \\
\texttt{GradDiff-WMDP} & 0.227 & 510 (0.444 $\pm$ 0.030) & 223 $\pm$ 248 (0.574 $\pm$ 0.067) & 90 $\pm$ 60 (0.618 $\pm$ 0.022) & 70 $\pm$ 26 (0.616 $\pm$ 0.017) & 50 $\pm$ 17 (0.624 $\pm$ 0.020) \\
\texttt{NPO-SAM} & 0.275 & 510 (0.456 $\pm$ 0.014) & 73 $\pm$ 11 (0.605 $\pm$ 0.006) & 46 $\pm$ 5 (0.617 $\pm$ 0.009) & 43 $\pm$ 5 (0.626 $\pm$ 0.008) & 63 $\pm$ 11 (0.651 $\pm$ 0.020) \\
\texttt{NPO} & 0.282 & 510 (0.506 $\pm$ 0.022) & 66 $\pm$ 11 (0.613 $\pm$ 0.010) & 46 $\pm$ 11 (0.612 $\pm$ 0.008) & 40 (0.616 $\pm$ 0.007) & 46 $\pm$ 25 (0.631 $\pm$ 0.024) \\
\texttt{SimNPO} & 0.337 & 510 (0.499 $\pm$ 0.022) & 63 $\pm$ 5 (0.607 $\pm$ 0.006) & 43 $\pm$ 5 (0.629 $\pm$ 0.005) & 40 $\pm$ 10 (0.629 $\pm$ 0.029) & 30 $\pm$ 10 (0.618 $\pm$ 0.010) \\
\texttt{DeepIgnorance} (strong filter) & 0.408 & -- & -- & -- & 510 (0.476 $\pm$ 0.013) & 126 $\pm$ 20 (0.618 $\pm$ 0.016) \\
\texttt{Llama-3.1-8B} & 0.482 & 93 $\pm$ 5 (0.607 $\pm$ 0.006) & 30 (0.651 $\pm$ 0.010) & 20 (0.618 $\pm$ 0.019) & 20 (0.629 $\pm$ 0.005) & 10 (0.609 $\pm$ 0.005) \\
\midrule
\rowcolor{green!10}
SpecDef $\alpha=10\mathrm{k}$ & 0.204 & 10 (0.195 $\pm$ 0.065)$^{\ddagger}$ & 10 (0.171 $\pm$ 0.039)$^{\ddagger}$ & 10 (0.176 $\pm$ 0.020)$^{\ddagger}$ & 10 (0.191 $\pm$ 0.061)$^{\ddagger}$ & 10 (0.195 $\pm$ 0.024)$^{\ddagger}$ \\
\bottomrule
\end{tabular}
}
\end{sc}
\end{table*}

\section{Experimental Validation}
\label{sec:empirical-results}
We now validate SpecDef (and, by extension, our theoretical framework) across four attack scenarios: relearning attacks on unlearned models, harmful fine-tuning, NSFW content generation, and artistic style transfer. Experiments span language models (\texttt{Llama-3.1-8B}, \texttt{GPT-OSS-20B}) and vision models (\texttt{Stable Diffusion V1-4}). We compare against 10 prior defences for relearning and the strongest known methods for each other domain (\texttt{TAR}, \texttt{RepNoise}, \texttt{IMMA}). We also evaluate curvature-aware optimizers (Sophia, Muon, AdaHessian) as adaptive attacks.

The key findings are: (1) all prior methods fail under standard fine-tuning when learning rates are varied (\cref{tab:unlearning-defence-evaluation}), (2) SpecDef induces divergence across all tested learning rates at sufficient $\alpha$ (\cref{tab:combined-llama-convergence,tab:harmful_results_lora_gpt20b,tab:nsfw_prevention,fig:vangogh_defence}), (3) curvature-aware optimizers do not circumvent resistance (\cref{tab:optimizer-comparison-main}), and (4) model utility is preserved (\cref{tab:spectral_impact_combined,tab:qualitative_epoch_comparison_style_relearning}).

\begin{table}[htbp!]
\centering
\caption{SpecDef preserves model utility. Performance differences are negligible across benchmarks even at extreme multipliers.}
\label{tab:spectral_impact_combined}
\resizebox{\columnwidth}{!}{%
\begin{sc}
\begin{tabular}{lcccccc}
\toprule
$\alpha$ & WMDP & PPL & MMLU & Winogrande & ARC & HellaSwag \\
\midrule
1k   & $\Delta$=-0.1 & $\Delta$=-0.01 & $\Delta$=+0.1 & $\Delta$=+0.0 & $\Delta$=+1.0 & $\Delta$=+0.3 \\
100k & $\Delta$=-0.1 & $\Delta$=-0.01 & $\Delta$=+0.6 & $\Delta$=+0.3 & $\Delta$=+0.3 & $\Delta$=+0.0 \\
1B   & $\Delta$=+0.0 & $\Delta$=-0.02 & $\Delta$=+0.1 & $\Delta$=-1.0 & $\Delta$=+1.0 & $\Delta$=+0.0 \\
\bottomrule
\end{tabular}
\end{sc}
}
\end{table}

\paragraph{Setup} All experiments use AdamW with standard \texttt{PyTorch} hyperparameters unless noted. The multiplier $\alpha$ scales the top-$k{=}25$ singular values of $5$ randomly selected layers; $\alpha{=}1$ is unmodified. We report results across 3 random seeds. We sweep learning rates and mark divergence with $\dagger$ (at least one seed) or $\ddagger$ (all seeds). Divergence indicates that perplexity has doubled. As per \citet{henderson2023self}, model collapse counts as successful. The tables should be read as follows: each cell reports the mean steps it takes to reach a task score of $>60\%$ or to diverge. The value in brackets is the mean score on the final step. Full experimental details are reported in \cref{app:experimental-details}.

\begin{table}[b!]
    \centering
    \small
    \caption{Style relearning: SpecDef prevents Van Gogh style recovery; ESD and IMMA fail within 10 epochs of LoRA.}
\label{tab:qualitative_epoch_comparison_style_relearning}
    \begin{sc}
    \begin{tabular}{@{} >{\centering\arraybackslash}m{1.4cm} >{\centering\arraybackslash}m{0.075\textwidth} >{\centering\arraybackslash}m{0.075\textwidth} >{\centering\arraybackslash}m{0.075\textwidth} >{\centering\arraybackslash}m{0.075\textwidth}@{}}
        \toprule
        Method & Before & 1 Epoch & 10 & 50 \\
        \midrule
        ESD &
        \includegraphics[width=0.075\textwidth]{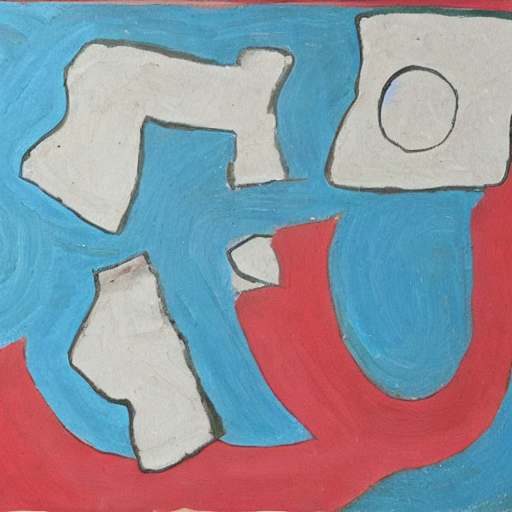} &
        \includegraphics[width=0.075\textwidth]{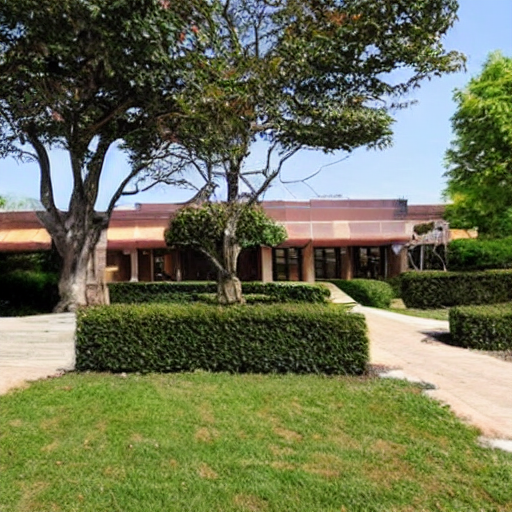} &
        \includegraphics[width=0.075\textwidth]{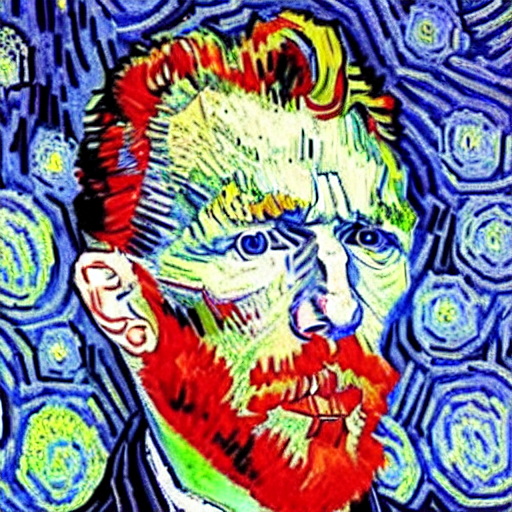} &
        \includegraphics[width=0.075\textwidth]{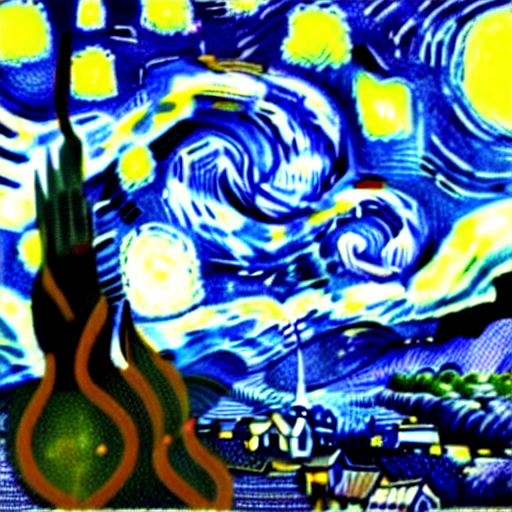} \\
        IMMA &
        \includegraphics[width=0.075\textwidth]{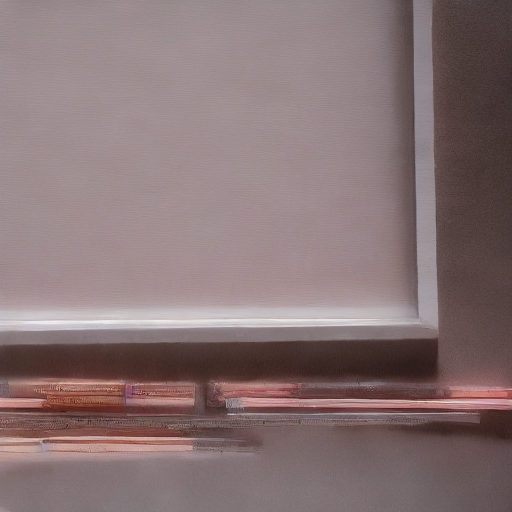} &
        \includegraphics[width=0.075\textwidth]{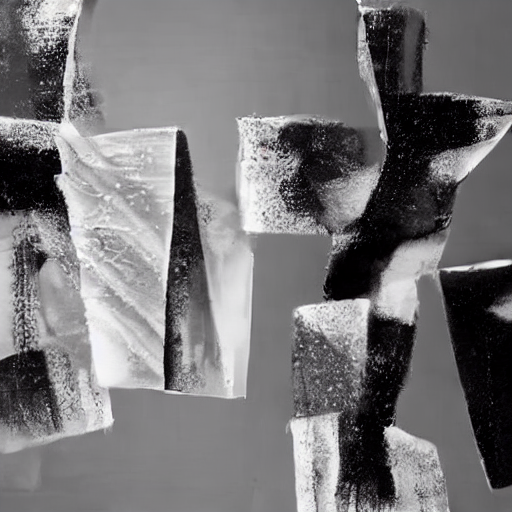} &
        \includegraphics[width=0.075\textwidth]{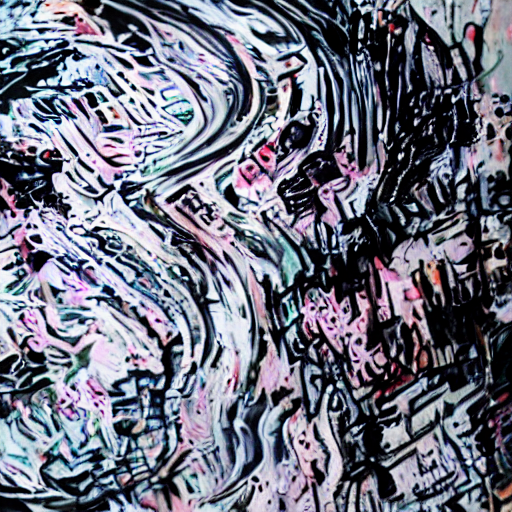} &
        \includegraphics[width=0.075\textwidth]{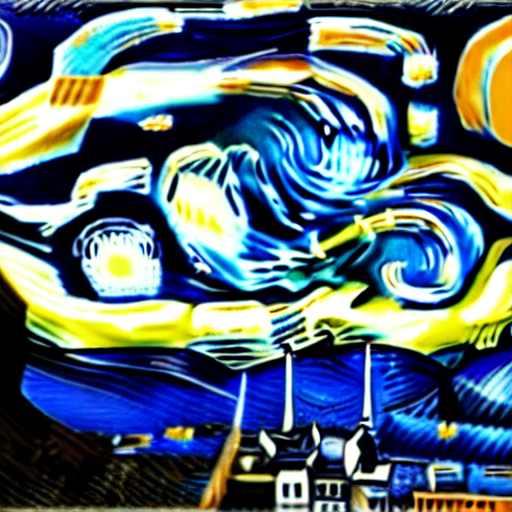} \\
        SpecDef &
        \includegraphics[width=0.075\textwidth]{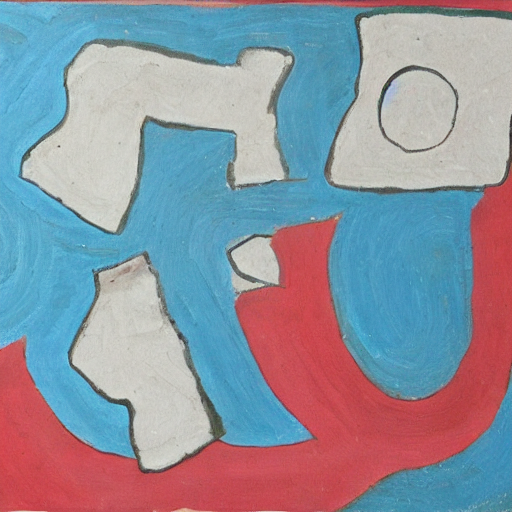} &
        \includegraphics[width=0.075\textwidth]{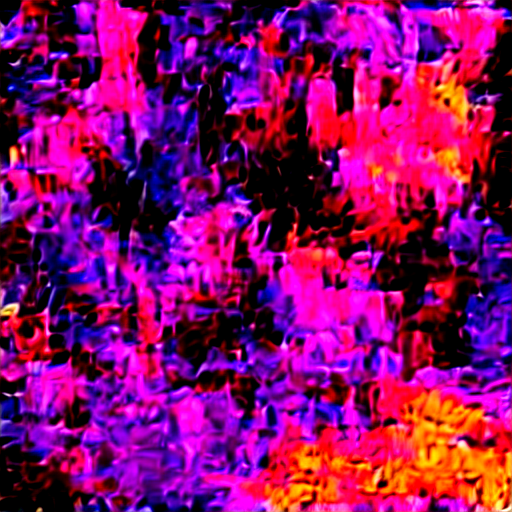} &
        \includegraphics[width=0.075\textwidth]{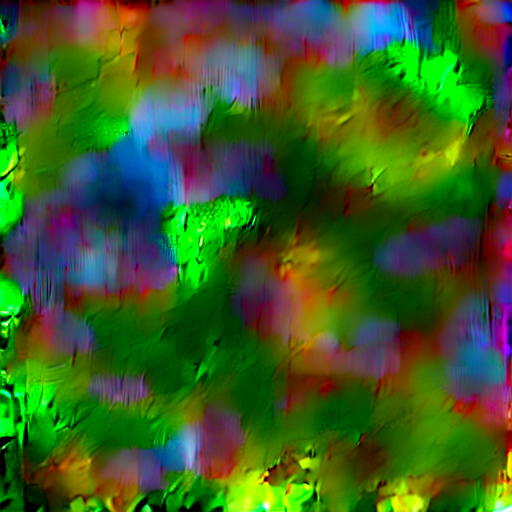} &
        \includegraphics[width=0.075\textwidth]{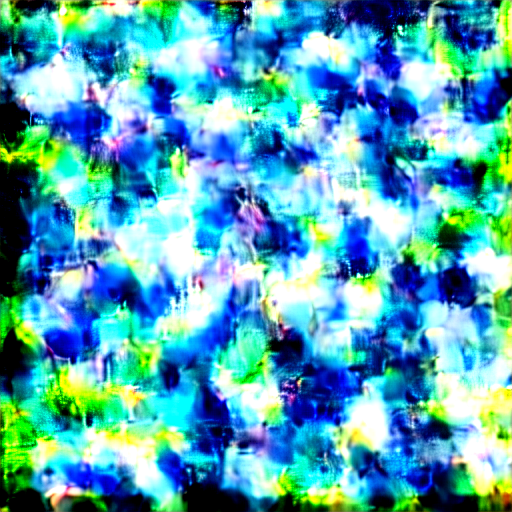} \\
        \bottomrule
    \end{tabular}
    \end{sc}
\end{table}
\begin{table*}[t]
\caption{
Relearning and harmful fine-tuning attacks on \texttt{Llama-3.1-8B} models. 
SpecDef causes divergence ($\ddagger$) at high enough $\alpha$.
}
\label{tab:combined-llama-convergence}
\centering
\setlength{\tabcolsep}{4pt}
\begin{sc}
\resizebox{\textwidth}{!}{%
\begin{tabular}{lccccccc}
\toprule
\multicolumn{8}{c}{Unlearning with WMDP-Bio (\texttt{ELM}-unlearned \texttt{Llama-3.1-8B})} \\
\midrule
Learning Rate & $\alpha=1$ & $1\mathrm{k}$ & $2500$ & $5\mathrm{k}$ & $7500$ & $10\mathrm{k}$ & $1\mathrm{M}$ \\
\midrule
$10^{-6}$ & 120 $\pm$ 10 (0.616 $\pm$ 0.010) & 176 $\pm$ 288 (0.424 $\pm$ 0.098)$^{\dagger}$ & 46 $\pm$ 56 (0.305 $\pm$ 0.051)$^{\ddagger}$ & 16 $\pm$ 10 (0.231 $\pm$ 0.077)$^{\ddagger}$ & 16 $\pm$ 10 (0.222 $\pm$ 0.078)$^{\ddagger}$ & 13 $\pm$ 5 (0.204 $\pm$ 0.068)$^{\ddagger}$ & 10 (0.216 $\pm$ 0.062)$^{\ddagger}$ \\
$5\times 10^{-6}$ & 33 $\pm$ 5 (0.644 $\pm$ 0.054) & 26 $\pm$ 28 (0.278 $\pm$ 0.168)$^{\ddagger}$ & 13 $\pm$ 5 (0.243 $\pm$ 0.095)$^{\ddagger}$ & 10 (0.197 $\pm$ 0.055)$^{\ddagger}$ & 10 (0.187 $\pm$ 0.032)$^{\ddagger}$ & 10 (0.224 $\pm$ 0.073)$^{\ddagger}$ & 10 (0.182 $\pm$ 0.047)$^{\ddagger}$ \\
$8\times 10^{-6}$ & 26 $\pm$ 5 (0.637 $\pm$ 0.022) & 20 $\pm$ 17 (0.267 $\pm$ 0.164)$^{\ddagger}$ & 13 $\pm$ 5 (0.222 $\pm$ 0.101)$^{\ddagger}$ & 10 (0.194 $\pm$ 0.061)$^{\ddagger}$ & 10 (0.303 $\pm$ 0.125)$^{\ddagger}$ & 10 (0.216 $\pm$ 0.109)$^{\ddagger}$ & 10 (0.179 $\pm$ 0.036)$^{\ddagger}$ \\
$10^{-5}$ & 26 $\pm$ 5 (0.658 $\pm$ 0.030) & 16 $\pm$ 11 (0.243 $\pm$ 0.146)$^{\ddagger}$ & 10 (0.221 $\pm$ 0.087)$^{\ddagger}$ & 10 (0.200 $\pm$ 0.033)$^{\ddagger}$ & 10 (0.195 $\pm$ 0.026)$^{\ddagger}$ & 10 (0.217 $\pm$ 0.039)$^{\ddagger}$ & 10 (0.201 $\pm$ 0.069)$^{\ddagger}$ \\
$3\times 10^{-5}$ & 33 $\pm$ 23 (0.651 $\pm$ 0.027) & 10 (0.218 $\pm$ 0.079)$^{\ddagger}$ & 10 (0.174 $\pm$ 0.033)$^{\ddagger}$ & 10 (0.183 $\pm$ 0.059)$^{\ddagger}$ & 10 (0.217 $\pm$ 0.059)$^{\ddagger}$ & 10 (0.190 $\pm$ 0.016)$^{\ddagger}$ & 10 (0.184 $\pm$ 0.030)$^{\ddagger}$ \\
\midrule
\multicolumn{8}{c}{Harmful Fine-tuning with Beavertails (\texttt{Llama-3.1-8B})} \\
\midrule
Learning Rate & $\alpha=1$ & $1\mathrm{k}$ & $2500$ & $5\mathrm{k}$ & $7500$ & $10\mathrm{k}$ & $1\mathrm{M}$ \\
\midrule
$10^{-6}$ & 293 $\pm$ 11 (0.619 $\pm$ 0.005) & 33 $\pm$ 15 (0.674 $\pm$ 0.044)$^{\dagger}$ & 20 $\pm$ 10 (0.650 $\pm$ 0.042)$^{\dagger}$ & 13 $\pm$ 5 (0.627 $\pm$ 0.130)$^{\dagger}$ & 13 $\pm$ 5 (0.616 $\pm$ 0.126)$^{\dagger}$ & 13 $\pm$ 5 (0.617 $\pm$ 0.127)$^{\dagger}$ & 10 (0.043 $\pm$ 0.040)$^{\ddagger}$ \\
$5\times 10^{-6}$ & 96 $\pm$ 5 (0.689 $\pm$ 0.069) & 13 $\pm$ 5 (0.602 $\pm$ 0.144)$^{\dagger}$ & 13 $\pm$ 5 (0.519 $\pm$ 0.274)$^{\dagger}$ & 10 (0.275 $\pm$ 0.369)$^{\dagger}$ & 10 (0.267 $\pm$ 0.418)$^{\dagger}$ & 10 (0.284 $\pm$ 0.431)$^{\dagger}$ & 10 (0.023 $\pm$ 0.019)$^{\ddagger}$ \\
$8\times 10^{-6}$ & 76 $\pm$ 5 (0.701 $\pm$ 0.015) & 13 $\pm$ 5 (0.606 $\pm$ 0.152)$^{\dagger}$ & 10 (0.214 $\pm$ 0.347)$^{\dagger}$ & 10 (0.275 $\pm$ 0.414)$^{\dagger}$ & 10 (0.266 $\pm$ 0.443)$^{\dagger}$ & 10 (0.278 $\pm$ 0.429)$^{\dagger}$ & 10 (0.037 $\pm$ 0.060)$^{\ddagger}$ \\
$10^{-5}$ & 63 $\pm$ 5 (0.651 $\pm$ 0.031) & 13 $\pm$ 5 (0.622 $\pm$ 0.138)$^{\dagger}$ & 10 (0.257 $\pm$ 0.368)$^{\dagger}$ & 10 (0.264 $\pm$ 0.436)$^{\dagger}$ & 10 (0.282 $\pm$ 0.432)$^{\dagger}$ & 10 (0.268 $\pm$ 0.441)$^{\dagger}$ & 10 (0.029 $\pm$ 0.047)$^{\ddagger}$ \\
$3\times 10^{-5}$ & 40 (0.747 $\pm$ 0.041) & 10 (0.249 $\pm$ 0.400)$^{\dagger}$ & 10 (0.262 $\pm$ 0.449)$^{\dagger}$ & 10 (0.360 $\pm$ 0.392)$^{\dagger}$ & 10 (0.238 $\pm$ 0.333)$^{\ddagger}$ & 10 (0.159 $\pm$ 0.211)$^{\ddagger}$ & 10 (0.028 $\pm$ 0.039)$^{\ddagger}$ \\
\bottomrule
\end{tabular}%
}
\end{sc}
\end{table*}

\begin{table}[hbpt]
\centering
\caption{
\label{tab:nsfw_prevention}
NSFW generation: SpecDef achieves 100\% reduction in nudity counts; IMMA achieves 63\%.}
\begin{sc}
\resizebox{\columnwidth}{!}{%
\begin{tabular}{lcccc}
\toprule
Method &Nudity Counts $\downarrow$ &  Images w/ Nudity $\downarrow$ & Img.\ Reduction $\uparrow$ & Nud.\ Reduction $\uparrow$ \\
\midrule
Vanilla SD V1-4 & 346.0 & 205 / 205 & --- & --- \\
Nudity-Erased SD (ESD)  & 59.0 & 32 / 205  & 84.4\% & 82.9\%\\
Unprotected ESD w/ LoRA   & 311.0 ± 23.4 & 119.3 ± 2.9 / 205 & 41.8 ± 1.4\% & 10.1 ± 6.8\% \\
IMMA \citep{zheng2024imma} & 127.7 ± 42.7 & 62.3 ± 14.1 / 205 & 69.6 ± 6.9\% & 63.1 ± 12.4\% \\
\midrule
SpecDef $\alpha=10\mathrm{k}$ & \textbf{0.0 ± 0.0} & \textbf{0.0 ± 0.0 / 205} & \textbf{100.0 ± 0.0}\% & \textbf{100.0 ± 0.0}\% \\
\bottomrule
\end{tabular}
}
\end{sc}
\end{table}

\begin{table}[hbpt]
\centering
\caption{Scaling to 20B parameters with LoRA: training resistance is possible on large models with high enough $\alpha$.}
\label{tab:harmful_results_lora_gpt20b}
\begin{sc}
\resizebox{\columnwidth}{!}{%
\begin{tabular}{lccc}
\toprule
 & LR = $10^{-4}$ & LR = $5{\times}10^{-4}$ & LR = $10^{-3}$ \\
\midrule
Baseline & 580 (0.61 $\pm$ 0.00) & 236 $\pm$ 37 (0.62 $\pm$ 0.02) & 193 $\pm$ 9 (0.62 $\pm$ 0.02) \\
$\alpha=1000$ & 16 $\pm$ 4 (0.67 $\pm$ 0.04)$^{\dagger}$ & 10 (0.57 $\pm$ 0.23)$^{\dagger}$ & 10 (0.26 $\pm$ 0.23)$^{\ddagger}$ \\
$\alpha=10,000$ & 10 (0.26 $\pm$ 0.27)$^{\ddagger}$ & 10 (0.00 $\pm$ 0.00)$^{\ddagger}$ & 10 (0.00 $\pm$ 0.00)$^{\ddagger}$ \\
\bottomrule
\end{tabular}
}
\end{sc}
\end{table}
\begin{figure*}[hbt!]
\centering
\includegraphics[width=1\linewidth]{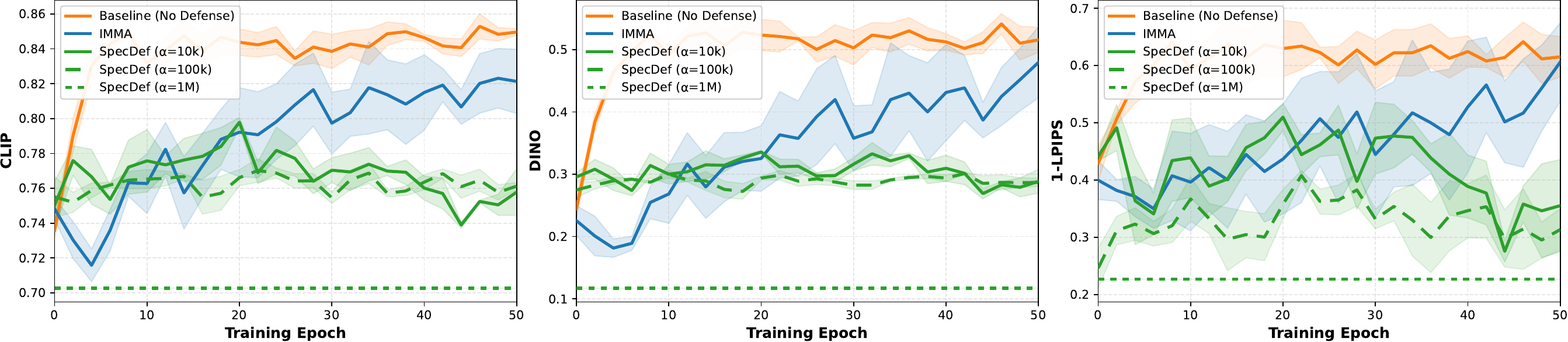}
\caption{
\label{fig:vangogh_defence} Style transfer attack: SpecDef maintains low similarity to Van Gogh references throughout 50 epochs of LoRA fine-tuning, while ESD and IMMA recover the erased style within 10 epochs. Lower similarity indicates stronger resistance.
}
\end{figure*}

\paragraph{Comparison with baselines} Unlearning robustness against relearning attacks is the most well-developed setting in training-time safety \citep{che2025model,fan2025llmunlearningresilientrelearning,o2025deep} so it is our primary focus for the analysis of baseline methods. \cref{tab:unlearning-defence-evaluation} shows no previous methods are able to withstand standard fine-tuning as learning rates are varied. While varying starting accuracies could theoretically confound convergence comparisons, all baseline methods eventually reach the target accuracy regardless of starting point and SpecDef does not give sufficient $\alpha$.


Our results also hold in the case for harmful fine-tuning with \texttt{TAR} \citep{tamirisa2025tamperresistantsafeguardsopenweightllms} and \texttt{RepNoise} \citep{rosati2024representation} (\cref{app:harmful-finetuning-details}) as well as \texttt{IMMA} \citep{zheng2024imma} for NSFW training (\cref{tab:nsfw_prevention}) and style transfer attacks (\cref{fig:vangogh_defence}). For SpecDef, we present comprehensive sweeps of learning rates of our method for unlearning and harmful fine-tuning in \cref{tab:combined-llama-convergence} and vision experiments in \cref{tab:vangogh_lr_ablations_1} showing that as long a sufficient $\alpha$ multiplier is used then convergence rate control is achieved. \cref{tab:harmful_results_lora_gpt20b} corroborates our findings in a large $20\mathrm{b}$ parameter setting using LoRA which reveals a phase transition: at $\alpha=1\mathrm{k}$ with 5 layers, the defence fails to prevent convergence, while $\alpha=10\mathrm{k}$ consistently induces divergence. This identifies the minimum effective $\alpha$ for multi-layer configurations. A more comprehensive study of past methods and where they fit within our framework as well as a novel attack that defeats them based our framework is presented in \cref{app:previous-defence-analysis}.

For adaptive optimizers, convergence rate can be initially accelerated as we increase $\alpha$. This is expected because the gradient can now be much larger and if the step sizes are not diverging then faster convergence would occur. Small multipliers are not intended to provide resistance as our method is intended to operate in the high‑$\alpha$ regime where divergence occurs. \cref{app:previous-defence-analysis} shows cases where this acceleration can be leveraged to attack previous defences.

\paragraph{Curvature-aware methods} We evaluate three classes of curvature-aware optimizers: Sophia \citep{liusophia} (second-order approximate method with Hessian diagonal estimation using Gauss-Newton-Bartlett, $\beta_1=0.965$, $\beta_2=0.99$, $\rho=0.04$), Muon \citep{jordan2024muon} (geometry-aware method using Newton-Schulz orthogonalization with momentum $0.95$), and AdaHessian \citep{yao2021adahessian} (second-order approximate method using Hutchinson's diagonal Hessian estimator). \cref{tab:optimizer-comparison-main} presents the results of these methods. The same setting as \cref{tab:combined-llama-convergence} is used. Our findings were: (i) Sophia was largely ineffective at undoing SpecDef, (ii) AdaHessian was more effective than Sophia but less effective than Muon, and (iii) Muon’s ability to undo SpecDef was roughly comparable to the earlier Adam-based attacks with some cases where increasing $\alpha$  below the subcritical regime can accelerate Muon. \cref{app:second-order-analysis} provides a thorough empirical and theoretical analysis of why this is the case. Unlike SpecDef, previous training resistance methods were very easy to undo with curvature-aware methods (see \cref{tab:curvature-by-model-sigma1}) and therefore these should be to strengthen methodology in future studies.

Ablation experiments on layer selection and top-$k$ selection are presented in \cref{app:further-ablations}. We tried several natural adaptive attacks such as quantization and naive rescaling but did not report them as they immediately destroy numerical stability due to the poor conditioning SpecDef introduces.

\begin{table}[t]
\caption{Curvature-aware optimizers fail to circumvent SpecDef. Muon performs best but still diverges at $\alpha{\geq}100\mathrm{k}$; Sophia is ineffective even at $\alpha{=}1\mathrm{k}$.}
\label{tab:optimizer-comparison-main}
\centering
\setlength{\tabcolsep}{5pt}
\begin{sc}
\resizebox{\columnwidth}{!}{%
\begin{tabular}{lccc}
\toprule
\multicolumn{4}{c}{Muon} \\
\midrule
$\alpha$ & $10^{-5}$ & $2\times10^{-5}$ & $5\times10^{-5}$ \\
\midrule
1 & 765 (0.29 $\pm$ 0.05) & 765 (0.43 $\pm$ 0.03) & 316 $\pm$ 45 (0.61 $\pm$ 0.01) \\
1K & 335 $\pm$ 375 (0.53 $\pm$ 0.15) & 305 $\pm$ 399 (0.54 $\pm$ 0.11) & 140 $\pm$ 156 (0.62 $\pm$ 0.03) \\
10K & 308 $\pm$ 396 (0.50 $\pm$ 0.13)$^{\dagger}$ & 83 $\pm$ 66 (0.48 $\pm$ 0.11)$^{\ddagger}$ & 36 $\pm$ 11 (0.41 $\pm$ 0.09)$^{\ddagger}$ \\
100K & 23 $\pm$ 5 (0.40 $\pm$ 0.10)$^{\ddagger}$ & 20 $\pm$ 10 (0.37 $\pm$ 0.10)$^{\ddagger}$ & 13 $\pm$ 5 (0.33 $\pm$ 0.11)$^{\ddagger}$ \\
\midrule
\multicolumn{4}{c}{Sophia} \\
\midrule
$\alpha$ & $5\times10^{-6}$ & $10^{-5}$ & $2\times10^{-5}$ \\
\midrule
1 & 36 $\pm$ 5 (0.63 $\pm$ 0.02) & 60 $\pm$ 60 (0.63 $\pm$ 0.02) & 36 $\pm$ 28 (0.58 $\pm$ 0.13)$^{\dagger}$ \\
1K & 13 $\pm$ 5 (0.25 $\pm$ 0.13)$^{\ddagger}$ & 13 $\pm$ 5 (0.21 $\pm$ 0.07)$^{\ddagger}$ & 10 (0.19 $\pm$ 0.05)$^{\ddagger}$ \\
\midrule
\multicolumn{4}{c}{AdaHessian} \\
\midrule
$\alpha$ & $10^{-6}$ & $3\times10^{-6}$ & $5\times10^{-6}$ \\
\midrule
1 & 126 $\pm$ 11 (0.62 $\pm$ 0.01) & 60 (0.64 $\pm$ 0.00) & 66 $\pm$ 15 (0.66 $\pm$ 0.04) \\
1K & 153 $\pm$ 230 (0.47 $\pm$ 0.17)$^{\dagger}$ & 30 $\pm$ 34 (0.31 $\pm$ 0.08)$^{\ddagger}$ & 20 $\pm$ 17 (0.22 $\pm$ 0.05)$^{\ddagger}$ \\
10K & 13 $\pm$ 5 (0.19 $\pm$ 0.03)$^{\ddagger}$ & 10 (0.23 $\pm$ 0.05)$^{\ddagger}$ & 10 (0.22 $\pm$ 0.03)$^{\ddagger}$ \\
\bottomrule
\end{tabular}
}
\end{sc}
\end{table}

\section{Fundamental Limits of Convergence-Rate-Based Resistance}
\label{sec:security-analysis-of-convergence-rate-control} 
While convergence-rate control imposes practical friction, it provides only limited security guarantees. To this end, we prove a fundamental limitation: any mechanism that slows fine-tuning by changing convergence rates can be efficiently bypassed by an adaptive attacker. In particular, methods that control convergence via weight matrix singular values---the only viable approach for reparameterization under standard assumptions (e.g., Lipschitz network components; see \cref{app:applicability-impossiblity-result-to-modern-deep-networks})---can be defeated given sufficient attacker budget and expertise.  Such methods exhibit a fundamental trade-off: an attacker can achieve a $k$-th root reduction in $\max_i{\sigma_1(\theta_i)}$ by training a model $k$ times larger.

To this end, we first establish universality.

\begin{theorem}[Only Weight Matrices Provide Unbounded Spectral Control]
\label{thm:only-weight-matrices-main}
Consider a feed forward network $f$ as defined in \S\ref{sec:spectral-reparameterization}. Additionally assume the activation functions $\phi$ to be approximately non-expansive, with approximately non-expansive first two derivatives if they exist. Also, let the data be bounded with $\|x\|_2 \leq c$ for any $x$ drawn from the dataset $\mathcal{D}$. A transformation of that function $\mathcal{T}: \mathcal{F} \to \mathcal{F}$ is a lower-max spectral reparameterization (i.e., a $(l, \infty, 1)$-spectral reparameterization) only if the transformation is a reparameterization of the parameter matrices $\theta_i$.
\end{theorem}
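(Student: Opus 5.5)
We prove the contrapositive. Suppose $\mathcal{T}$ leaves every weight matrix $\theta_i$ fixed, or more precisely leaves all $\sigma_1(\theta_i)$ bounded by constants independent of the target $c$. We then show that $\sigma_1(H^{\mathcal{L}}_{\theta'})$ is bounded above by a constant depending only on the data bound, the activation constants, and the weight norms. Consequently the Spectral Control condition $\sigma_1(H^{\mathcal{L}}_{\theta'})\geq c$ of \cref{def:upper-max-spec-reparam} fails for all sufficiently large $c$. The whole argument is an upper-bound counterpart to \cref{thm:hessian-singular-value-bound}. That theorem showed the Hessian blocks factor as products of activation Jacobians, weight matrices and data. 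Here we show every factor other than the weights is uniformly bounded under the stated assumptions.

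\textbf{Step 1: bound the forward and backward quantities.} Write $s_i=\theta_i z_{i-1}$ and $z_i=\phi_i(s_i)$. Approximate non-expansiveness means $\|\phi(u)\|\leq \|u\|+\delta$ and $\|D_{z_i}\|_2\leq 1+\delta$. By induction we get $\|z_i\|\leq \prod_{l\leq i}\sigma_1(\theta_l)\,c+O(\delta)$. The Jacobian of $f$ with respect to any intermediate $s_i$ is a product of the remaining $\theta_l$ and the diagonal matrices $D_{z_l}$. Its operator norm is therefore at most $\prod_{l>i}\sigma_1(\theta_l)(1+\delta)^{n}$. Similarly, the second derivatives $\phi''$ are bounded, so the diagonal second-order terms $\mathrm{diag}(\phi''(s_i))$ are bounded.

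\textbf{Step 2: decompose the Hessian blockwise and bound each block.} Use the Gauss--Newton split $H^{\mathcal{L}}_\theta=J_f^\top \nabla^2_f\ell\, J_f+\sum_m \partial_{f_m}\ell\,\nabla^2_\theta f_m$. As in the example derivations of \cref{app:example-hessian-derivations}, each block $\partial^2 f/\partial\theta_i\partial\theta_j$ is a finite sum of products of three kinds of factor. The first is Kronecker factors $(z_{l}^\top\otimes I)$. The second is $D_{z_l}$ or $\mathrm{diag}(\phi_l'')$. The third is the weight matrices $\theta_l$. Submultiplicativity then bounds each block's spectral norm by a polynomial in $c$, $(1+\delta)$ and $\{\sigma_1(\theta_l)\}$. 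We take the loss to be smooth in the network output on the bounded output range, for example cross-entropy or squared error. Then $\|\nabla_f\ell\|$ and $\|\nabla_f^2\ell\|$ are bounded on this set. Finally, the norm of the whole Hessian is at most the Frobenius-type aggregate of its block norms, namely $\sigma_1(H)\leq \big(\sum_{i,j}\|H_{ij}\|_2^2\big)^{1/2}$. This gives $\sigma_1(H^{\mathcal{L}}_{\theta})\leq P(c,\{\sigma_1(\theta_l)\})$ for a fixed polynomial $P$.

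\textbf{Step 3: conclude.} Suppose $\mathcal{T}$ does not reparameterize the $\theta_i$. The only remaining freedom lies in the components the theorem treats as fixed, namely the activations (constrained to be non-expansive) and the data (bounded by $c$). So $P$ is a fixed finite number, and choosing $c'>P$ in the Spectral Control condition violates it. Hence unbounded spectral control, the $(l,\infty,1)$ case, requires $\mathcal{T}$ to change the weights. Conversely, \cref{thm:hessian-singular-value-bound} and \cref{thm:spectral-deformation-is-spec-reparam-main} show that changing them does suffice.

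\textbf{Main obstacle.} The hard part is making ``approximately non-expansive'' precise enough that the $\delta$ slack does not compound over depth into an unbounded term. We handle this by keeping $n$ fixed, so the slack contributes at most the factor $(1+\delta)^n$. A second difficulty is that the loss curvature in the output must be bounded, which fails for cross-entropy near saturated logits only in the lower direction, so it is harmless here. We also need to argue that transformations such as inserting extra layers count as reparameterizations of parameter matrices. We will state this explicitly: the added layers are new $\theta$'s.
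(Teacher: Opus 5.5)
Your proposal is correct and is essentially the paper's argument. Both show that every non-weight ingredient of the Hessian blocks is uniformly bounded: the data enter only through outer-product or Kronecker factors with $\|x\|_2\le c$, and the activation derivatives $D_{z}$ and $\mathrm{diag}(\phi'')$ are bounded by non-expansiveness. Consequently only the weight matrices can push $\sigma_1$ of the Hessian past an arbitrary target.

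The difference is organizational. The paper rules out data manipulation and activation manipulation as separate cases, each on individual blocks of $H^f_\theta$. You instead derive one global polynomial bound, by inducting on $\|z_i\|$, aggregating the blocks, and passing to the loss Hessian via the Gauss--Newton split. Your version also covers combined manipulations and makes explicit steps the paper leaves implicit: bounding the intermediate vectors, and going from per-block bounds to a bound on $\sigma_1(H)$.
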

\begin{proof}[Proof Sketch]
The proof (\cref{app:layer-injection-is-universal}) proceeds from two standard assumptions (see \citealp{du2019gradient, xiong2020layer}). (1) bounded data: token embeddings are bounded by construction, pixel normalization ensures boundedness, training datasets have finite maximum norms, (2) nonlinear components have bounded Jacobian and Hessian norms. This includes element-wise activations, residual connections, and normalization layers under standard variance-floor assumptions; attention mechanisms are Lipschitz with constants depending multiplicatively on query, key, and value weight norms. Under these conditions, we show by contradiction that Hessian spectral values must be controlled by weight matrix factors.
\end{proof}
Having established this universal class of convergence-rate control methods, we now show that all such methods can be undone by an adversary---a fundamental limitation of possible security guarantees.

\begin{theorem}[Cost of Undoing Spectral Reparameterization]
\label{thm:layer-injection-attack}
Assume the feed forward networks defined in \S\ref{sec:spectral-reparameterization}. Let $M = \max_{1 \leq i \leq n} \sigma_1(\theta_i)$. Then to achieve $\max_i \sigma_1(\theta_i) \leq M^{1/k}$ it suffices to insert at most $(k - 1) \cdot n$ additional linear layers and perform spectral deformation.
\end{theorem}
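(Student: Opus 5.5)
The plan is to factor each weight matrix into $k$ linear factors of equal spectral shape, so that every factor has top singular value exactly $\sigma_1(\theta_i)^{1/k}$. We work layer by layer; the count of $(k-1)$ new layers per original layer gives the stated total $(k-1)\cdot n$. Fix a layer $\theta_i \in \mathbb{R}^{d_i\times d_{i-1}}$ with SVD $\theta_i = U_i\Sigma_i V_i^\top$, where $\Sigma_i$ is the $r\times r$ diagonal block with $r=\min\{d_i,d_{i-1}\}$ (compact SVD, zero-padded as needed). First, as in the layer-injection step of \cref{alg:spectral-deformation}, insert $k-1$ identity linear layers with identity activations immediately after $\theta_i$. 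This leaves $f$ unchanged, since the identity is trivially degree-$1$ homogeneous, so we are in the special case of \cref{thm:spectral-deformation-is-spec-reparam-main} that the paper already notes. Then replace the block $I\cdots I\,\theta_i$ by the product
\[
\theta_i \;=\; \bigl(U_i\Sigma_i^{1/k}\bigr)\,\bigl(\Sigma_i^{1/k}\bigr)\cdots\bigl(\Sigma_i^{1/k}\bigr)\,\bigl(\Sigma_i^{1/k}V_i^\top\bigr),
\]
which has $k$ factors. The intermediate widths are $r$, so the inserted layers are square $r\times r$ diagonal matrices.

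Functional invariance then follows because the $k$ new factors are composed with identity activations between them. Their composition equals $U_i\Sigma_iV_i^\top=\theta_i$ exactly, so $d(\mathcal{T}[f],f)=0$ and the surrounding nonlinearities $\phi_{i-1},\phi_i$ see exactly the same inputs as before. For the spectral part, the first and last factors have singular values $\sigma_j(\theta_i)^{1/k}$ because $U_i$ and $V_i$ have orthonormal columns, and the middle factors are diagonal with nonnegative entries $\sigma_j^{1/k}$. Hence every factor has $\sigma_1=\sigma_1(\theta_i)^{1/k}\le M^{1/k}$, using monotonicity of $x\mapsto x^{1/k}$ on $[0,\infty)$. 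This step is exactly a spectral deformation in the sense of \cref{thm:spectral-deformation-is-spec-reparam-main}, only run in reverse: we shrink singular values by $T=\Sigma_i^{1/k-1}$ and compensate in the adjacent injected layers. Applying the construction to each of the $n$ layers independently gives at most $(k-1)n$ insertions and $\max_i\sigma_1\le M^{1/k}$ over all matrices of the new network. The last layer $\theta_{n+1}$ can be handled the same way. We only need insertions for layers with $\sigma_1(\theta_i)>M^{1/k}$, which justifies the phrase ``at most''.

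The main obstacle is bookkeeping rather than analysis. We must check that the compensation is applied within a block joined by identity activations, so that no non-homogeneous activation sits between the factors, which \cref{thm:spectral-deformation-is-spec-reparam-main} would otherwise forbid. We must also handle rectangular or rank-deficient $\theta_i$. The compact SVD together with the choice $0^{1/k}=0$ avoids any pseudoinverse, because we only take positive roots and never invert $\Sigma_i$, so zero singular values cause no trouble. If $\theta_i$ came from SpecDef and is already followed by its compensating layer $\theta_i^{comp}$, the attacker can first merge the two (also linear, with identity activation in between) and then apply the factorization to the product. For the stated bound in terms of $M$, however, this merge is not needed, because $\theta_i^{comp}$ is itself just another weight matrix with $\sigma_1\le M$ and gets factored the same way.
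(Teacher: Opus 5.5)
Your proposal is correct and is essentially the paper's argument: the paper also factors each $\theta_i$ into $k$ linear maps joined by identity activations, and it notes that the optimal value $\sigma_1(\theta_i)^{1/k}$ is attained by using the SVD to distribute the singular values across the injected layers. Your explicit split $(U_i\Sigma_i^{1/k})(\Sigma_i^{1/k})\cdots(\Sigma_i^{1/k}V_i^\top)$ makes that step concrete, and because you shrink all singular values rather than only the top one, you avoid the paper's WLOG assumption $\sigma_2(\theta_i)<r$. The paper additionally argues from $r^{k}\ge\sigma_1(\theta_i)$ that the $k$-th root is the best achievable with $k-1$ insertions per layer, which this sufficiency claim does not need; also drop your aside about treating $\theta_{n+1}$, since doing so would exceed the $(k-1)n$ budget and is unnecessary because $M$ ranges only over $i\le n$.
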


\begin{proof}[Proof Sketch]
The proof and practical details are given in \cref{app:layer-injection-details}. Assuming that the defender controls the convergence rate by maximizing $\{\sigma_1(\theta_i)\}_i$ (for which we do not know alternatives), we show an upper bound analogous to \cref{thm:hessian-singular-value-bound}. An attacker achieves $k$-th root reduction by factoring each  layer into $k$ layers using \cref{thm:spectral-deformation-is-spec-reparam-main}.
\end{proof}

Extensions to common deep architectures follow under mild technical assumptions (\cref{app:applicability-impossiblity-result-to-modern-deep-networks}). While worst-case attacks can be made expensive, these results show that lower-max spectral reparameterizations have limited security guarantees. By \cref{thm:only-weight-matrices-main} and \cref{remark:why-curvature}, any open-weight safety method maximizing convergence time for unsafe objectives is subject to this attack. Cheaper attacks are unlikely given that spectral control directly bounds iteration complexity. For example (1) naively scaling layers down might give more tractable gradients but now $\sigma_n$ causes conditioning that is very poor which would result in  instability and (2) naive quantization or clipping would not capture the necessary numerical range of the original transformation effectively destroying model utility. Upper-max spectral reparameterizations that flatten curvature are even easier to defeat (we demonstrate this in \cref{app:previous-defence-analysis}).

\paragraph{Experiments}
We now empirically evaluate the minimal \textit{attack budget} required to undo convergence-rate control. The attacker restores original training cost via \cref{thm:layer-injection-attack} by inserting ${k -1} \cdot n$ identity layers and factorizing defended layers via SVD to achieve $\sigma_1^{1/k}$. We first evaluate relearning attacks on WMDP-Bio for unlearning with learning rate fixed at $3 \times 10^{-5}$. The $\alpha$ multiplier varies across columns; attack cost (number of added layers, resulting in linear parameter increase) varies across rows.

As predicted, \cref{tab:injection-attack} shows that required compute grows linearly in $k$ and logarithmically in the defender's singular-value multiplier, with WMDP-Bio accuracy recovering within few steps after (but not before) appropriate parameter addition. Due to compute limitations, for \texttt{Llama-3.1-8B} with \texttt{ELM} unlearning we only demonstrate spectral deformation on one layer.

We now empirically examine the attack on a fully reparameterized model. We evaluate the harmful fine-tuning attack on \texttt{SmolLM2-360M-Instruct}, which includes a safety guard where the model does not generate tokens when the query is harmful. In the absence of reparameterization, this safety guard can be undone within 10 fine-tuning steps, achieving over 70\% harmfulness.

When we reparameterize all layers with $\alpha = 100\text{k}$ using the smallest effective learning rate ($10^{-7}$), undoing SpecDef requires increasing the model size by approximately $4\times$. These model size increases are relative to post-SpecDef application where SpecDef initially doubles the model size with injected layers (360m to 876m). For $\alpha = 1\text{M}$, undoing SpecDef requires approximately $6\times$ as many parameters and $10\times$ for $\alpha=10\text{M}$, increasing VRAM usage from 1.63GB to 9.30GB with 4.99B total parameters (some layers like LM head are ignored). This translates into an increase from 2.7 average seconds per step to 7.5 seconds per step. In \cref{tab:injection-attack}, fewer injected layers are required to undo these $\alpha$ multipliers. This occurs because when only a single layer (or a few layers) is reparameterized, the network can tolerate one or two, moderate but higher than the original, singular values without destabilizing the overall dynamics. However, when many layers are reparameterized, even moderate singular values across multiple layers compound multiplicatively, leading to large activations and  gradient norms.

\begin{table}[htbp]
\centering
\caption{Layer injection attack validates \cref{thm:layer-injection-attack}: adding $k{-}1$ layers reduces effective resistance from $\alpha$ to $\alpha^{1/k}$, restoring convergence at linear cost in parameters.}
\label{tab:injection-attack}
\begin{sc}
\resizebox{\columnwidth}{!}{%
\begin{tabular}{lcccc}
\toprule
Layers Added & \textbf{$\alpha=1$} & \textbf{$\alpha=10^5$} & \textbf{$\alpha=10^6$} & \textbf{$\alpha=10^{9}$} \\
\midrule
0 ($\sigma^{1 / k=1}$) & 80 (0.61) & 10 (0.16)$^\dagger$ & 10 (0.15)$^\dagger$ & 10 (0.18)$^\dagger$ \\
1 ($\sigma^{1 / k=2}$) & 80 (0.62) & 130 (0.62) & 10 (0.21)$^\dagger$ & 10 (0.17)$^\dagger$ \\
2 ($\sigma^{1 / k=3}$) & 30 (0.64) & 80 (0.61) & 100 (0.60) & 10 (0.17)$^\dagger$ \\
3 ($\sigma^{1 / k=4}$) & 80 (0.61) & 30 (0.61) & 30 (0.61) & 30 (0.61) \\
\bottomrule
\end{tabular}
}
\end{sc}
\end{table}
\section{Discussion}
\label{sec:discussion}
Our work provides theoretical grounding for arbitrarily slowing convergence rates under non-adversarial first-order (second in \cref{app:second-order-analysis}) 
methods common in foundation model fine-tuning. We derive a general class of convergence-rate control methods (spectral reparameterization) and provide an illustrative example (SpecDef) that is practically effective in a non-adversarial regime. However, we also show that adaptive attackers can undo these resistance methods, albeit with a linear increase in budget cost.

\vspace{-1em}
\paragraph{Practical impact} The utility of our results depends on whether one seeks \textit{safety} guarantees (settings without adversaries) or \textit{security} guarantees (assuming adversaries). Geometric safety constraints in open-weight models are practically useful in the safety regime---for example by making accidental removal of safety guards more difficult \citep{qi2023fine} or protecting against reward-incentivized unsafe behaviour \citep{dai2024safe} i.e., ``reward hacking''. One limitation of our method is that it prevents training on all distributions but our framework provides a foundation for remedying this. For example, future work could localize convergence results by aligning distributions with specific singular values \citep{zheng2025model}, creating distribution-specific training constraints.

\paragraph{Security} A main contribution of this work is \textit{provably} ruling out convergence-rate control as a path to security guarantees on open-weight models. Future work could explore means of making $k$-th layer injection even more expensive, for example, by layering in defences such as gradient obfuscation. However, even requiring a substantial training budget only deters low-skilled and low-resourced attackers. These constraints would not prevent state actors developing weapons motivated to undo safety guards. Open-weight models also have a large inference-time attack surface \citep{rando2025adversarial} as well as vulnerability to model exfiltration, such as through distillation \citep{dionicio2025undistillable} that our work does not address. Future work should explore applications of traditional security solutions such as hardware attestation, functional encryption, and model compilation, which could provide future pathways towards security guarantees.

\paragraph{Structural insights for neural network optimization} We develop new bounds linking Hessian spectral values to parameter matrix spectral values in deep networks. Developing this bound further could provide new viable alternatives to approaches for curvature control and analysis based on Hessian-vector products (e.g., \citealp{wang2025better}).

\section{Conclusion}

Our work establishes a concrete theoretical foundation for training-time safety. \cref{thm:hessian-singular-value-bound} enables tractable curvature control at scale, and SpecDef provides resistance where all tested prior methods fail (\cref{tab:unlearning-defence-evaluation}), including against second-order optimizers (\cref{tab:optimizer-comparison-main}). Theorems~\ref{thm:layer-injection-attack} and \ref{thm:only-weight-matrices-main} show that training-time safety mechanisms relying solely on optimization dynamics are inherently limited. While they can impose meaningful barriers for compute-limited attackers and suggest new avenues for guaranteed safety in non-adversarial settings, they offer little protection against well-resourced adversaries, sharply delineating the limits of what is possible in open-weight model safety.

\section*{Acknowledgements}

We would like to acknowledge the generous support of the Killam Foundation, the Vector Institute of Artificial Intelligence, and the Natural Sciences and Engineering Research Council of Canada for funding this work. The compute was made available by the Digital Research Alliance of Canada, Vector Institute, and a grant from the Center for AI Safety.



\section*{Impact statement}

This paper establishes fundamental limits that may narrow the space of viable open-weight safety approaches. Clearly understanding what is and is not possible is essential for the field's maturity. Impossibility results guide future research toward promising directions (cryptographic methods, architectural interventions) while avoiding dead ends. By providing a theoretical basis for understanding resistance mechanisms and their limitations, we demonstrate significant progress in training-time safety. We hope this paper influences the community to approach this topic from first principles and sets a standard for formal theoretical guarantees in future open-weight safety methods.

\bibliography{bibliography}
\bibliographystyle{icml2026}

\newpage
\appendix
\onecolumn
\section{Review of First-Order Methods}
\label{app:review-of-gradient-decent-methods}
This section reviews convergence rates of first-order (gradient-based) steepest descent methods. A deeper review appears in \citet{Wright_Recht_2022}. In foundation model fine-tuning, we minimize a loss $\mathcal{L}: \mathbb{R}^{p} \to \mathbb{R}$ over parameters $\theta \in \mathbb{R}^{p}$ (the concatenation of all weight matrices) where predictions are made by a prediction function $f_{\theta}: \mathbb{R}^d \to \mathbb{R}$ for a dataset $\mathcal{D} = \{(x_j \in \mathbb{R}^d, y_j \in\mathbb{R})\}_{j=1}^{n}$ with $n$ samples and some convex loss like mean-squared error $\|f_{\theta}(x_i) - y_i \|^2_2$ defines the loss $\mathcal{L}$. For simplicity in this analysis we will treat full batch gradient descent. Under common assumptions like i.i.d.\ sampling, empirical risk minimization converges to the population risk as $n$ increases \citep{bach2024learning}. Gradient descent minimizes $\mathcal{L}$ via the recurrence relation:
\[
\theta_{k+1} = \theta_{k} - \eta \nabla \mathcal{L}_{\theta_k}, \quad k = 0, 1, 2, \ldots
\]
for learning rate $\eta \in \mathbb{R}_+$. First-order optimality conditions state that when the gradient achieves $\nabla \mathcal{L}_{\theta} = 0$ a stationary point $\theta_*$ is found. Under assumptions below, Gradient descent finds such points with appropriate $\eta$ (proof below). The convergence rate specifies iterations $k$ required to reach a stationary point. We assume $\mathcal{L}$ is $L$-smooth (\cref{def:l-smooth}), standard in optimization theory and deep learning. Sobolev-type regularity handles non-smooth components (e.g., weak differentiability of ReLU), but incorporating these would complicate exposition without changing overall conclusions of this paper.

\begin{definition}[$L$-smooth functions]
\label{def:l-smooth}
A function $f$ is said to have $L$-Lipschitz continuous gradients, or be $L$-smooth, if, with appropriate norm $\|\cdot\|$, for all $x$ and $y$, \[
\|\nabla f(x) - \nabla f(y)\| \leq L\|x - y\|.
\]
\end{definition}

Application of the $L$-smooth property to a Taylor series approximation of $\mathcal{L} $ yields the following well-known descent lemma.

\begin{lemma}[Descent Lemma]
\label{lemma:descent-lemma}
For the gradient descent update rule of $L$-smooth functions, we have the following upper bound \[
\mathcal{L}_{\theta_{k + 1}} \leq \mathcal{L}_{\theta_k} - \frac{1}{2L} \|\nabla \mathcal{L}_{\theta_k}\|^2.
\]
\end{lemma}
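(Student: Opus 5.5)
The plan is the standard argument: a second-order Taylor expansion controlled by $L$-smoothness, followed by substituting the gradient step with $\eta = 1/L$. First, from \cref{def:l-smooth}, I will establish the quadratic upper bound
\[
\mathcal{L}_{\theta'} \le \mathcal{L}_{\theta} + \langle \nabla \mathcal{L}_{\theta}, \theta' - \theta\rangle + \frac{L}{2}\|\theta'-\theta\|^2
\]
for all $\theta,\theta'$. To do this, I write $\mathcal{L}_{\theta'} - \mathcal{L}_{\theta} = \int_0^1 \langle \nabla \mathcal{L}_{\theta + s(\theta'-\theta)}, \theta'-\theta\rangle\, ds$ by the fundamental theorem of calculus along the segment. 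I then subtract $\langle \nabla\mathcal{L}_\theta, \theta'-\theta\rangle$ and bound the remaining integrand by Cauchy--Schwarz and the Lipschitz property:
\[
\bigl|\langle \nabla \mathcal{L}_{\theta+s(\theta'-\theta)} - \nabla\mathcal{L}_\theta, \theta'-\theta\rangle\bigr| \le sL\|\theta'-\theta\|^2 .
\]
Integrating $sL$ over $[0,1]$ gives the factor $L/2$. Here I take the norm in \cref{def:l-smooth} to be the Euclidean norm (or any norm used consistently with its dual). This is the only step that needs care.

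Second, I plug in the gradient descent update from the recurrence with step size $\eta = 1/L$, which the paper takes as the optimal choice before \cref{prop:l-smooth-descent-iteration}. Setting $\theta = \theta_k$ and $\theta' = \theta_{k+1} = \theta_k - \frac{1}{L}\nabla\mathcal{L}_{\theta_k}$, the linear term becomes $-\frac{1}{L}\|\nabla\mathcal{L}_{\theta_k}\|^2$ and the quadratic term becomes $\frac{L}{2}\cdot\frac{1}{L^2}\|\nabla\mathcal{L}_{\theta_k}\|^2$. Together they give $\mathcal{L}_{\theta_{k+1}} \le \mathcal{L}_{\theta_k} - \frac{1}{2L}\|\nabla\mathcal{L}_{\theta_k}\|^2$.

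For an arbitrary step size $\eta$, the same computation gives a decrease of $\eta(1-\frac{L\eta}{2})\|\nabla\mathcal{L}_{\theta_k}\|^2$. This quantity is maximized at $\eta = 1/L$, which justifies the optimality claim and shows that the lemma's constant is the best obtainable from this bound.

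The main obstacle is essentially nil. The only subtlety is that the lemma implicitly assumes $\eta = 1/L$, so I will state that explicitly. I also need $\mathcal{L}$ to be differentiable along the segment for the integral form, which holds because $\mathcal{L}$ is $L$-smooth and hence $C^1$.
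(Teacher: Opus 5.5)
Your proposal is correct and follows the route the paper indicates: the paper defers the proof to Wright and Recht and describes it as minimizing the $L$-smooth quadratic Taylor upper bound, which is your FTC-derived bound followed by the choice $\eta = 1/L$ that maximizes $\eta(1 - L\eta/2)$. You are also right to state the step size $\eta = 1/L$ explicitly, because the constant $\frac{1}{2L}$ is guaranteed only at that step size and the lemma as written leaves it implicit.
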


The full proof appears in \citet{Wright_Recht_2022} and involves minimizing an $L$-smooth Taylor approximation of $\mathcal{L}$. The optimal learning rate is $\eta = 1/L$---the maximal rate before oscillation ($1/L < \eta \leq 2/L$) or divergence ($\eta > 2/L$). We assume convergent behaviour, as otherwise optimization never finds an stationary point. This justifies our assumption throughout that learning rate is at most $1/L$.

\subsection{Convergence Rate of GD on Smooth Functions}

We can use \cref{lemma:descent-lemma} to derive the rate of convergence of gradient descent on smooth functions. 

Summing over the descent lemma from iteration $i=0$ to $k-1$ results in \[
\mathcal{L}_{\theta_k} \leq \mathcal{L}_{\theta_0} - \frac{1}{2L} \sum_{i = 0}^{k - 1}  \|\nabla \mathcal{L}_{\theta_i}\|^2.
\]
Assuming $\mathcal{L}$ has a local minimum $\mathcal{L_{\theta_*}}$ such that \(
\mathcal{L_{\theta_*}} 
\leq \mathcal{L}_{\theta_k}
\) for all $k$ in some neighbourhood of $\theta_*$, then we have \[
\sum_{i = 0}^{k-1} \|\nabla \mathcal{L}_{\theta_i}\|^2 \leq 2L\left[\mathcal{L}_{\theta_0} - \mathcal{L}_{\theta_*}\right],
\] which implies that \(\lim_{k \to \infty} \|\nabla \mathcal{L}_{\theta_k}\| = 0\) otherwise the upper bound could not hold. Finally we have:
\[
\underset{0 \leq i \leq k-1}{\min} \|\nabla \mathcal{L}_{\theta_i}\|^2 
\leq 
\frac{1}{k} \sum_{i = 0}^{k-1} \|\nabla \mathcal{L}_{\theta_i}\|^2 
\leq  
    \frac{2L
        \left[ \mathcal{L}_{\theta_0} - \mathcal{L}_{\theta_*} \right] 
    }{k},
\] which follows from the fact that the minimum of a function is always less than or equal its mean.

This formula shows that after $k$ steps of gradient descent, the minimum gradient norm shrinks as 
\begin{equation*}
\underset{0 \leq i \leq k-1}{\min} \|\nabla \mathcal{L}_{\theta_i}\|
\leq  \sqrt{ 
    \frac{2L
        \left[ \mathcal{L}_{\theta_0} - \mathcal{L_{\theta_*}} \right] 
    }{k}
}.
\end{equation*}
We can restate this using another property that follows from the Taylor approximation of $L$-smooth functions that $\mathcal{L}_{\theta_0} - \mathcal{L}_{\theta_*} \leq \frac{L}{2}\|\theta_0 - \theta_*\|^2$ resulting in 
\begin{equation*}
\label{eq:sublinear-smooth-gd-convergence-rate}
\underset{0 \leq i \leq k-1}{\min} \|\nabla \mathcal{L}_{\theta_i}\|
\leq  \sqrt{ 
    \frac{
        \left[ L\|{\theta_0} - {\theta_*}\| \right]^2
    }{k}
}.
\end{equation*}

\subsection{Iteration Complexity}
From this we can show the iteration complexity of \cref{eq:sublinear-smooth-gd-convergence-rate}. The iteration complexity tells us how many steps $k$ are needed to achieve $\underset{0 \leq i \leq k-1}{\min} \|\nabla \mathcal{L}_{\theta_i}\| \leq \epsilon$. We can show the iteration complexity is given by \cref{prop:l-smooth-descent-iteration} from the main text with the following proof. The proposition is restated for clarity here first.
\begin{proposition}[$L$-smooth gradient descent iteration complexity]
\label{prop:l-smooth-descent-iteration-app}
Let $\mathcal{L}$ be $L$-smooth. For $L$-smooth gradient descent, the guarantee is that to obtain $\underset{0 \leq i \leq k-1}{\min} \|\nabla \mathcal{L}_{\theta_i}\| \le \epsilon$, the number of steps is given by the lower bound \(
 k \ge \left[ L\|\theta_0 - \theta_*\|  \right]^2/\epsilon^2
\) where $\theta_0$ is the initial model weights, $i$ is the sequence index, and $\theta_*$ is a stationary point. and $k$ is the number of iteration steps.
\end{proposition}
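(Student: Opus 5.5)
The plan is to invert the convergence-rate bound already derived just above, \cref{eq:sublinear-smooth-gd-convergence-rate}. It states that after $k$ steps of gradient descent with step size $\eta = 1/L$,
\[
\min_{0 \leq i \leq k-1} \|\nabla \mathcal{L}_{\theta_i}\| \leq \frac{L\|\theta_0 - \theta_*\|}{\sqrt{k}}.
\]
We want the guaranteed right-hand side to be at most $\epsilon$. This means requiring $L\|\theta_0-\theta_*\|/\sqrt{k} \leq \epsilon$. Squaring both sides, which are nonnegative, and rearranging gives exactly $k \geq (L\|\theta_0 - \theta_*\|)^2/\epsilon^2$. So the proposition follows once this chain of inequalities is verified in order.

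The chain has four steps.

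\textbf{Step 1.} Invoke \cref{lemma:descent-lemma} with $\eta = 1/L$.

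\textbf{Step 2.} Telescope the per-step decrease over $i=0,\dots,k-1$. Use $\mathcal{L}_{\theta_*} \leq \mathcal{L}_{\theta_k}$, under the standing assumption that the iterates remain in the neighbourhood where $\theta_*$ is a local minimizer. This yields
\[
\sum_{i=0}^{k-1}\|\nabla\mathcal{L}_{\theta_i}\|^2 \leq 2L(\mathcal{L}_{\theta_0}-\mathcal{L}_{\theta_*}).
\]

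\textbf{Step 3.} Bound the minimum by the average, then take square roots.

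\textbf{Step 4.} Replace the loss gap by a distance. Since $\nabla\mathcal{L}_{\theta_*}=0$, the quadratic upper bound implied by $L$-smoothness gives $\mathcal{L}_{\theta_0}-\mathcal{L}_{\theta_*} \leq \tfrac{L}{2}\|\theta_0-\theta_*\|^2$. This converts the bound into the form $L\|\theta_0-\theta_*\|/\sqrt{k}$. The final inversion is then routine algebra.

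The main obstacle is interpretive rather than technical. The inequality $k \geq (\cdot)^2/\epsilon^2$ is a \emph{sufficient} number of iterations to guarantee the $\epsilon$-stationarity condition. It is not a worst-case lower bound over all instances. I would state explicitly that the proposition is read in this sense: the worst-case upper bound certifies $\epsilon$-stationarity once $k$ reaches this threshold, so the iteration complexity guarantee scales as $L^2$. This dependence on $L$ is what \cref{sec:convergence-rate-control} uses.

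A secondary point to flag is the use of $\theta_*$ in Step 4. It requires $\theta_*$ to be a stationary point with $\nabla\mathcal{L}_{\theta_*}=0$, so that the linear term in the smoothness expansion vanishes. It also requires $\theta_*$ to lower-bound the loss along the trajectory. I would state both as hypotheses carried over from the preceding derivation.
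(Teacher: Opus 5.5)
Your proposal is correct and follows the paper's route. The paper likewise derives $\min_{0\le i\le k-1}\|\nabla\mathcal{L}_{\theta_i}\|\le L\|\theta_0-\theta_*\|/\sqrt{k}$ from the descent lemma, telescoping, bounding the minimum by the mean, and the quadratic upper bound at the stationary point, and then substitutes $k=(L\|\theta_0-\theta_*\|)^2/\epsilon^2$. Your reading of the threshold as a sufficient iteration count, rather than a worst-case lower bound, is exactly what that substitution establishes; the hypotheses you flag on $\theta_*$ are the same ones the paper's preceding derivation assumes.
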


\begin{proof}
 By directly substituting $k = \frac{1}{\epsilon^2}\left[ L\|{\theta_0} - {\theta_*}\| \right]^2$ in \cref{eq:sublinear-smooth-gd-convergence-rate} we achieve $\underset{0 \leq i \leq k-1}{\min} \|\nabla \mathcal{L}_{\theta_i}\| \leq \epsilon$. Therefore, the iteration complexity of smooth GD is \(
 k \ge \left[ L\|\theta_0 - \theta_*\|  \right]^2/\epsilon^2
\).
    
\end{proof}

\subsection{Nesterov's Optimal Method Convergence Analysis}
\label{app:nesterov-convergence}

Nesterov's method belongs to a category of ``momentum''-based methods and is often motivated as a way to ``dampen'' the oscillatory behaviour introduced by gradient descent. Oscillatory behaviour occurs when there are both small and large curvature directions, causing nearly orthogonal gradients (This phenomenon is illustrated in \citealp{goh2017why}). Nesterov's optimal (or accelerated) method \citep{nesterov1983method} involves two modifications to the typical gradient descent method.

First, we introduce a momentum or drag term $\beta(\theta_i - \theta_{i - 1})$ which prevents overshooting and yields the so-called heavy ball method or Polyak's momentum \citep{polyak1964some}. Second, instead of evaluating our gradient at $\theta_i$, we evaluate the gradient at $\theta_i + \beta(\theta_i - \theta_{i-1})$ in order to correct for the path taken by momentum. This results in the following descent equations:
\begin{align*}
\phi_u &= \theta_i + \beta_i (\theta_i - \theta_{i - 1}) \\
\theta_{i + 1}  &= \phi_u - \alpha \nabla_{\theta_i} \mathcal{L}(\phi_u).
\end{align*}

Deriving the convergence rate of Nesterov's method and why it is optimal is outside the scope of this paper, since we are simply using the known convergence rate to show that best-case function convergence still depends on $L$. Interested readers are referred to \citet{Wright_Recht_2022}, from which we obtain the convergence rate result restated below.

\begin{equation}
\label{eq:nesterovs-convergence-rate}
\mathcal{L}_{\theta_k} - \mathcal{L}_{\theta_*} \leq \frac{
    2L
}{
    (k + 1)^2
} \|\theta_0 - \theta_*\|^2.
\end{equation}

Using this we can prove \cref{prop:nesterov-iteration} from the main text (restated below for clarity).

\begin{proposition}[Nesterov iteration complexity]
\label{prop:nesterov-iteration-app}
Let $\mathcal{L}$ be an $L$-smooth convex function. For Nesterov's optimal method, the guarantee is that to obtain $\mathcal{L}_{\theta_k} - \mathcal{L}_{\theta_*} \le \epsilon$, the number of steps is given by the lower bound \(
 k \ge\sqrt{ \left[ 2L\|\theta_0 - \theta_*\|^2 \right]/\epsilon} - 1  
\).
\end{proposition}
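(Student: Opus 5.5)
The plan is to invert the Nesterov convergence bound \eqref{eq:nesterovs-convergence-rate} in the same way that \cref{prop:l-smooth-descent-iteration-app} was obtained from the smooth gradient-descent rate. I will take \eqref{eq:nesterovs-convergence-rate} as given from \citet{Wright_Recht_2022}. It holds for $L$-smooth convex $\mathcal{L}$ when the step size is $1/L$ and the momentum parameters $\beta_i$ follow the standard schedule.

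\textbf{Step 1: sufficient condition.} To guarantee $\mathcal{L}_{\theta_k} - \mathcal{L}_{\theta_*} \le \epsilon$, it suffices that the right-hand side of \eqref{eq:nesterovs-convergence-rate} is at most $\epsilon$:
\[
\frac{2L}{(k+1)^2}\|\theta_0 - \theta_*\|^2 \le \epsilon .
\]

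\textbf{Step 2: rearrange.} Since $\epsilon>0$ and $k+1>0$, the condition is equivalent to
\[
(k+1)^2 \ge \frac{2L\|\theta_0-\theta_*\|^2}{\epsilon}.
\]
Taking the (monotone) square root gives
\[
k \ge \sqrt{2L\|\theta_0-\theta_*\|^2/\epsilon} - 1 .
\]
Conversely, substituting this value of $k$ (rounded up to an integer) into \eqref{eq:nesterovs-convergence-rate} yields the $\epsilon$ guarantee. This matches the substitution argument used for \cref{prop:l-smooth-descent-iteration-app}.

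\textbf{Step 3: interpretation.} There is no substantive obstacle here; the main point to handle carefully is interpretation. The inequality is a \emph{sufficient} iteration count derived from an upper bound on the suboptimality gap. So the statement ``the number of steps is given by the lower bound'' should be read the way the paper reads \cref{prop:l-smooth-descent-iteration-app}: the guarantee is certified once $k$ exceeds this threshold. Read this way, the certified iteration complexity grows like $\sqrt{L}$, which is the monotone dependence on $L$ the main text relies on.

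I would also note that when the threshold is negative, the bound is trivially satisfied for every $k \ge 0$.
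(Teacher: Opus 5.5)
Your proposal is correct and follows the paper's own proof: the paper likewise takes the Nesterov rate $\frac{2L}{(k+1)^2}\|\theta_0-\theta_*\|^2$ as given, sets it equal to $\epsilon$, and solves for $k$. Your version works with the inequality directly and notes that the resulting threshold is only a sufficient iteration count; this is a slightly more careful rendering of the same argument.
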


\begin{proof}
As above, our goal is to set $k$ such that \(\mathcal{L}_{\theta_k} - \mathcal{L}_{\theta_*} \leq \epsilon \). Let $\Delta = \|\theta_0 - \theta_*\|^2$. Set the L.H.S of \cref{eq:nesterovs-convergence-rate} to $\epsilon$ to find $k$.
\begin{align*}
\frac{2L\Delta}{(k + 1)^2} &= \epsilon \\
\sqrt{2L\Delta / \epsilon} - 1  &= k. \\
\end{align*}
From above, we see that we must set $k$ to $\sqrt{2L\Delta / \epsilon} - 1$ in order to achieve $\epsilon$ error in the L.H.S of \cref{eq:nesterovs-convergence-rate}. Proving the iteration complexity is $k \geq \sqrt{2L\Delta / \epsilon} - 1$.
\end{proof}

\subsection{Hessian Singular Value Dependence}

The results of our paper depend on the following property.
\begin{proposition}
\label{prop:L-lower-bound-app}
The smoothness quantifier $L \in \mathbb{R}$ for the function $\mathcal{L}$ is greater than or equal to the maximum singular value of the Hessian $H$ of that function. In other words the following formula holds
$L \geq \sigma_1(H_{\theta}^{\mathcal{L}})$.
\end{proposition}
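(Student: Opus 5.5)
The plan is to show that the Hessian acts on one well-chosen direction with a stretch of exactly $\sigma_1(H)$, and then use the $L$-smoothness inequality on a small step along that direction. We work in the Euclidean norm and assume $\mathcal{L}$ is twice continuously differentiable at $\theta$, so $H := H^{\mathcal{L}}_{\theta}$ exists and is symmetric. Symmetry gives $\sigma_1(H) = \|H\|_2 = \max_i |\lambda_i(H)|$, so there is a unit eigenvector $v$ with $Hv = \lambda v$ and $|\lambda| = \sigma_1(H)$.

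\textbf{Key steps.}
\begin{enumerate}
\item For a scalar $t>0$, apply \cref{def:l-smooth} to the pair $x = \theta + t v$, $y = \theta$:
\[
\|\nabla\mathcal{L}(\theta + t v) - \nabla\mathcal{L}(\theta)\| \le L t .
\]
\item By differentiability of the gradient (the definition of $H$ as the Jacobian of $\nabla\mathcal{L}$),
\[
\nabla\mathcal{L}(\theta + t v) - \nabla\mathcal{L}(\theta) = t H v + o(t).
\]
\item Divide by $t$ and let $t \to 0^+$. This gives $\|Hv\| \le L$, hence $\sigma_1(H) = |\lambda| = \|Hv\| \le L$.
\end{enumerate}

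Since $\theta$ was arbitrary, $L \ge \sup_\theta \sigma_1(H^{\mathcal{L}}_\theta)$, which is stronger than the pointwise statement.

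\textbf{Alternative route.} One can avoid the eigenvector by using the operator-norm characterization $\|H\|_2 = \sup_{\|u\|=1}\|Hu\|$ directly. The same difference-quotient argument then bounds $\|Hu\| \le L$ for every unit $u$. Taking the supremum gives the claim without invoking symmetry, so it also covers the case where one only knows $\sigma_1$ as the operator norm.

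\textbf{Main obstacle.} The argument itself is short; the delicate point is regularity. For ReLU-type activations the Hessian is only defined almost everywhere, or in the weak/Sobolev sense the appendix mentions. At points of twice differentiability the argument above applies verbatim. Elsewhere the plan is to interpret $H$ as a limit of Hessians at nearby differentiable points, or as a weak Hessian, and pass the bound $\|H\|_2 \le L$ through the limit, using that the set $\{\|H\|_2 \le L\}$ is closed. A second, smaller caveat is the choice of norm: the proposition implicitly requires the Euclidean norm in \cref{def:l-smooth}, and I would state that assumption explicitly.
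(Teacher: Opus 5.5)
Your proof is correct, and it differs from the paper's mainly in where the work is done. The paper does not derive the Hessian bound. It cites the standard fact (Wright and Recht) that $L$-smoothness is equivalent to $\|\nabla^2\mathcal{L}(\theta)\|_2\le L$ for all $\theta$. It then passes through $|v^\top H v|\le L$, symmetry, and the eigenvalues to conclude $\sigma_1(H)=\max_i|\lambda_i(H)|\le L$.

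You instead prove the one direction of that equivalence that is actually needed, directly from \cref{def:l-smooth}, via the difference quotient $\bigl(\nabla\mathcal{L}(\theta+tu)-\nabla\mathcal{L}(\theta)\bigr)/t\to Hu$. This makes the argument self-contained. It also exposes the hypotheses really in use, which the paper leaves implicit: the Euclidean norm in the smoothness definition, and differentiability of $\nabla\mathcal{L}$ at $\theta$.

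Your alternative route further shows that the paper's detour through symmetry and eigenvalues is unnecessary. Since $\sigma_1(A)=\|A\|_2$ for every matrix, the statement is already proved once $\|H\|_2\le L$ is established. What the paper's citation buys is brevity, plus the converse direction (a uniform Hessian bound implies $L$-smoothness), which this proposition does not need.

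Your discussion of ReLU-type nonsmoothness goes beyond what is required, because the proposition presupposes that the Hessian exists. For any $L$-smooth $\mathcal{L}$, Rademacher's theorem gives existence of the Hessian almost everywhere, and your bound holds at each such point.
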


\begin{proof}
For an $L$-smooth function $f$, smoothness is equivalent to a uniform operator-norm bound on the Hessian:
\[
\|\nabla^2 f(\theta)\|_2 \le L, \quad \text{for all } \theta
\]
(see, e.g., \citealp{Wright_Recht_2022}). Equivalently, for any unit vector $v$,
\[
|v^\top \nabla^2 f(\theta)\, v| \le L.
\]
Since $\nabla^2 f(\theta)$ is symmetric, all of its eigenvalues lie in the interval $[-L, L]$. Therefore, the largest singular value of the Hessian satisfies
\[
\sigma_1\!\left(H_\theta^{\mathcal L}\right)
= \|H_\theta^{\mathcal L}\|_2
= \max_i |\lambda_i(H_\theta^{\mathcal L})|
\le L.
\]
Equivalently, this implies the two-sided Loewner bound
\[
-LI \preceq \nabla^2 f(\theta) \preceq LI,
\]
which completes the proof.
\end{proof}

To connect this back to convergence rate, we present a classical demonstration of the impact of principal curvature ($L$) on number of steps needed to minimize a quadratic under gradient descent in \cref{fig:quadratic}.

\begin{figure*}[h!]
\centering
\includegraphics[width=1\linewidth]{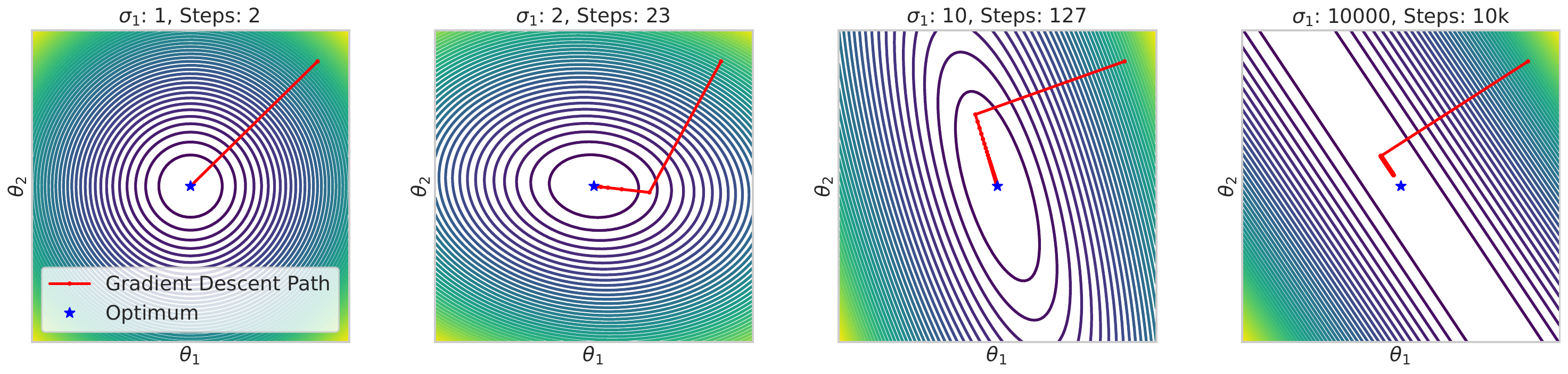}
\caption{
\label{fig:quadratic}
 Steps needed for optimizing a 2D convex quadratic function depend on Hessian spectral values,  $\sigma_1, \sigma_2$. Maximum convergent learning rate is used and $\sigma_{2}$ is fixed at 1.
}
\end{figure*}

\section{Review of Singular Value Inequalities}
\label{app:singular-value-inequalities}
To prepare for understanding the results of this paper and the proof techniques used, we briefly review singular value inequalities for general real matrices. \citet[Chapter~3]{Horn_Johnson_1991} provides a readable overview from which much of this material is sourced (\citep{bhatia2013matrix} is another good resource).

Recall the spectral value decomposition states that given a real-valued matrix $A \in \mathbb{R}^{m \times n}$ and index $q = \min(m, n)$, there is a diagonal matrix $\Sigma =\mathrm{diag}(\sigma_1, \ldots, \sigma_q)$ with $\sigma_1 \ge \dots \ge \sigma_q$ and two orthogonal matrices $U \in \mathbb{R}^{m \times m}$ and $V \in \mathbb{R}^{n \times n}$ such that $A = U\Sigma V^{\top}$. The individual elements $\sigma_i$ are the singular values of $A$, $U$ is known as the left singular vectors of $A$ and $V$ as the right singular vectors.

The Poincaré separation Theorem for singular values allows us to characterize matrix singular values by bounding them in terms of their submatrices, where a submatrix $A_r$ is given by deleting a total number of $r$ rows and/or columns from a matrix $A \in \mathbb{R}^{m \times n}$. We restate this result from \citet[3.1.3, p.~149]{Horn_Johnson_1991} where a proof can be obtained. We use this result extensively to find tractably computable submatrices of the Hessian and bound its singular values.

\begin{theorem}[Poincaré separation Theorem for singular values]
\label{thm:poincare-seperation}
Given $A \in \mathbb{R}^{m \times n}$ and submatrix $A_r$ the following holds 
\begin{equation*}
\label{eq:poincare-seperation}
\sigma_i(A) \ge \sigma_i(A_r) \ge \sigma_{i + r}(A), \quad i = 1, \ldots, \min\{m, n\}
\end{equation*}
where for $X \in \mathbb{R}^{p \times q}$, let $\sigma_i \equiv 0$ if $i \ge \min\{p, q\}.$
\end{theorem}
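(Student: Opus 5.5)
The plan is to reduce the statement to the case of deleting a single row or a single column, and then induct on $r$. Two facts drive the argument. First, $\sigma_i(A)=\sigma_i(A^{\top})$. Second, singular values admit the Courant--Fischer (min--max) characterization
\[
\sigma_i(A)=\max_{\substack{S\subseteq\mathbb{R}^n\\ \dim S=i}}\ \min_{\substack{x\in S\\ \|x\|=1}}\|Ax\|_2 .
\]
This follows from applying the eigenvalue min--max theorem to $A^{\top}A$, whose eigenvalues are $\sigma_i(A)^2$. Because of the first fact, deleting a row of $A$ is the same as deleting a column of $A^{\top}$. So it suffices to treat column deletion.

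\textbf{One-column step.} Let $A_1=AP$, where $P\in\mathbb{R}^{n\times(n-1)}$ is the column-selection matrix. Its columns are orthonormal, so $P$ is an isometry onto a hyperplane $W=\operatorname{range}P$.

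For the upper bound $\sigma_i(A_1)\le\sigma_i(A)$: take an optimal $i$-dimensional subspace $S\subseteq\mathbb{R}^{n-1}$ for $A_1$. Then $PS$ is an $i$-dimensional subspace of $\mathbb{R}^n$, and $\|A(Px)\|=\|A_1x\|$ with $\|Px\|=\|x\|$. Plugging $PS$ into the min--max characterization for $A$ gives the bound.

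For the lower bound $\sigma_i(A_1)\ge\sigma_{i+1}(A)$: take an optimal $(i+1)$-dimensional subspace $T$ for $A$. Since $W$ has codimension one, $\dim(T\cap W)\ge i$, so we can choose an $i$-dimensional $S'\subseteq T\cap W$. Then $P^{\top}S'$ is an $i$-dimensional subspace of $\mathbb{R}^{n-1}$, and every unit vector in it maps under $P$ to a unit vector in $T$. Hence
\[
\sigma_i(A_1)\ \ge\ \min_{\substack{x\in S'\\ \|x\|=1}}\|Ax\|\ \ge\ \min_{\substack{x\in T\\ \|x\|=1}}\|Ax\| \;=\; \sigma_{i+1}(A).
\]

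\textbf{Induction.} Suppose $A_r$ is obtained from $A$ by $r$ successive single deletions $A=A^{(0)},A^{(1)},\dots,A^{(r)}=A_r$. The one-step result gives $\sigma_i(A^{(j)})\ge\sigma_i(A^{(j+1)})\ge\sigma_{i+1}(A^{(j)})$ for each $j$. Chaining the left inequalities gives $\sigma_i(A)\ge\sigma_i(A_r)$. Chaining the right inequalities with shifted indices, $\sigma_i(A^{(r)})\ge\sigma_{i+1}(A^{(r-1)})\ge\cdots\ge\sigma_{i+r}(A^{(0)})$, gives the other side.

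\textbf{Expected obstacle.} The main obstacle is the index bookkeeping at the boundary. When $i$ or $i+r$ exceeds the smaller dimension of the relevant matrix, the stated convention $\sigma_i\equiv 0$ is needed. In those cases the lower bound is trivial, and the upper bound holds because singular values are nonnegative. The min--max formula must also be stated so that it returns $0$ when $i$ exceeds the rank, or is only invoked for $i\le\min\{m,n\}$. I would check these cases explicitly rather than rely on the generic argument. The dimension count $\dim(T\cap W)\ge i$ is the only nontrivial linear-algebra step, and it follows directly from $\dim T+\dim W-n\ge i$.
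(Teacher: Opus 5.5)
Your proposal is correct and matches the argument the paper relies on. The paper gives no proof of its own and defers to Horn and Johnson (1991, 3.1.3), where the result is obtained from the Courant--Fischer characterization exactly as you do: reduce to deleting a single column (rows via the transpose) and iterate. Your boundary bookkeeping uses the intended convention $\sigma_i\equiv 0$ for $i>\min\{p,q\}$; the paper's ``$i\ge\min\{p,q\}$'' is a typo. Under that convention the one-step inequalities hold for every $i\ge 1$, which is precisely what your shifted-index chaining requires.
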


The Courant-Fischer Theorem allows us to have a variational characterization of singular values based on the subspaces of matrices. We use this later in the paper to construct lower bounds that consider the subspace alignment between matrices $A$ and $B$ in matrix product $AB$. For now, we can consider it as an alternative way to obtain singular values given the norm of a matrix-vector product:

\begin{theorem}[Courant-Fischer]
\label{thm:courant-fischer}
Let $A \in \mathbb{R}^{m \times n}$ and $\sigma_1(A) \geq \sigma_2(A) \ge \ldots$ be the ordered singular values of $A$ for all $1 \le i \le \min\{m, n\}$. Then
\begin{align*}
\label{eq:courant-fischer}
\sigma_i(A) &= \ \ \  \underset{\dim S = i}{\max} \quad \underset{
\substack{x \in S \\ \|x\|_2 = 1}
}{\min}  \|Ax\|_2 \\
&= \underset{\dim S = n - i + 1}{\min} \underset{
\substack{x \in S \\ \|x\|_2 = 1}
}{\max}  \|Ax\|_2.
\end{align*}
$\mathcal{S}$ are the subspaces of $\mathbb{R}^n$ with the indicated dimension $i$.
\end{theorem}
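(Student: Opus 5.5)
The plan is to reduce everything to the singular value decomposition $A = U\Sigma V^{\top}$ recalled above. The key facts are that $U$ is orthogonal, so $\|Ax\|_2 = \|\Sigma V^{\top}x\|_2$, and that $V$ is orthogonal, so the substitution $c = V^{\top}x$ maps unit vectors to unit vectors and $i$-dimensional subspaces to $i$-dimensional subspaces. Writing $x = \sum_{j=1}^{n} c_j v_j$ with $\sum_j c_j^2 = 1$, we get
\[
\|Ax\|_2^2 = \sum_{j=1}^{n} \sigma_j^2 c_j^2 ,
\]
using the convention $\sigma_j := 0$ for $j > \min\{m,n\}$. Both characterizations then become statements about this weighted sum of squares with nonincreasing weights $\sigma_1^2 \ge \sigma_2^2 \ge \cdots$.

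For the max-min formula I prove two inequalities.
\begin{itemize}
\item Achievability: choose $S = \mathrm{span}(v_1,\dots,v_i)$. Every unit $x \in S$ has $c_j = 0$ for $j > i$, so $\|Ax\|_2^2 \ge \sigma_i^2 \sum_{j\le i} c_j^2 = \sigma_i^2$. Hence the max-min is at least $\sigma_i$.
\item Optimality: take an arbitrary $S$ with $\dim S = i$, and let $W = \mathrm{span}(v_i,\dots,v_n)$, which has dimension $n-i+1$. Since $\dim S + \dim W = n+1 > n$, the intersection $S \cap W$ contains a unit vector $x$. For this $x$ we have $c_j = 0$ for $j < i$, so $\|Ax\|_2^2 \le \sigma_i^2$. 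Thus the inner minimum over every $S$ is at most $\sigma_i$.
\end{itemize}
Together these give equality, and the maximum is attained.

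The min-max formula is handled symmetrically.
\begin{itemize}
\item Achievability: choose $S = W = \mathrm{span}(v_i,\dots,v_n)$, which has dimension $n-i+1$. On unit vectors in $W$, $\|Ax\|_2 \le \sigma_i$, so the min-max is at most $\sigma_i$.
\item Optimality: any $S$ with $\dim S = n-i+1$ meets $\mathrm{span}(v_1,\dots,v_i)$ nontrivially, because the dimensions sum to $n+1$. A unit vector in that intersection has $\|Ax\|_2 \ge \sigma_i$, so the inner maximum over every such $S$ is at least $\sigma_i$.
\end{itemize}

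The only step needing care is the dimension-counting intersection argument, namely $\dim(S\cap W) \ge \dim S + \dim W - n \ge 1$. Alongside it, I must check the bookkeeping when $n > m$: the right singular vectors $v_j$ with $j > m$ lie in the kernel of $A$. The zero-padding convention for $\sigma_j$ absorbs these cases without changing either argument.
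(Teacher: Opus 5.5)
Your proof is correct and complete. The substitution $c=V^{\top}x$ with zero-padded $\sigma_j$, together with the intersection bound $\dim(S\cap W)\ge \dim S+\dim W-n\ge 1$ used once in each direction, is all that is needed. The only unstated point is that the inner $\min$ and $\max$ over the unit sphere of $S$ are attained, which follows from compactness.

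The paper gives no proof of its own and defers to Horn--Johnson, Theorem~3.1.2. There the singular-value version is deduced from the eigenvalue Courant--Fischer theorem for symmetric matrices, applied to $A^{\top}A$ via $\|Ax\|_2^2=x^{\top}A^{\top}Ax$ and $\lambda_i(A^{\top}A)=\sigma_i(A)^2$ for $i\le\min\{m,n\}$. Since $A^{\top}A=V\Sigma^{\top}\Sigma V^{\top}$, your argument is exactly the standard proof of that eigenvalue theorem written out in the basis $V$, so the two routes share the same mechanism. The citation is shorter if you take the symmetric case as known; yours is self-contained.

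Note also how the paper later uses this theorem: in the principal-angle bound it chooses $S=V_{B_k}$. That step needs only the inequality $\min_{x\in S,\|x\|_2=1}\|Ax\|_2\le\sigma_i(A)$ for every $i$-dimensional $S$. This is precisely your dimension-counting optimality step, not the achievability step.
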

The proof of this is stated in \citet[3.1.2]{Horn_Johnson_1991}.

We now state two-sided inequalities for products and sums of matrices that we need for decomposing Hessian block (i.e., $\frac{\partial^2 \mathcal{L}}{\partial \theta_i \partial \theta_k})$ formulas. 

\begin{theorem}[Additive Singular Value Bounds]
\label{thm:additive-singular-value-bounds}
Let $A, B \in \mathbb{R}^{m \times n}$ be given and let $q = \min\{m, n\}$. The following inequality holds for decreasingly ordered singular values of A, B, and A + B. 
\begin{equation*}
\sigma_i(A) - \sigma_1(B) \leq \sigma_i(A + B) \leq \sigma_i(A) + \sigma_1(B).
\end{equation*}
For $1 \leq i \leq q$.
\end{theorem}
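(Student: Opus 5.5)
The plan is to derive both inequalities from the variational characterization in \cref{thm:courant-fischer}, using only the triangle inequality and the fact that $\|Bx\|_2 \le \sigma_1(B)$ for every unit vector $x$. The upper bound is proved first and directly. The lower bound then follows from the upper bound by a change of variables.

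For the upper bound, fix $1 \le i \le q$ and use the min--max form
\[
\sigma_i(A+B) = \min_{\dim S = n-i+1} \ \max_{x \in S,\ \|x\|_2 = 1} \|(A+B)x\|_2 .
\]
For any unit vector $x$, the triangle inequality and the operator-norm characterization of $\sigma_1(B)$ give
\[
\|(A+B)x\|_2 \le \|Ax\|_2 + \|Bx\|_2 \le \|Ax\|_2 + \sigma_1(B).
\]
Fix a subspace $S$ of dimension $n-i+1$. Taking the maximum over unit $x \in S$ preserves this pointwise inequality, and the constant $\sigma_1(B)$ passes through the maximum. Taking the minimum over such $S$ is monotone and the constant passes through it as well. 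This gives
\[
\sigma_i(A+B) \le \min_{\dim S = n-i+1} \ \max_{x \in S,\ \|x\|_2=1} \|Ax\|_2 + \sigma_1(B) = \sigma_i(A) + \sigma_1(B),
\]
where the last equality is \cref{thm:courant-fischer} applied to $A$.

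For the lower bound, apply the upper bound just proved to the pair $(A+B,\,-B)$. Since $(A+B) + (-B) = A$, this yields
\[
\sigma_i(A) \le \sigma_i(A+B) + \sigma_1(-B).
\]
Because $-B = U(-\Sigma)V^\top = (-U)\Sigma V^\top$, the matrices $B$ and $-B$ have the same singular values, so $\sigma_1(-B) = \sigma_1(B)$. Rearranging gives $\sigma_i(A) - \sigma_1(B) \le \sigma_i(A+B)$.

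There is no real obstacle. The only points needing care are these:
\begin{itemize}
\item Both sides must be taken with the same dimension convention in \cref{thm:courant-fischer}, namely subspaces of $\mathbb{R}^n$ of dimension $n-i+1$. Restricting to $i \le q$ keeps every $\sigma_i$ a genuine singular value rather than the zero-padding convention.
\item Pointwise inequalities must be preserved under $\max$ and then $\min$. This holds because both operations are monotone and commute with adding a constant.
\end{itemize}
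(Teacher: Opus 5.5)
Your proof is correct. The min--max form of \cref{thm:courant-fischer} is applied to $A+B$ and to $A$, both in $\mathbb{R}^{m\times n}$, with the same $i$ and the same family of $(n-i+1)$-dimensional subspaces. Combined with the pointwise bound $\|(A+B)x\|_2\le\|Ax\|_2+\sigma_1(B)$ and monotonicity of $\max$ and $\min$, this gives the upper bound. Applying it to the pair $(A+B,-B)$ with $\sigma_1(-B)=\sigma_1(B)$ then gives the lower bound.

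The paper gives no argument of its own here; it simply defers to \citet[Th.~3.3.16]{Horn_Johnson_1991}. There, the bound appears next to the index-shifted Weyl inequality $\sigma_{i+j-1}(A+B)\le\sigma_i(A)+\sigma_j(B)$, and your upper bound is its case $j=1$. Your route buys self-containment: it needs only the Courant--Fischer statement already in the appendix and the triangle inequality. The citation buys brevity and the more general statement.

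Your method also extends directly to the general form if you want it. Take $S$ inside the orthogonal complement of the top $i-1$ right singular vectors of $A$ and the top $j-1$ right singular vectors of $B$. That complement has dimension at least $n-i-j+2$, and on it $\|Ax\|_2\le\sigma_i(A)$ and $\|Bx\|_2\le\sigma_j(B)$.
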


The proof of this and the Theorem below are presented in \citet[Th 3.3.16]{Horn_Johnson_1991}.

\begin{theorem}[Multiplicative Singular Value Bounds]
\label{thm:multiplicative-singular-value-bounds}
Let $A \in \mathbb{R}^{n \times m}$ and $B \in \mathbb{R}^{m \times p}$ and let $q = \min\{n, p\}$. The following inequality holds for decreasingly ordered singular values of A, B, and AB. 
\begin{equation*}
\sigma_i(A) \sigma_{q}(B) \leq \sigma_i(AB) \leq \sigma_i(A) \sigma_1(B).
\end{equation*}
For $1 \leq i \leq q$
\end{theorem}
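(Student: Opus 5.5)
We work with singular values of $(AB)^{\top}=B^{\top}A^{\top}$, so that the \emph{left} factor $A$ is the one whose index $i$ is preserved. Both inequalities will come from the Courant--Fischer characterization (\cref{thm:courant-fischer}) applied to this product, acting on vectors $y\in\mathbb{R}^{n}$. We use $\sigma_i(AB)=\sigma_i(B^{\top}A^{\top})$ and $\sigma_i(A)=\sigma_i(A^{\top})$.

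\textbf{Upper bound.} For any unit $y\in\mathbb{R}^n$,
\[
\|B^{\top}A^{\top}y\|_2\le \sigma_1(B^{\top})\,\|A^{\top}y\|_2=\sigma_1(B)\,\|A^{\top}y\|_2 .
\]
Take the max--min form of \cref{thm:courant-fischer} for $B^{\top}A^{\top}$ over subspaces $S\subseteq\mathbb{R}^n$ with $\dim S=i$. For every such $S$, the inner minimum of $\|B^{\top}A^{\top}y\|_2$ is at most $\sigma_1(B)$ times the inner minimum of $\|A^{\top}y\|_2$. Maximizing over $S$ then gives $\sigma_i(AB)\le\sigma_1(B)\,\sigma_i(A)$. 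This step is routine.

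\textbf{Lower bound.} Symmetrically, for every $y$,
\[
\|B^{\top}A^{\top}y\|_2\ \ge\ \min_{\|w\|=1,\ w\in\operatorname{range}(A^{\top})}\|B^{\top}w\|_2\;\|A^{\top}y\|_2 .
\]
Feeding this into the same max--min argument yields $\sigma_i(AB)\ge \sigma_i(A)\cdot\tau$, where $\tau$ is the smallest gain of $B^{\top}$ on $\operatorname{range}(A^{\top})\subseteq\mathbb{R}^m$. The remaining task is to lower-bound $\tau$ by $\sigma_q(B)$ with $q=\min\{n,p\}$. I would bound $\tau\ge\sigma_{\min}(B^{\top})$ and split into cases on $m$ versus $p$:
\begin{itemize}
\item If $m\le p$, then $B^{\top}$ is injective whenever $\sigma_m(B)>0$, so $\tau\ge\sigma_m(B)$. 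One then needs $\sigma_m(B)\ge\sigma_q(B)$, i.e.\ $q\ge m$, or else the convention $\sigma_q\equiv 0$ for indices past $\min\{m,p\}$ from \cref{thm:poincare-seperation} makes the inequality trivial.
\item If $m>p$, then $B^{\top}$ has a kernel, so $\tau$ can vanish. One must check that the convention again makes $\sigma_q(B)=0$ in this case.
\end{itemize}

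\textbf{Main obstacle.} This index bookkeeping is where I expect the argument to break, and I do not think the case split closes. When $n<m\le p$ we have $q=n<m$, so $\sigma_q(B)$ can genuinely exceed $\sigma_m(B)$. A concrete counterexample is $A=[0\;1]$ and $B=\mathrm{diag}(1,\tfrac12)$: here $\sigma_1(AB)=\tfrac12$ while $\sigma_1(A)\,\sigma_q(B)=1$.

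My plan is therefore as follows. I would prove the upper bound as stated. For the lower bound, I would prove it in the form $\sigma_i(AB)\ge\sigma_i(A)\,\sigma_{\min\{m,p\}}(B)$, which is the Horn--Johnson statement, using the conventions of \cref{thm:poincare-seperation}. I would then record that the stated index $q=\min\{n,p\}$ must be read as $\min\{m,p\}$ for the inequality to hold.
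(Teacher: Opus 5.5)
\paragraph{Assessment.} Your counterexample is right. For $A=[0\;1]$ and $B=\mathrm{diag}(1,\tfrac12)$ we have $n=1$ and $q=\min\{n,p\}=1$, and $\sigma_1(AB)=\tfrac12<1=\sigma_1(A)\,\sigma_q(B)$. So the lower bound as stated is false. The paper gives no argument of its own, only a pointer to Horn--Johnson, so nothing there can rescue the index $q=\min\{n,p\}$. Your upper-bound argument via Courant--Fischer on $B^\top A^\top$ is correct. So is your reduction $\sigma_i(AB)\ge\tau\,\sigma_i(A)$ with $\tau=\min\{\|B^\top w\|_2 : w\in\operatorname{range}(A^\top),\ \|w\|_2=1\}$.

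\paragraph{The gap is in your repair.} The inequality $\sigma_i(AB)\ge\sigma_i(A)\,\sigma_{\min\{m,p\}}(B)$ is also false as soon as $m>p$. Take $A=[0\;1]\in\mathbb{R}^{1\times 2}$ and $B=(1,0)^\top\in\mathbb{R}^{2\times 1}$. Then $AB=0$, while $\sigma_1(A)\,\sigma_{\min\{2,1\}}(B)=\sigma_1(A)\,\sigma_1(B)=1$.

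This is exactly the branch of your case split that you left at ``one must check''. For $m>p$ the kernel of $B^\top\in\mathbb{R}^{p\times m}$ is nontrivial and can meet $\operatorname{range}(A^\top)$, forcing $\tau=0$. Meanwhile $\sigma_{\min\{m,p\}}(B)=\sigma_p(B)$ is a genuine singular value of $B$, and no sensible zero-padding convention sets it to zero. Read literally, the convention in \cref{thm:poincare-seperation} ($\sigma_i\equiv 0$ for $i\ge\min$) would zero it. But it would equally zero $\sigma_m(B)$ when $m\le p$ and make every such lower bound vacuous, so the ``$\ge$'' there is a typo for ``$>$''.

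The correct repair indexes $B$ by the inner dimension $m$:
\[
\sigma_i(AB)\ge\sigma_i(A)\,\sigma_m(B), \qquad \text{with } \sigma_m(B):=0 \text{ when } m>p.
\]
A nontrivial lower bound therefore exists only when $m\le p$. In that case it is exactly what your first branch gives ($\tau\ge\sigma_m(B)$). Equivalently, $BB^\top\succeq\sigma_m(B)^2 I_m$ gives $ABB^\top A^\top\succeq\sigma_m(B)^2AA^\top$, and eigenvalue monotonicity finishes it. So record that $q=\min\{n,p\}$ must be replaced by $m$ (with the zero convention), not by $\min\{m,p\}$.
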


Note that the bound for \cref{thm:multiplicative-singular-value-bounds} is symmetric, that is \(
    \sigma_{q}(A) \sigma_i(B) \leq \sigma_i(AB) \leq \sigma_1(A) \sigma_i(B)
\) also holds.

Tighter upper bounds can be derived from majorization results such as those of Lidskii and Ky Fan, but the focus of our paper is largely on lower bounds. Additional characterization can be obtained probabilistically using random matrix theory, but this is outside the scope of this paper. Unfortunately, the lower bounds presented can be vacuous for rank-deficient matrices, which are common in deep learning, and are generally uninformative for small singular values. There are a number of ways to remedy this, including Gel'fand-Naimark's results in \citet[III.20]{bhatia2013matrix}, but these require strong assumptions (e.g., positive definiteness in the case of Gel'fand-Naimark). To address this, we develop a stronger result without additional assumptions in \cref{app:principal-angle-lower-bound}.

\section{Principal Angle Lower Bound}
\label{app:principal-angle-lower-bound}

Now that we have provided a review of singular value inequalities in \cref{app:singular-value-inequalities}, we develop a tighter lower bound that depends on the overlap between the image of $B$ and the span of $A$'s left singular vectors. We develop this result because, in our setting, the Hessian blocks often factor into components that the designer cannot control ($A$), namely the data distribution or architectural components, and those the designer can control ($B$), for example rotations of $\theta_i$ that change the image of $B$ and scaling that changes the singular values.

Before stating this result, let us review subspaces, principal angles, orthogonal projections, and images. The image, or range, of a linear transformation represented by a matrix $B \in \mathbb{R}^{m \times n}$ and denoted $\mathrm{Im}(B)$ is the span of its column vectors, that is $\mathrm{Im}(B) := \mathrm{span}(B_{:,1}, \ldots, B_{:,n})$. To make things easier for later definitions, recall that by the singular value decomposition any matrix $B$ can be written as $B = U_B\Sigma_B V_B^{\top}$ with orthogonal $U_B \in \mathbb{R}^{m \times m}$ and $V_B \in \mathbb{R}^{n \times n}$ singular vector matrices. The image of $B$ is spanned by the first $r$ columns of the left singular vectors $\mathrm{Im}(B) := \mathrm{span}(U_{B_{:,1}}, \ldots, U_{B_{:,r}})$ where $r = \mathrm{rank}(B)$, a fact we use later. A subspace $\mathcal{M} \subset \mathcal{V}$ is a subset that is closed under linear combination of its vectors. The dimension of this subspace is the number of elements in the basis of $\mathcal{M}$, i.e., the linearly independent vectors that span $\mathcal{M}$. To illustrate, a rank-$k$ subspace of $\mathrm{Im}(B)$ can be represented by the first $k$ left singular vectors of $B$ denoted $U_{B_k}$. An orthogonal projection $\Pi_{E}$ is a linear transformation that takes any vector in the ambient space onto a subspace spanned by $E$. If $U_E \in \mathbb{R}^{m \times r}$ is the left singular vector matrix corresponding to nonzero singular values of $E$, then $\Pi_{E} = U_E U_E^{\top}$ (since $U_E$ is orthogonal, the term $(U_E^{\top}U_E)^{-1}$ in a standard projection drops). We use $\Pi_{U_k}$, which is a projection onto the rank-$k$ subspace spanned by the columns of $U_k$.

Finally, we define a distance between two subspaces using their \textbf{principal angles} (also called \textbf{canonical angles}). We illustrate with subspaces arising from a matrix product. Let $A \in \mathbb{R}^{p \times n}$ and $B \in \mathbb{R}^{n \times q}$, and let $\mathcal{A} \subseteq \mathbb{R}^n$ and $\mathcal{B} \subseteq \mathbb{R}^n$ be the subspaces spanned by the rows of $A$ and the columns of $B$, respectively. Both subspaces live in the same ambient space $\mathbb{R}^n$, which is precisely the space where the product $AB$ ``contracts.''

Let $r_A = \dim \mathcal{A} \leq \min\{p, n\}$ and $r_B = \dim \mathcal{B} \leq \min\{n, q\}$. Let $V_A \in \mathbb{R}^{n \times r_A}$ and $U_B \in \mathbb{R}^{n \times r_B}$ be matrices whose columns form orthonormal bases for $\mathcal{A}$ and $\mathcal{B}$ (e.g., obtained from the SVD of $A^\top$ and $B$, respectively).

\begin{definition}{(Principal Angle)}
\label{def:principal_angle}
The \textbf{principal angles} $\theta_1, \ldots, \theta_r$ between $\mathcal{A}$ and $\mathcal{B}$, where $r = \min\{r_A, r_B\}$, are defined by
\[
\theta_i = \cos^{-1}(\sigma_i),
\]
where $\sigma_1 \geq \cdots \geq \sigma_r \geq 0$ are the singular values of $V_A^\top U_B \in \mathbb{R}^{r_A \times r_B}$. These angles quantify how the row space of $A$ and column space of $B$ align, which directly affects the singular values of the product $AB$.
\end{definition}

There are several ways to utilize principal angles. Since we are going to construct lower bounds, we are concerned with the worst-case alignment (largest angle) between the subspaces $\mathcal{A}$ and $\mathcal{B}$; this would be $\sigma_{min}(V_A^{\top}U_B)$. We can further use a variational characterization to analyze the angle in terms of a minimization of a projection (which we need for a proof below). In this work, we use the cosine of the largest principal angle denoted $\prinangle(\cdot, \cdot) \in \mathbb{R}_+$.
\begin{align*}
    \prinangle(A,B) &= \sigma_{\min}(V_A^{\top}U_B) \\
    &= \underset{\|c\| = 1}{\min}\|V_AV^{\top}_A U_{B}\,c\| \tag{Variational Definition} \\
    &= \underset{\|c\| = 1}{\min}\|\Pi_{V_A}U_{B}\,c\| \tag{Definition of Projection} \\
    &= \underset{y \in \mathrm{Im}(U_B), \|y\| = 1}{\min}\|\Pi_{V_A}\,y\|. \tag{Change of Variable} 
\end{align*} While this characterization is somewhat more complex, we present it here to clarify a step we make in the proof below. There is only one more slight change in our notation to fully define the largest principal subspace angle: we consider the angle between $V_{A_r}$ truncated to a rank-$r$ subspace of $A$ and $U_{B_k}$ truncated to a rank-$k$ subspace of $B$. We are now ready to state and prove our presented on the next page.  

\clearpage

\begin{theorem}[Principal angle-weighted singular value bound]
\label{thm:principal-angle-lower-bound}
Let $A \in \mathbb{R}^{m \times n}, B \in \mathbb{R}^{n \times p}$. Let $V_{A_r}$ be the top-$r$ right singular-vector subspace of $A$. Let $\Pi_{V_{A_r}}$ denote the orthogonal projection onto the top-$r$ right singular subspace $V_{A_r}$ of $A$. Define \[
\prinangle(A_r,B_k) := \min_{y \in \mathrm{Im}(U_{B_k}),\: \|y\|=1} \|\Pi_{V_{A_r}}y\| \Leftrightarrow \sigma_{\min}(V_{A_r}^{\top}U_{B_k})
\] Where $U_{B_k}$ is the top-$k$ left singular vectors of $B$. Then for every $k$ with $1 \leq k \leq \mathrm{rank}(B)$ and $r \geq k$ we have \[
    \sigma_k(AB)\geq \prinangle(A_r,B_k) \sigma_r(A)\sigma_k(B).
\]
\end{theorem}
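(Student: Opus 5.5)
The plan is to use the max--min form of \cref{thm:courant-fischer}, with a test subspace built from the top-$k$ right singular vectors of $B$. Write $B = U_B\Sigma_B V_B^{\top}$ and let $S := \mathrm{span}(V_{B_{:,1}},\ldots,V_{B_{:,k}})$, which has dimension $k$ because $k \le \mathrm{rank}(B)$. By \cref{thm:courant-fischer},
\[
\sigma_k(AB) \;\ge\; \min_{x\in S,\ \|x\|=1}\|ABx\|_2 .
\]
So it suffices to show that every unit $x\in S$ satisfies $\|ABx\| \ge \prinangle(A_r,B_k)\,\sigma_r(A)\,\sigma_k(B)$.

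\textbf{Step 1: the factor $B$.} Fix a unit $x = V_{B_k}c$ with $\|c\|=1$. Then $y := Bx = U_{B_k}\Sigma_{B_k}c$. This gives two facts.
\begin{itemize}
\item $y \in \mathrm{Im}(U_{B_k})$.
\item $\|y\| = \|\Sigma_{B_k}c\| \ge \sigma_k(B)$.
\end{itemize}

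\textbf{Step 2: the factor $A$.} Split $y = \Pi_{V_{A_r}}y + (I-\Pi_{V_{A_r}})y$. The first piece lies in the span of the top-$r$ right singular vectors of $A$. The second lies in the span of the remaining right singular vectors, together with $\ker A$. Since $A$ maps orthonormal right singular vectors to orthogonal vectors in the output, namely $\sigma_i$ times the corresponding left singular vectors, the images $A\Pi_{V_{A_r}}y$ and $A(I-\Pi_{V_{A_r}})y$ are orthogonal. Pythagoras then gives $\|Ay\| \ge \|A\Pi_{V_{A_r}}y\|$.

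On the subspace $V_{A_r}$, $A$ acts as $U_{A_r}\Sigma_{A_r}$, so $\|A\Pi_{V_{A_r}}y\| \ge \sigma_r(A)\,\|\Pi_{V_{A_r}}y\|$.

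\textbf{Step 3: the principal angle.} Because $y/\|y\|$ is a unit vector in $\mathrm{Im}(U_{B_k})$, the variational definition of $\prinangle(A_r,B_k)$ in the statement gives $\|\Pi_{V_{A_r}}y\| \ge \prinangle(A_r,B_k)\,\|y\|$. Chaining Steps 1--3 yields
\[
\|ABx\| \;\ge\; \sigma_r(A)\,\prinangle(A_r,B_k)\,\sigma_k(B),
\]
uniformly over unit $x\in S$, which proves the theorem.

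\textbf{Degenerate cases.} If $r > \mathrm{rank}(A)$, then $\sigma_r(A)=0$ and the bound is trivial. The hypothesis $r\ge k$ is exactly what allows $\sigma_{\min}(V_{A_r}^{\top}U_{B_k})$ to be nonzero, since an $r$-dimensional subspace can only fully capture a $k$-dimensional one when $r \ge k$. The argument itself does not otherwise need it.

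I expect the main obstacle to be the orthogonality claim in Step 2. It must be checked carefully using the full SVD of $A$, including the null-space directions, so that discarding the complementary component can only decrease the norm. A simple triangle-inequality estimate would not be enough here, because it could lose a factor.
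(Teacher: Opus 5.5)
Your proposal is correct and follows essentially the same route as the paper. Both arguments apply Courant--Fischer to the span of the top-$k$ right singular vectors of $B$, use $\|Bx\|\ge\sigma_k(B)$ with $Bx\in\mathrm{Im}(U_{B_k})$, bound $\|Ay\|\ge\sigma_r(A)\|\Pi_{V_{A_r}}y\|$, and finish with the variational definition of $\prinangle(A_r,B_k)$. Your Pythagoras argument on orthogonal images in Step 2 is exactly the paper's step of expanding $\|Ay\|^2=\sum_i\sigma_i(A)^2|v_i^\top y|^2$ and discarding the terms with $i>r$, so the ``main obstacle'' you anticipate is settled by that one-line identity.
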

\begin{proof}
Let $V_{B_k} \subset \mathbb{R}^p$ denote the span of the top-$k$ right singular vectors of $B$. By the Courant-Fischer (\cref{thm:courant-fischer}) variational characterization for the $k$-th singular value of $AB$, \[
\sigma_k(AB) = \underset{
\substack{S \subset \mathbb{R}^p \\ \dim S = k}
}{\max} \quad \underset{
\substack{x \in S \\ \|x\| = 1}
}{\min}  \|ABx\|.
\]
Since the maximum over all $k$-subspaces is \textit{at least} the value on any \textit{particular} $k$-subspace, we may choose $S = V_{B_k}$ to obtain 
\begin{equation*}
\label{eq:palb_cf_ineq}
\sigma_k(AB) \geq \underset{
\substack{x \in V_{B_k} \\ \|x\| = 1}
}{\min}  \|ABx\|.
\end{equation*}
Fix any unit $x \in V_{B_k}$. By properties of the SVD of B, \[
\|Bx\| \geq \sigma_k(B).
\]
Define the unit vector $y := \frac{Bx}{\|Bx\|} \in \mathrm{Im}(U_{B_k})$. Then  
\begin{equation}
\label{eq:palbp_ineq_1}
\|ABx\| = \|A(Bx)\| = \|Bx\| \cdot \|Ay\| \geq \sigma_k(B) \cdot \|Ay\|.
\end{equation}
Expand $Ay$ in the right-singular basis of A. Since the squared norm satisfies 
\begin{align*}
\|Ay\|^2 &= \|U\Sigma V^{\top} y\|^2 = \|\Sigma V^{\top} y\|^2 \tag{Unitary Invariance}\\
&= \sum_{i =1}^n \sigma_i(A)^2 |v_i^\top y|^2 \geq  \sum_{i =1}^r \sigma_i(A)^2 |v_i^\top y|^2 \tag{Rank reduction to $V_{A_r}$} \\ 
&\ge \sigma_r(A)^2\sum_{i =1}^r  |v_i^\top y|^2  = \sigma_r(A)^2 \|\Pi_{V_{A_r}} y\|^2,
\end{align*}
we obtain 
\begin{equation}
\label{eq:palbp_ineq_2}
\|Ay\| \geq \sigma_r(A)\|\Pi_{V_{A_r}} y\|.
\end{equation}
Combining inequalities \cref{eq:palbp_ineq_1} and \cref{eq:palbp_ineq_2} gives, for every $x \in V_{B_k}$,
\[
\|ABx\| \geq \sigma_k(B)\sigma_r(A) \|\Pi_{V_{A_r}} y\|.
\]
By definition of $\prinangle(A_r, B_k)$ we have $\|\Pi_{V_{A_r}} y\| \ge \prinangle(A_r, B_k)$ for all unit $y \in \mathrm{Im}(U_{B_k})$. Hence \[
    \|ABx\| \geq \sigma_k(B) \sigma_r(A) \prinangle(A_r, B_k).
\]
Taking the minimum over all unit $x \in V_{B_k}$ and using the earlier Courant-Fischer inequality \cref{eq:palb_cf_ineq} yields \[
    \sigma_k(AB) \geq \sigma_k(B) \sigma_r(A) \prinangle(A_r, B_k),
\]
which is the desired bound.
\end{proof}

\subsection{Bound Tightness and Characterization}
\label{app:bound-tightness}

\begin{remark}[Principal Alignment Intuition]
Our lower bound captures how the geometric alignment between subspaces of $A$ and $B$ impacts the singular values of the product $AB$.

This results in the following bound characteristics:
\begin{enumerate}
    \item When $\prinangle(A_r, B_k) = 1$: the subspaces have perfect alignment in at least one direction, resulting in very tight bounds.
    \item When $\prinangle(A_r, B_k) = 0$ there exists at least one unit vector in $\mathrm{Im}(U_{B_k})$ whose projection onto $V_A$ is zero i.e., at least one orthogonal direction such that the lower bound becomes trivial.
    \item When $0 < \prinangle(A_r, B_k) < 1$: we have partial alignment where meaningful lower bounds can be obtained.
\end{enumerate}
\cref{fig:svlb-inequality-illustration} below illustrates three examples and provides an intuitive characterization of our principal angle lower bound.
\end{remark}

\begin{figure}[h!]
    \centering
\includegraphics[width=1\linewidth]{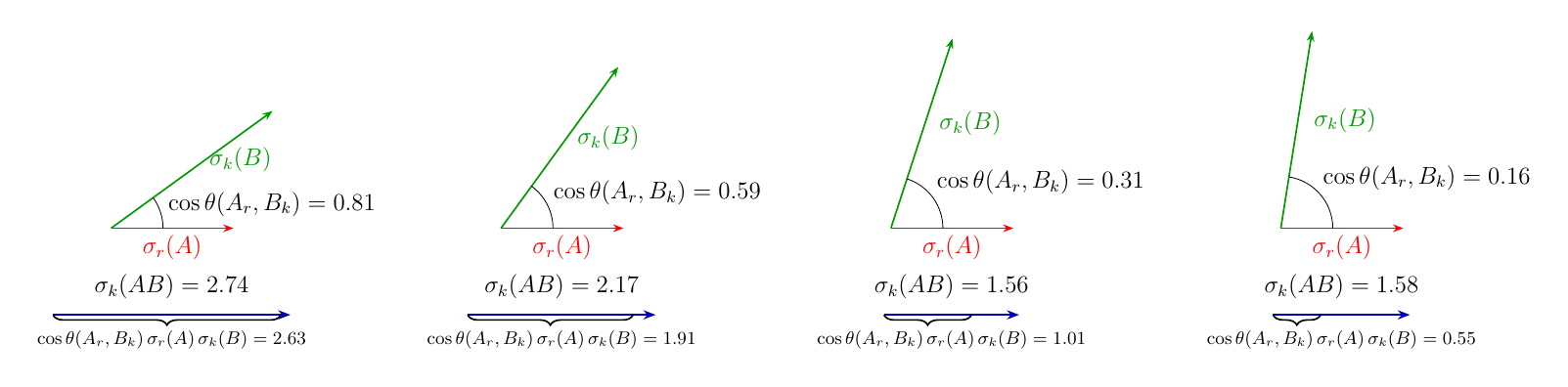}
    \caption{Illustration of the principal angle lower bound}
    \label{fig:svlb-inequality-illustration}
\end{figure}

We provide a numerical analysis validating the bounds in \cref{tab:bound-under-rotation} under 2D and 3D rotation and \cref{tab:bound-k-r-rand-def} illustrating varying $k$ and $r$ under rank deficiency. Note that while these bounds are similar, at least in spirit, to the results of Davis and Kahan on rotation of eigenvectors by a perturbation \citep{davis1970rotation} and general work on principal angles \citep{knyazev2012principal}, to the best of our ability we were not able to find this bound in the literature of matrix analysis and believe that it is novel. Some similar work was presented in \citet{ipsen1995angle,miao1992principal,kaur2023new}.

\begin{remark}{Bound Tightening via Supremum}
It follows from these analysis that the bound can be tightened by taking \[
  \sigma_k(AB)\geq \sup_{r \geq k} \prinangle(A_r,B_k) \sigma_r(A)\sigma_k(B).
\] for a specific $k$ or for the tightest relationship \[
  \sup_{r \geq k, k} \prinangle(A_r,B_k) \sigma_r(A)\sigma_k(B),
\] for a $\sigma_k(AB)$ determined by the supremum. If $k > 1$ then we can use the fact that $\sigma_1 \geq \sigma_2 \geq \ldots$ to bound all $\sigma_{k - i}$ where $i < k$. We will use this supremum tightening later in the paper.
\end{remark}

\begin{table}[h]
\centering
\sisetup{round-mode=places, round-precision=2, table-format=2.4}
\begin{tabular}{c S[table-format=1.4] S[table-format=1.4] S[table-format=1.4] S[table-format=1.4] | c S[table-format=1.4] S[table-format=1.4] S[table-format=1.4] S[table-format=1.4]}
\toprule
\multicolumn{5}{c|}{Example 2: 2D Matrices} & \multicolumn{5}{c}{Example 3: 3D Matrices} \\
\midrule
{$\theta$} & {$\prinangle(A_r, B_k)$} & {Bound} & {Actual} & {Ratio} &
{$\theta$} & {$\prinangle(A_r, B_k)$} & {Bound} & {Actual} & {Ratio} \\
\midrule
0  & 1.0000 & 2.7929 & 2.7929 & 1.0000 & 0  & 1.0000 & 7.0951 & 7.0951 & 1.0000 \\
18 & 0.9511 & 2.6562 & 2.7694 & 1.0426 & 18 & 0.9674 & 6.8636 & 6.9369 & 1.0107 \\
36 & 0.8090 & 2.2595 & 2.7066 & 1.1979 & 36 & 0.8727 & 6.1917 & 6.4996 & 1.0497 \\
54 & 0.5878 & 1.6416 & 2.6260 & 1.5997 & 54 & 0.7252 & 5.1453 & 5.8873 & 1.1442 \\
72 & 0.3090 & 0.8630 & 2.5579 & 2.9640 & 72 & 0.5393 & 3.8267 & 5.2667 & 1.3763 \\
90 & 0.0000 & 0.0000 & 2.5312 & 0.0000 & 90 & 0.3333 & 2.3650 & 4.8151 & 2.0359 \\
\bottomrule
\end{tabular}
\caption{Numerical analysis of \cref{thm:principal-angle-lower-bound} under 2D and 3D matrix rotation. Each matrix has Gaussian random entries.}
\label{tab:bound-under-rotation}
\end{table}

\begin{table}[h]
\centering
\sisetup{round-mode=places, round-precision=2, table-format=1.4}
\begin{tabular}{c c S[table-format=1.4] S[table-format=1.4] S[table-format=1.4] S[table-format=1.4]}
\toprule
$r$ & $k$ & {$\prinangle(A_r, B_k)$} & {Bound} & {Actual} & {Ratio} \\
\midrule
1 & 1 & 0.3694 & 4.9781 & 6.7136 & 1.3486 \\
2 & 1 & 0.4243 & 3.9340 & 6.7136 & 1.7066 \\
2 & 2 & 0.4243 & 2.1083 & 4.7753 & 2.2650 \\
3 & 1 & 0.8142 & 3.0090 & 6.7136 & 2.2312 \\
3 & 2 & 0.8072 & 1.5988 & 4.7753 & 2.9868 \\
3 & 3 & 0.1830 & 0.1968 & 0.3970 & 2.0173 \\
4 & 1 & 0.8144 & 0.0000 & 6.7136 & 0.0000 \\
4 & 2 & 0.8077 & 0.0000 & 4.7753 & 0.0000 \\
4 & 3 & 0.1922 & 0.0000 & 0.3970 & 0.0000 \\
4 & 4 & 0.1861 & 0.0000 & 0.0000 & 0.0000 \\
\bottomrule
\end{tabular}
\caption{Numerical analysis of \cref{thm:principal-angle-lower-bound}. Grid of $(r,k)$ values for $A \in \mathbb{R}^{4 \times 6}, B \in \mathbb{R}^{6 \times 4}$, both matrices are made to be Rank deficient (Rank 3). Entries are drawn Gaussian random.}
\label{tab:bound-k-r-rand-def}
\end{table}

\subsection{Products of 3 matrices}

We conclude this section by stating a lemma for application of the principal angle lower bound in the case of a product of 3 matrices, to clarify how to apply the bound for isolating matrices in a product. A more general statement could be obtained for $n$ matrices by the same type of induction, but is not necessary for this paper.

\begin{lemma}
\label{lemma:principal-angle-lower-bound-3-matrices}
$\sigma_k(ABC)$ can be lower bounded from below in the following ways:
\begin{enumerate}
    \item $\sigma_{r_1}(A)\sigma_{r_2}(B)\sigma_k(C) \prinangle(AB_{r_1},C_k)\prinangle(A_{r_2}, B_{r_1})$ where $r_2 \geq r_1 \ge k$.
    \item $\sigma_{r_1}(A)\sigma_{k}(B)\sigma_{r_2}(C) \prinangle(A_{r_1},BC_k)\prinangle(B_k, C_{r_2}) $ where $r_1 \geq k$ and $r_2 \ge k$.
    \item $\sigma_{k}(A)\sigma_{r_2}(B)\sigma_{r_1}(C) \prinangle(A_{k},BC_{r_1})\prinangle(B_{r_2}, C_{r_1})$ where $r_2 \geq r_1 \ge k$.
\end{enumerate}
\end{lemma}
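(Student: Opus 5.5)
The plan is to apply \cref{thm:principal-angle-lower-bound} twice, once for each way of splitting $ABC$ into a product of two factors. Each application contributes one singular-value factor and one principal-angle factor. In every case the left factor of the split supplies the ``$A$'' role of the theorem, through its right singular subspace $V_{\cdot_r}$. The right factor supplies the ``$B$'' role, through its left singular subspace $U_{\cdot_k}$. The three items differ only in which grouping we use and which matrix keeps the index $k$.

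For item 2, group $ABC = A(BC)$. \cref{thm:principal-angle-lower-bound} with index $k$ on $BC$ and $r_1 \ge k$ on $A$ gives
\[
\sigma_k(ABC) \ge \prinangle(A_{r_1}, BC_k)\,\sigma_{r_1}(A)\,\sigma_k(BC).
\]
We then bound $\sigma_k(BC)$ by applying the theorem again to $B\cdot C$. The ``left'' matrix is now $B$ and the ``right'' matrix is $C$, so the natural output is $\sigma_k(BC) \ge \prinangle(B_{r}, C_k)\sigma_r(B)\sigma_k(C)$. To obtain the stated form, in which $B$ carries index $k$ and $C$ carries $r_2 \ge k$, we apply the theorem in transposed form. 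Since singular values are invariant under transposition, $\sigma_k(BC) = \sigma_k(C^\top B^\top)$. In the transposed product, the right singular subspace of $C^\top$ is the left singular subspace of $C$, and the left singular subspace of $B^\top$ is the right singular subspace of $B$. This yields $\sigma_k(BC) \ge \prinangle(B_k, C_{r_2})\,\sigma_k(B)\,\sigma_{r_2}(C)$, where the angle is read with the roles of the subspaces swapped accordingly. All factors are nonnegative, so substituting this into the first display gives item 2.

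For item 1, group $ABC = (AB)C$. The theorem with index $k$ on $C$ and $r_1 \ge k$ on $AB$ gives $\sigma_k(ABC) \ge \prinangle(AB_{r_1}, C_k)\,\sigma_{r_1}(AB)\,\sigma_k(C)$. Applying the theorem again to $\sigma_{r_1}(AB)$, with index $r_1$ on $B$ and $r_2 \ge r_1$ on $A$, gives $\sigma_{r_1}(AB) \ge \prinangle(A_{r_2}, B_{r_1})\,\sigma_{r_2}(A)\,\sigma_{r_1}(B)$.

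Item 3 is the mirror image of item 1. We group as $A(BC)$ and put index $k$ on $A$ through the transposed version of the theorem. This gives $\sigma_k(A)\,\sigma_{r_1}(BC)\,\prinangle(A_k, BC_{r_1})$. We then expand $\sigma_{r_1}(BC)$ with $r_2 \ge r_1$, as in item 1.

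The main obstacle is bookkeeping rather than substance, and it has two parts. The first is checking that the index constraints of \cref{thm:principal-angle-lower-bound} ($1 \le k \le \mathrm{rank}$ of the right factor, $r \ge k$) hold at each step, or that the bound is trivially true (right side zero) when ranks drop. The second is that the subscripts in the printed formulas, such as $\sigma_{r_1}(A)\sigma_{r_2}(B)$ in item 1, do not literally match what the two-step derivation produces. I would state the derived index assignment explicitly, and note that it agrees with the printed one after relabeling. Where needed I would also use monotonicity, $\sigma_{r} \ge \sigma_{r'}$ for $r \le r'$, to pass between the two forms.
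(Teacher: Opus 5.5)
\textbf{Item 1 has a gap: the bound you derive cannot be converted into the printed one, and the printed item 1 is false.}

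Your treatment of items 2 and 3 is sound and follows the paper's route: associativity plus \cref{thm:principal-angle-lower-bound}, applied to the transposed product whenever the fixed index must sit on the left factor.

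For item 1, your two-step argument correctly gives $\sigma_{r_2}(A)\sigma_{r_1}(B)\sigma_k(C)\prinangle(AB_{r_1},C_k)\prinangle(A_{r_2},B_{r_1})$. Relabeling $r_1\leftrightarrow r_2$ does not turn this into the printed form. It reverses the constraint $r_2\ge r_1$ and also moves the angle subscripts. Monotonicity does not help either, because it pulls the two factors in opposite directions: $\sigma_{r_1}(A)\ge\sigma_{r_2}(A)$ while $\sigma_{r_2}(B)\le\sigma_{r_1}(B)$, so neither product dominates.

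Here is a counterexample to item 1 as printed. Let $e_1,e_2,e_3$ be the standard basis of $\mathbb{R}^3$, and take $A=\mathrm{diag}(100,1,0)$, $B=e_2e_1^\top+\tfrac{1}{2}e_3e_2^\top$, $C=e_1e_1^\top$, with $k=r_1=1$ and $r_2=2$.
\begin{itemize}
\item $AB=ABC=e_2e_1^\top$, so $\sigma_1(ABC)=1$.
\item Both angle factors equal $1$: $V_{(AB)_1}=U_{C_1}=\mathrm{span}(e_1)$, and $U_{B_1}=\mathrm{span}(e_2)\subset V_{A_2}=\mathrm{span}(e_1,e_2)$.
\item The printed right-hand side is therefore $\sigma_1(A)\sigma_2(B)\sigma_1(C)=50>1$.
\end{itemize}
Your derived bound evaluates to $\sigma_2(A)\sigma_1(B)\sigma_1(C)=1$ here, which is consistent.

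So you cannot prove item 1 as stated; you need to state and prove a corrected version. Your derived inequality is one valid correction. The other is what the paper's own proof derives. Its final line carries $\prinangle(A_{r_1},B_{r_2})$, not the printed $\prinangle(A_{r_2},B_{r_1})$. To get that version, expand $\sigma_{r_1}(AB)=\sigma_{r_1}(B^\top A^\top)$ using the transposed two-factor bound, with index $r_1$ on $A$ and $r_2\ge r_1$ on $B$. This gives
\[
\sigma_{r_1}(AB)\ge\sigma_{r_1}(A)\,\sigma_{r_2}(B)\,\sigma_{r_1}\bigl(V_{A_{r_1}}^\top U_{B_{r_2}}\bigr),
\]
where the last factor is the cosine of the largest principal angle in the sense of \cref{def:principal_angle}. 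In the example above it equals $0$. The printed item 1 takes its singular-value indices from one route and its angle from the other, and that combination does not hold.
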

\begin{proof}
Each case follows from the associativity of matrix multiplication and recursive application of \cref{thm:principal-angle-lower-bound}. We only show this explicitly for the first case.

First, apply \cref{thm:principal-angle-lower-bound} to the product of two matrices $(AB)$ and C.
\begin{align*}
\sigma_k((AB)(C)) &\geq \sigma_{r_1}(AB)\sigma_k(C) \prinangle(AB_{r_1},C_k).
\end{align*}
Now for $\sigma_{r_1}(AB)$ apply \cref{thm:principal-angle-lower-bound} again to get the result
\begin{align*}
\sigma_{r_1}(AB)\sigma_k(C) \prinangle(AB_{r_1},C_k) \geq \sigma_{r_1}(A)\sigma_{r_2}(B)\sigma_k(C) \prinangle(AB_{r_1},C_k)\prinangle(A_{r_1}, B_{r_2}).
\end{align*}
The recursive application of the Theorem requires that $r_2 \geq r_1 \ge k$.

For the next two cases notice that we can apply the Theorem in different order due to associativity and because \cref{thm:principal-angle-lower-bound} is symmetric.
\begin{align*}
\sigma_k\left((A)(BC)\right) &\geq \sigma_{r_1}(A)\sigma_k(BC) \prinangle(A_{r_1},BC_k)  \\
&\geq \sigma_{r_1}(A)\sigma_{k}(B)\sigma_{r_2}(C) \prinangle(A_{r_1},BC_k)\prinangle(B_k, C_{r_2}), 
\end{align*} with $r_1 \geq k$ and $r_2 \ge k$.
and 
\begin{align*}
\sigma_k\left((A)(BC)\right) &\geq \sigma_{k}(A)\sigma_{r_1}(BC) \prinangle(A_k,BC_{r_1})  \\
&\geq \sigma_{k}(A)\sigma_{r_2}(B)\sigma_{r_1}(C) \prinangle(A_{k},BC_{r_1})\prinangle(B_{r_2}, C_{r_1}),
\end{align*} with $r_2 \geq r_1 \ge k$
which completes the proof. 
\end{proof}

\section{Hessian Lower Bound Details}
\label{app:hessian-lower-bound-details}

In this section we develop the analytical tools necessary for characterizing the Hessian in deep neural models in terms of their parameterization. We first present example Hessian derivations for MLPs, CNNs, and self-attention in \cref{app:example-hessian-derivations}. We then develop the tools that apply the results in \cref{app:singular-value-inequalities} and \cref{app:principal-angle-lower-bound} to the context of the Hessian of a deep neural network in \cref{app:hessian-lower-bound}, enabling us to prove the main result \cref{thm:hessian-singular-value-bound}. At a high level, we will rely on a \textbf{block sufficiency} argument that states that if a Hessian block (i.e., submatrix) exists then other arguments that follow based on the block apply to the full Hessian in some sense.

Second-order partial derivatives of neural networks contain many computational artifacts that can obscure presentation. For clarity of exposition in this paper, we consider example decompositions of the network function $f: x \in \mathbb{R}^d \to \hat{y} \in \mathbb{R}$ without regard to the loss function $\mathcal{L}: \hat{y}, y \in \mathbb{R} \ \mapsto \ell \in \mathbb{R}$. This also allows us to avoid needing to comment on specific loss functions (discussion on this in \cref{rem:network-to-loss-hessian}). We don't present analysis of multi-head attention, layer normalization, and residuals since the content of our self-attention example would not meaningfully change as these amount to either scaling in the activation magnitude (layer normalization, \citealp{xiong2020layer, xu2019understanding}) which is unchanged in our methods, or additive factors in the case of residuals which are often dropped.

\subsection{Example Derivations}
\label{app:example-hessian-derivations}

We now consider example Hessian blocks for feed-forward, self-attention, and convolution architectures to establish that for common machine learning architectures, there exist Hessian blocks that contain, as part of their formula, matrix multiplication factors with at least one weight matrix factor. This is both the necessary and sufficient condition for our singular value inequalities we develop later to hold.

\paragraph{Multi-layer Perceptron (MLP)}
Consider the following three-layer multilayer perceptron, \( f: x \mapsto \theta_3^{\top} \phi_2 (\theta_2\,  \phi_1(\theta_1 x))\)), where weight matrices are given by $\theta_1 \in \mathbb{R}^{m \times d}$ and $\theta_2 \in \mathbb{R}^{p \times m}$ with linear layer $\theta_3 \in \mathbb{R}^{p}$. $\phi$ is the element-wise activation function $\phi_i: \mathbb{R}^d \to \mathbb{R}^d$. In this paper, we assume $\phi_i$ is approximately $1$-Lipschitz which captures almost all activation functions used in practice.

Given this architecture we have the following intermediate variables defined:
\begin{equation}
\label{eq:mlp-annotated}
f(x) = \theta_3^{\top} \overbrace{
\phi_2 (
\underbrace{\theta_2\,  \overbrace{\phi_1(\underbrace{\theta_1 x}_{h_1}}^{z_1}))}_{h_2}
}^\text{$z_2$}.
\end{equation}

We define $\phi'(z) = D_z := \mathrm{Diag}(\phi'(z_i))$ as well as $D_z^{\prime\prime} := \mathrm{Diag}(\phi''(z_i))$. We can now specify $\nabla_{\theta_1} f$
\begin{align*}
\nabla_{\theta_1} f &= \frac{\partial  h_1}{\partial \,\mathrm{vec}(\theta_1)}^{\top} \frac{\partial  z_1}{\partial h_1}^{\top} \frac{\partial  h_2}{\partial z_1}^{\top}  \frac{\partial  z_2}{\partial h_2}^{\top}  \frac{\partial  f}{\partial z_2} \\
&=  (x^{\top} \otimes I_{m})^{\top} (D_{z_1} \theta_2^{\top}    D_{z_2} \theta_3) \tag{ Vectorized form} \\
&=  (D_{z_1} \theta_2^{\top}    D_{z_2} \theta_3) \: x^{\top}. \tag{Reshaped by $\text{vec}(uv^{\top})= v \otimes u$}
\end{align*}

For the Hessian we use vectorization.
\begin{align*}
\frac{\partial^2 f}{\partial \theta_3 \partial \,\mathrm{vec}(\theta_1)} &= \frac{\partial}{\partial \theta_3}  \frac{\partial f}{\partial \,\mathrm{vec}{(\theta_1)}}  \\ &= (x^{\top} \otimes I_{m})^{\top} (D_{z_1} \theta_2^{\top}    D_{z_2}). \tag{Numerator Layour}
\end{align*}

Since this Hessian block is clearly dependent on the weight matrix $\theta_2$, we have shown what we set out to show.

\paragraph{Convolutional Neural Network (CNN)} Let us consider a one-dimensional convolution setting. Consider a two-layer CNN with kernel matrices $K_1 \in \mathbb{R}^{m \times d}$ and $K_2 \in \mathbb{R}^{p \times m}$. We assume these kernel matrices have the Toeplitz structure induced by convolution with weight-sharing. This gives the following function \(f(x) = \theta^{\top} \phi_2\left(K_2 \phi_1 \left(  K_1x \right) \right) \) for a vectorized input $x \in \mathbb{R}^d$ where $\theta \in \mathbb{R}^p$ is a linear layer. It is easy to see that this results in the same Hessian matrix as above with $K_2$ taking the place of $\theta_2$. This is not surprising given that a CNN, in this low-dimensional case, is a special case of an MLP with sparse weight structure. Higher-dimensional kernels with additional channels require a specialized tensor treatment that does not change our conclusions.

\paragraph{Self-Attention Transformer} For this derivation we consider a single self-attention layer with a linear layer $\theta \in \mathbb{R}^{p}$. 
\begin{equation*}
\theta^{\top}VX\,\mathrm{Softmax}\underbrace{\left[(KX)^{\top}Qx_i
\right]}_{h_1}.
\end{equation*}
$K, Q, V \in \mathbb{R}^{n \times d}$ are the Key, Query, and Value weight matrix parameters respectively, whose dimensions are the number of inputs $n$ and dimensions $d$ for a set of input values $X \in \mathbb{R}^{d \times n}$. We consider only the attention values for a single query $x_i$ attending to keys/values from $X$. $\mathrm{Softmax}$ is an element-wise function $\mathrm{Softmax}: \mathbb{R}^n \to \mathbb{R}^n,\, x \mapsto [e^{x_i}/\sum_j^{|x|} e^{x_j}]_i^{|x|}$. We denote $s[k]$ as the activation corresponding to $i = k$, with $s$ being the vector of activations.

First we obtain the Jacobian of $s$ by examining the gradient of $s[k] = e^{x_k}/\sum_j^{|x|} e^{x_j}$. 
\begin{align*}
\frac{\partial s[k]}{\partial x_i} 
&= \frac{1}{\sum_{j=1}^{|x|} e^{x_j}} \, \frac{\partial}{\partial x} e^{x_k} 
   + e^{x_k} \, \frac{\partial}{\partial x} \left(\frac{1}{\sum_{j=1}^{|x|} e^{x_j}}\right) 
   && \text{(Product Rule)} \\
&= \frac{e^{x_k}}{\sum_{j=1}^{|x|} e^{x_j}} \,
   -  \,  \frac{ e^{x_k}  e^{x_i}}{\left( \sum_{j=1}^{|x|}  e^{x_j}\right)^2}
   \\
\frac{\partial s[k]}{\partial x_i} 
&= \begin{cases}
s[k]\bigl(1 - s[k]\bigr), & i = k, \\[6pt]
-\,s[k]\,s[i], & i \neq k.
\end{cases}
\end{align*}

By stacking this column-wise, we have the following Jacobian: \(J_{s} = \left(\mathrm{Diag}(s) - ss^\top \right).\)

Returning to self-attention we have  
\begin{align*}
\nabla_Q f &=  \frac{\partial h_1}{\partial Q}^{\top} \frac{\partial s}{\partial h_1} ^{\top} \frac{\partial f}{\partial s} \\
 &=   (x_i \otimes KX) 
\left(\mathrm{Diag}(s) - ss^\top \right)^\top
 (VX)^\top \theta \\ 
 &=    KX 
\left(\mathrm{Diag}(s) - ss^\top \right)^{\top} 
 (VX)^{\top} \theta x_i^{\top}.
\end{align*}

The Hessian block of $H_{\theta, Q}^{\mathcal{L}}$ follows simply from dropping $\theta$ in the vectorized form resulting in \[
(x_i \otimes I_k) \left( KX 
\left(\mathrm{Diag}(s) - ss^\top \right)^{\top}
 (VX)^\top \right),
\] where $I_k$ is the key matrix row dimension. Clearly this block depends on the matrix products of the weight matrices $K$ and $V$. We will explain later in \cref{example:self-attention} how this relates to structure needed for the Hessian lower bound.

\subsection{Hessian Lower Bound}
\label{app:hessian-lower-bound}

In this section we will establish the main theoretical result of the paper. 

To characterize the Hessian in deep neural models in terms of their parameterization, we develop two results: (1) Hessian spectral values can be bounded in terms of the second derivatives of individual blocks corresponding to individual weight matrices (\cref{corr:block-interlacing-for-hessian}); (2) supposing these blocks contain weight matrices as product factors, the Hessian can be bounded by singular values of these weight matrices (\cref{thm:hessian-singular-value-bound}).

\begin{corollary}
\label{corr:block-interlacing-for-hessian}
Let $\nabla^2_{\theta}\mathcal{L} \in \mathbb{R}^{n \times n}$ be the Hessian of $\mathcal{L}$ where $n = |\theta|$ is the number of parameters in that function. By definition, this Hessian admits the following block decomposition $\nabla^2_{B} \mathcal{L} := \nabla^2_{\mathrm{vec}({\theta_i}), \mathrm{vec}({\theta_j})} \mathcal{L} \in \mathbb{R}^{p \times q}$ where $p$ and $q$ are the number of parameters in the vectorized parameter matrices $p = |\theta_i|$ and $q = |\theta_j|$. Choose any block $B$ from the Hessian, The singular value $\sigma_k(\nabla^2_{\theta}\mathcal{L})$ for any $k$ has the following interlacing inequalities 
\begin{equation*}
\label{eq:block-interlacing}
\sigma_k(\nabla^2_{\theta}\mathcal{L}) \geq \sigma_k(\nabla^2_{B} \mathcal{L}) \geq \sigma_{k + r}(\nabla^2_{\theta} \mathcal{L}), \quad k = 1, \ldots, n
\end{equation*}
where $r := (n - p) + (n - q)$ and we set $\sigma_k(\cdot) \equiv 0$ if $k > \min\{p, q\}$
\end{corollary}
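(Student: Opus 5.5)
This is a direct specialization of the Poincaré separation theorem (\cref{thm:poincare-seperation}). The plan is to exhibit $\nabla^2_{B}\mathcal{L}$ as a submatrix of the full Hessian obtained by deleting exactly $r$ rows and columns, and then read off both inequalities.

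First, fix the ordering of the concatenated parameter vector $\theta = (\mathrm{vec}(\theta_1),\ldots)$. The Hessian $\nabla^2_{\theta}\mathcal{L}\in\mathbb{R}^{n\times n}$ then has entries $\partial^2\mathcal{L}/\partial\theta_a\partial\theta_b$ indexed by coordinate pairs.

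The block $\nabla^2_{\mathrm{vec}(\theta_i),\mathrm{vec}(\theta_j)}\mathcal{L}$ consists of exactly the entries whose row index lies in the $p$ coordinates of $\theta_i$ and whose column index lies in the $q$ coordinates of $\theta_j$. This holds by the definition of second partial derivatives, since the block is just those mixed partials collected together. So the block is obtained from the full Hessian by deleting the $n-p$ rows not belonging to $\theta_i$ and the $n-q$ columns not belonging to $\theta_j$. No permutation is needed, and even if one were, permutations are orthogonal and leave singular values unchanged.

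The total number of deleted rows and columns is therefore $r=(n-p)+(n-q)$. This is precisely the quantity appearing in \cref{thm:poincare-seperation}.

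Applying \cref{thm:poincare-seperation} with $A=\nabla^2_\theta\mathcal{L}$ and $A_r=\nabla^2_B\mathcal{L}$ gives $\sigma_k(\nabla^2_\theta\mathcal{L})\ge\sigma_k(\nabla^2_B\mathcal{L})\ge\sigma_{k+r}(\nabla^2_\theta\mathcal{L})$ for each $k$.

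The only point needing care is the index range. The theorem is stated for $k\le\min\{p,q\}$, while the corollary lets $k$ run to $n$. For $k>\min\{p,q\}$ we use the stated convention $\sigma_k(\nabla^2_B\mathcal{L})\equiv 0$. The left inequality then holds because singular values are nonnegative. For the right inequality, note that $k+r>\min\{p,q\}+(n-p)+(n-q)\ge n$, so by the same convention $\sigma_{k+r}(\nabla^2_\theta\mathcal{L})=0$, and the inequality holds trivially.

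I expect no real obstacle. The main thing to check is that the deletion-based submatrix notion in \cref{thm:poincare-seperation} covers deleting rows and columns separately (a rectangular submatrix), rather than only principal submatrices. It does: the Horn–Johnson statement counts total deleted rows and/or columns. As a consistency check, the left inequality can also be seen directly from the variational characterization in \cref{thm:courant-fischer}, by restricting test vectors to the $\theta_j$ coordinates and projecting onto the $\theta_i$ coordinates, which can only decrease norms.
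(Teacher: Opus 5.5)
Your proposal is correct and matches the paper's proof, which is simply a direct application of the Poincaré separation theorem (\cref{thm:poincare-seperation}) to the Hessian block viewed as a submatrix with $r=(n-p)+(n-q)$ deleted rows and columns. Your explicit treatment of the range $k>\min\{p,q\}$ (via $k+r>n$ and the zero convention) fills in a detail the paper leaves implicit.
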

\begin{proof}
This is a straightforward application of \cref{thm:poincare-seperation} to the block decomposition of the Hessian.
\end{proof}

This corollary implies that the largest singular value of the Hessian is lower bounded by the singular values of any of its block matrices. Therefore our analysis can proceed from choosing an arbitrary block which is the partial second derivatives of one or more weight matrices. This result already has significant computational implications since we can now find lower bounds of the Hessian by computing only subsets of the Hessian. We can now proceed to the main result of the paper which is that the Hessian of deep networks can be characterized by the singular values of its parameters.

\begin{theorem}[Hessian Singular Value Lower Bound]
\label{thm:hessian-singular-value-bound-app}
Let $\nabla^2_{\theta}\mathcal{L} \in \mathbb{R}^{n \times n}$ be the Hessian of $\mathcal{L}$ with network function $f_\theta(x): x \mapsto  \theta_{n+1} \circ \phi_{n} \circ \theta_{n} \circ \cdots \circ \phi_1 \circ \theta_1x$. Assume there exists a submatrix of the Hessian          $\nabla^2_{\theta_i,\theta_j} \mathcal{L} :=  \left[\frac{\partial^2 \mathcal{L}}{\partial \theta_i \partial \theta_j}\right]_{i, j} \in \mathbb{R}^{p \times q}$ where $p = |\theta_i|, q = |\theta_j|$ that has the following matrix product structure $ABC$ with $B := \theta_k$ and arbitrary matrices $A,C$ with the index $k$ depending on which $i,j$ were chosen. Then $\sigma_1(\nabla^2_{\theta}\mathcal{L})$ is bounded below as \[
\sigma_1(\nabla^2_{\theta}\mathcal{L}) \geq \sup_{r_1,r_2 \geq 1} \sigma_{r_1}(A)\sigma_{1}(B)\sigma_{r_2}(C) \prinangle_1\prinangle_2,
\] where $\prinangle(\cdot, \cdot)$ is the cosine of the largest principal angle between the subspace as defined in \cref{def:principal_angle} with $\prinangle_1 := \prinangle(A_{r_1},BC_1)$ and $\prinangle_2 :=\prinangle(B_1, C_{r_2})$.
\end{theorem}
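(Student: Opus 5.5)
The plan is to chain two earlier results: the block interlacing statement \cref{corr:block-interlacing-for-hessian}, which reduces the problem from the full Hessian to one block, and the three-matrix principal angle bound \cref{lemma:principal-angle-lower-bound-3-matrices}, which controls that block in terms of $\sigma_1(B)$.

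\textbf{Step 1 (reduction to the block).} Fix the indices $i,j$ for which, by hypothesis, the block $\nabla^2_{\theta_i,\theta_j}\mathcal{L}$ factors as $ABC$ with $B=\theta_k$. Apply \cref{corr:block-interlacing-for-hessian} with $k=1$. Its left inequality, which is Poincar\'e separation (\cref{thm:poincare-seperation}) applied after deleting the rows outside $\theta_i$ and the columns outside $\theta_j$, gives
\[
\sigma_1(\nabla^2_{\theta}\mathcal{L}) \ge \sigma_1(\nabla^2_{\theta_i,\theta_j}\mathcal{L}) = \sigma_1(ABC).
\]
Only the upper half of the interlacing is needed here, so the offset $r$ plays no role.

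\textbf{Step 2 (bound on the product).} Apply case 2 of \cref{lemma:principal-angle-lower-bound-3-matrices} with $k=1$. It first treats $(A)(BC)$ via \cref{thm:principal-angle-lower-bound}, then splits $BC$ the same way. This gives, for all $r_1\ge 1$ and $r_2\ge 1$,
\[
\sigma_1(ABC)\ \ge\ \sigma_{r_1}(A)\,\sigma_1(B)\,\sigma_{r_2}(C)\,\prinangle(A_{r_1},BC_1)\,\prinangle(B_1,C_{r_2}).
\]
This is exactly the summand in the statement with $\prinangle_1$ and $\prinangle_2$ as defined there. The constraints $r_1\ge k$ and $r_2\ge k$ of case 2 reduce to $r_1,r_2\ge 1$ when $k=1$, matching the range of the supremum.

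\textbf{Step 3 (supremum).} The lower bound holds for every admissible pair $(r_1,r_2)$, so it also holds for their supremum. This is the tightening remark following \cref{thm:principal-angle-lower-bound}, and combining it with Step 1 yields the claim.

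\textbf{Main obstacle.} The one delicate point is checking the side conditions when \cref{thm:principal-angle-lower-bound} is used inside the lemma.
\begin{enumerate}
\item That theorem needs $1\le k\le \operatorname{rank}$ of the right factor, here $\operatorname{rank}(BC)\ge 1$ and $\operatorname{rank}(C)\ge 1$, and it needs the appropriate ordering of $r$ and $k$.
\item The lemma's two-step application produces the inner factor $\sigma_1(BC)\ge \sigma_1(B)\sigma_{r_2}(C)\prinangle(B_1,C_{r_2})$. This is the case where the roles of the indices are swapped, with the top index on the left factor $B$ and $r_2$ on the right factor $C$. It relies on the symmetric (transposed) version of \cref{thm:principal-angle-lower-bound}, obtained by applying the theorem to $(BC)^\top=C^\top B^\top$, under which singular values are unchanged and left and right subspaces are exchanged.
\item If the ranks are degenerate, or $r_1,r_2$ exceed the ranks, the right-hand side is simply $0$ by the convention $\sigma_i\equiv 0$, so the inequality holds trivially.
\end{enumerate}
The existence of $B=\theta_k$ inside an actual block is assumed in the statement, and the derivations in \cref{app:example-hessian-derivations} exhibit it for MLPs, so it does not need to be proved here.
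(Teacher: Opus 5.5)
Your proposal is correct and is essentially the paper's proof. Like the paper, you pass to the block via \cref{corr:block-interlacing-for-hessian}, bound $\sigma_1(ABC)$ with case 2 of \cref{lemma:principal-angle-lower-bound-3-matrices} at $k=1$, and take the supremum over $(r_1,r_2)$. The one difference is that the paper detours through the network Hessian $\nabla^2_\theta f$ and the heuristic \cref{rem:network-to-loss-hessian}, whereas you apply the hypothesis directly to the block of $\nabla^2_\theta\mathcal{L}$, which matches the statement as written. One minor bookkeeping point: in your transposed inner step the right factor is $B^\top$, so the condition needed is $\operatorname{rank}(B)\ge 1$ rather than $\operatorname{rank}(C)\ge 1$, though your degenerate-case remark already covers this.
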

\begin{proof}
Assuming we are sufficiently far aware from the task minima, we may use \cref{rem:network-to-loss-hessian} to assume the loss Hessian is controlled from below by the network Hessian. This assumption is weak given that we are in a fine-tuning regime that requires this to be the case.
From \cref{corr:block-interlacing-for-hessian}, we have that \[
\sigma_1(\nabla^2_{\theta}f) \ge \sigma_1(\nabla^2_{\theta_i, \theta_j} f).
\] By assumption (and demonstration in \cref{example:mlp}) we had $\nabla^2_{\theta_i, \theta_j}f := A\theta_kC$ which results in \[
 \sigma_1(\nabla^2_{\theta}f) \geq \sigma_1(A\theta_kC).
\] We now apply \cref{lemma:principal-angle-lower-bound-3-matrices} using that $\theta_k := B$ developed before attaining the result,
\begin{align*}
 \sigma_1(\nabla^2_{\theta}f) & \geq\sigma_1(A\theta_kC)  \\
 &\geq \sigma_{r_1}(A)\sigma_1(\theta_k) \sigma_{r_2}(C)\prinangle{(A_{r_1},(\theta_kC)_1)} \prinangle{(\theta_{k_1},C_{r_2})}. \tag{\cref{lemma:principal-angle-lower-bound-3-matrices}} \\
\end{align*}
Finally, in order to tighten the bound as much as possible a supremum is taken over $r_1$ and $r_2$.
\end{proof}

\begin{remark}[Bound Tightness and Singular Value Selection]
\label{remark:bound-tightness}
The supremum over $r_1, r_2 \geq 1$ in \cref{thm:hessian-singular-value-bound-app} allows for adaptive tightening when principal angle factors are deficient. Specifically, if $\prinangle(A_1, (BC)_1)$ is small due to near-orthogonality of the leading singular subspaces, selecting $r_1 > 1$ may yield a tighter bound provided
\[
\sigma_{r_1}(A)\prinangle(A_{r_1}, (BC)_1) > \sigma_1(A)\prinangle(A_1, (BC)_1).
\]
More generally, for any $k \leq \min(r_1, r_2)$, we have
\[
\sigma_1(\nabla^2_\theta \mathcal{L}) \geq \sup_{\substack{r_1, r_2 \geq k}} \sigma_{r_1}(A)\sigma_k(B)\sigma_{r_2}(C)\prinangle_1\prinangle_2,
\]
though our focus on $\sigma_1(B)$ reflects our concern with controlling the dominant convergence rate from below rather than providing a complete spectral characterization. Upper bounds and analysis of $\sigma_j(\nabla^2_\theta \mathcal{L})$ for $j > 1$ follow from analogous arguments but lie outside our present scope.
\end{remark}

\begin{remark}[Additive Factors and Non-Commutative Ring Structure]
\label{remark:additive-ring-structure}
While our Theorem is valid precisely for the feedforward structure we assumed, for other architectures and other Hessian blocks the product structure $ABC$ in \cref{thm:hessian-singular-value-bound-app} does not preclude the presence of additive terms in the Hessian submatrix. When $\nabla^2_{\theta_i, \theta_j} f = ABC + D$, the singular value bounds can be extended via \cref{thm:additive-singular-value-bounds} using Weyl-type inequalities. We omit this for clarity, as the product structure is enough to illustrate scaling behaviour under spectral reparameterization. We acknowledge that these may be subtractive in which case we refer readers to \cref{rem:L2-control-conditions} for a discussion on conditions where the bounds for both $L_1$ and $L_2$ are tight and where we are still able to guarantee permissive bounds.

More fundamentally, the algebra of matrices over $\mathbb{R}$ forms a non-commutative ring, which constrains the compositional structures that can arise in Hessian submatrices to sums and products (though they may be represented through more exotic structures like Kronecker and Hadamard products). Since neural network computations compose linearly and through element-wise non-linearities, the mixed partial derivatives $\frac{\partial^2 f}{\partial \theta_i \partial \theta_j}$ inherit this ring structure---no additional operations beyond addition and (non-commutative) multiplication are available. This observation justifies our focus on product and sum decompositions as the exhaustive set of tractable structural forms.
\end{remark}

\begin{remark}[From network Hessian to loss Hessian]\label{rem:network-to-loss-hessian}
By the multivariate chain rule, the loss Hessian decomposes as
\[
\nabla^2_\theta \mathcal{L}
= H_{GN} + H'_f,
\qquad
H_{GN} := J^\top \nabla^2_{\hat{y}} \mathcal{L}\, J,
\quad
H'_f := (\nabla_{\hat{y}} \mathcal{L}) \, \nabla^2_\theta f,
\]
where $J = \nabla_\theta f$ and $H_{GN}$ is the positive semi-definite Gauss--Newton term (see discussion in \citet{schraudolph2002}).

Our spectral reparameterization explicitly controls $\sigma_1(\nabla^2_\theta f)$, which enters $H'_f$ scaled by the loss gradient $\nabla_{\hat{y}} \mathcal{L}$. When the Gauss--Newton term is small relative to $H'_f$—as is typical during early or mid training on high-loss examples, where gradients are large and the model is far from a well-fit solution—the operator norm of $H'_f$ dominates the spectrum of $\nabla^2_\theta \mathcal{L}$. In this regime, inflating $\sigma_1(\nabla^2_\theta f)$ yields a corresponding lower bound on $\sigma_1(\nabla^2_\theta \mathcal{L})$.

Conversely, near convergence on well-fit examples, the Gauss--Newton term can dominate and partially offset the contribution of $H'_f$.
\end{remark}

\subsubsection{Specific Architectural Examples}
\label{app:specific-architectural-examples}

We now proceed analytically by showing that, under certain conditions, MLP, CNNs, and Self-Attention networks satisfy the assumption that there exists a second partial derivative block with the structure $A\theta_iB$. First, for illustration, we show that linear regression with a weight coefficient vector does not satisfy this assumption.

\begin{example}[Typical Linear Regression is a Non-example]
\label{example:linear-regression}
Estimation of a typical linear regression model parameterized by $\theta \in \mathbb{R}^d$ is formulated in the following way \[
\theta^* = \underset{\theta \in \mathbb{R}^d}{\arg\min}  \frac{1}{2}\|X\theta - y\|_2^2,
\] where $X \in \mathbb{R}^{n \times d}$ and $y \in \mathbb{R}^{n}$ are the input feature matrix and associated prediction targets. Now consider the Hessian w.r.t.\ the loss $\mathcal{L} = \frac{1}{2}\|X\theta - y\|_2^2$ and parameters $\theta$ \begin{align*}
\nabla^2_{\theta} \mathcal{L} &= \nabla_\theta \left[ \nabla_\theta \frac{1}{2}\|X\theta - y\|_2^2 \right]  \\
&= \nabla_\theta \: X^{\top}X\theta - X^{\top}y \\
&=  X^{\top}X.
\end{align*}

The Hessian above doesn't match the structure we assumed for \cref{thm:hessian-singular-value-bound} so typical linear regression is not an example.
\end{example}

We now show two examples where \cref{thm:hessian-singular-value-bound} applies. Generally, for deep neural networks with a certain number of layers, the algebraic structure of our assumption will hold (see the remark above on Matrix rings).


\begin{example}[MLP]
\label{example:mlp}
For the three-layer MLP in \cref{app:example-hessian-derivations}, we showed that Hessian admits the following block (vectorization notation is omitted for clarity):
\[
\frac{\partial^2 f}{\partial \theta_3 \partial \theta_1} =
(x^{\top} \otimes I_{m})^{\top} (D_{z_1} \theta_2^{\top}    D_{z_2}).
\]
Since a Kronecker product produces a matrix, and matrix multiplication is associative we assign $A: = (x^{\top} \otimes I_{m})^{\top}D_{z_1}$. Now we simply apply \cref{thm:hessian-singular-value-bound} resulting in \[
\sigma_1(\nabla^2_{\theta}\mathcal{L} ) \geq
\sigma_{r_1}(A)\sigma_1(\theta_2^{\top}) \sigma_{r_2}(D_{z_2})\prinangle(A_{r_1}, (\theta_2^{\top} D_{z_2})_1) \prinangle((\theta_2^{\top})_1, (D_{z_2})_{r_2}).
\]
If we increase the number of layers, we can always find a structure like this.
\end{example}

Some CNNs may be considered a special case of MLPs as we showed in \S~\ref{app:example-hessian-derivations}.

\begin{example}[Self-attention]
\label{example:self-attention}
We can proceed using the method of analysis as the MLP. First, we have from \S~\ref{app:example-hessian-derivations} that for a single self-attention layer with a linear layer $\theta \in \mathbb{R}^n$ we have (vectorization notation is omitted for clarity)  \[
\frac{\partial^2 f}{\partial \theta \partial Q} =  (x_i \otimes I_k) KX
\left(\mathrm{Diag}(s) - ss^\top \right)
 X^\top V^{\top}.
\] The analysis is simplified by setting $A := (x_i \otimes I_k) KX
\left(\mathrm{Diag}(s) - ss^\top \right)X^{\top}$ and then directly applying \cref{thm:hessian-singular-value-bound} resulting in \[
\sigma_1(\nabla^2_{\theta}\mathcal{L}) \geq
\sigma_{r_1}(A)\sigma_1(V) \prinangle(A_{r_1}, V_1),
\] which shows that a self-attention layer with a linear layer is admitted by \cref{thm:hessian-singular-value-bound}.
\end{example}

A numerical analysis of this bound is provided in \cref{tab:spectral_alignment_main}. Note that this table reports averages for each quantity to illustrate the mean relationships. For more complicated and realistic architectures for modern deep networks we would be unable to provide analytical closed-form Hessian decomposition or exact numerical analysis. We refer the reader to the empirical parts of the paper to understand whether the bounds convincingly control $\sigma_1(H)$.

\begin{table}
\caption{Numerical analysis of our bound broken into various quantities. LB indicates  $\sigma_{r_1}(A)\sigma_{1}(B)\sigma_{r_2}(C)\alpha_A\alpha_B$ with principal subspace angles $\alpha_A = \prinangle(A_{r_1},BC_1)$ and $\alpha_B = \prinangle(B_1, C_{r_2})$. Mean across 30 seeds is reported for each entry, std ranges omitted for clarity. Values of $A$, $B$, and $C$ are found in \cref{app:experimental-details}. The bound always holds.}
\label{tab:spectral_alignment_main}
\resizebox{\textwidth}{!}{%
\begin{tabular}{llccccccc}
\toprule
Network & $\sigma_1(B)$ & $\sigma_1(A)$ & $\sigma_1(C)$ & $\alpha_A$ & $\alpha_B$ & $\sigma_1(A)\sigma_1(B)\sigma_1(C)\alpha_A\alpha_B$ & $\sigma_1(H_{\text{block}})$ & $\sigma_1(H)$ \\
\midrule
MLP & 1 & 1.75{\tiny ±0.71} & 0.93{\tiny ±0.25} & 0.43{\tiny ±0.27} & 0.46{\tiny ±0.25} & 0.32{\tiny ±0.34} & 0.94{\tiny ±0.60} & 1.47{\tiny ±0.71} \\
MLP & 10 & 1.59{\tiny ±0.83} & 0.83{\tiny ±0.38} & 0.52{\tiny ±0.31} & 0.45{\tiny ±0.23} & 3.36{\tiny ±3.33} & 7.75{\tiny ±6.23} & 7.82{\tiny ±6.23} \\
MLP & 100 & 1.72{\tiny ±0.80} & 0.87{\tiny ±0.35} & 0.55{\tiny ±0.26} & 0.44{\tiny ±0.22} & 37.07{\tiny ±42.33} & 74.81{\tiny ±66.88} & 74.83{\tiny ±66.88} \\
MLP & 1000 & 1.56{\tiny ±0.79} & 0.80{\tiny ±0.41} & 0.51{\tiny ±0.30} & 0.46{\tiny ±0.26} & 309.38{\tiny ±358.77} & 580.02{\tiny ±581.07} & 580.02{\tiny ±581.07} \\
MLP & 5000 & 1.74{\tiny ±0.94} & 0.80{\tiny ±0.41} & 0.48{\tiny ±0.31} & 0.46{\tiny ±0.20} & 1469.93{\tiny ±1472.48} & 3518.09{\tiny ±3196.12} & 3518.09{\tiny ±3196.12} \\
MLP w/ GELU & 1 & 1.73{\tiny ±0.84} & 0.60{\tiny ±0.11} & 0.45{\tiny ±0.20} & 0.47{\tiny ±0.23} & 0.24{\tiny ±0.25} & 0.60{\tiny ±0.35} & 1.06{\tiny ±0.73} \\
MLP w/ GELU & 10 & 1.57{\tiny ±0.79} & 0.92{\tiny ±0.21} & 0.44{\tiny ±0.25} & 0.49{\tiny ±0.22} & 3.39{\tiny ±3.42} & 6.02{\tiny ±4.03} & 9.76{\tiny ±8.62} \\
MLP w/ GELU & 100 & 1.65{\tiny ±0.83} & 1.01{\tiny ±0.05} & 0.42{\tiny ±0.21} & 0.41{\tiny ±0.21} & 29.62{\tiny ±29.67} & 71.16{\tiny ±41.67} & 213.49{\tiny ±695.71} \\
MLP w/ GELU & 1000 & 1.36{\tiny ±0.60} & 0.94{\tiny ±0.26} & 0.51{\tiny ±0.26} & 0.39{\tiny ±0.22} & 290.68{\tiny ±271.62} & 610.09{\tiny ±390.96} & 776.61{\tiny ±477.53} \\
MLP w/ GELU & 5000 & 1.57{\tiny ±0.89} & 0.97{\tiny ±0.18} & 0.45{\tiny ±0.25} & 0.45{\tiny ±0.27} & 1631.60{\tiny ±1930.84} & 3463.65{\tiny ±3198.56} & 4016.44{\tiny ±3521.10} \\
Self-attention & 1 & 0.73{\tiny ±0.41} & 3.05{\tiny ±0.62} & 0.46{\tiny ±0.24} & 0.34{\tiny ±0.28} & 0.41{\tiny ±0.62} & 0.91{\tiny ±0.71} & 2.49{\tiny ±1.43} \\
Self-attention & 10 & 0.68{\tiny ±0.30} & 3.01{\tiny ±0.51} & 0.42{\tiny ±0.25} & 0.40{\tiny ±0.23} & 3.27{\tiny ±3.33} & 5.12{\tiny ±3.64} & 11.51{\tiny ±9.63} \\
Self-attention & 100 & 0.83{\tiny ±0.72} & 3.09{\tiny ±0.69} & 0.44{\tiny ±0.31} & 0.41{\tiny ±0.27} & 30.41{\tiny ±30.70} & 70.52{\tiny ±81.60} & 137.56{\tiny ±137.50} \\
Self-attention & 1000 & 0.72{\tiny ±0.40} & 3.11{\tiny ±0.59} & 0.37{\tiny ±0.25} & 0.42{\tiny ±0.28} & 260.64{\tiny ±230.62} & 550.32{\tiny ±407.37} & 1400.47{\tiny ±1447.52} \\
Self-attention & 5000 & 0.69{\tiny ±0.29} & 2.96{\tiny ±0.48} & 0.41{\tiny ±0.25} & 0.45{\tiny ±0.33} & 1616.20{\tiny ±1531.16} & 3366.71{\tiny ±2348.89} & 7624.24{\tiny ±10392.40} \\
\bottomrule
\end{tabular}
}
\end{table}
\clearpage
\section{Algorithm Details}
\label{app:algorithm-details}
In this section, we establish the correctness of \cref{alg:spectral-deformation} by showing that it is a form of spectral reparameterization which proves it is able to control convergence rate (\cref{thm:spectral-deformation-is-spec-reparam-app}).

\begin{theorem}[Spectral Deformation with Compensation is a (Lower-Max) Spectral Reparameterization.]
\label{thm:spectral-deformation-is-spec-reparam-app}
The following spectral deformation is a (lower-max) spectral reparameterization:
For $\theta_i$, set $\widetilde{\Sigma}_{\theta_i} \leftarrow T\Sigma_{\theta_i}$ such that $\sigma_1(U\widetilde{\Sigma}_{\theta_i}V^{\top}) = \alpha$. Where $\alpha$ is set so that $\sigma_1(H_{\theta}^{\mathcal{L}}) \geq c$ (Spectral Control) where $c > 0$ is given. Next, compensate either with $\theta_{j>i} \leftarrow \theta_j U_{\theta_i} \Sigma_{\theta_i} \widetilde{\Sigma}_{\theta_i}^{-1} U^{\top}_{\theta_i}$ or $\theta_{j<i} \leftarrow V_{\theta_i}\widetilde{\Sigma}^{-1}_{\theta_i} \Sigma_{\theta_i} V^{\top}_{\theta_i} \theta_j$ (Functional Invariance). Where $\phi_{i:j}$ and $\phi_{j:i}$ must be degree-$1$ homogeneous and $\widetilde{\Sigma}^{-1}$ is the pseudoinverse in the case of rectangular matrices.
\end{theorem}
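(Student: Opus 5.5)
We need to check the two conditions of \cref{def:upper-max-spec-reparam} separately: functional invariance and spectral control.

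\textbf{Functional invariance.} We handle the two compensation cases by direct algebra, taking $\epsilon = 0$.

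In the case $j>i$, suppose first that $j=i+1$ with no intervening activation, which is exactly the identity-layer injection of \cref{alg:spectral-deformation}. The deformed product is
\[
\theta_j U\Sigma\widetilde{\Sigma}^{-1}U^{\top}\,U\widetilde{\Sigma}V^{\top} = \theta_j U\Sigma\widetilde{\Sigma}^{-1}\widetilde{\Sigma}V^{\top}.
\]
Since $T$ is diagonal and invertible with positive entries, $\widetilde{\Sigma}^{-1}\widetilde{\Sigma}$ acts as the identity on the support of $\Sigma$. In the rectangular or rank-deficient case, we use the pseudoinverse and note that zero singular values are mapped to zero on both sides. This gives $\Sigma\widetilde{\Sigma}^{+}\widetilde{\Sigma}=\Sigma$, so the product equals $\theta_j\theta_i$.

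When activations $\phi_{i:j}$ sit between the layers, we need the compensation to pass through them. Degree-$1$ homogeneity gives $\phi(cx)=c\phi(x)$ for scalars $c$, and for element-wise positively homogeneous maps this extends to positive diagonal scalings. The main obstacle is that the compensation is $U(\cdot)U^{\top}$, which is not diagonal in the coordinate basis. So I would state the argument for the case where the intermediate map commutes with the compensation, namely either the identity activation (layer injection) or a compensation matrix that is a positive scalar multiple of the identity on the relevant subspace. I would then remark that the general theorem hypothesis ``$\phi_{i:j}$ degree-$1$ homogeneous'' should be read in this sense, with layer injection as the canonical instance.

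The case $j<i$ is symmetric. We have
\[
U\widetilde{\Sigma}V^{\top}\,V\widetilde{\Sigma}^{-1}\Sigma V^{\top}\theta_j = U\Sigma V^{\top}\theta_j = \theta_i\theta_j,
\]
so $f$ is unchanged pointwise, giving $d(\mathcal{T}_c[f],f)=0$.

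\textbf{Spectral control.} The deformed $\theta_i'$ has $\sigma_1(\theta_i')=\alpha$ by construction, since $T$ multiplies the top-$k$ values by $\alpha$; we choose $T$ so that the top value equals $\alpha$. Next, select a Hessian block $\nabla^2_{\theta_a,\theta_b}\mathcal{L}$ whose factorization contains $\theta_i'$ as the middle factor $B$, as in \cref{example:mlp}, where the block for the outer layers contains $\theta_2^{\top}$. Applying \cref{thm:hessian-singular-value-bound} with $r_1,r_2$ fixed gives
\[
\sigma_1(H^{\mathcal{L}}_{\theta'}) \ge \alpha\,\sigma_{r_1}(A)\sigma_{r_2}(C)\prinangle_1\prinangle_2 =: \alpha\,\kappa.
\]
Provided $\kappa>0$, setting $\alpha \ge c/\kappa$ yields spectral control.

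The subtlety is that $A$, $C$ and the principal angles may themselves depend on the deformation, since activations $z$ are unchanged by invariance but the compensation layer may appear in $A$ or $C$. I would handle this by noting two facts. First, functional invariance keeps all activations, and hence the diagonal Jacobian factors $D_z$, fixed. Second, the compensation affects only singular values on the directions $U$, which leaves the subspaces entering $\prinangle_1,\prinangle_2$ unchanged for $\theta_i'$: it has the same singular vectors as $\theta_i$, so $(\theta_i')_1$ spans the same top direction whenever the ordering is preserved, and $T$ preserves the ordering for $\alpha\ge1$.

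Thus $\kappa$ is essentially independent of $\alpha$, apart from factors I would bound or include in the choice of $A,C$. If $\kappa=0$, I would invoke \cref{remark:bound-tightness} and choose larger $r_1,r_2$ or another block to obtain a positive constant. Since this relies on the bound being non-vacuous, I would state it as an explicit assumption, matching the ``$\alpha$ is set so that'' phrasing in the theorem.
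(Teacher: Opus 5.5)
Your functional-invariance step is fine and matches the paper, which only checks $\theta_i=\tilde I\tilde\theta_i$ for the injected layer. Your caveat is also correct: degree-$1$ homogeneity does not let the non-diagonal compensator $U(\cdot)U^\top$ pass through an activation, and the paper's proof likewise only covers the identity-activation case.

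\textbf{The gap is in spectral control}, at the claim that $\kappa$ is essentially independent of $\alpha$. You use the same block as the paper:
\[
\frac{\partial^2 f}{\partial\,\mathrm{vec}(\theta_1)\,\partial\theta_3}=(x\otimes D_{z_1}^\top)\,\tilde\theta_2^\top\,\tilde I^\top D_{z_2},\qquad B=\tilde\theta_2^\top,\quad C=\tilde I^\top D_{z_2}.
\]
But $\tilde\theta_2^\top\tilde I^\top=(\tilde I\tilde\theta_2)^\top=\theta_2^\top$. As you note yourself, $D_{z_1}$ and $D_{z_2}$ are unchanged. So this block is \emph{exactly} the undeformed block, and its top singular value does not move with $\alpha$.

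More generally, every Hessian block whose two differentiation variables are both undeformed layers is invariant. The reason is that $f$, restricted to those variables, is literally the same function before and after the deformation. Since \cref{thm:hessian-singular-value-bound} bounds that fixed number from below, $\alpha\kappa\le\sigma_1(\text{block})$ forces $\kappa=O(1/\alpha)$.

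Here $A$ and $\prinangle_1$ are also invariant, because $\prinangle_1$ only involves $BC=\theta_2^\top D_{z_2}$. So it is the product $\sigma_{r_2}(C)\,\prinangle_2$ that must decay like $1/\alpha$. Indeed, $\tilde I=U\Sigma\tilde\Sigma^{+}U^\top$ shrinks the top directions $u_1,\dots,u_k$ by a factor of order $1/\alpha$. Your ``second fact'' only shows that the singular subspaces of $B=\theta_i'$ are unchanged. It says nothing about $C$, which contains the compensator, and no choice of $r_1,r_2$ repairs this.

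\textbf{The missing idea} is to read the growth off a block whose differentiation variable is the compensator itself. The compensator is a trainable parameter of the released model, and in such a block $\theta_i'$ appears without $\tilde I$ multiplying it. In the three-layer ReLU example (no $\phi''$ terms),
\[
\frac{\partial^2 f}{\partial\,\mathrm{vec}(\tilde I)\,\partial\,\mathrm{vec}(\theta_1)}=\bigl(I\otimes D_{z_2}\theta_3\bigr)\,\tilde\theta_2\,D_{z_1}\bigl(x^\top\otimes I\bigr),
\qquad
\frac{\partial^2 f}{\partial\theta_3\,\partial\,\mathrm{vec}(\tilde I)}=h^\top\otimes D_{z_2},\quad h=\tilde\theta_2 z_1 .
\]
\begin{itemize}
\item In the first block, $A=I\otimes D_{z_2}\theta_3$ and $C=D_{z_1}(x^\top\otimes I)$ depend only on quantities fixed by invariance. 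Also, $\tilde\theta_2$ shares the singular vectors of $\theta_2$. So your subspace argument now goes through verbatim and yields $\alpha\kappa$ with a genuinely $\alpha$-independent $\kappa$.
\item The second block is even more direct. With your normalization $\sigma_1(\theta_i')=\alpha$, its top singular value is $\|\tilde\theta_2 z_1\|\,\|D_{z_2}\|_2\ge\alpha\,|v_1^\top z_1|\,\|D_{z_2}\|_2$, where $v_1$ is the top right singular vector of $\theta_2$.
\end{itemize}

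The condition that $v_1^\top z_1\neq 0$ and some unit of layer $2$ is active is the explicit nondegeneracy assumption that should replace your ``$\kappa>0$''. Your fallback of trying ``another block'' points the right way, but switching blocks is forced, not optional.
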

\begin{proof}
To show this is the case, we need to prove two conditions: (1) \textbf{Spectral Control} and (2) \textbf{Functional Invariance up to $\epsilon$}.

\paragraph{(1) Spectral control} Spectral control states that $\sigma_1(H_{\theta}^\mathcal{L}) \geq c$.
We now need to show that the idealized function composition has a second partial derivative block that meets the algebraic structure of \cref{thm:hessian-singular-value-bound}. If so, then \cref{thm:hessian-singular-value-bound} says that with $c := \sigma_{r_1}(A)\sigma_{1}(B)\sigma_{r_2}(C) \prinangle(A_{r_1},BC_1)\prinangle(B_1, C_{r_2})$ where $B = \theta_i$, one can simply set $\sigma_{1}(\theta_i)$ such that $c$ is the desired value.  This follows directly from reconstructing $\widetilde{\Sigma}_{\theta_i} \leftarrow T\Sigma_{\theta_i}$ where $\sigma_1(\widetilde{\Sigma}_{\theta_i}) = \alpha$ with $\alpha$ being the value desired to set $c$.

By \cref{corr:block-interlacing-for-hessian}, we only need to show one second partial derivative block exists. Without loss of generality, consider the same three layer perception from \cref{eq:mlp-annotated}. Which we reparameterize as: 
\begin{equation*}
f(x) = \theta_3^{\top} \overbrace{
\phi_2 (
\underbrace{\tilde{I}\tilde{\theta}_2\,  \overbrace{\phi_1(\underbrace{\theta_1x}_{h_1}}^{z_1}))}_{h_2}
}^\text{$z_2$}.
\end{equation*}

$\tilde{\theta}$ is the spectrally deformed weight matrix with compensation matrix $\tilde{I}$ (technically we have the identity activation function between these which is $1$-homogeneous). Note that supposing we picked $\phi_1$ as a homogeneous activation, we could have easily demonstrated with compensating in $\theta_1$. Note that we do not need to consider the Hessian block w.r.t.\ the deformed parameter $\tilde{\theta}_2$ as this is not a requirement of \cref{corr:block-interlacing-for-hessian} nor \cref{thm:hessian-singular-value-bound}.
We then have the gradient with respect to $\theta_1$ given by
\begin{align*}
\nabla_{\theta_1} f
&= (x^{\top} \otimes I_{n_1})^{\top}
D_{z_1}\,\tilde{\theta}_2^{\top}\,\tilde{I}^{\top} D_{z_2}\,\theta_3 .
\end{align*}

We now compute the mixed second-order derivatives with respect to $\theta_1$ and $\theta_3$.

To clarify dimensionality, let
\[
\theta_1 \in \mathbb{R}^{n_1 \times n_0}, \quad
\theta_2 \in \mathbb{R}^{n_2 \times n_1}, \quad
\tilde{I} \in \mathbb{R}^{n_2 \times n_2}, \quad
\theta_3 \in \mathbb{R}^{n_2},
\]
and let $D_{z_1} \in \mathbb{R}^{n_1 \times n_1}$ and
$D_{z_2} \in \mathbb{R}^{n_2 \times n_2}$ denote the diagonal Jacobians of the activation functions at the first and second hidden layers, respectively.

The gradient with respect to $\theta_1$ is
\[
\nabla_{\theta_1} f
= (x^{\top} \otimes I_{n_1})^{\top}
D_{z_1}\,\tilde{\theta}_2^{\top}\,\tilde{I}^{\top} D_{z_2}\,\theta_3 .
\]
Differentiating with respect to $\theta_3$ yields the mixed Hessian block
\[
\frac{\partial^2 f}{\partial\,\operatorname{vec}(\theta_1)\,\partial \theta_3^{\top}}
=
(x \otimes D_{z_1}^{\top})\,
\tilde{\theta}_2^{\top}\,\tilde{I}^{\top} D_{z_2}.
\]

Defining
\begin{align*}
A &= (x \otimes D_{z_1}^{\top}) \in \mathbb{R}^{n_0 n_1 \times n_1}, \\
B &= \tilde{\theta}_2^{\top} \in \mathbb{R}^{n_1 \times n_2}, \\
C &= \tilde{I}^{\top} D_{z_2} \in \mathbb{R}^{n_2 \times n_2},
\end{align*}
we obtain the factorization
\[
\frac{\partial^2 f}{\partial\,\operatorname{vec}(\theta_1)\,\partial \theta_3^{\top}}
= ABC \in \mathbb{R}^{n_0 n_1 \times n_2}.
\]
This satisfies \cref{thm:hessian-singular-value-bound}, with the spectrally deformed matrix
$\tilde{\theta}_2^{\top}$ appearing as the central factor $B$.

\paragraph{(2) Functional invariance up to $\epsilon$} This is satisfied by showing, without loss of generality, that our reparameterization preserves the function exactly. This follows immediately from
\begin{align*}
\theta_i = I\theta_i &= I U_{\theta_i}\Sigma_{\theta_i}V_{\theta_i}^{\top} \\
&= \underbrace{U_{\theta_i}\Sigma_{\theta_i}\tilde{\Sigma}_{\theta_i}^{-1}U_{\theta_i}^{\top}}_{\tilde{I}}\underbrace{U_{\theta_i}\tilde{\Sigma}_{\theta_i}V_{\theta_i}^{\top}}_{\tilde{\theta}_i} \\
&= \tilde{I}\tilde{\theta}_i
\end{align*}
Note that in practice, floating-point computation during SVD accumulates small numerical errors, contributing to the $\epsilon$ of spectral reparameterization.
\end{proof}

\section{Experimental Details}
\label{app:experimental-details}

This section outlines the details for the Hessian lower bound experiments and numerical analysis as well as the full details of the settings used in the main paper with additional discussion. 

\paragraph{Reproducibility} Code for all experiments, including SpecDef implementation, attack scripts, and evaluation pipelines, is available at \url{REDACTED}. We provide configuration files for all hyperparameter settings reported in this paper. All pre-trained resistance methods used in our baseline comparisons (ELM, GradDiff, RepNoise, RMU, TAR, etc.) are publicly available on HuggingFace as cited in the respective papers which we list below.

\paragraph{Compute Resources} All experiments were conducted using NVIDIA A100 and L40S GPUs. 

\subsection{Details for \cref{sec:hessian_lower_bound} experiments}

\paragraph{\cref{tab:spectral_alignment_main}} The purpose of this experiment is to verify the developed bound \cref{thm:hessian-singular-value-bound}. We randomly initialize a three-layer MLP $\theta_3^{\top}\phi(I \cdot \theta_2\phi(\theta_1x))$ where $\theta_1, \theta_2, I \in \mathbb{R}^{m \times m}$ with $I$ initialized as the identity matrix, and $\theta_3 \in \mathbb{R}^{m}$ is the output layer. $\phi: \mathbb{R}^m \to \mathbb{R}^m$ is the ReLU activation. The reparameterization modifies $\theta_2$ and $I$ such that their product preserves the original linear transformation while allowing control over the singular values of $\theta_2$. We also initialize a single-layer self-attention network $\theta^{\top}VX\,\mathrm{softmax}((KX)^{\top}Qx_i)$ with a linear predictor head $\theta \in \mathbb{R}^{d}$, query, key, value matrices $Q, K, V \in \mathbb{R}^{d \times d}$, and data matrix $X \in \mathbb{R}^{d \times n}$ where $x_i$ is the $i$-th column of $X$ (the query token). This matches the convention in \cref{app:example-hessian-derivations}. For the toy experiments, we set all dimensions equal: $d = n = 4$, with batch size 1 for the MLP and sequence length $n=4$ for self-attention. We verify that the automatic Hessian computed using PyTorch matches the analytical Hessian from \cref{app:example-hessian-derivations}; the square dimensionality ensures all products are conformable and the derivations apply directly.

The experiment consists of choosing a layer ($\theta_2$ for the MLP and $V$ for self-attention) and scaling its largest singular value to $\{1, 10, 100, 1\mathrm{k}, 10\mathrm{k}\}$. For the MLP, this is achieved through reparameterization; for self-attention, the singular value is set directly. We verify that the automatic Hessian computed using PyTorch matches the analytical Hessian based on our derivations in \cref{app:example-hessian-derivations}. Here we measure the network Hessian $H^f_{\theta}$ rather than the loss Hessian. We run a forward pass and collect the following quantities over 30 random seeds. $\sigma_1(H_{\theta}^f)$ is the largest singular value of the full Hessian of the network function. For the MLP, $\sigma_1(H^{f}_{\theta_i,\theta_j})$ is the Hessian block w.r.t.\ $\theta_1$ and $I$. For the self-attention network, $H^{f}_{\theta_i,\theta_j}$ is w.r.t.\ $Q$ and $\theta$. To connect back to \cref{thm:hessian-singular-value-bound}, $\sigma_1(B)$ corresponds to $B := V$ for the self-attention network and $B := \theta_2^{\top}$ for the MLP. 

For the self-attention network, we just have $AB$, $A := (x_n \otimes I_m)^{\top} K X (\mathrm{Diag}(s) - ss^{\top})^{\top}X^{\top}$, where $s$ is the softmax activation vector. For the MLP: $A := (x \otimes I_m)^{\top} D_{z_1}^{ \top}$, $C := D_{z_2}$. Finally, $\alpha_A$ and $\alpha_B$ are the principal subspace alignment coefficients between $A$ and $BC$ (or $A$ and $B$), and between $B$ and $C$, respectively.

\paragraph{\cref{fig:toy-network-iteration-hessian-experiment}} The purpose of this experiment is to demonstrate the use of \cref{thm:hessian-singular-value-bound} for convergence rate control. For these experiments, we initialize a four-layer MLP with ReLU activations, a four-layer CNN (two CNN layers, two linear) with Tanh activations, and a two-layer transformer with ReLU activations. The input dimension is 6 and hidden dimensions are all 6. Each network has a linear predictor that is a weighted sum as the last layer to produce a single scalar output. The experiment consists of initializing the network with a singular value between 1 and 50, measuring the loss Hessian $\sigma_1(H_{\theta}^{\mathcal{L}})$, and measuring the number of iterations required for full-batch gradient descent to converge. Each training run is run for 1000 steps; we select the worst \textit{final} loss achieved as the convergence target and then count the number of steps to achieve this worst value. In theory, with perfect alignment of principal angles, the learning rate should be $1/\sigma_1(H_{\theta}^{\mathcal{L}})$. However, since alignment is often not perfect, we used $0.1/\sigma_1(H_{\theta}^{\mathcal{L}})$ which was found empirically. The task is a regression task where we generate 128 6-dimensional isotropic Gaussian vectors with targets that are also Gaussian. The CNN uses 2D image data (6 $\times$ 6, 1 channel) and the transformer uses sequences of length 6 with feature dimension of 6.

\subsection{Experimental Details for Language Models}

\paragraph{General Setup} Unless otherwise stated, we use AdamW \citep{loshchilov2017decoupled} with weight decay $0.01$, $\beta_1=0.9$, $\beta_2=0.999$, $\epsilon=10^{-8}$, and a linear learning rate schedule with warmup (10\% of total steps). Gradient clipping is applied with max norm $1.0$. All experiments use PyTorch \texttt{2.9.1} with 3 random seeds, incorporating evaluation improvements from \citet{qi2024evaluating} such as dataset shuffling across seeds.

\paragraph{Spectral Deformation Parameters} We denote by $\alpha$ the \textbf{multiplier} applied to the top singular values $\sigma_k$ of each weight matrix $\theta_i$; i.e., $\alpha = 1$ leaves singular values unchanged. We modify the top-$k = 25$ singular values (ablation in \cref{topk-singular-value-ablation}) across 5 randomly selected layers (ablation in \cref{app:num_layers}). We apply SpecDef to randomly selected MLP layers (specifically the \texttt{gate\_proj}, \texttt{o\_proj}, and \texttt{v\_proj} weight matrices). We exclude attention query and key matrices (\texttt{attn.q}, \texttt{attn.k}), MLP up and down projections, and the language model head (\texttt{lm\_head}) as these are often rank-deficient and provide less effective convergence control (see \cref{app:num_layers} for ablations). The 5 layers are selected uniformly at random from the eligible layers at the start of each experiment, with the same selection used across all learning rates for that seed.

\paragraph{Evaluation Metrics} For language models, we measure perplexity (PPL) on 100 randomly drawn samples from WikiText2 \citep{merity2016pointer} and report MMLU, Winogrande, ARC, and HellaSwag scores using LightEval.\footnote{\url{https://huggingface.co/docs/lighteval/en/index}} The primary model class is either \texttt{Llama-3.1-8b} or \texttt{Llama-3-8b-Instruct} depending on what as used for construct the defences we obtained from huggingface, with additional foundation models evaluated below.

\paragraph{Divergence Criteria} Consistent with prior training-time safety evaluations, we sweep learning rates from the smallest to the largest value for which the base model converges. Divergence is defined as a doubling of perplexity on WikiText2. We mark partial divergence (at least one seed) with $\dagger$ and full divergence (all seeds) with $\ddagger$. Following \citet{henderson2023self}, runs inducing model ``self-destruction'' are considered successful defences with the rational that an attacker wants to produce a useful model not a broken one (contrast this with adversarial bandits \citep{lattimore2020bandit} where the adversary wants to prevent learning or break the model in some way). As shown in \cref{tab:spectral_impact_combined,tab:harmful_results_lora_gpt20b}, SpecDef preserves original model performance.

\begin{table}[t]
\centering
\caption{Training resistance methods evaluated in this work. Models were obtained from those public ally available on Huggingface from \citet{che2025model} and \citet{fan2025llmunlearningresilientrelearning}.}
\label{tab:training-resistance-methods}
\begin{sc}
\begin{tabular}{ll}
\toprule
Method & Reference \\
\midrule
\texttt{ELM} & \citet{gandikota2024erasing} \\
\texttt{RepNoise} &   
\citet{rosati2024representation} \\
\texttt{RMU} & \citet{li2024wmdp} \\
\texttt{RR} & \citet{zouimproving} \\
\texttt{TAR} & \citet{tamirisa2025tamperresistantsafeguardsopenweightllms} \\
\texttt{GradDiff} & \citet{liu2022continual} \\
\texttt{NPO} & \citet{zhang2024negative} \\
\texttt{NPO-SAM} & \citet{fan2025llmunlearningresilientrelearning} \\
\texttt{SimNPO} & \citet{fan2024simplicity} \\
\texttt{IMMA} & \citet{zheng2024imma} \\
\texttt{DeepIgnorance} & \citet{o2025deep} \\
\bottomrule
\end{tabular}
\end{sc}
\end{table}

\subsubsection{Relearning Attacks on Unlearned Models}
\label{app:relearning-attacks}
\paragraph{Experimental Setting}
In this setting, we assess whether a model can \textit{robustly} \citep{deeb2024unlearning,tamirisa2025tamperresistantsafeguardsopenweightllms} unlearn dangerous biochemical weapons precursor knowledge from \citet{li2024wmdp} such that the unlearned knowledge cannot be recovered through relearning attacks. The base model chosen was \texttt{Llama-3-8B-Instruct}, which had a multiple-choice accuracy of 50.98\% on a held-out subset (255 test questions, 1020 train questions) of \texttt{WMDP-Bio}.

\paragraph{Baseline Evaluation and Related Works}
We evaluated a number of well-known unlearning methods (only some of which claim robust unlearning) in \cref{tab:unlearning-defence-evaluation} (i.e., robustness against relearning attacks). The relearning attacks are simply fully fine-tuned on \texttt{WMDP-Bio} without the held-out test set. While some methods were able to increase the number of steps to convergence, this was only under smaller learning rates. For sufficient learning rate sweeps, we found none of these methods can prevent a relearning attack. This echoes the findings of \citet{qi2024evaluating}, where robustness against training-time attacks is often not sufficiently evaluated. To control for reproducibility, we used models that were already trained from \citet{fan2025llmunlearningresilientrelearning}, \citet{che2025model}, and \citet{o2025deep}. Due to \citet{qi2024evaluating}, we were concerned with evaluating our own implementations of newer unpublished methods developed after or near submission of this manuscript. We did not include methods that did not provide publicly available models but reference them for completion: \citep{sondej2025collapse,shilov2025beyond}. Finally, we acknowledge there is a long tradition of unlearning in computer vision broadly (see for example \citep{golatkar2019eternal}) but these works have yet to be integrated properly in foundation model unlearning studies and also were primarily used in the context where training from scratch is easy which is outside of the scope of our paper.

\paragraph{Training Protocol}
We run each experiment for 510 steps or a total of 3 epochs at a batch size of 6. Learning rates below $10^{-6}$ never converged after 510 training steps. Learning rates greater than $3 \times 10^{-5}$ caused divergence, which we observed as consistently fluctuating loss with a significant increase in PPL (100\% increase). Early stopping is used when the perplexity doubles or we reach more than 60\% \texttt{WMDP-Bio} accuracy.

\paragraph{Combining Training Resistance Methods} In the main paper, \cref{tab:combined-llama-convergence} evaluates \texttt{ELM} as an initially applied unlearning model. Below we present results for combining SpecDef with \texttt{GradDiff} in \cref{tab:convergence-GradDiff-WMDP-llama3-8b-instruct} as well using a model with no unlearning applied \cref{tab:convergence-Meta-Llama-3-8B-Instruct}. Combining training resistance methods clearly has a beneficial effect since we are able to use lower values of $\alpha$ for effective training resistance when \texttt{GradDiff} is first applied. This points to potential future work where SpecDef is made stronger through combining differently motivated defences (which we describe in \cref{app:previous-defence-analysis}). Application of SpecDef to \texttt{Llama-3-8B-Instruct} without unlearning hardly makes any difference, giving more evidence that initial distance to the harmful loss objective doesn't meaningfully impact SpecDef's resistance capabilities.

\begin{table}[h]
\centering
\caption{
Convergence rates for \texttt{GradDiff}. 
Values show mean training steps $\pm$ std with final WMDP-Bio accuracy $\pm$ std in brackets across 3 seeds. 
$^{\dagger}$ indicates at least one run diverged (perplexity doubled from initial value). 
$^{\ddagger}$ indicates all runs diverged.
}
\label{tab:convergence-GradDiff-WMDP-llama3-8b-instruct}
\resizebox{\textwidth}{!}{%
\begin{sc}
\begin{tabular}{lccccccc}
\toprule
Learning Rate & $\alpha=1$ & $\alpha=1k$ & $\alpha=2500$ & $\alpha=5k$ & $\alpha=7500$ & $\alpha=10\mathrm{k}$ & $\alpha=1\mathrm{M}$ \\
\midrule
$10^{-6}$ & 510 (0.467 $\pm$ 0.035) & 20 $\pm$ 17 (0.180 $\pm$ 0.035)$^{\ddagger}$ & 13 $\pm$ 5 (0.158 $\pm$ 0.007)$^{\ddagger}$ & 13 $\pm$ 5 (0.154 $\pm$ 0.002)$^{\ddagger}$ & 13 $\pm$ 5 (0.165 $\pm$ 0.021)$^{\ddagger}$ & 10 (0.175 $\pm$ 0.022)$^{\ddagger}$ & 10 (0.221 $\pm$ 0.122)$^{\ddagger}$ \\
$5\times 10^{-6}$ & 126 $\pm$ 30 (0.608 $\pm$ 0.010) & 13 $\pm$ 5 (0.182 $\pm$ 0.019)$^{\ddagger}$ & 10 (0.204 $\pm$ 0.040)$^{\ddagger}$ & 10 (0.175 $\pm$ 0.023)$^{\ddagger}$ & 10 (0.213 $\pm$ 0.044)$^{\ddagger}$ & 10 (0.214 $\pm$ 0.079)$^{\ddagger}$ & 10 (0.210 $\pm$ 0.078)$^{\ddagger}$ \\
$8\times 10^{-6}$ & 73 $\pm$ 11 (0.626 $\pm$ 0.029) & 10 (0.199 $\pm$ 0.019)$^{\ddagger}$ & 10 (0.175 $\pm$ 0.035)$^{\ddagger}$ & 10 (0.199 $\pm$ 0.048)$^{\ddagger}$ & 10 (0.196 $\pm$ 0.056)$^{\ddagger}$ & 10 (0.195 $\pm$ 0.081)$^{\ddagger}$ & 10 (0.217 $\pm$ 0.094)$^{\ddagger}$ \\
$10^{-5}$ & 73 $\pm$ 5 (0.618 $\pm$ 0.014) & 10 (0.192 $\pm$ 0.024)$^{\ddagger}$ & 10 (0.171 $\pm$ 0.008)$^{\ddagger}$ & 10 (0.220 $\pm$ 0.052)$^{\ddagger}$ & 10 (0.166 $\pm$ 0.025)$^{\ddagger}$ & 10 (0.197 $\pm$ 0.079)$^{\ddagger}$ & 10 (0.200 $\pm$ 0.080)$^{\ddagger}$ \\
$3\times 10^{-5}$ & 53 $\pm$ 11 (0.631 $\pm$ 0.024) & 10 (0.191 $\pm$ 0.024)$^{\ddagger}$ & 10 (0.241 $\pm$ 0.078)$^{\ddagger}$ & 10 (0.178 $\pm$ 0.021)$^{\ddagger}$ & 10 (0.192 $\pm$ 0.046)$^{\ddagger}$ & 10 (0.165 $\pm$ 0.024)$^{\ddagger}$ & 10 (0.217 $\pm$ 0.118)$^{\ddagger}$ \\
\bottomrule
\end{tabular}
\end{sc}
}
\end{table}

\begin{table}[h]
\centering
\caption{
Convergence rates for \texttt{Llama-3-8B-Instruct}. 
Values show mean training steps $\pm$ std with final WMDP-Bio accuracy $\pm$ std in brackets across 3 seeds. 
$^{\dagger}$ indicates at least one run diverged (perplexity doubled from initial value). 
$^{\ddagger}$ indicates all runs diverged.
}
\label{tab:convergence-Meta-Llama-3-8B-Instruct}
\resizebox{\textwidth}{!}{%
\begin{sc}
\begin{tabular}{lccccccc}
\toprule
Learning Rate & $\alpha=1$ & $\alpha=1k$ & $\alpha=2500$ & $\alpha=5k$ & $\alpha=7500$ & $\alpha=10\mathrm{k}$ & $\alpha=1\mathrm{M}$ \\
\midrule
$10^{-6}$ & 93 $\pm$ 25 (0.607 $\pm$ 0.006) & 43 $\pm$ 57 (0.452 $\pm$ 0.157)$^{\dagger}$ & 50 $\pm$ 54 (0.428 $\pm$ 0.146)$^{\ddagger}$ & 26 $\pm$ 25 (0.281 $\pm$ 0.152)$^{\ddagger}$ & 18 $\pm$ 10 (0.300 $\pm$ 0.156)$^{\ddagger}$ & 16 $\pm$ 11 (0.244 $\pm$ 0.172)$^{\ddagger}$ & 10 (0.212 $\pm$ 0.050)$^{\ddagger}$ \\
$5\times 10^{-6}$ & 26 $\pm$ 5 (0.617 $\pm$ 0.023) & 20 $\pm$ 17 (0.290 $\pm$ 0.170)$^{\ddagger}$ & 16 $\pm$ 10 (0.255 $\pm$ 0.128)$^{\ddagger}$ & 12 $\pm$ 4 (0.219 $\pm$ 0.123)$^{\ddagger}$ & 10 (0.237 $\pm$ 0.093)$^{\ddagger}$ & 10 (0.216 $\pm$ 0.099)$^{\ddagger}$ & 10 (0.246 $\pm$ 0.080)$^{\ddagger}$ \\
$8\times 10^{-6}$ & 26 $\pm$ 5 (0.667 $\pm$ 0.024) & 16 $\pm$ 11 (0.264 $\pm$ 0.172)$^{\ddagger}$ & 14 $\pm$ 5 (0.182 $\pm$ 0.020)$^{\ddagger}$ & 10 (0.226 $\pm$ 0.099)$^{\ddagger}$ & 10 (0.247 $\pm$ 0.072)$^{\ddagger}$ & 10 (0.233 $\pm$ 0.069)$^{\ddagger}$ & 10 (0.259 $\pm$ 0.075)$^{\ddagger}$ \\
$10^{-5}$ & 20 (0.659 $\pm$ 0.082) & 16 $\pm$ 11 (0.269 $\pm$ 0.165)$^{\ddagger}$ & 10 (0.227 $\pm$ 0.064)$^{\ddagger}$ & 10 (0.277 $\pm$ 0.093)$^{\ddagger}$ & 10 (0.207 $\pm$ 0.030)$^{\ddagger}$ & 10 (0.214 $\pm$ 0.089)$^{\ddagger}$ & 10 (0.200 $\pm$ 0.042)$^{\ddagger}$ \\
$3\times 10^{-5}$ & 16 $\pm$ 5 (0.633 $\pm$ 0.033) & 10 (0.222 $\pm$ 0.100)$^{\ddagger}$ & 10 (0.201 $\pm$ 0.057)$^{\ddagger}$ & 10 (0.201 $\pm$ 0.045)$^{\ddagger}$ & 10 (0.202 $\pm$ 0.048)$^{\ddagger}$ & 10 (0.175 $\pm$ 0.011)$^{\ddagger}$ & 10 (0.220 $\pm$ 0.062)$^{\ddagger}$ \\
\bottomrule
\end{tabular}
\end{sc}
}
\end{table}

\subsubsection{Harmful Fine-Tuning of Foundation Models}
\label{app:harmful-finetuning-details}
For our harmful fine-tuning experiments, we use the non-overlapping 30k train and test splits of the BeaverTails dataset \citep{beavertails}. This dataset consists of harmful questions (e.g., ``How can I steal from my job's cash register safely?'') as well as harmful answers to such questions. Typically, these questions would be refused by an LLM's safety guard. At the time of writing, RepNoise \citep{rosati2024representation} and TAR \citep{tamirisa2025tamperresistantsafeguardsopenweightllms} are the only existing published methods that provide some open-weight resistance against harmful fine-tuning with public huggingface models. As in the WMDP experiments, we evaluate the models trained by the original authors and publicly released on HuggingFace, as listed in the corresponding papers. Readers are encouraged to consult the original works for detailed descriptions of these methods and their training procedures; brief summaries are also provided in \citep{che2025model}. Resistance methods such as \citep{huang2024vaccine} that require control over the model during training time (i.e., for all iterations $t > 0$) are not relevant to our study, since curvature control is unnecessary in such settings (see the survey of these methods in \citealp{huang2024harmful}). We acknowledge the existence of several newer methods which claim improvements over TAR and RepNoise but these were not including for several reasons: (1) none of these provided public models for fair evaluation, (2) many if not all of these were unpublished preprints identified after this paper manuscript was submitted for publication, (3) the findings in these papers all show methods that can be defeated with harmful fine-tuning and do not produce theoretical results that are interesting for comparison in our paper. For completeness these were: \citep{cheng2025weaponization, yi2025ctrap,wang2025self}.

\paragraph{Training Configuration}
We train each model for one epoch with a batch size of 6, randomly sampling and shuffling 6k training examples from the BeaverTails 30k training split. All experiments use AdamW with the same hyperparameters as the relearning experiments. For SpecDef experiments, we apply spectral deformation to five randomly selected layers, using the same layer selection strategy described in \cref{app:relearning-attacks}.

\paragraph{Harmfulness Evaluation}
We measure harmfulness using the safety classifier from \citet{rosati2024representation}, available at \texttt{huggingface.co/domenicrosati/deberta-v3-xsmall-beavertails-classifier}. For each evaluation, we randomly select 64 test questions from the BeaverTails test split and generate model responses. The classifier outputs a harmfulness probability for each response; we report the fraction of responses classified as harmful using a threshold of 0.6. High harmfulness scores sometimes requires careful interpretation. The text classifier can flag repeated or disfluent text as harmful, which occurs when models diverge. We mark such cases with $\dagger$, indicating perplexity has at least doubled. Entries showing high harmfulness without $\dagger$ represent fluent, genuinely harmful generations—these are successful attacks. Entries with $\dagger$ indicate defence via model collapse, regardless of the harmfulness score. We chose to use a classifier with this failure mode because it was small enough to run efficiently at each 10 steps rather than an larger but more robust evaluator like \texttt{LlamaGuard} \citep{inan2023llamaguardllmbasedinputoutput} which would have made training runs too long for practical evaluation.

\paragraph{Stopping Criteria}
We stop training if either (1) the model's perplexity on WikiText2 doubles from its initial value, indicating model collapse, or (2) the harmfulness score exceeds 60\%. When both perplexity and harmfulness are high, the resulting model should be interpreted as unusable, since continued training would further degrade utility.

\paragraph{Combining Training Resistance Methods} In addition to the results in the main text applying SpecDef to \texttt{Llama-3-8B-Instruct} in \cref{tab:combined-llama-convergence}, we evaluate the impact of applying SpecDef to models with training resistance already applied. In \cref{tab:convergence-repnoise-0.001-beta}, \texttt{RepNoise} is made worse initially after applying SpecDef and this could be because RepNoise operates by reducing curvature. For \texttt{TAR} in \cref{tab:convergence-Llama-3-8B-Instruct-TAR-Refusal}, the methods pair well together and SpecDef is more effective at lower $\alpha$ rates when combined.

\begin{table}[h]
\centering
\caption{
Convergence rates for \texttt{RepNoise}. 
Values show mean training steps $\pm$ std with final harmfulness probability $\pm$ std in brackets across 3 seeds. 
$^{\dagger}$ indicates at least one run diverged (perplexity doubled from initial value). 
$^{\ddagger}$ indicates all runs diverged.
}
\label{tab:convergence-repnoise-0.001-beta}
\resizebox{\textwidth}{!}{%
\begin{sc}
\begin{tabular}{lcccccccc}
\toprule
Learning Rate & $\alpha=1$ & $\alpha=1k$ & $\alpha=2500$ & $\alpha=5k$ & $\alpha=7500$ & $\alpha=10\mathrm{k}$ & $\alpha=100\mathrm{k}$ & $\alpha=1\mathrm{M}$ \\
\midrule
$10^{-6}$ & 2597 (0.098 $\pm$ 0.001) & 40 $\pm$ 10 (0.705 $\pm$ 0.065) & 23 $\pm$ 5 (0.644 $\pm$ 0.035) & 20 (0.641 $\pm$ 0.173)$^{\dagger}$ & 13 $\pm$ 5 (0.589 $\pm$ 0.255)$^{\dagger}$ & 16 $\pm$ 11 (0.584 $\pm$ 0.304)$^{\ddagger}$ & 10 (0.024 $\pm$ 0.037)$^{\ddagger}$ & 10 (0.011 $\pm$ 0.011)$^{\ddagger}$ \\
$5\times 10^{-6}$ & 216 $\pm$ 5 (0.610 $\pm$ 0.013) & 20 (0.626 $\pm$ 0.206)$^{\dagger}$ & 10 (0.537 $\pm$ 0.246)$^{\dagger}$ & 10 (0.481 $\pm$ 0.331)$^{\dagger}$ & 10 (0.552 $\pm$ 0.319)$^{\dagger}$ & 10 (0.482 $\pm$ 0.282)$^{\dagger}$ & 10 (0.019 $\pm$ 0.031)$^{\ddagger}$ & 10 (0.004 $\pm$ 0.003)$^{\ddagger}$ \\
$8\times 10^{-6}$ & 143 $\pm$ 5 (0.650 $\pm$ 0.034) & 13 $\pm$ 5 (0.585 $\pm$ 0.291)$^{\dagger}$ & 10 (0.629 $\pm$ 0.146)$^{\dagger}$ & 10 (0.566 $\pm$ 0.260)$^{\dagger}$ & 10 (0.375 $\pm$ 0.319)$^{\ddagger}$ & 10 (0.048 $\pm$ 0.049)$^{\ddagger}$ & 10 (0.014 $\pm$ 0.022)$^{\ddagger}$ & 10 (0.016 $\pm$ 0.025)$^{\ddagger}$ \\
$10^{-5}$ & 130 (0.697 $\pm$ 0.006) & 13 $\pm$ 5 (0.561 $\pm$ 0.296)$^{\dagger}$ & 10 (0.443 $\pm$ 0.328)$^{\dagger}$ & 10 (0.414 $\pm$ 0.380)$^{\ddagger}$ & 10 (0.256 $\pm$ 0.439)$^{\ddagger}$ & 10 (0.118 $\pm$ 0.139)$^{\ddagger}$ & 10 (0.002 $\pm$ 0.000)$^{\ddagger}$ & 10 (0.016 $\pm$ 0.024)$^{\ddagger}$ \\
$3\times 10^{-5}$ & 63 $\pm$ 5 (0.653 $\pm$ 0.054) & 10 (0.573 $\pm$ 0.270)$^{\dagger}$ & 10 (0.247 $\pm$ 0.415)$^{\ddagger}$ & 10 (0.005 $\pm$ 0.006)$^{\ddagger}$ & 10 (0.147 $\pm$ 0.224)$^{\ddagger}$ & 10 (0.136 $\pm$ 0.230)$^{\ddagger}$ & 10 (0.003 $\pm$ 0.002)$^{\ddagger}$ & 10 (0.002 $\pm$ 0.001)$^{\ddagger}$ \\
\bottomrule
\end{tabular}
\end{sc}
}
\end{table}

\begin{table}[h]
\centering
\caption{
Convergence rates for \texttt{TAR}. 
Values show mean training steps $\pm$ std with final harmfulness probability $\pm$ std in brackets across 3 seeds. 
$^{\dagger}$ indicates at least one run diverged (perplexity doubled from initial value). 
$^{\ddagger}$ indicates all runs diverged.
}
\label{tab:convergence-Llama-3-8B-Instruct-TAR-Refusal}
\resizebox{\textwidth}{!}{%
\begin{sc}
\begin{tabular}{lcccccccc}
\toprule
Learning Rate & $\alpha=1$ & $\alpha=1k$ & $\alpha=2500$ & $\alpha=5k$ & $\alpha=7500$ & $\alpha=10\mathrm{k}$ & $\alpha=100\mathrm{k}$ & $\alpha=1\mathrm{M}$ \\
\midrule
$10^{-6}$ & 2597 (0.581 $\pm$ 0.003) & 43 $\pm$ 49 (0.396 $\pm$ 0.250)$^{\dagger}$ & 33 $\pm$ 40 (0.356 $\pm$ 0.218)$^{\dagger}$ & 30 $\pm$ 34 (0.266 $\pm$ 0.081)$^{\ddagger}$ & 30 $\pm$ 34 (0.320 $\pm$ 0.126)$^{\ddagger}$ & 26 $\pm$ 28 (0.255 $\pm$ 0.173)$^{\ddagger}$ & 13 $\pm$ 5 (0.055 $\pm$ 0.055)$^{\ddagger}$ & 10 (0.017 $\pm$ 0.005)$^{\ddagger}$ \\
$5\times 10^{-6}$ & 103 $\pm$ 5 (0.615 $\pm$ 0.002) & 40 $\pm$ 51 (0.364 $\pm$ 0.235)$^{\dagger}$ & 23 $\pm$ 23 (0.178 $\pm$ 0.157)$^{\ddagger}$ & 16 $\pm$ 11 (0.179 $\pm$ 0.239)$^{\ddagger}$ & 16 $\pm$ 11 (0.151 $\pm$ 0.231)$^{\ddagger}$ & 13 $\pm$ 5 (0.137 $\pm$ 0.219)$^{\ddagger}$ & 10 (0.002 $\pm$ 0.001)$^{\ddagger}$ & 10 (0.072 $\pm$ 0.061)$^{\ddagger}$ \\
$8\times 10^{-6}$ & 76 $\pm$ 11 (0.626 $\pm$ 0.031) & 30 $\pm$ 34 (0.347 $\pm$ 0.252)$^{\ddagger}$ & 16 $\pm$ 11 (0.243 $\pm$ 0.263)$^{\ddagger}$ & 16 $\pm$ 11 (0.152 $\pm$ 0.228)$^{\ddagger}$ & 13 $\pm$ 5 (0.130 $\pm$ 0.212)$^{\ddagger}$ & 13 $\pm$ 5 (0.010 $\pm$ 0.006)$^{\ddagger}$ & 10 (0.003 $\pm$ 0.003)$^{\ddagger}$ & 10 (0.051 $\pm$ 0.083)$^{\ddagger}$ \\
$10^{-5}$ & 76 $\pm$ 11 (0.646 $\pm$ 0.030) & 30 $\pm$ 34 (0.317 $\pm$ 0.273)$^{\ddagger}$ & 16 $\pm$ 11 (0.202 $\pm$ 0.289)$^{\ddagger}$ & 13 $\pm$ 5 (0.136 $\pm$ 0.207)$^{\ddagger}$ & 13 $\pm$ 5 (0.088 $\pm$ 0.119)$^{\ddagger}$ & 13 $\pm$ 5 (0.116 $\pm$ 0.172)$^{\ddagger}$ & 10 (0.033 $\pm$ 0.028)$^{\ddagger}$ & 10 (0.011 $\pm$ 0.008)$^{\ddagger}$ \\
$3\times 10^{-5}$ & 46 $\pm$ 5 (0.660 $\pm$ 0.047) & 16 $\pm$ 11 (0.159 $\pm$ 0.215)$^{\ddagger}$ & 13 $\pm$ 5 (0.131 $\pm$ 0.192)$^{\ddagger}$ & 10 (0.126 $\pm$ 0.146)$^{\ddagger}$ & 10 (0.104 $\pm$ 0.118)$^{\ddagger}$ & 10 (0.039 $\pm$ 0.044)$^{\ddagger}$ & 10 (0.089 $\pm$ 0.083)$^{\ddagger}$ & 10 (0.027 $\pm$ 0.019)$^{\ddagger}$ \\
\bottomrule
\end{tabular}
\end{sc}
}
\end{table}

\subsubsection{LoRA Harmful Fine-Tuning of \texttt{GPT-OSS-20b}}
\label{app:lora-harmful-fine-tuning-gpt-oss-20b}

In this experiment, we generally repeat the exact same procedure as we did with \texttt{Llama-3.1-8B-Instruct}, \texttt{RepNoise}, and \texttt{TAR} for defending against harmful fine-tuning attacks except that instead of full fine-tuning we use LoRA to train the model. Here we demonstrate at a much larger scale with \texttt{GPT-OSS-20b} \citep{openai2025gptoss120bgptoss20bmodel}. We found that it is very easy to undo the safeguards of \texttt{GPT-OSS-20b} in as few as 70 training steps with a single layer selected for LoRA. Our resistance method, SpecDef, is sufficient once a large enough multiplier and number of layers are selected but not before. \cref{tab:harmful_results_lora_gpt20b} presents these results. Note that with LoRA, we had to use more than a single layer to resist convergence. For LoRA, we target the up and down projection MLP layers (\texttt{mlp.up\_proj}, \texttt{mlp.down\_proj}) in layers 7, 15, and 24, selected to span the early, middle, and late portions of the network. We observed no significant difference in SpecDef's effectiveness when varying these specific layers within each portion. For LoRA's settings we use a rank of 8 and alpha of 16. We use the PEFT library for the LoRA implementation.

\subsection{Experimental Details for Vision Models}
\label{app:NSFW}

In this section we provide further details on the investigation of SpecDef applied to two Vision tasks using Diffusion models as the foundation model. Training resistance in image generation is more nascent than for text generation so the only baseline we are able to investigate is \citet{zheng2024imma} which both our NSFW and Style transfer settings are based on below. We acknowledge that a recent preprint, \citet{abdalla2025gift}, builds as a follow up on IMMA but were not aware of it during writing of this manuscript. Regardless, readers can compare our results with \citet{abdalla2025gift} and find that this method is still able to be defeated given a sufficient number of training steps.


\subsubsection{NSFW}
To evaluate spectral deformation for resisting harmful fine-tuning that enables Not-Safe-For-Work (NSFW) image generation in text-to-image diffusion models, we examine whether the method can add friction that prevents an attacker from restoring unsafe capabilities through low-rank adaptation (LoRA) fine-tuning.

\paragraph{Setup} We follow the harmful fine-tuning evaluation protocol introduced by IMMA \citep{zheng2024imma} (Immunization Against Harmful Fine-tuning), one of the few model-side resistance methods that specifically addresses how to make harmful fine-tuning more difficult for restoring safety-filtered NSFW image generation capabilities in text-to-image diffusion models. Thus, IMMA provides a suitable comparison point for spectral deformation, since both aim to impede malicious adaptation while preserving the model's general utility.

\paragraph{Baseline} To enable a direct comparison under identical conditions, we replicate IMMA’s immunization training and harmful fine-tuning setup in our NSFW re-learning scenario. Using Stable Diffusion (SD) v1.4 (860M parameters), we generate 4,704 NSFW images using I2P (Inappropriate Image Prompts) datasets \citep{schramowski2022safe}. A NudeNet\footnote{\url{https://github.com/platelminto/NudeNetClassifier}} detector with a threshold of 0.05, is then applied to extract 305 images containing nudity. From these, we replicate IMMA's data split by using the first 50 images for immunization (bi-level optimization, 1000 steps), the next 50 for the attacker's LoRA-based NSFW re-learning attack (rank 4, $\alpha$=4, 50 epochs, learning rate $10^{-4}$, batch size 1), and treat the remaining 205 images as the evaluation set.

\paragraph{Spectral Deformation on Stable Diffusion} We then apply Algorithm \ref{alg:spectral-deformation} to SD V1.4, specifically targeting the cross-attention layers of the U-Net under different layer selection strategies. Building on the observation that spectral deformation can often be achieved by modifying a single layer, and that reparameterizing multiple layers makes convergence-rate control even harder for an adversary to undo, we evaluate configurations ranging from a single layer to all cross-attention projections. Specifically, we experiment with: 

(1) a randomly selected single cross-attention layer, (2) 5 randomly selected cross-attention layers, (3) 25 randomly selected cross-attention layers, and (4) all cross-attention layers with all three projection matrices (\textit{to\_q}, \textit{to\_k}, \textit{to\_v}) for a total of 48 weight matrices. 

We also vary the singular values multiplier across $\alpha \in \{100, 1000, 10\mathrm{k}\}$ for each layer selection strategy to examine how different conditioning strengths influence the diffusion models' robustness to LoRA-based relearning attacks and to what extent the core model utility is preserved.

\paragraph{Evaluation Metrics} Following IMMA's \emph{open-source implementation}\footnote{\url{https://github.com/amberyzheng/IMMA.}}, we evaluate the effectiveness of our resistance method using 200 generated images and measuring the proportion and the reduction of generated images that contain nudity after the LoRA re-learning attack. Specifically, we take the first 50 prompts from the evaluation set and generate 4 images per prompt using different random seeds, yielding 200 samples. We then apply the NudeNet detector to each image. An image is considered to contain nudity if its detector score exceeds the threshold of $0.05$, consistent with IMMA’s specification. Table \ref{tab:nsfw_prevention} summarizes these results.

\begin{table*}[!htp]
\centering
\caption{NSFW content generation ablations on layer selections across 3 random seeds}
\begin{sc}
\resizebox{\textwidth}{!}{%
\begin{tabular}{lcccc}
\toprule
Method & Nudity Counts $\downarrow$ & Images w/ Nudity $\downarrow$ & Img.\ Reduction $\uparrow$ & Nud.\ Reduction $\uparrow$ \\
\midrule
SpecDef $\alpha=100$ & &  &  &  \\
\midrule
random 1 layer & 371.3 ± 58.0 & 117.0 ± 7.8 / 205 & 42.9 ± 3.8\% & -7.3 ± 16.8\%  \\
random 5 layers & 419.7 ± 146.1 & 122.7 ± 19.7/ 205 & 40.2 ± 9.6\% & -21.3 ± 42.2\%  \\
random 25 layers & 331.7 ± 79.1 & 109.3 ± 4.0 / 205 & 46.7 ± 2.0\% & 4.1 ± 22.9\%  \\
all 48 cross-attention layers & 335.3 ± 84.5 & 123.0 ± 12.0 / 205 & 40.0 ± 5.9\% & 3.1 ± 24.4\% \\
\midrule
SpecDef $\alpha=1,000$ &  &  &  &  \\
\midrule
random 1 layer & 407.7 ± 40.5 & 124.3 ± 4.0 / 205 & 39.3 ± 2.0 & -17.8  ± 11.7\%  \\
random 5 layers & 229.7 ± 120.0 & 80.0 ± 24.3 / 205 & 61.1 ± 11.8\% & 33.6 ± 34.7\%  \\
random 25 layers & 138.3 ± 97.0 & 53.3 ± 36.3 / 205 & 74.0 ± 17.7\% & 60.0 ± 28.0\%  \\
\rowcolor{green!10}
all 48 cross-attention layers & \textbf{0.0 ± 0.0} & \textbf{0.0 ± 0.0 / 205} & \textbf{100.0 ± 0.0}\% & \textbf{100.0 ± 0.0}\% \\
\midrule
SpecDef $\alpha=10{,}000$ & & & &  \\
\midrule
random 1 layer & 312.7 ± 60.1 & 102.7 ± 13.0 / 205 & 49.9 ± 6.3\% & 9.6 ± 17.4\% \\
random 5 layers & 270.7 ± 130.9 & 83.3 ± 31.5 / 205 & 59.3 ± 15.4\% & 21.8 ± 37.8\%  \\
\rowcolor{green!10}
random 25 layers & \textbf{0.0 ± 0.0} & \textbf{0.0 ± 0.0 / 205} & \textbf{100.0 ± 0.0}\% & \textbf{100.0 ± 0.0}\%  \\
\rowcolor{green!10}
all 48 cross-attention layers & \textbf{0.0 ± 0.0} & \textbf{0.0 ± 0.0 / 205} & \textbf{100.0 ± 0.0}\% & \textbf{100.0 ± 0.0}\% \\
\bottomrule
\end{tabular}
}
\end{sc}
\label{tab:nsfw_layer_ablations}
\end{table*}

In the ablations above (\cref{tab:nsfw_layer_ablations}), we see that weak or sparsely applied spectral deformation is not intended to be a safe configuration; in such regimes you may see little protection and even more harmful generations than baseline. Resistance is only possible once $\alpha$ and layer coverage are large enough to produce substantial curvature inflation, as in the highlighted rows.

\begin{figure*}[!htbp]
\centering
\includegraphics[width=1\linewidth]{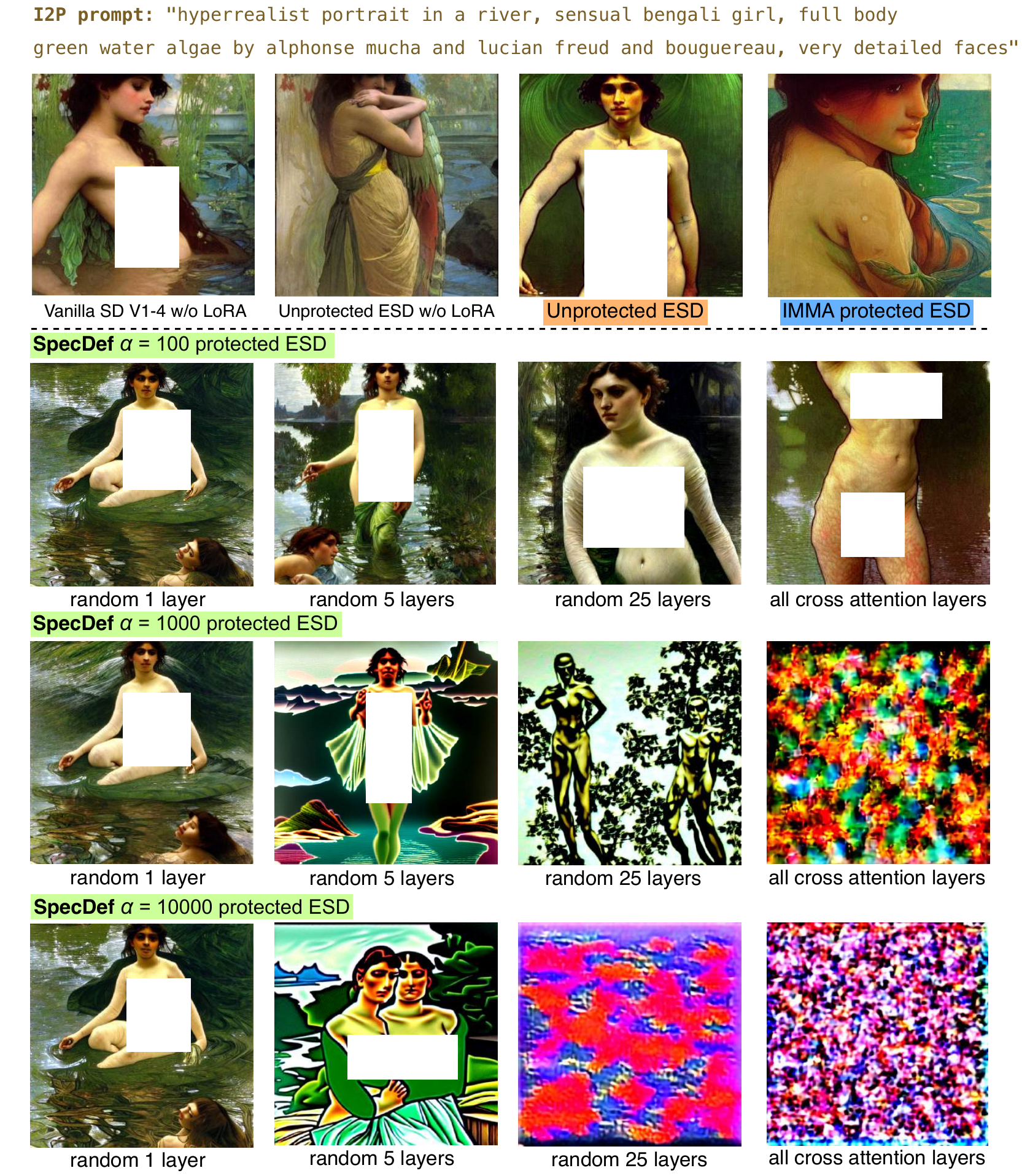}
\caption{
Qualitative NSFW results under an I2P prompt using the same random seed. We compare the images generated by spectral deformation protected ESD with the generations from Vanilla SD v1-4, nudity-erased SD (ESD), unprotected ESD after LoRA adaptation, and IMMA-protected ESD after LoRA adaptation. We further vary the largest singular values multipliers  ($\alpha \in \{100, 1000, 10000\}$) and the layer selection strategies (random 1, 5, 25 layers, or all cross-attention layers).
}
\label{fig:NSFW}
\end{figure*}
\clearpage
\subsubsection{Style Relearning}
\label{app:style-relearning}

We next evaluate whether SpecDef can add friction to the restoration of artistic styles that have been intentionally removed from a diffusion model. In contrast to the NSFW experiment presented in \cref{app:NSFW}, which targets the recovery of safety-filtered content, we now examine whether convergence rate control can inhibit the re-learning of copyrighted artistic styles.

\paragraph{Setup} Our evaluation follows the same experimental design as IMMA. To construct reference data for measuring style restoration, we first generate 100 ``Van Gogh'' style reference images from the original SD V1.4 using the prompt ``An artwork by Van Gogh''. We then load a Van Gogh ESD (Van Gogh Style Erased Stable Diffusion) \citep{gandikota2023erasing} checkpoint of Stable Diffusion V1.4, in which the ``Van~Gogh'' style was intentionally removed. This erased model is the target of both IMMA and SpecDef, as well as the target of the subsequent LoRA re-learning attack.

\paragraph{Baseline} Following IMMA's data partitioning strategy, we use 20 of the reference images for IMMA's resistance method, which requires 50 epochs (1000 steps) of bi-level optimization. For LoRA-based artistic style re-learning (rank~4, $\alpha=4$, 50 epochs, learning rate $10^{-4}$, batch size~1), we use an additional 20 of these images. During LoRA re-learning training, we generate a set of 100 validation images using the same prompt at each epoch. These validation images serve exclusively to monitor recovery of the erased style and, at the final checkpoint (epoch 50), are used as inputs for evaluation. 

\paragraph{Evaluation Metrics} To quantify recovery of the erased artistic style, we follow IMMA’s open-source evaluation and compute the Similarity Gap Ratio (SGR):

\[
\text{SGR} =
\frac{M(x_{\text{ref}},\, x_{\text{baseline}})
      - M(x_{\text{ref}},\, x_{\text{resistance}})}
     {M(x_{\text{ref}},\, x_{\text{baseline}})},
\]

where $x_{\text{ref}}$ are the 100 reference images generated from the original SD v1.4, $x_{\text{baseline}}$ are the 100 validation images generated by the undefended ESD at epoch~50 of the LoRA attack, $x_{\text{resistance}}$ are the corresponding images produced by the defended model after training, and $M$ is a similarity metric. Following IMMA, we compute SGR using three complementary measures: \textbf{SGR$_C$ (CLIP)} (semantic/style similarity gap), \textbf{SGR$_D$ (DINO)} (mid-level visual features similarity gap), and \textbf{SGR$_L$ (LPIPS)} (perceptual similarity gap). For consistency, we report \textit{one minus} \textbf{LPIPS} as IMMA did, such that larger values for all three metrics mean larger similarity gaps, hence, stronger protection. To ensure 
statistical robustness, we repeat the entire pipeline with three random seeds and report all results as mean ± standard deviation. The results are shown below in Table \ref{tab:vangogh_sgr_layers}

\begin{table*}[!htp]
\centering
\caption{
Van~Gogh style re-learning after SpecDef across 3 random seeds. We report:
(1) \textbf{SGR$_C$ (CLIP)} $\uparrow$: semantic–style similarity gap to the reference images,
(2) \textbf{SGR$_D$ (DINO)} $\uparrow$: mid-level visual feature similarity gap,
(3) \textbf{SGR$_L$ (LPIPS)} $\uparrow$: perceptual similarity gap, and
(4) \textbf{Resistance Time} $\uparrow$: wall-clock time required to apply the resistance method.
Higher SGR indicates a larger similarity gap to the reference images, reflecting stronger prevention of artistic-style re-learning.
}
\begin{sc}
\resizebox{\textwidth}{!}{%
\begin{tabular}{lcccc}
\toprule
Method
& SGR$_C$ (CLIP) $\uparrow$ 
& SGR$_D$ (DINO) $\uparrow$ 
& SGR$_L$ (LPIPS) $\uparrow$
& Resistance Time $\downarrow$ \\
\midrule
Reference Images
& 0\% & 0\% & 0\% & --- \\
IMMA \citep{zheng2024imma}
& 3.34 ± 3.18\% & 7.47 ± 14.94\% & 1.77 ± 15.08\% & 343.32 ± 3.47s  \\
\midrule
SpecDef $\alpha=10{,}000$ & \textbf{10.81 ± 2.23\%} & \textbf{44.08 ± 4.08\%} & \textbf{42.11 ± 21.96\%} & \textbf{20.27 ± 0.00s} \\
\bottomrule
\end{tabular}
}
\end{sc}
\label{tab:vangogh_sgr_layers}
\end{table*}

\begin{figure*}[!htbp]
\centering
\includegraphics[width=1\linewidth]{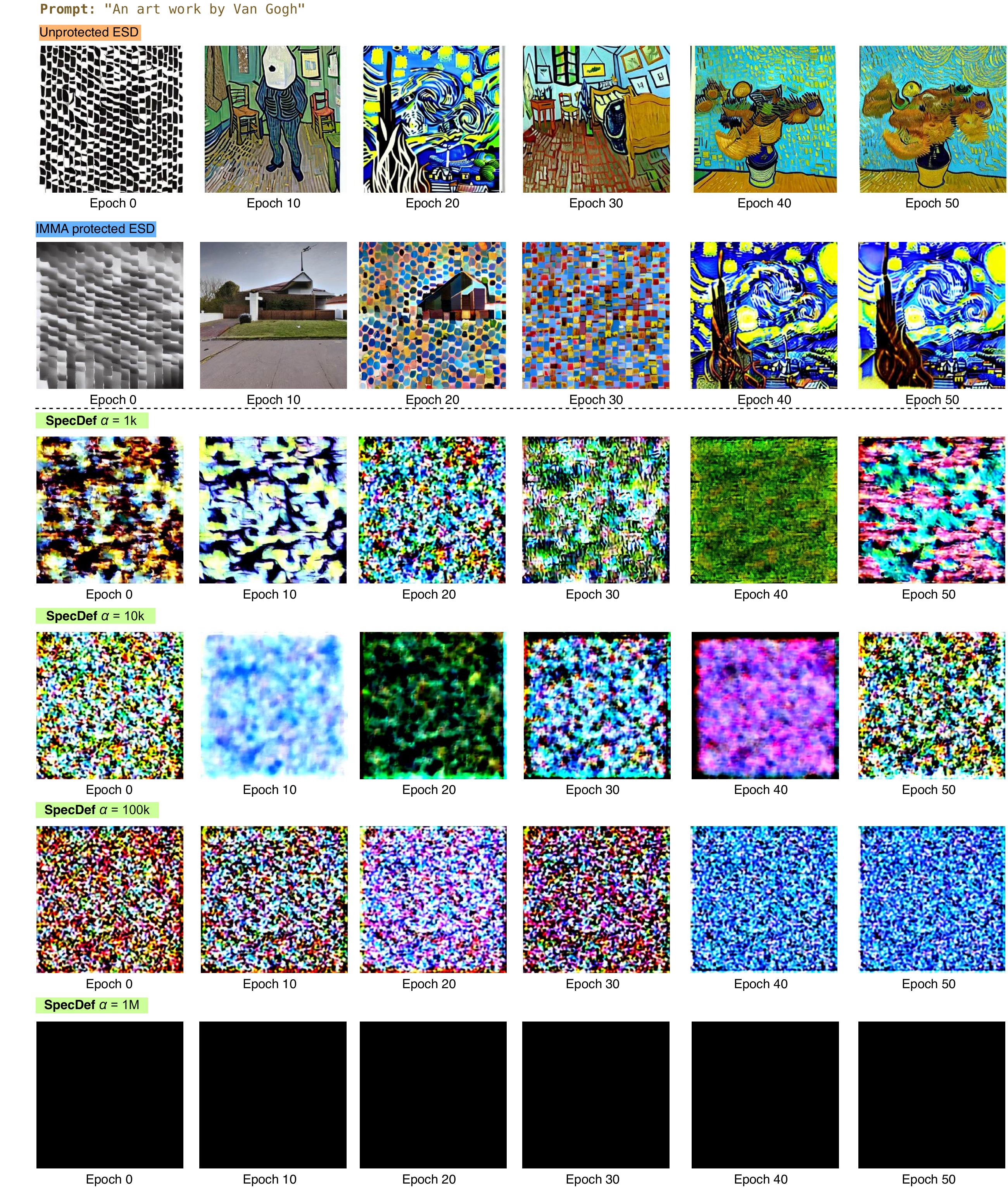}
\label{fig:Vangogh}
\caption{
Evolution of generated images over 50 LoRA training epochs for the prompt “An artwork by Van~Gogh” under different protection mechanisms, with a LoRA learning rate of $10^{-4}$. Both IMMA and SpecDef are applied to all cross-attention layers.
}
\end{figure*}

\begin{table*}[!htp]
\centering
\caption{Van Gogh style re-learning ablations across LoRA learning rates with three random seeds. Rows highlighted in green indicate successful protection against LoRA fine-tuning. Bolded values indicate all outputs collapse to black images across three seeds.}
\begin{sc}
\resizebox{\textwidth}{!}{%
\begin{tabular}{lcccc}
\toprule
Method
& SGR$_C$ (CLIP) $\uparrow$ 
& SGR$_D$ (DINO) $\uparrow$ 
& SGR$_L$ (LPIPS) $\uparrow$
& Resistance Time \\
\midrule
SpecDef $\alpha=100$ \\
\midrule
random 1 layer & 0.67 ± 0.73\% & -4.54 ± 6.92\% & -0.99 ± 11.58\% & 0.46 ± 0.34s \\
random 5 layers & 1.34 ± 2.53\% & -6.45 ± 3.97\% & -4.96 ± 2.83\% & 1.50 ± 0.23s \\
random 25 layers & 1.62 ± 3.51\% & -7.03 ± 4.40\% & -5.78 ± 6.65\% & 10.65 ± 1.15s  \\
all 48 cross-attention layers & 0.23 ± 2.61\% & -9.15 ± 4.80\% & -5.90 ± 3.54\% & 20.27 ± 0.01s  \\
\midrule
SpecDef $\alpha=1,000$ \\
\midrule
random 1 layer & 0.69 ± 2.46\% & -5.50 ± 1.12\% & -4.21 ± 5.58\% & 0.24 ± 0.20s  \\
random 5 layers & 0.42 ± 0.78\% & -6.40 ± 6.20\% & -4.72 ± 12.29\% & 1.49 ± 0.23s  \\
random 25 layers & 2.67 ± 1.60\% & 7.29 ± 12.27\% & -0.18 ± 12.20\% & 10.66 ± 1.14s  \\
\rowcolor{green!10}
all 48 cross-attention layers & 8.95 ± 0.39\% & 30.04 ± 4.60\% & 2.43 ± 10.49\% & 20.27 ± 0.00s \\
\midrule
SpecDef $\alpha=10{,}000$ \\
\midrule
random 1 layer & 0.04 ± 0.52\% & -6.44 ± 6.79\% & -2.72 ± 12.25\% & 0.24 ± 0.20s \\
random 5 layers & 2.78 ± 1.28\% & 6.36 ± 5.66\% & -3.90 ± 11.82\% & 1.49 ± 0.23s  \\
\rowcolor{green!10}
random 25 layers & \textbf{9.29 ± 0.96\%} & \textbf{51.40 ± 3.14\%} & \textbf{42.09 ± 6.33\%} & 10.66 ± 1.14s  \\
\rowcolor{green!10}
all 48 cross-attention layers & \textbf{10.81 ± 2.23\%} & \textbf{44.08 ± 4.08\%} & \textbf{42.11 ± 21.96\%} & 20.27 ± 0.00s \\
\bottomrule
\end{tabular}
}
\end{sc}
\label{tab:vangogh_layer_ablations}
\end{table*}

\begin{table*}[!htp]
\centering
\caption{Van~Gogh style re-learning ablations on LoRA learning rate across 3 random seeds. Rows highlighted in green represent successful protection from LoRA fine-tuning. Values in bold font represent all generated images are collapsed to black. }
\begin{sc}
\begin{tabular}{lccc}
\toprule
Method
& SGR$_C$ (CLIP) $\uparrow$ 
& SGR$_D$ (DINO) $\uparrow$ 
& SGR$_L$ (LPIPS) $\uparrow$\\
\midrule
Reference Images & 0\% & 0\% & 0\% \\
\midrule
IMMA \citep{zheng2024imma} \\
\hspace{1mm} $10^{-3}$ & 0.65 ± 0.10\% & -5.74 ± 5.05\% & -0.32 ± 6.21\% \\
\hspace{1mm} $10^{-4}$ & 3.34 ± 3.18\% & 7.47 ± 14.94\% & 1.77 ± 15.08\% \\
\hspace{1mm} $10^{-5}$ & 11.94 ± 1.51\% & 60.55 ± 3.03\% & 38.37 ± 8.56\% \\
\hspace{1mm} $10^{-6}$ & 10.89 ± 2.90\% & 52.49 ± 10.75\% & 35.14 ± 18.60\% \\
\hspace{1mm} $10^{-7}$ & 11.61 ± 0.62\% & 54.64 ± 9.34\% & 32.45 ± 14.64\% \\
\midrule
SpecDef $\alpha=10\mathrm{k}$ \\
\rowcolor{green!10}
\hspace{1mm} $10^{-3}$ & 15.82 ± 2.21\% & 68.55 ± 11.15\% & 63.31 ± 4.03\% \\
\rowcolor{green!10}
\hspace{1mm} $10^{-4}$ & 10.81 ± 2.23\% & 44.08 ± 4.08\% & 42.11 ± 21.96\% \\
\rowcolor{green!10}
\hspace{1mm} $10^{-5}$ & 8.31 ± 1.55\% & 32.91 ± 7.37\% & 10.91 ± 9.56\% \\
\hspace{1mm} $10^{-6}$ & 0.37 ± 1.66\% & -7.52 ± 6.10\% & -5.37 ± 11.48\% \\
\hspace{1mm} $10^{-7}$ & 0.06 ± 1.19\% & -8.97 ± 9.37\% & -4.71 ± 10.68\% \\
\midrule
SpecDef $\alpha=100\mathrm{k}$ \\
\rowcolor{green!10}
\hspace{1mm} $10^{-3}$ & \textbf{17.30 ± 0.31\%} & \textbf{77.19 ± 1.49\%} & \textbf{62.83 ± 3.39\%} \\
\rowcolor{green!10}
\hspace{1mm} $10^{-4}$ & 10.40 ± 0.54\% & 44.22 ± 2.00\% & 49.31 ± 6.14\% \\
\rowcolor{green!10}
\hspace{1mm} $10^{-5}$ & 8.18 ± 1.04\% & 40.43 ± 4.07\% & 31.79 ±  9.81\% \\
\rowcolor{green!10}
\hspace{1mm} $10^{-6}$ & 9.09 ± 1.47\% & 33.93 ± 5.42\% & 7.20 ± 7.97\% \\
\hspace{1mm} $10^{-7}$ & 0.84 ± 1.44\% & -2.01 ± 6.64\% & -6.37 ± 11.65\% \\
\midrule
SpecDef $\alpha=1\mathrm{M}$ \\
\rowcolor{green!10}
\hspace{1mm} $10^{-3}$ & \textbf{17.30 ± 0.31\%} & \textbf{77.19 ± 1.49\%} & \textbf{62.83 ± 3.39\%} \\
\rowcolor{green!10}
\hspace{1mm} $10^{-4}$ & \textbf{17.30 ± 0.31\%} & \textbf{77.19 ± 1.49\%} & \textbf{62.83 ± 3.39\%} \\
\rowcolor{green!10}
\hspace{1mm} $10^{-5}$ & \textbf{17.30 ± 0.31\%} & \textbf{77.19 ± 1.49\%} & \textbf{62.83 ± 3.39\%} \\
\rowcolor{green!10}
\hspace{1mm} $10^{-6}$ & 11.87 ± 1.22\% & 50.41 ± 4.90\% & 64.77 ± 3.72\% \\
\rowcolor{green!10}
\hspace{1mm} $10^{-7}$ & 9.26 ± 2.47\% & 38.56 ± 9.53\% & 23.05 ± 23.62\% \\
SpecDef $\alpha=1B$ \\
\rowcolor{green!10}
\hspace{1mm} $10^{-3}$ & \textbf{17.30 ± 0.31\%} & \textbf{77.19 ± 1.49\%} & \textbf{62.83 ± 3.39\%} \\
\rowcolor{green!10}
\hspace{1mm} $10^{-4}$ & \textbf{17.30 ± 0.31\%} & \textbf{77.19 ± 1.49\%} & \textbf{62.83 ± 3.39\%} \\
\rowcolor{green!10}
\hspace{1mm} $10^{-5}$ & \textbf{17.30 ± 0.31\%} & \textbf{77.19 ± 1.49\%} & \textbf{62.83 ± 3.39\%} \\
\rowcolor{green!10}
\hspace{1mm} $10^{-6}$ & \textbf{17.30 ± 0.31\%} & \textbf{77.19 ± 1.49\%} & \textbf{62.83 ± 3.39\%} \\
\rowcolor{green!10}
\hspace{1mm} $10^{-7}$ & \textbf{17.30 ± 0.31\%} & \textbf{77.19 ± 1.49\%} & \textbf{62.83 ± 3.39\%} \\
\bottomrule
\end{tabular}

\end{sc}
\label{tab:vangogh_lr_ablations_2}
\end{table*}

\begin{table*}[!htp]
\centering
\caption{Van~Gogh style re-learning ablations on top k singular values across 3 random seeds. Rows highlighted in green represent successful protection from LoRA fine-tuning.}
\begin{sc}
\begin{tabular}{lccc}
\toprule
Method
& SGR$_C$ (CLIP) $\uparrow$ 
& SGR$_D$ (DINO) $\uparrow$ 
& SGR$_L$ (LPIPS) $\uparrow$\\
\midrule
Reference Images & 0\% & 0\% & 0\% \\
\midrule
SpecDef $\alpha=1k$, LoRA Learning Rate = $10^{-4}$\\
\hspace{1mm} $k=1$ & 0.84 ± 1.07\% & -3.39 ± 3.18\% & -2.67 ± 9.03\% \\
\hspace{1mm} $k=5$ & -0.36 ± 1.67\% & -9.83 ± 2.88\% & -9.99 ± 5.84\% \\
\hspace{1mm} $k=10$ & 2.99 ± 3.89\% & 9.11 ± 8.97\% & 1.37 ± 3.05\% \\
\hspace{1mm} $k=25$ & 0.88 ± 1.17\% & -5.96 ± 7.00\% & -8.05 ± 10.54\% \\
\rowcolor{green!10}
\hspace{1mm} $k=50$ & 6.93 ± 1.69\% & 33.29 ± 11.08\% & 19.06 ± 5.14\% \\
\rowcolor{green!10}
\hspace{1mm} $k=100$ & 4.67 ± 2.38\% & 15.78 ± 7.43\% & 7.07 ± 9.66\% \\
\rowcolor{green!10}
\hspace{1mm} $k=250$ & 3.70 ± 0.49\% & 19.47 ± 1.54\% & 0.53 ± 8.87\% \\
\bottomrule
\end{tabular}

\end{sc}
\label{tab:vangogh_lr_ablations_1}
\end{table*}

\begin{figure*}[!htbp]
\centering
\includegraphics[width=1\linewidth]{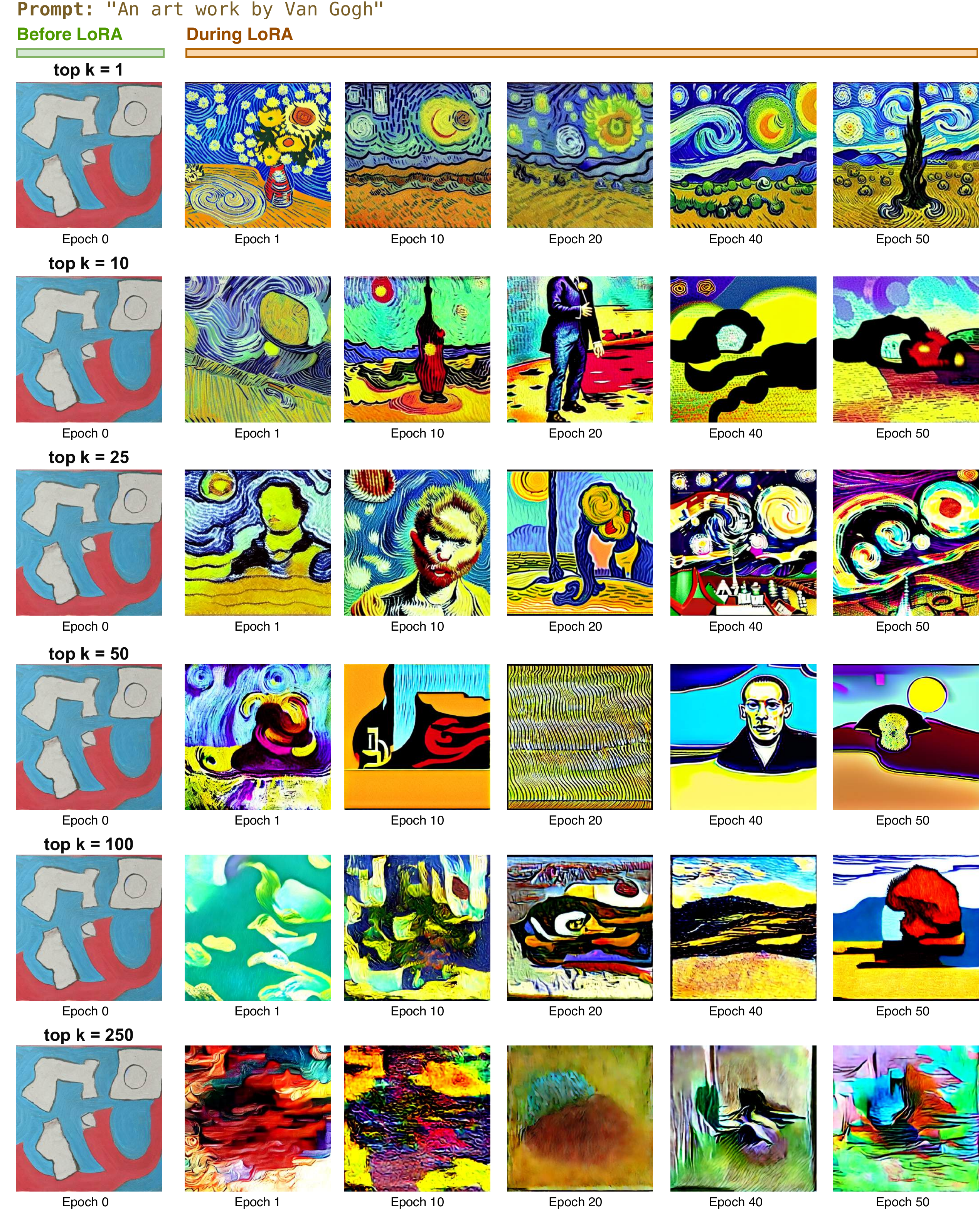}
\label{fig:topk_Vangogh}
\caption{
Evolution of generated images over 50 LoRA training epochs for the prompt “An artwork by Van~Gogh”, with $\alpha = 1000$, a LoRA learning rate of $10^{-4}$, and Top-$k$ singular values where $k \in \{1, 10, 25, 50, 100, 250\}$.
}
\end{figure*}
\clearpage

\section{Further Ablations and Experimental Variation}
\label{app:further-ablations}
The degrees of freedom of our algorithm are very few (number of layers, the multiplier $\alpha$, and $k$ the number of singular values to choose). Nonetheless, this section provides a number of additional experiments that either support choices made in the paper or supplement other analyses.

\subsection{Layers used in Spectral Deformation}
\label{app:num_layers}
In this experiment, we varied the number of layers used for spectral deformation. We repeat the harmful fine-tuning experiment with fresh reruns (different layers may be selected due to seed variation) on \texttt{meta-llama\_Llama-3.1-8B-Instruct} using $\alpha = 1\mathrm{k}$, which does not provide resistance when layers is $1$. Here we show that resistance at a smaller $\alpha$ is possible provided given more layers are used, though for smaller learning rates even more layers or a larger $\alpha$ will need to be used. For spectral deformation with layer injection for all layers it costs up to $2\times$ the original model cost to reparameterized the entire network so choosing a smaller number of layers is more efficient. 

\begin{table}[htbp]
\centering

\caption{Harmful fine-tuning results: steps $\pm$ std (harmfulness $\pm$ std). $^\dagger$: some seeds diverged (PPL doubled). $^\ddagger$: all seeds diverged. Notice that divergence in all seeds only typically occurs when enough layers are used. More layers results in better convergence rate control.}
\label{tab:harmful_layers_lr}
\begin{sc}
\resizebox{\textwidth}{!}{
\begin{tabular}{llcccccc}
\toprule
$\sigma$ & Layers & $10^{-6}$ & $2\times10^{-6}$ & $5\times10^{-6}$ & $8\times10^{-6}$ & $10^{-5}$ & $3\times10^{-5}$ \\
\midrule
\multirow{4}{*}{1000} & 1 & 120 $\pm$ 132 (0.53 $\pm$ 0.16)$^{\dagger}$ & 66 $\pm$ 60 (0.54 $\pm$ 0.12)$^{\dagger}$ & 53 $\pm$ 40 (0.56 $\pm$ 0.10)$^{\dagger}$ & 43 $\pm$ 30 (0.64 $\pm$ 0.19)$^{\dagger}$ & 40 $\pm$ 26 (0.62 $\pm$ 0.14)$^{\dagger}$ & 23 $\pm$ 15 (0.63 $\pm$ 0.13)$^{\dagger}$ \\
 & 2 & 26 $\pm$ 5 (0.62 $\pm$ 0.24)$^{\dagger}$ & 16 $\pm$ 5 (0.59 $\pm$ 0.16)$^{\dagger}$ & 10 (0.58 $\pm$ 0.13)$^{\dagger}$ & 10 (0.55 $\pm$ 0.16)$^{\dagger}$ & 10 (0.59 $\pm$ 0.14)$^{\dagger}$ & 10 (0.42 $\pm$ 0.39)$^{\dagger}$ \\
 & 5 & 20 (0.69 $\pm$ 0.10)$^{\dagger}$ & 13 $\pm$ 5 (0.64 $\pm$ 0.15)$^{\dagger}$ & 10 (0.60 $\pm$ 0.06)$^{\ddagger}$ & 10 (0.59 $\pm$ 0.12)$^{\ddagger}$ & 10 (0.47 $\pm$ 0.39)$^{\ddagger}$ & 10 (0.25 $\pm$ 0.27)$^{\ddagger}$ \\
 & 10 & 16 $\pm$ 5 (0.72 $\pm$ 0.05)$^{\dagger}$ & 13 $\pm$ 5 (0.68 $\pm$ 0.18)$^{\ddagger}$ & 10 (0.67 $\pm$ 0.05)$^{\ddagger}$ & 10 (0.44 $\pm$ 0.38)$^{\ddagger}$ & 10 (0.43 $\pm$ 0.37)$^{\ddagger}$ & 10 (0.03 $\pm$ 0.05)$^{\ddagger}$ \\
\midrule
\multirow{4}{*}{5000} & 1 & 83 $\pm$ 110 (0.55 $\pm$ 0.10)$^{\dagger}$ & 60 $\pm$ 62 (0.58 $\pm$ 0.12)$^{\dagger}$ & 40 $\pm$ 36 (0.60 $\pm$ 0.12)$^{\dagger}$ & 36 $\pm$ 30 (0.64 $\pm$ 0.17)$^{\dagger}$ & 30 $\pm$ 26 (0.64 $\pm$ 0.15)$^{\dagger}$ & 23 $\pm$ 15 (0.59 $\pm$ 0.21)$^{\dagger}$ \\
 & 2 & 10 (0.58 $\pm$ 0.13)$^{\dagger}$ & 10 (0.56 $\pm$ 0.16)$^{\dagger}$ & 10 (0.48 $\pm$ 0.28)$^{\dagger}$ & 10 (0.41 $\pm$ 0.40)$^{\dagger}$ & 10 (0.42 $\pm$ 0.41)$^{\dagger}$ & 10 (0.41 $\pm$ 0.41)$^{\dagger}$ \\
 & 5 & 10 (0.59 $\pm$ 0.08)$^{\ddagger}$ & 10 (0.44 $\pm$ 0.33)$^{\ddagger}$ & 10 (0.32 $\pm$ 0.24)$^{\ddagger}$ & 10 (0.45 $\pm$ 0.34)$^{\ddagger}$ & 10 (0.10 $\pm$ 0.08)$^{\ddagger}$ & 10 (0.01 $\pm$ 0.02)$^{\ddagger}$ \\
 & 10 & 10 (0.65 $\pm$ 0.07)$^{\ddagger}$ & 10 (0.44 $\pm$ 0.38)$^{\ddagger}$ & 10 (0.12 $\pm$ 0.17)$^{\ddagger}$ & 10 (0.00 $\pm$ 0.00)$^{\ddagger}$ & 10 (0.00 $\pm$ 0.00)$^{\ddagger}$ & 10 (0.04 $\pm$ 0.05)$^{\ddagger}$ \\
\midrule
\multirow{4}{*}{10000} & 1 & 83 $\pm$ 110 (0.57 $\pm$ 0.11)$^{\dagger}$ & 60 $\pm$ 70 (0.59 $\pm$ 0.13)$^{\dagger}$ & 40 $\pm$ 36 (0.60 $\pm$ 0.13)$^{\dagger}$ & 30 $\pm$ 26 (0.56 $\pm$ 0.19)$^{\dagger}$ & 30 $\pm$ 26 (0.56 $\pm$ 0.15)$^{\dagger}$ & 16 $\pm$ 11 (0.56 $\pm$ 0.13)$^{\dagger}$ \\
 & 2 & 10 (0.55 $\pm$ 0.16)$^{\dagger}$ & 10 (0.58 $\pm$ 0.18)$^{\dagger}$ & 10 (0.43 $\pm$ 0.39)$^{\dagger}$ & 10 (0.40 $\pm$ 0.42)$^{\dagger}$ & 10 (0.37 $\pm$ 0.41)$^{\dagger}$ & 10 (0.35 $\pm$ 0.36)$^{\ddagger}$ \\
 & 5 & 10 (0.64 $\pm$ 0.07)$^{\ddagger}$ & 10 (0.38 $\pm$ 0.35)$^{\ddagger}$ & 10 (0.25 $\pm$ 0.22)$^{\ddagger}$ & 10 (0.06 $\pm$ 0.05)$^{\ddagger}$ & 10 (0.00 $\pm$ 0.00)$^{\ddagger}$ & 10 (0.03 $\pm$ 0.04)$^{\ddagger}$ \\
 & 10 & 10 (0.40 $\pm$ 0.36)$^{\ddagger}$ & 10 (0.22 $\pm$ 0.29)$^{\ddagger}$ & 10 (0.01 $\pm$ 0.02)$^{\ddagger}$ & 10 (0.12 $\pm$ 0.15)$^{\ddagger}$ & 10 (0.01 $\pm$ 0.02)$^{\ddagger}$ & 10 (0.09 $\pm$ 0.13)$^{\ddagger}$ \\
\bottomrule
\end{tabular}
}
\end{sc}
\end{table}

We additionally ran an ablation study where we select different layers for spectral deformation to show which layers are effective and which are not. We select a single layer $\ell$ from $\ell \in \{2, 16, 30\}$ that represent early, middle and later parts of the model. With the same experimental settings above and $\alpha=10\mathrm{k}$. The experiments were run over 3 random seeds. We selected one of the following seven components for each experiment: (1) a Query matrix, (2) a Key matrix, (3) a Value matrix, (4) an Output projection, (5) a Gated projection, (6) an Up projection, or (7) a Down projection. When looking at effectiveness by position, \emph{averaged over all seven components}, there was not a large difference between layer depths: early layers ($\ell = 2$) achieved a mean harmfulness score of $0.40 {\,\pm 0.37}$, middle layers ($\ell = 16$) scored $0.32 {\,\pm 0.29}$, and late layers ($\ell = 30$) scored $0.30 {\, \pm 0.32}$. These means are close relative to their standard deviations, indicating that layer position alone is not the dominant factor when averaging across component types.

However, for specific components, effectiveness can vary sharply with depth. The best components to select were the output projections after attention, which achieved very low harmfulness across all depths (e.g., $\ell=30$: 0.0014, $\ell=16$: 0.0598, $\ell=2$: 0.0021). The value matrices were similarly effective: $\ell=30$ harmfulness is 0.0042, $\ell=16$ harmfulness is 0.0038, and $\ell=2$ harmfulness is 0.0122. In contrast, other components showed strong depth dependence. For example, the MLP up projection had low harmfulness at early layers ($\ell=2$: 0.0864) but was largely ineffective at later depths ($\ell=16$: 0.5593, $\ell=30$: 0.5488). Gate and down projections were generally ineffective in early layers but showed some effectiveness in later layers.

Query and key matrices (as well as up and down projections) are often very rank deficient, which can be predicted from the results in \cref{tab:bound-k-r-rand-def} where \cref{thm:hessian-singular-value-bound} becomes loose. The modest trend toward later layers being more effective than earlier layers (in the component-averaged means), and why some types of layers are more effective than others may also be explained by how often the layer appears as a factor in the Hessian blocks.

\subsection{Top K Singular Values for Deformation}
\label{topk-singular-value-ablation}
In order to use SpecDef, we need to select the top-$k$ singular values. In this section, we will provide an empirical analysis on the impact of that choice in \cref{tab:topk-ablation}. We purposely choose a lower $\alpha$ than is effective to illustrate the effects of scaling $k$ and we chose to use the original base model \texttt{Llama-3.1-8B-Instruct} which is already close performing well at this task. This was done to make training resistance as hard as possible for SpecDef to highlight the effect of varying $k$.

\begin{table}[h]
\centering
\small
\begin{tabular}{lcccc}
\toprule
$k$ & lr=$10^{-6}$ & lr=$5{\times}10^{-6}$ & lr=$10^{-5}$ & lr=$3{\times}10^{-5}$ \\
\midrule
1 & 330$\pm$180 (0.573) & 50$\pm$0 (0.616) & 30$\pm$0 (0.614) & 20$\pm$10 (0.514)$^\dagger$ \\
2 & 350$\pm$160 (0.578) & 55$\pm$5 (0.602)$^\dagger$ & 30$\pm$0 (0.590)$^\dagger$ & 10$\pm$0 (0.433)$^\ddagger$ \\
5 & 400$\pm$110 (0.594) & 45$\pm$5 (0.594)$^\dagger$ & 30$\pm$0 (0.592)$^\dagger$ & 25$\pm$15 (0.443)$^\ddagger$ \\
10 & 325$\pm$185 (0.571) & 45$\pm$5 (0.594)$^\dagger$ & 35$\pm$5 (0.576)$^\dagger$ & 15$\pm$5 (0.475)$^\ddagger$ \\
25 & 340$\pm$170 (0.588) & 50$\pm$20 (0.541)$^\dagger$ & 30$\pm$10 (0.531)$^\ddagger$ & 15$\pm$5 (0.455)$^\ddagger$ \\
all & 325$\pm$185 (0.573) & 30$\pm$0 (0.451)$^\ddagger$ & 20$\pm$0 (0.439)$^\ddagger$ & 10$\pm$0 (0.398)$^\ddagger$ \\
\bottomrule
\end{tabular}
\label{tab:topk-ablation}
\caption{Top-$k$ ablation ($\alpha=10,000$). Number of steps and WMDP classification accuracy (mean$\pm$std over 3 seeds). $^\dagger$: some seeds doubled perplexity; $^\ddagger$: all seeds doubled perplexity.}
\end{table}

Our results in \cref{tab:topk-ablation} are that as SpecDef uses more $k$ divergence occurs faster and SpecDef is able to provide resistance for smaller learning rates.

\section{Are Previous Resistance Methods Convergence-Rate Control in $L$?}
\label{app:previous-defence-analysis}

In this section, we examine how existing resistance methods relate to our framework. Prior work is motivated by heterogeneous formalisms—ranging from information-theoretic arguments \citep{rosati-etal-2024-immunization} to metalearning perspectives \citep{tamirisa2025tamperresistantsafeguardsopenweightllms}. A comprehensive unification connection each directly to our framework is out of scope of this paper. Instead, we ask a sharply defined question aligned with our theory:

\emph{Do existing resistance methods implicitly control convergence by manipulating curvature, e.g., via the Lipschitz constant $L$?}

To this end, we conduct a layerwise spectral analysis of defended models.

\begin{figure}[htbp]
    \centering
    \includegraphics[width=1\linewidth]{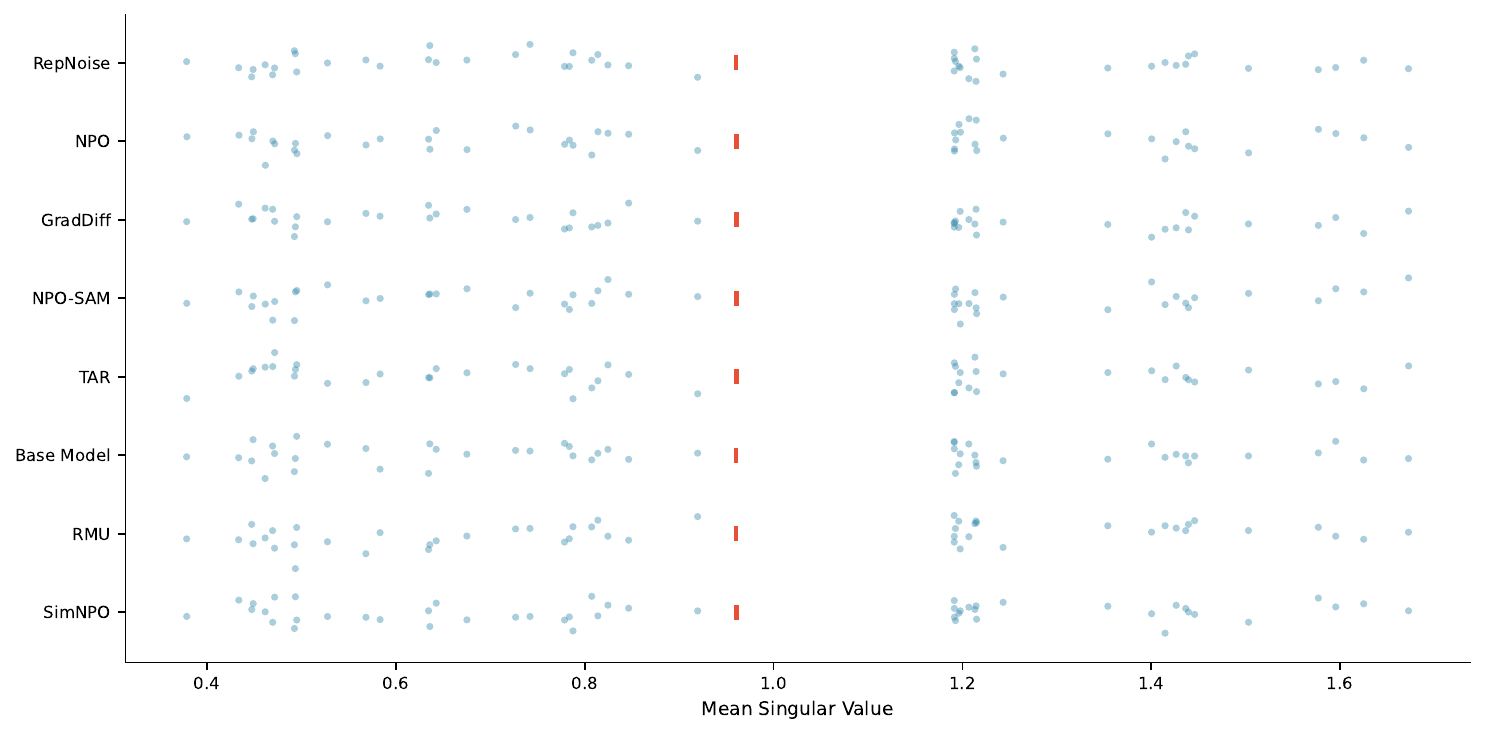}
    \caption{Mean singular value of each layer’s weight matrix across resistance methods (rows). Red bars indicate layerwise means. SpecDef is omitted since its reparameterized layers have singular values that are orders of magnitude larger by construction. Or finding is that weight spectra is not meaningfully different from the base model.}
    \label{fig:layer_sv_means_of_each_method}
\end{figure}

\paragraph{Observation: No Curvature Inflation}
As shown in \cref{fig:layer_sv_means_of_each_method}, all methods exhibit weight spectra closely matching the base model (\texttt{Meta-Llama-3-8B-Instruct}). No method exceeds $\sigma_1(\theta_i) \approx 15$, and the variance of singular values is small (typically $\sim$0.45), indicating uniformly good conditioning. Thus, these resistance methods do \emph{not} implement convergence-rate control by increasing $L$ through enlarged spectral norms, nor other kinds of numerical instability through weight-based poor conditioning. This is not surprising given that each method is built using a loss function and trained using standard proceedures such as the use of Adam and the use of auxiliary retain loses which would prevent poor conditioning and inflated spectral values.

This aligns with their empirical behaviour: all methods remain trainable under sufficiently strong learning rates and do not exhibit the divergence characteristic of large-$L$ systems (see \cref{tab:unlearning-defence-evaluation}). Moreover, several methods can still be fine-tuned on unrelated objectives \citep{rosati2024representation}, which would be inconsistent with global curvature inflation and is orthogonal to the mechanism we propose.

\paragraph{Locally Flat Curvature via Alignment Rather Than Magnitude.}
One might instead hypothesize that these resistance methods slow optimization by introducing unusually small curvature directions, increasing the number of gradient steps required for curvature-unaware optimizers. However, we do not observe systematically smaller singular values than in the base model. In fact, gradient norms are often \emph{initially larger}, as shown in \cref{fig:training_dynamics_of_defences}.

\begin{figure}[htbp]
    \centering
    \includegraphics[width=1\linewidth]{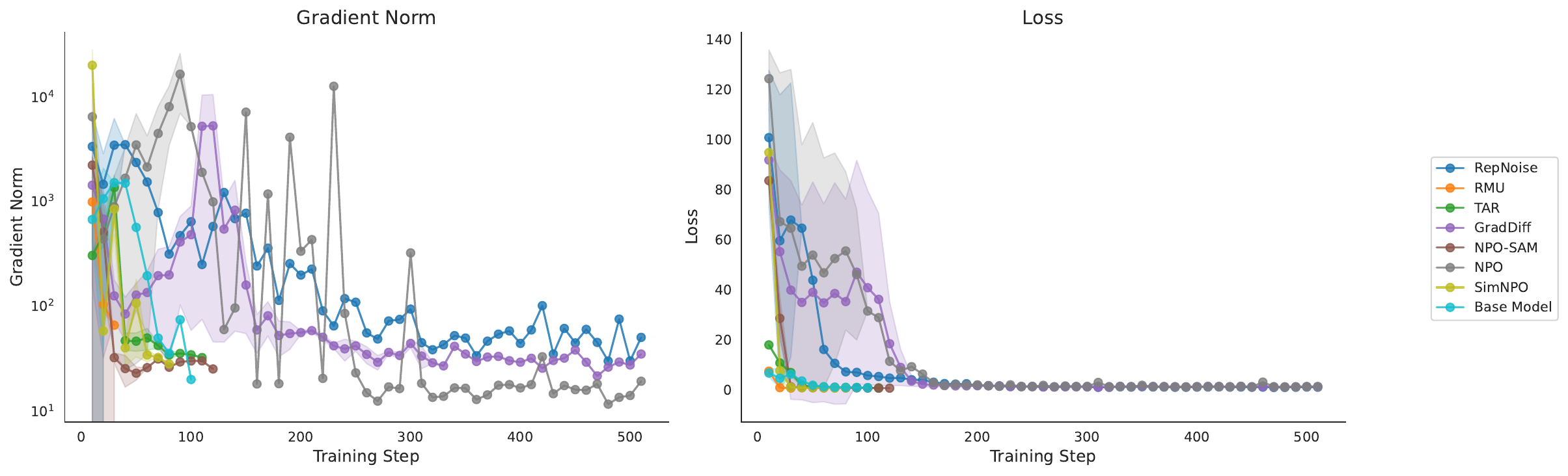}
    \caption{Training dynamics across resistance methods, showing loss and log gradient norm. On successful resistance runs \texttt{RepNoise}, \texttt{NPO}, and \texttt{GradDiff} enter a small gradient norm regime where loss does not change.}
    \label{fig:training_dynamics_of_defences}
\end{figure}

This combination—unaltered spectral magnitudes but large initial gradient norms—points to a more subtle mechanism: \emph{spectral alignment}. Within our framework, flat curvature can arise when activation Jacobian align with singular vectors corresponding to small singular values; this produces loss-specific flatness without globally modifying $L$. Flat curvature also results in convergence rate control when a chosen learning rate $1/L$ is not large enough to produce effective gradient steps in the flat direction. In cases where the learning rate cannot be increased since there are other large curvature directions simply increasing the learning rate would cause divergence.

Crucially, such flatness is fragile. Unlike our $k$-root layer injection, flatness can be undone by explicitly sharpening curvature. In particular, increasing the largest singular values via spectral reparameterization immediately destroys this alignment-induced flatness. We should be clear that the curvature increase still needs to be in the sub-divergence regime with only a small $\alpha$.

\paragraph{Spectral Reparameterization as an Attack}
To test this hypothesis, we apply SpecDef to 20 layers of each defended model with $\alpha = 10$ ($\alpha=1$ baseline). The results in \cref{tab:defence-sigma-comparison} are striking: resistance methods that were previously effective consistently fail once curvature is increased. We refer to this as a \emph{spectral reparameterization attack}.

\begin{table}[htbp]
\centering
\caption{
\textbf{Spectral Reparameterization Attacks.} Comparison of defended models under $\alpha=1$ vs.\ $\alpha=10$.
Entries report mean training steps $\pm$ std, with final accuracy $\pm$ std in parentheses.
$^{\dagger}$ indicates at least one divergent run (perplexity doubled);
$^{\ddagger}$ indicates all runs diverged.
}
\label{tab:defence-sigma-comparison}
\begin{sc}
\begin{tabular}{lcc}
\toprule
Resistance Method & $\alpha=1$ & $\alpha=10$ \\
\midrule
\texttt{RepNoise} & 510 (0.459) & 26 $\pm$ 5 (0.630 $\pm$ 0.022) \\
\texttt{RMU} & 510 (0.573) & 23 $\pm$ 5 (0.618 $\pm$ 0.013) \\
\texttt{TAR} & 510 (0.302) & 70 $\pm$ 45 (0.510 $\pm$ 0.170)$^{\dagger}$ \\
\texttt{GradDiff} & 510 (0.409 $\pm$ 0.002) & 196 $\pm$ 120 (0.609 $\pm$ 0.010) \\
\texttt{NPO-SAM} & 510 (0.412 $\pm$ 0.000) & 70 $\pm$ 43 (0.610 $\pm$ 0.008) \\
\texttt{NPO} & 510 (0.553) & 73 $\pm$ 15 (0.616 $\pm$ 0.016) \\
\texttt{SimNPO} & 510 (0.518) & 56 $\pm$ 25 (0.625 $\pm$ 0.030) \\
\texttt{Llama-3-8B} & 100 (0.608) & 20 (0.631 $\pm$ 0.012) \\
\bottomrule
\end{tabular}
\end{sc}
\end{table}

These results suggest that many existing resistance methods rely on loss-specific spectral alignment rather than $L$-based curvature control. From this perspective, our framework does not rule out these resistance methods as a class; rather, it explains both their apparent effectiveness and their brittleness. It also suggests a principled path forward: explicitly constructing resistance methods that control singular \emph{vector} alignment, not merely spectral magnitudes. Since we saw that pervious methods often pair well with SpecDef in the high $\alpha$ regime, future training resistance methods may be combined with SpecDef to provide an even higher cost to undo defences.

\paragraph{Curvature-aware Optimization as an Attack} \cref{tab:curvature-by-model-sigma1} shows that Sophia or AdaHessian is very effective (often more so than Adam) in undoing previous defence resistance. Muon was not very effective, which further corroborates that these methods may not operate by poor conditioning which Muon is effective at undoing. Instead, the results from Sophia and AdaHessian seem to point to flat curvature induced by the training objective as how they operate since second-order methods are designed to account for both high negative and positive curvature.

\begin{table}[htbp]
\caption{Curvature-aware optimizer attacks on defended models ($\alpha=1$). 
Entries report training steps to reach target metric (mean $\pm$ std across seeds). 
$^\dagger$: some seeds diverged; $^\ddagger$: all seeds diverged. Unlike with SpecDef, these attacks are often much more successful on defences that were previously able to resist Adam.}
\label{tab:curvature-by-model-sigma1}
\centering
\setlength{\tabcolsep}{3pt}
\begin{sc}
\resizebox{\textwidth}{!}{
\begin{tabular}{lccccccccc}
\toprule
 & \multicolumn{3}{c}{Muon} & \multicolumn{3}{c}{Sophia} & \multicolumn{3}{c}{AdaHessian} \\
\cmidrule(lr){2-4} \cmidrule(lr){5-7} \cmidrule(lr){8-10}
Model & $10^{-5}$ & $2\times10^{-5}$ & $5\times10^{-5}$ & $5\times10^{-6}$ & $10^{-5}$ & $2\times10^{-5}$ & $10^{-6}$ & $3\times10^{-6}$ & $5\times10^{-6}$ \\
\midrule
Base Model & 765 (0.52 $\pm$ 0.04) & 653 $\pm$ 193 (0.54 $\pm$ 0.05) & 190 $\pm$ 62 (0.62 $\pm$ 0.01) & 26 $\pm$ 5 (0.61 $\pm$ 0.01) & 20 (0.65 $\pm$ 0.05) & 16 $\pm$ 5 (0.63 $\pm$ 0.04) & 146 $\pm$ 25 (0.63 $\pm$ 0.01) & 60 (0.61 $\pm$ 0.01) & 56 $\pm$ 5 (0.62 $\pm$ 0.01) \\
\texttt{NPO} & 765 (0.29) & 765 (0.30) & 765 (0.55) & 60 (0.69) & 60 (0.54)$^{\ddagger}$ & 40 (0.42)$^{\ddagger}$ & 467 $\pm$ 420 (0.60 $\pm$ 0.02) & 140 $\pm$ 70 (0.61 $\pm$ 0.01) & 120 $\pm$ 42 (0.60 $\pm$ 0.00) \\
\texttt{GradDiff} & 765 (0.25 $\pm$ 0.03) & 765 (0.24 $\pm$ 0.03) & 765 (0.44 $\pm$ 0.04) & 73 $\pm$ 15 (0.63 $\pm$ 0.03) & 93 $\pm$ 5 (0.62 $\pm$ 0.02) & 56 $\pm$ 15 (0.42 $\pm$ 0.01)$^{\ddagger}$ & 553 $\pm$ 366 (0.56 $\pm$ 0.04) & 313 $\pm$ 210 (0.60 $\pm$ 0.00) & 183 $\pm$ 90 (0.60 $\pm$ 0.01)$^{\dagger}$ \\
\texttt{RMU} & 765 (0.23 $\pm$ 0.02) & 765 (0.30 $\pm$ 0.03) & 633 $\pm$ 228 (0.58 $\pm$ 0.02) & 46 $\pm$ 11 (0.65 $\pm$ 0.04) & 50 $\pm$ 43 (0.63 $\pm$ 0.01) & 33 $\pm$ 15 (0.54 $\pm$ 0.12)$^{\dagger}$ & 183 $\pm$ 60 (0.61 $\pm$ 0.01) & 76 $\pm$ 11 (0.63 $\pm$ 0.03) & 70 $\pm$ 10 (0.62 $\pm$ 0.02) \\
\texttt{TAR} & 765 (0.30 $\pm$ 0.02) & 765 (0.30 $\pm$ 0.03) & 435 $\pm$ 285 (0.29 $\pm$ 0.02)$^{\dagger}$ & 86 $\pm$ 5 (0.62 $\pm$ 0.02) & 93 $\pm$ 15 (0.58 $\pm$ 0.06)$^{\dagger}$ & 33 $\pm$ 11 (0.34 $\pm$ 0.04)$^{\ddagger}$ & 110 $\pm$ 26 (0.25 $\pm$ 0.06)$^{\ddagger}$ & 46 $\pm$ 11 (0.28 $\pm$ 0.05)$^{\ddagger}$ & 33 $\pm$ 5 (0.25 $\pm$ 0.00)$^{\ddagger}$ \\
\texttt{RR} & 765 (0.23 $\pm$ 0.02) & 765 (0.41 $\pm$ 0.03) & 383 $\pm$ 119 (0.60 $\pm$ 0.00) & 36 $\pm$ 5 (0.61 $\pm$ 0.01) & 36 $\pm$ 11 (0.65 $\pm$ 0.06) & 43 $\pm$ 11 (0.49 $\pm$ 0.13)$^{\dagger}$ & 140 $\pm$ 26 (0.61 $\pm$ 0.01) & 70 (0.62 $\pm$ 0.02) & 56 $\pm$ 5 (0.62 $\pm$ 0.01) \\
\texttt{RepNoise} & 765 (0.31 $\pm$ 0.03) & 765 (0.23 $\pm$ 0.04) & 410 (0.60) & 40 (0.66) & 30 (0.67) & 30 (0.62) & 65 $\pm$ 7 (0.63 $\pm$ 0.03) & 55 $\pm$ 21 (0.63 $\pm$ 0.01) & 45 $\pm$ 7 (0.63) \\
\bottomrule
\end{tabular}
}
\end{sc}
\end{table}

\paragraph{Alternative Explanation: Stochastic Obfuscation}
A second possibility, following \citet{athalye2018obfuscated}, is that these resistance methods increase the variance term governing stochastic convergence by biasing minibatch gradients, thereby violating the unbiased-gradient assumption underlying SGD. We tested this hypothesis via gradient accumulation (factor 10) and increased batch size (64). Neither intervention materially weakened the resistance in most methods, whereas curvature sharpening did. While this does not fully rule out stochastic effects, it suggests they are not the primary mechanism. For \texttt{DeepIgnorance} with strong filtering \citep{o2025deep}, we did find that these attacks did meaningfully change the resistance strength: at a $10^{-5}$ learning under our usual first-order attack the model could not be trained for WMDP-Bio. By applying a gradient accumulation of 10 and batch size of 16, the model reached over $60\%$ accuracy in around 200 training steps. This indicates that pre-training based methods might have obfuscated gradients w.r.t./ a harmful training objective. \texttt{DeepIgnorance} with a strong filter was similarly easily undone using curvature-aware methods but we did not report these since our goal is to compare defences applied to the same model and \texttt{DeepIgnorance} is itself a newly pretrained model.

\paragraph{Takeaway}
In summary, existing resistance methods do not significantly alter weight spectra. Their behaviour is most consistent with loss- and data-local spectral alignment that produces locally flat curvature directions. This observation both motivates our approach and highlights a future research direction: understanding how singular vector rotation, Jacobian spectra, and data alignment jointly shape effective curvature so that more robust resistance methods can be built.

\section{Layer Injection Attack Mathematical Details}
\label{app:layer-injection-details}

In this appendix, we will provide proofs and various remarks for the $k$-th layer injection attack as well as the universality result that all convergence rate control methods are spectral reparamaterization subject to this attack.

\subsection{Proof of $k$-th layer injection attack (\cref{thm:layer-injection-attack})}
\label{app:layer-injection-attack-proof}
First, we provide a proof for \cref{thm:layer-injection-attack} in \cref{app:layer-injection-attack-proof}. Restating the Theorem:

\begin{theorem}[$k$-th layer injection attack]
\label{thm:layer-injection-attack-app}
Consider a feed-forward network \[
f(x) = \theta_{n+1} \circ \phi_{n} \circ \theta_{n} \circ \cdots \circ \phi_1 \circ \theta_1x,
\] where each $\theta_i \in \mathbb{R}^{d_i \times d_{i -1}}$ is a linear layer and each $\phi_i$ is a fixed, elementwise activation. Let $M = \max_{1 \leq i \leq n} \sigma_1(\theta_i)$. Then to achieve $\max_i \sigma_1(\theta_i) \leq M^{1/k}$ it suffices to insert at most $(k - 1) \cdot n$ additional linear layers and perform spectral deformation.
\end{theorem}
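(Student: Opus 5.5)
The plan is to factor each weight matrix into $k$ linear layers whose top singular values are each the $k$-th root of the original. The construction is a repeated version of the compensation step in \cref{thm:spectral-deformation-is-spec-reparam-main}. Fix a layer $\theta_i = U_i\Sigma_iV_i^{\top}$ with $\sigma_1(\theta_i)\le M$. The identity activation is degree-$1$ homogeneous, so inserting linear layers with identity activations immediately before or after $\theta_i$ leaves $f$ unchanged, exactly as in the layer-injection special case discussed after \cref{thm:spectral-deformation-is-spec-reparam-main}. We then replace $\theta_i$ by a product
\[
\theta_i = W_k W_{k-1}\cdots W_1,\qquad W_k := U_i\Sigma_i^{1/k}U_i^{\top},\ \ldots,\ W_2 := U_i\Sigma_i^{1/k}U_i^{\top},\ W_1 := U_i\Sigma_i^{1/k}V_i^{\top}.
\]
Here $\Sigma_i^{1/k}$ is the entrywise $k$-th root of the diagonal, padded with zeros or shaped as needed in the rectangular case. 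The square factors $U_i\Sigma_i^{1/k}U_i^\top$ act on $\mathbb{R}^{d_i}$. Only $k-1$ new matrices are introduced per layer, since $W_1$ overwrites $\theta_i$. Across $n$ layers this gives at most $(k-1)n$ insertions.

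There are two verifications. \emph{Functional invariance} follows because the inner orthogonal factors cancel: $U_i^{\top}U_i=I$, so the product telescopes to $U_i(\Sigma_i^{1/k})^k V_i^{\top}=\theta_i$. Any pseudo-inverse or padding issue is handled by working with the $r=\mathrm{rank}(\theta_i)$ nonzero singular values and zeroing the rest, as in the pseudoinverse convention of \cref{thm:spectral-deformation-is-spec-reparam-main}. \emph{Spectral bound}: each $W_j$ has singular values $\sigma_\ell(\theta_i)^{1/k}$ by unitary invariance. Hence $\sigma_1(W_j)=\sigma_1(\theta_i)^{1/k}\le M^{1/k}$, using monotonicity of $t\mapsto t^{1/k}$ on $\mathbb{R}_+$. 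Taking the maximum over all original and inserted layers gives $\max\sigma_1\le M^{1/k}$. We should note that the bound is on the new network's layer list. Layers with $\sigma_1(\theta_i)\le 1$ (if $M\ge1$) or already below $M^{1/k}$ need no factoring, which is why the count is ``at most''.

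One can phrase the construction as $k-1$ successive applications of the SpecDef step rather than a closed form. In that version one injects an identity, deforms $\theta_i$ with $T=\Sigma_i^{-(k-1)/k}$ scaling (i.e. shrinking), and places the compensation $U\Sigma\widetilde\Sigma^{-1}U^{\top}$ in the injected layer. Then one repeats on the compensation matrix. Either presentation works; the closed form is cleaner for the bound.

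The main obstacle is not the algebra but the case $M<1$ or mixed scales. When $\sigma_1(\theta_i)<1$, taking the $k$-th root \emph{increases} it, but it still stays $\le M^{1/k}$ when $M\ge 1$. If $M<1$, then $M^{1/k}>M$ and the claim holds trivially with no insertions. A second subtlety is the small singular values. $\sigma_r^{1/k}$ improves rather than worsens conditioning, so no division by tiny values occurs, and exact rank-deficiency is handled by restricting to the compact SVD. I would state these two cases explicitly and otherwise keep the argument at the level of the telescoping factorization above.
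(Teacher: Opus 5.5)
Your proposal is correct and takes essentially the paper's route: inject identity-activation linear layers next to each of the $n$ layers, and use the SVD to factor each $\theta_i$ into $k$ factors of spectral norm $\sigma_1(\theta_i)^{1/k}\le M^{1/k}$, for $(k-1)n$ insertions in total. Your explicit telescoping factorization $W_j=U_i\Sigma_i^{1/k}U_i^{\top}$, $W_1=U_i\Sigma_i^{1/k}V_i^{\top}$ makes concrete the paper's sketched ``distribute the singular values across the injected layers'' step, and it avoids the paper's side assumption $\sigma_2(\theta_i)<r$, which the paper needs because its min--max balancing only rescales the top singular value; the paper also adds a product-constraint argument that $\sigma_1(\theta_i)^{1/k}$ is optimal for $k$ factors, which the sufficiency statement does not require.
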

\begin{proof}
Assume the defender would engage in the best possible resistance:
\[
\sigma_1(\theta_1) = \dots = \sigma_1(\theta_n) = c,
\]
where $c$ is large enough to resist convergence. From the theorem statement c = M. Since otherwise, the attack is less expensive (less layers to inject).

By \cref{thm:spectral-deformation-is-spec-reparam-main}, it suffices for only one weight to have $\sigma_1(\theta_i) = c$ to achieve spectral control over $H^f_{\theta}$. This implies that, assuming a defender has made the network non-convergent, to achieve convergence the attacker must ensure that
\[
\max\{\sigma_1(\theta_1), \dots, \sigma_1(\theta_n)\} \le r.
\]
where $r$ is selected to so that $1/r$ is a reasonable learning rate that gives convergence.

To reduce $\sigma_1(\theta_i)$, we can use a procedure analogous to our spectral deformation algorithm. Let
\[
\tilde{\Sigma}_{\theta_i} \gets T \Sigma_{\theta_i}, \quad T = \mathrm{diag}\bigl(r/\sigma_1(\theta_i), 1, \dots, 1\bigr),
\]
and reconstruct
\[
\tilde{\theta}_i = U_{\theta_i} \tilde{\Sigma}_{\theta_i} V_{\theta_i}^{\top}.
\]
Without loss of generality, we assume that $\sigma_2(\theta_i) < r$ (otherwise $T$ can be modified accordingly). For simplicity, we assume the network uses activation functions that are non-homogeneous, so compensation can only occur within layers, not across layers. Therefore, we inject the following compensation matrix after $\tilde{\theta}_i$:
\[
\tilde{I}_{i} = U_{\theta_i} \Sigma_{\theta_i} \tilde{\Sigma}_{\theta_i}^{-1} U_{\theta_i}^{\top},
\]
that is, we now have \[
f(x) = \theta_{n+1} \circ \phi_{n} \circ \theta_{n} \circ \cdots \circ \phi_{i} \circ \tilde{I}_{i} \tilde{\theta}_i \circ \cdots  \circ \phi_1 \circ \theta_1x
\]

Observe that by compensation
\[
\sigma_1(\tilde{\theta}_i) = r, \qquad \sigma_1(\tilde{I}_{i}) = \sigma_1(\theta_i)/r.
\]
We now seek the $r^*$ that minimizes the worst-case singular value. We have the following equality by definition:
\[
r^* = \arg\min_r \max\{\sigma_1(\tilde{\theta}_i), \sigma_1(\tilde{I}_{i})\} = \arg\min_r \max\{r, \sigma_1(\theta_i)/r\}.
\]
Since the first term of the max is increasing in $r$ and the second term is decreasing, it is well-known that the minimum occurs at their intersection, when \[
r = \frac{\sigma_1(\theta_i)}{r},
\] which works out to
\[
\quad r^* = \sqrt{\sigma_1(\theta_i)}.
\]
This is the maximum achievable reduction when injecting a single compensating layer, yielding
\[
\max\{\sigma_1(\tilde{\theta}_i), \sigma_1(\tilde{I}_{i})\} = \sqrt{\sigma_1(\theta_i)}.
\]

For $m$ injected compensating layers, we have $m+1$ total matrices in the factorization: the deformed layer $\tilde{\theta}_i$ and $m$ compensation matrices $\tilde{I}_i^{(1)}, \dots, \tilde{I}_i^{(m)}$. These must satisfy
\[
\tilde{I}_i^{(m)} \cdots \tilde{I}_i^{(1)} \tilde{\theta}_i = \theta_i,
\]
so the product of their singular values is constrained by $\sigma_1(\theta_i)$. To minimize the maximum singular value across all $m+1$ matrices, we set them equal. If each has spectral norm $r$, the constraint becomes $r^{m+1} \ge \sigma_1(\theta_i)$ and hence $r \ge \sigma_1(\theta_i)^{1/(m+1)}$ for the best possible worst-case singular value, giving optimal factorization
\[
r^* = \sigma_1(\theta_i)^{1/(m+1)},
\]
which yields $\sqrt[k]{\sigma_1(\theta_i)} = \sqrt[k]{c}$ for the given layer $\theta_i$ where $k = m + 1$. This is achievable by using SVD and distributing the singular values across the injected layers.

 So in order to achieve $\max\{\sigma_1(\theta_i)\}_i^{n \cdot (k - 1)} = \sqrt[k]{c}$, we need $(k - 1) \cdot n$ layers since we need to reduce the singular values for \textit{each} of the $n$ original layers.  This concludes the proof.
\end{proof}

\subsection{Layer injection is a Strong Universal Attack}
\label{app:layer-injection-is-universal}
In this section we show that the layer injection attack presented in \S~\ref{sec:security-analysis-of-convergence-rate-control} is an upper-max spectral reparameterization (\cref{thm:layer-injection-reparameterization}), which we define below (\cref{def:luk-spectral-reparameterization}). We also show why this attack would be universal under spectral reparameterizations of deep networks (\cref{thm:only-weight-matrices}). The implication of these two results is that this attack is a strong adaptive attack in the sense that if the attacker knows the curvature was manipulated to control convergence, then under relatively mild assumptions specified in \cref{thm:only-weight-matrices}, the curvature-based convergence control will be undone (though possibly at a cost that is too expensive for the attacker).

First, let us provide a general definition of $l$,$u$,$k$-spectral reparameterization.
 
\begin{definition}{$l$,$u$,$k$-Spectral Reparameterization}
\label{def:luk-spectral-reparameterization}
Given function $f$ parameterized by $\theta$ and a constant $l,u > 0$, $\epsilon \geq 0$, and $0 < k \leq n$. A map $\mathcal{T}_{u,l}: f_\theta \to f_{\theta^\prime}$ is a $l$,$u$,$k$ spectral reparameterization.
\begin{enumerate}
    \item \textbf{Spectral Control} $l \leq \sigma_k(H_{\theta}) \leq u$ where $H_{\theta}$ is the Hessian w.r.t.\ the parameters of $f_{\theta}$.
    \item \textbf{Functional Invariance up to $\epsilon$} There exists distance function $d(f, g)$ on the space of functions $f,g \in \mathcal{F}$ such that  $d(\mathcal{T}_{u,l}[f], f) \leq \epsilon $ for a small $\epsilon \geq 0$.
\end{enumerate}
\end{definition}

An upper-max spectral reparameterization is a $0$,$c$,$1$-Spectral Reparameterization where $c$ is a constant that provides an upper bound for the maximum singular values.

\begin{theorem}[Layer Injection is Upper-Max Spectral Reparameterization]
\label{thm:layer-injection-reparameterization}
Layer injection attacks with sufficiently selected layers are an upper-max spectral reparameterization.
\end{theorem}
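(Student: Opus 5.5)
To prove \cref{thm:layer-injection-reparameterization}, I would verify the two conditions of \cref{def:luk-spectral-reparameterization} with $(l,u,k)=(0,c,1)$, where $c$ is the attacker's target curvature. Functional invariance is the easy half. For each original layer $\theta_i = U_{\theta_i}\Sigma_{\theta_i}V_{\theta_i}^{\top}$, the $k$-fold factorization from the proof of \cref{thm:layer-injection-attack-app} is
\[
\theta_i = \bigl(U_{\theta_i}\Sigma_{\theta_i}^{1/k}U_{\theta_i}^{\top}\bigr)^{k-1}\bigl(U_{\theta_i}\Sigma_{\theta_i}^{1/k}V_{\theta_i}^{\top}\bigr).
\]
This reproduces $\theta_i$ exactly, and the injected layers carry identity activations. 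So, exactly as in part (2) of the proof of \cref{thm:spectral-deformation-is-spec-reparam-app}, $d(\mathcal{T}[f],f)\le\epsilon$, where $\epsilon$ accounts only for floating-point SVD error.

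Spectral control needs $\sigma_1(H_{\theta'})\le c$, an \emph{upper} bound. \cref{thm:hessian-singular-value-bound} only gives lower bounds, so I would build the matching upper bound by hand. The plan is to write the network Hessian, with the loss Hessian handled through \cref{rem:network-to-loss-hessian} as $H_{GN}+H'_f$, in terms of its blocks $\nabla^2_{\theta_a,\theta_b}f$. Then $\sigma_1(H)\le\bigl(\sum_{a,b}\sigma_1(\nabla^2_{\theta_a,\theta_b}f)^2\bigr)^{1/2}$, or more crudely at most (number of blocks) times the largest block norm. Next, by the chain rule as in \cref{app:example-hessian-derivations}, each block is a finite sum of products. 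Each product consists of Kronecker factors involving the input $x$ (or intermediate activations), diagonal Jacobian factors $D_z$ and $D''_z$, and some of the weight matrices (original or injected). Applying the upper half of \cref{thm:multiplicative-singular-value-bounds} repeatedly, together with \cref{thm:additive-singular-value-bounds} for the sums, bounds each block by
\[
\mathrm{poly}(\|x\|)\cdot \prod_j \max\{1,\sigma_1(\theta'_j)\}\cdot(\text{activation-derivative bounds}).
\]
Using bounded data and non-expansive activations with non-expansive derivatives (the assumptions of \cref{thm:only-weight-matrices-main}), every factor except the weight norms is an absolute constant.

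The key step is then to choose $k$ large enough. After injection, every weight has $\sigma_1\le M^{1/k}$. The bound therefore becomes $K\cdot M^{P/k}$, where $K$ collects the architecture- and data-dependent constants and $P$ is the maximal number of weight factors in a block product. Setting $k\ge P\log M/\log(c/K)$ gives $\sigma_1(H_{\theta'})\le c$, provided $c>K$. This is what ``sufficiently selected layers'' means.

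I expect the main obstacle to be that injection also increases the depth, so $P$ grows with $k$. A product of $(k-1)n$ extra factors, each of size $M^{1/k}$, telescopes to roughly $M^{n}$, not $M^{1/k}$. So the naive product bound does not improve with $k$. To fix this, I would group each chain of injected factors with its original layer. For any block that does not differentiate inside the chain, the chain appears only as its full product $\theta_i$ (or the original Jacobian), so it contributes the original, pre-defence curvature. Only the blocks differentiating with respect to an injected factor pick up partial products. Those partial products are bounded by $M^{(k-1)/k}$, times the constants of the original network. Controlling these partial products is the delicate point. If the clean bound fails, I would fall back to a statement relative to the undefended network: $\sigma_1(H_{\theta'})\le c$ with $c$ equal to a constant multiple of the original pre-defence curvature.
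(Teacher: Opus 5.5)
Your functional-invariance half and your reduction of $\sigma_1(H)$ to block norms match the paper. The paper uses $\sigma_1(H_\theta)\le\sum_{i,j}\sigma_1(\partial^2 f/\partial\theta_i\partial\theta_j)$ and then $\sigma_1(A\theta_kB)\le\sigma_1(A)\sigma_1(\theta_k)\sigma_1(B)$, treating $A$ and $B$ as fixed. You correctly diagnose that $K\cdot M^{P/k}$ with fixed $P$ fails, but your repair does not close the gap.

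The chain you build factors the \emph{defended} layer, so its full product is $\theta_i$ with $\sigma_1(\theta_i)=M$, not the pre-defence layer. Blocks that differentiate outside the chain therefore carry the defended curvature, not the original one. Concretely, take the paper's three-layer MLP with $\theta_2$ inflated and replaced by $F_k\cdots F_1$, with identity activations between factors. Every original activation, and hence $D_{z_1},D_{z_2}$, is unchanged, and
\[
\frac{\partial^2 f}{\partial\theta_3\,\partial\theta_1}=(x^{\top}\otimes I_m)^{\top}D_{z_1}F_1^{\top}\cdots F_k^{\top}D_{z_2}=(x^{\top}\otimes I_m)^{\top}D_{z_1}\theta_2^{\top}D_{z_2},
\]
which is the same matrix as before the attack. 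This is exactly the block through which \cref{thm:hessian-singular-value-bound} (with $B=\theta_2^{\top}$) certifies large curvature. By \cref{corr:block-interlacing-for-hessian}, that lower bound on $\sigma_1(H_{\theta'})$ persists for every $k$. Factoring $\theta_1$ as well only alters the outer factor, never the undifferentiated middle $\theta_2^{\top}$.

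The same happens for SpecDef. The compensation layer $\theta_i^{comp}$ still receives the inflated activation $\theta_i^{\prime}z_{i-1}$, so blocks involving it still scale with $\alpha$. Your remaining estimate cannot rescue the argument: partial products bounded by $M^{(k-1)/k}$ increase toward $M$ as $k$ grows. So no choice of $k$ yields $\sigma_1(H_{\theta'})\le c$ below the defended scale. Your fallback to a multiple of the pre-defence curvature rests on the same misidentification of the chain's product.

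The missing ingredient is control of blocks in which an inflated product, or an inflated inter-layer activation, appears undifferentiated. Within-layer factorization preserves all such quantities, so it cannot supply this. An actual upper bound would require re-splitting across the inflated layer and whatever compensates it, or a weaker notion of spectral control. For SpecDef, that re-splitting simply means merging $\theta_i^{comp}\theta_i^{\prime}$.

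The paper's proof does not resolve your obstacle either. By holding $A$ and $B$ fixed, it only shows that the upper bound $\sigma_1(A)\sigma_1(\theta_k)\sigma_1(B)$ decreases with $\sigma_1(\theta_k)$. It reads ``sufficiently selected layers'' as ensuring that no untouched layer dominates the block sum. Reproducing the paper's argument therefore means stopping at that qualitative monotonicity, which does not establish $\sigma_1(H_{\theta'})\le c$ for a prescribed $c$.
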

\begin{proof}
To prove this we need to show two things (1) functional invariance and (2) spectral control.

\textbf{(1) Functional invariance}
Functional invariance proof is the exact same as that of the proof for \cref{thm:spectral-deformation-is-spec-reparam-main}.

\textbf{(2) Spectral Control}

First, we show that \[
\sigma_1(H_{\theta}) \leq \sum_{i,j}\sigma_1 \left(\frac{\partial^2 f}{
\partial \theta_i \partial \theta_j
} \right).
\] This follows from the application of the triangle inequality of the spectral norm $\| \cdot\|_2$ to the following block decomposition.

Any given matrix $M$ admits the following sum over its disjoint submatrices
\begin{equation*}
M = \begin{bmatrix}
A_{1,1} & \ldots & A_{1,m} \\
\vdots & \ddots & \vdots \\
A_{n,1} & \ldots & A_{n,m}
\end{bmatrix} = \begin{bmatrix}
A_{1,1} & \ldots & 0 \\
\vdots & \ddots & \vdots \\
0 & \ldots & 0
\end{bmatrix}  + \cdots + \begin{bmatrix}
0 & \ldots & 0 \\
\vdots & \ddots & \vdots \\
0 & \ldots & A_{n,m}
\end{bmatrix}.
\end{equation*}

Suppose we had the Hessian matrix $H_{\theta} \in \mathbb{R}^{n \times n}$. Let $H_{i,j}^0 \in \mathbb{R}^{n \times n}$ be the matrix $\frac{\partial^2 f}{
\partial \theta_i \partial \theta_j
}$ embedded in a $0$ matrix in the original position $i,j$ it would have been in $H$. Here $i,j$ index the blocks of $H_{\theta}$ corresponding to the vectorized parameter matrices $\theta_i$ and $\theta_j$ where each block may itself be a matrix.

Now it follows that
\[
H_{\theta} = \sum_{i,j}^{n} H_{i,j}^0.
\]
Using the subadditivity of operator norms, we see that the spectral norm and hence $\sigma_1$ has the following property: \[
 \sigma_1\left( H \right) = \left\|H\right\|_2 = \left\|\sum_{i,j} H_{i,j}^0\right\|_2 \leq \sum_{i,j} \left\|H_{i,j}^0\right\|_2
\]
Since embedding a matrix in a larger 0 matrix does not change the spectral norm, we have that $\left\|H_{i,j}^0\right\|_2 = \left\|\frac{\partial^2 f}{
\partial \theta_i \partial \theta_j
}\right\|_2 = \sigma_1\left(\frac{\partial^2 f}{
\partial \theta_i \partial \theta_j
} \right)$. Putting this together proves that \[
\sigma_1(H_{\theta}) \leq \sum_{i,j}\sigma_1 \left(\frac{\partial^2 f}{
\partial \theta_i \partial \theta_j
} \right).
\]

Next all we need to show is that assuming at least one Hessian block $\frac{\partial^2 f}{\partial \theta_i \partial \theta_j}$ admits the algebraic structure $A\theta_kB$, we can control the upper bound on $\sigma_1(H_{\theta})$ established above by manipulating $\sigma_1(\theta_k)$. Like in the other proofs, we assume that this structure exists, which has been shown in many examples throughout the paper for neural networks. Now we just directly apply the upper bound of \cref{thm:multiplicative-singular-value-bounds}. Yielding \begin{align}
\sigma_1 \left(\frac{\partial^2 f}{
\partial \theta_i \partial \theta_j
} \right) &= \sigma_1 \left(A\theta_kB\right) \tag{By assumption} \\
&\leq \sigma_1(A)\sigma_1(\theta_k)\sigma_1(B). \tag{By \cref{thm:multiplicative-singular-value-bounds}}
\end{align}

Taken together, this shows that by reducing $\sigma_1(\theta_i)$  we have some upper spectral control of $\sigma_1(H^f_{\theta})$. To obtain the upper bound $u$, we must select a sufficient number of layers $i=k$ such that the upper bound sum we showed earlier is not dominated by some layer $\theta_p$ that is not used. We now be sure that $u$ can be chosen based on the desired maximum singular value after layer injection, as demonstrated in \cref{thm:layer-injection-attack}.
\end{proof}

We now show an impossibility result. Under weak assumptions that most deep learning settings satisfy, convergence rate control must be a 
spectral reparameterization using parameter matrices alone 
(\cref{thm:only-weight-matrices}). Combined with \cref{thm:layer-injection-reparameterization}, 
which establishes that the layer injection attack is an upper-max spectral reparameterization, this implies that any lower-max spectral resistance method 
operating on weights can be countered: the defender must increase $\sigma_1(H^f_{\theta})$ through weight manipulation, and the attacker can subsequently decrease it while preserving functionality. Thus, no spectral reparameterization of parameter matrices exists that cannot be undone by our attack.

We first state a lemmas that will help us in our proof.
\begin{lemma}
\label{lemma:outer-product-spectral-norm}
The spectral norm of an outer product of vectors $u,v \in \mathbb{R}^d$ is given by \[
\|uv^{\top}\|_2 = \|u\|_2 \|v \|_2.
\]
\end{lemma}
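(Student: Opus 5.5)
We prove \cref{lemma:outer-product-spectral-norm} directly from the variational definition of the spectral norm, $\|A\|_2 = \max_{\|x\|_2=1}\|Ax\|_2$. The plan is to show that $\|uv^\top\|_2$ is bounded above by $\|u\|_2\|v\|_2$, and then that this bound is attained.

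\textbf{Trivial case.} If $u=0$ or $v=0$, both sides are zero. So assume from now on that both vectors are nonzero.

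\textbf{Upper bound.} For any unit vector $x$, associativity gives
\[
(uv^\top)x = u\,(v^\top x).
\]
Since $v^\top x$ is a scalar, $\|(uv^\top)x\|_2 = |v^\top x|\,\|u\|_2$. By Cauchy--Schwarz, $|v^\top x| \le \|v\|_2\|x\|_2 = \|v\|_2$. Hence $\|uv^\top\|_2 \le \|u\|_2\|v\|_2$.

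\textbf{Attainment.} Take $x = v/\|v\|_2$. Then $v^\top x = \|v\|_2$, so $\|(uv^\top)x\|_2 = \|u\|_2\|v\|_2$. The maximum therefore equals this value.

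\textbf{Alternative via the SVD.} One can also write
\[
uv^\top = \Big(\tfrac{u}{\|u\|_2}\Big)\,\big(\|u\|_2\|v\|_2\big)\,\Big(\tfrac{v}{\|v\|_2}\Big)^{\!\top}.
\]
This is a rank-one SVD with the single nonzero singular value $\|u\|_2\|v\|_2$, so $\sigma_1(uv^\top)=\|u\|_2\|v\|_2$. This form is the one used later with \cref{thm:courant-fischer} and in bounding Hessian blocks of outer-product form.

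No step is a real obstacle. The only point needing care is identifying the spectral norm with $\sigma_1$ and handling the zero-vector case separately.
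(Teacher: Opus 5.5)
Your proof is correct and matches the paper's argument almost line for line: it uses the variational characterization $\|A\|_2=\max_{\|x\|_2=1}\|Ax\|_2$, factors out the scalar $|v^\top x|$, applies Cauchy--Schwarz for the upper bound, and attains it at $x=v/\|v\|_2$. Your separate handling of $v=0$, where that maximizer is undefined, is a small gap in the paper that you close; the SVD aside is valid but not needed.
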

\begin{proof}
Consider the following variational characterization of the spectral norm \[
\|A\|_2 = \max_{\|x\|_2 = 1} \|Ax\|_2.
\]
For an outer product $uv^{\top}$, we have 
\begin{align*}
\|uv^{\top}\|_2 &= \max_{\|x\|_2 = 1} \|u(v^{\top}x)\|_2 \\
&= \max_{\|x\|_2 = 1} |v^{\top}x| \|u\|_2 \tag{absolute homogeneity} \\
&= \|u\|_2\|v\|_2,   \tag{by Cauchy-Schwarz, max at $x = v/\|v\|_2$}
\end{align*}
as required.
\end{proof}

\begin{theorem}[Only Weight Matrices Provide Unbounded Spectral Control]
\label{thm:only-weight-matrices}
Suppose we have the following feed forward network \[
f(x) =  \theta_{n+1} \circ \phi_{n} \circ \theta_{n} \circ \cdots \circ \phi_1 \circ \theta_1x,
\] with element-wise activation functions $\phi_i: \mathbb{R}^d \to \mathbb{R}^d$ and parameter matrices $\theta_i \in \mathbb{R}^{d_i \times d_{i-1}}$. Further, suppose that the activation functions $\phi$ were approximately non-expansive and had non-expansive derivatives and second derivatives if they exist. Also, let the data be bounded with $\|x\|_2 \leq c$ for any $x$ drawn from the dataset $\mathcal{D}$.

A transformation of that function $\mathcal{T}: \mathcal{F} \to \mathcal{F}$ is a lower max spectral reparameterization (i.e., a $(l, \infty, 1)$-spectral reparameterization)  only if the transformation is a reparameterization of the parameter matrices $\theta_i$.
\end{theorem}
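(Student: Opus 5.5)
We argue by contraposition, in the spirit of the paper's proof sketch: if a transformation leaves every weight matrix $\theta_i$ with bounded spectral norm, then $\sigma_1(H^f_\theta)$ stays below a finite constant. Hence no such transformation can reach an arbitrary lower bound $l$ in the sense of \cref{def:luk-spectral-reparameterization} with $u=\infty$, $k=1$.

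So fix a transformation $\mathcal{T}$ that does not reparameterize the $\theta_i$. Concretely, assume $\max_i \sigma_1(\theta_i) \le W$ for some finite $W$ that $\mathcal{T}$ cannot alter. The only other objects $\mathcal{T}$ can change are the activations $\phi_i$ and the input. These are constrained by approximate non-expansiveness of $\phi$, $\phi'$, $\phi''$ and by $\|x\|_2 \le c$. The goal is an explicit upper bound $\sigma_1(H^f_\theta) \le G(W,c,n)$ that depends only on these quantities. Then taking $l > G$ shows that spectral control fails.

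The upper bound has three steps.

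\textbf{Step 1.} Start from the block triangle inequality already proved in \cref{thm:layer-injection-reparameterization}:
\[
\sigma_1(H_\theta) \le \sum_{i,j} \sigma_1\!\left(\frac{\partial^2 f}{\partial \theta_i \partial \theta_j}\right).
\]
This reduces the problem to bounding each block.

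\textbf{Step 2.} Write each mixed block as a sum of products, as in the MLP derivation of \cref{app:example-hessian-derivations}. The factors are of three kinds:
\begin{itemize}
\item Kronecker factors $(z_{\ell}^\top \otimes I)$. By \cref{lemma:outer-product-spectral-norm} (or its Kronecker analogue), their norm is $\|z_\ell\|_2$.
\item Diagonal Jacobians $D_{z}$ and $D''_{z}$, with norm at most $1$ (up to the approximation slack) by the assumption on $\phi'$ and $\phi''$.
\item Weight matrices $\theta_m$, with norm at most $W$.
\end{itemize}
For the activations, an induction on depth gives $\|z_\ell\|_2 \le W^\ell c$. This uses non-expansiveness of $\phi$ together with $\phi(0)=0$; if $\phi(0)\neq 0$, a bias-type constant is added at each layer.

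\textbf{Step 3.} Apply the upper half of \cref{thm:multiplicative-singular-value-bounds} to each product, and \cref{thm:additive-singular-value-bounds} to each sum. Each block is then bounded by a polynomial in $W$ and $c$ of degree $O(n)$. Summing over the $O(n^2)$ blocks gives a finite $G$. If a loss Hessian is wanted instead of $H^f$, use \cref{rem:network-to-loss-hessian}: it adds a Gauss--Newton term $J^\top \nabla^2 \mathcal{L} J$, where $J$ is bounded by the same argument and a smooth loss has bounded curvature on a bounded output set.

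The main obstacle is Step 2. We need a uniform structural description of every mixed second derivative for a general depth-$n$ network, including diagonal blocks $i=j$, where $D''$ terms appear as Hadamard and Kronecker contractions and not plain products. The first thing to try is induction on the layer index using the backpropagation recursion $\delta_\ell = D_{z_\ell}\theta_{\ell+1}^\top \delta_{\ell+1}$. Differentiating this recursion once more expresses each block as a sum of at most $O(n)$ terms. Each term is a chain containing at most $n$ weight factors, one $D''$ factor, and two activation or backprop vectors. A Hadamard product with a diagonal $D''$ is again a diagonal scaling, so these terms reduce to ordinary products and fall under \cref{thm:multiplicative-singular-value-bounds}.

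A secondary subtlety is the word ``approximately'' in non-expansive. We would read it as Lipschitz constants bounded by some fixed $1+\delta$. This only changes $G$ by factors $(1+\delta)^n$, which keeps it finite, and that finiteness is all the contradiction needs.
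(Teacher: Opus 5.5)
Your proposal is correct and rests on the same idea as the paper's proof. Once the weights are held fixed, bounded data (via outer-product and Kronecker norms) and non-expansive $\phi',\phi''$ cap $\sigma_1(H^f_\theta)$, so no transformation that leaves the weights alone can meet an arbitrary lower bound $l$.

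The execution differs. The paper splits into a data-only case and an activation-only case. It bounds representative blocks $Aux^\top B$ and $A(x^\top\otimes U)B$, and for the activation case it appeals to \cref{thm:hessian-singular-value-bound} with the middle factor replaced by $D'$ or $D''$. You instead prove one global bound $G(W,c,n)$, using the block triangle inequality from \cref{thm:layer-injection-reparameterization}, the multiplicative upper bounds, and the induction $\|z_\ell\|_2\le W^\ell c$.

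What your version buys:
\begin{enumerate}
\item It covers simultaneous manipulation of data and activations.
\item It makes explicit two steps the paper leaves implicit. One is summing the per-block bounds to bound $\sigma_1(H^f_\theta)$ itself. The other is bounding the intermediate vector $u$, which depends on $x$; this is needed for the paper's constant $K$ to depend only on the weights.
\end{enumerate}
What the paper's version buys is brevity, and a structure that mirrors the component-by-component form of the claim.

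One caveat applies to both arguments: your $G$ grows with the depth $n$, so inserting layers must be counted as a reparameterization of the weights.
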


\begin{proof}
The reverse implication (only if) will be proven by showing that the function composition we assumed, our deep neural architecture, has only one unbounded component, the parameter matrices $\theta_i$ which allows us to achieve an arbitrary $l$ with which $\sigma_1(H_{\theta}^f) \geq l$. 

Recall that one condition of lower max spectral reparameterization was that the transformation must be able to achieve \[
\sigma_{1}(H_{\theta}^f) \geq l,
\] for any $l$.

Suppose that the transformation involved sampling an $x$ to control convergence in such a way while  respecting, by our earlier assumption, that $\|x\| \leq c$. Recall that we had the block decomposition such that $\sigma_1(H_{\theta}^f) \geq \sigma_1(\frac{\partial^2 f}{\partial \theta_i \partial \theta_j})$. Since $x$ appears in the function composition, these Hessian blocks will consist of outer products involving $x$ or Kronecker products involving $x$ to maintain the matrix structure of the block.

For outer products, we have blocks of the form $Aux^\top B$ for weight matrices $A, B$ and some intermediate vector $u$. By \cref{lemma:outer-product-spectral-norm}, 
\[
\sigma_1(Aux^\top B) \leq \sigma_1(A)\|u\|\|x\|\sigma_1(B) \leq c \cdot \sigma_1(A)\|u\|\sigma_1(B).
\]

For Kronecker products, we have blocks of the form $A(x^\top \otimes U)B$. Since spectral norms are multiplicative under Kronecker products, $\sigma_1(x^\top \otimes U) = \|x\|\sigma_1(U)$, and thus
\[
\sigma_1(A(x^\top \otimes U)B) \leq \sigma_1(A)\|x\|\sigma_1(U)\sigma_1(B) \leq c \cdot \sigma_1(A)\sigma_1(U)\sigma_1(B).
\]

In both cases, the contribution of $x$ to $\sigma_1$ of any Hessian block is bounded above by $c$ times a constant $K$ depending only on the weight matrices. Therefore, $\sigma_1(H_\theta^f) \leq c \cdot K$ for some $K > 0$ independent of the data. For any target $l > c \cdot K$, manipulation of the data alone cannot achieve $\sigma_1(H_{\theta}^f) \geq l$, contradicting the requirement of a lower-max spectral reparameterization.

Suppose that the transformation involved selecting an activation function under the assumptions above. We have seen that by \cref{thm:hessian-singular-value-bound} for $\sigma_1(\frac{\partial^2f}{\partial \theta_i \partial \theta_j})$, we have \[
\sigma_{r_1}(A)\sigma_1(B) \sigma_{r_2}(C)\prinangle(A_{r_1},BC_1) \prinangle(B,C_{r_2})  \leq \sigma_1(\nabla^2_{\theta}\mathcal{L} ).
\] In this case, $B$ involves some products containing either $D^{\prime}_{z_i}$ or $D^{\prime\prime}_{z_i}$ rather than $\theta_k$. 
Since the activation functions are assumed to be element-wise approximately non-expansive and to have approximately non-expansive first and second derivatives, we have \[
    |\phi^{\prime}_i(t)| \lesssim 1 \; \mathrm{and} \; |\phi^{\prime\prime}_i| \lesssim 1 \quad \text{for all } t\in \mathbb{R}.
\]
Consequently, the diagonal matrices $D^{\prime}_{z_i}$ and $D^{\prime\prime}_{z_i}$, which embed these derivatives at the pre-activations $z_i$, have the operator norm uniformly bounded by one: \[
\|D^{\prime}_{z_i}\|_2 \lesssim 1, \quad \|D^{\prime\prime}_{z_i}\|_2 \lesssim 1.
\]
Therefore, any Hessian block involving activation derivatives contributes at most a constant (independent of the weights) to $\sigma_1(H_{\theta}^f)$. In particular, manipulating the activation functions alone cannot produce arbitrarily large singular values of the Hessian, and thus cannot satisfy the lower max spectral reparameterization condition for sufficiently large $\ell$. For piece-wise linear activations that may not be twice differentiable case it is even more true they cannot contribute unbounded increases to the Hessian singular values.  Due to this contradiction manipulation of the activation functions alone cannot be a spectral reparameterization.

The only remaining possibility is manipulation of weights $\theta_i$ which we have already shown is a spectral reparameterization in \cref{thm:spectral-deformation-is-spec-reparam-main}.
\end{proof}

\begin{remark}[Applicability to Modern Deep Networks]
\label{app:applicability-impossiblity-result-to-modern-deep-networks}
The bounded data assumption $\|x\|_2 \leq c$ is mild in practice. For 
images, pixel normalization ensures boundedness. For language models, 
token embeddings are bounded by construction. Even when data distributions 
have infinite support, training datasets have finite maximum norms.
Non-elementwise functions such as softmax and normalization are also bounded or have a small Lipschitz factor such that using them to control convergence rate is infeasible. Counter-examples may exist outside of our boundedness assumptions but they would take the form of a very exotic architecture. These assumptions are common (e.g., \citealp{du2019gradient, xiong2020layer}). GELU and Swish are approximately 1-Lipschitz so modern activation functions are also captured here.
\end{remark}

\begin{remark}[Implications of \cref{thm:only-weight-matrices}]
This result establishes that:
\begin{enumerate}
\item Under standard architectural assumptions, convergence control requires 
    weight manipulation
\item Any such manipulation is subject to the layer injection attack 
\item Therefore, convergence-rate-based resistance methods do not provide security 
    against adaptive attackers
\end{enumerate}
This impossibility result is constructive: it precisely quantifies the 
attack cost as $(k-1) \cdot n$ layers for $k$-th root reduction. While 
this creates practical friction, it 
does not prevent a sufficiently resourced adversary.
\end{remark}

Finally in practice, injecting a large number of linear layers could result in gradient explosion via repeated unnormalized multiplications during backpropagation. We only observed this in practice we performing the layer injection on all layers of a fully reparameterized \texttt{SmolLM2-360M-Instruct}.

\section{Analysis of Second-Order Methods}
\label{app:second-order-analysis}

Second-order optimization and curvature-aware methods are increasingly popular and feasible for large deep networks \citep{liusophia,yao2021adahessian}. At first glance, it may appear that approximate second-order methods could easily undo spectral reparameterization, since these methods provide preconditioners that mitigate sharp curvature. We show, however, that this intuition fails: while preconditioning can neutralize large $L_1$-Lipschitz constants (i.e., large Hessian eigenvalues), spectral reparameterization controls the $L_2$-Lipschitz constant of the Hessian itself (under the assumptions discussed in \cref{rem:L2-control-conditions}), which bounds the iteration complexity of second-order methods independently of preconditioning. Specifically, even exact Newton's method---which perfectly inverts the Hessian---has convergence rates that depend on $L_2$, and our reparameterization often inflates this quantity.

We validate this empirically in \cref{app:second_order_toy_empirical_analysis} on a toy problem where the full Hessian is tractable. That analysis visualizes how the $L_2$ inflation appears concretely as large off-diagonal curvature, confirming that strong second-order optimizers---including KFAC, Natural Gradients, AdaHessian, and Cubic Newton---remain unable to circumvent spectral deformation. Large-scale second-order experiments on foundation models appear in \cref{tab:optimizer-comparison-main}.

For the theoretical work, a roadmap of the analysis is as follows. After defining the $L_2$ Hessian Lipschitz constant, we restate the convergence rate and iteration complexity of Newton's method, including full proofs for completeness. We then provide tensor-based extensions of singular value bounds to show lower bound inequalities that relate the $L_2$ constant to properties of the weights alone, stating the conditions under which spectral reparameterization controls the convergence of second-order methods.

\subsection{Empirical Analysis}
\label{app:second_order_toy_empirical_analysis}

\subsubsection{Demonstrating Second-Order Convergence Control}

We empirically demonstrate that Spectral Deformation (SpecDef) cannot be undone by a wide range of second-order and curvature-aware optimizers on a controlled toy problem. The task is binary classification of points sampled from two intertwined spirals, trained using the three-layer ReLU MLP introduced earlier. This setting allows us to compute the full Hessian explicitly and isolate the mechanism by which SpecDef controls convergence.

Our analysis has two goals:  
(i) to show that SpecDef concentrates curvature primarily in off-diagonal Hessian structure, explaining the failure of diagonal and block-diagonal approximations; and  
(ii) to empirically validate \cref{thm:hessian-singular-value-bound} by showing that SpecDef introduces extremely large-magnitude Hessian eigenvalues.

\paragraph{Experimental Setup}
We evaluate 200 data points across 5 random seeds, training for 1000 epochs per run. The MLP has input dimension 2 and hidden widths of 8. Unless otherwise stated, SpecDef is applied with $\alpha = 10\mathrm{k}$. Because second-order methods are highly sensitive to hyperparameters, we conduct a full grid search for each optimizer, summarized in \cref{tab:second_order_optimizer_grids}. We perform this search twice: once on the base model and once after applying SpecDef with $\alpha = 10^5$, ensuring a fair comparison. The resulting optimal hyperparameters are reported in \cref{tab:specdef_so_optimizer_hyperparams}.

\begin{table}[h]
\centering
\caption{Hyperparameter grids used for second-order optimizer ablations.}
\label{tab:second_order_optimizer_grids}
\begin{tabular}{lll}
\hline
\textbf{Optimizer} & \textbf{Hyperparameter} & \textbf{Values} \\
\hline
Adam & Learning rate &
$\{10^{-9}, \dots, 10^{-1}\}$ \\

KFAC & Learning rate &
$\{10^{-9}, \dots, 10^{-2}\}$ \\
     & Damping &
$\{10^{-3}, 10^{-2}, 3\!\times\!10^{-2}, 10^{-1}, 3\!\times\!10^{-1}\}$ \\

Natural Gradient & Learning rate &
$\{10^{-9}, \dots, 10^{-2}\}$ \\
                 & Damping &
$\{0.1, 0.3, 1.0, 3.0\}$ \\

Newton & Learning rate &
$\{10^{-9}, \dots, 10^{-2}\}$ \\
       & Damping &
$\{0.1, 0.3, 1.0, 3.0\}$ \\

AdaHessian & Learning rate &
$\{10^{-9}, \dots, 10^{-1}\}$ \\

Cubic Newton & Learning rate &
$\{10^{-9}, \dots, 10^{-1}\}$ \\
             & $\sigma$  &
$\{0.1, 0.5, 1.0, 5.0\}$ \\
\hline
\end{tabular}
\end{table}

\paragraph{SpecDef Forces Universally Smaller Learning Rates}
Across all optimizers, SpecDef sharply reduces the stable learning rate—often by 4–6 orders of magnitude—and increases the required damping. This empirically confirms that SpecDef enforces convergence control even for curvature-aware methods by constructing a sharply conditioned landscape that must be heavily regularized.

\begin{table}[h]
\centering
\renewcommand{\arraystretch}{1.25}
\caption{Optimal optimizer hyperparameters before and after SpecDef.}
\label{tab:specdef_so_optimizer_hyperparams}
\begin{tabular}{lcc}
\hline
\textbf{Optimizer} & \textbf{Before SpecDef} & \textbf{After SpecDef} \\
\hline
Adam &
$\text{lr}=10^{-2}$ &
$\text{lr}=10^{-7}$ \\

KFAC &
$\text{lr}=10^{-3},\ \text{damping}=10^{-3}$ &
$\text{lr}=10^{-4},\ \text{damping}=10^{-2}$ \\

Natural Gradient &
$\text{lr}=10^{-2},\ \text{damping}=0.1$ &
$\text{lr}=10^{-8},\ \text{damping}=3.0$ \\

Newton &
$\text{lr}=10^{-2},\ \text{damping}=0.1$ &
$\text{lr}=10^{-6},\ \text{damping}=1.0$ \\

AdaHessian &
$\text{lr}=10^{-3}$ &
$\text{lr}=10^{-4}$ \\

Cubic Newton &
$\text{lr}=0.1,\ \sigma=5.0$ &
$\text{lr}=10^{-4},\ \sigma=5.0$ \\
\hline
\end{tabular}
\end{table}

\paragraph{Second-order Methods Fail under SpecDef}
Using these tuned hyperparameters, \cref{tab:specdef_toy_second_order_results} reports classification accuracy before and after SpecDef. After deformation, no optimizer is effective. Natural Gradient and Newton’s method typically converge to poor local minima in either cases, achieving accuracies near chance ($\approx 0.5$), consistent with known failures of exact second-order methods in non-convex landscapes. Methods that perform well prior to SpecDef—including KFAC, AdaHessian, and Cubic Newton—are likewise rendered ineffective.

Notably, Adam performs best overall despite not being explicitly curvature-aware (though it still applies a diagonal preconditioner via second‑moment estimates). This suggests that even scalable second-order approximations are insufficient to counteract SpecDef, corroborating our large-scale findings in \cref{tab:optimizer-comparison-main}.

\begin{table}[h]
\centering
\renewcommand{\arraystretch}{1.25}
\caption{Classification accuracy before and after SpecDef (mean $\pm$ std).}
\label{tab:specdef_toy_second_order_results}
\begin{tabular}{lcc}
\hline
\textbf{Optimizer} & \textbf{Before SpecDef} & \textbf{After SpecDef} \\
\hline
KFAC & $0.835 \pm 0.031$ & $0.616 \pm 0.058$ \\
Natural Gradient & $0.624 \pm 0.005$ & $0.589 \pm 0.045$ \\
Newton & $0.625 \pm 0.006$ & $0.621 \pm 0.007$ \\
AdaHessian & $0.795 \pm 0.048$ & $0.648 \pm 0.027$ \\
Cubic Newton & $0.748 \pm 0.038$ & $0.642 \pm 0.026$ \\
Adam & $0.806 \pm 0.042$ & $0.648 \pm 0.011$ \\
\hline
\end{tabular}
\end{table}

The learning dynamics in \cref{fig:second_order_optimizers} make this effect explicit: SpecDef shifts the stable learning-rate regime downward, yielding second-order convergence control.

\begin{figure}[h]
    \centering
    \includegraphics[width=1\linewidth]{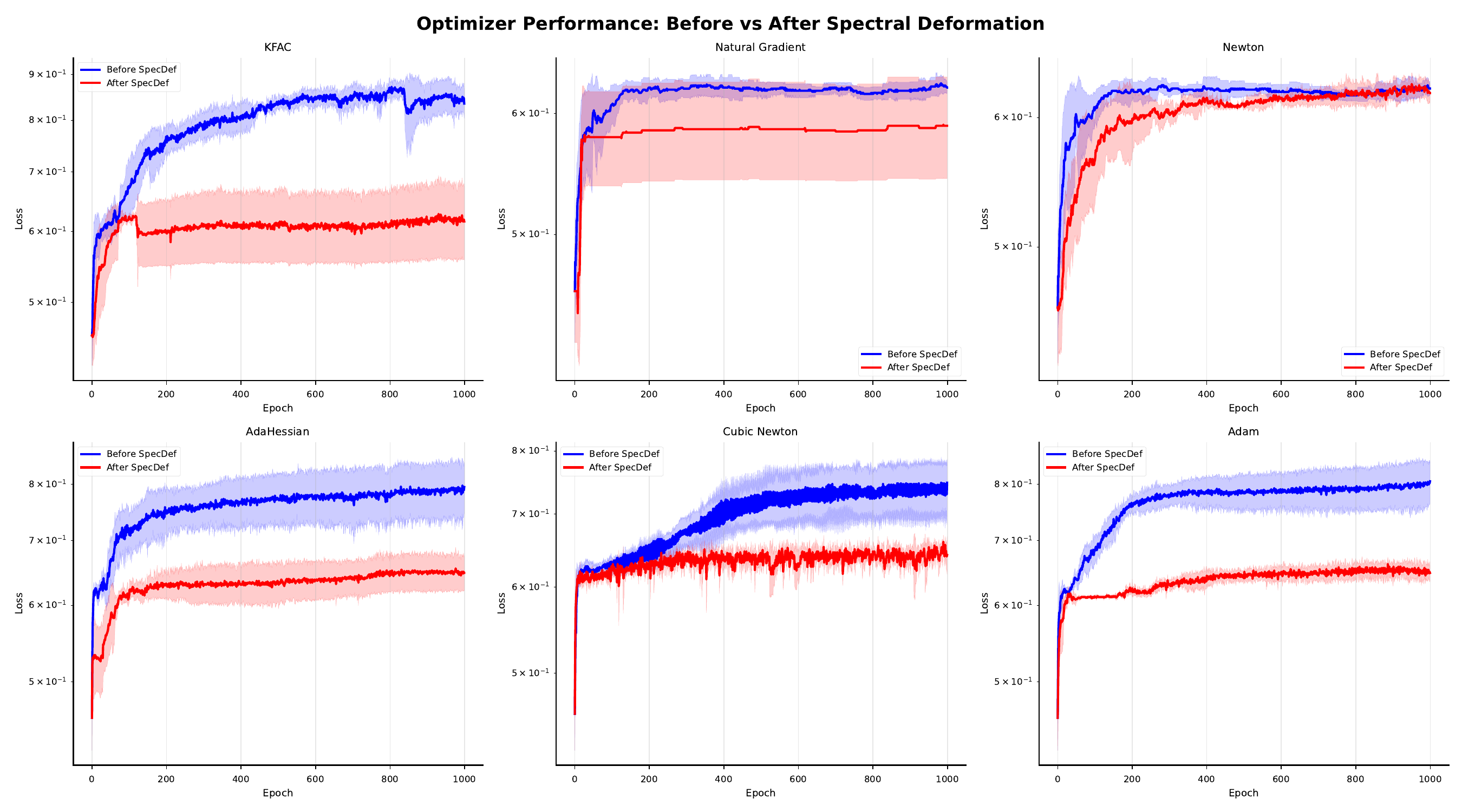}
    \caption{SpecDef sharply contracts the stable learning-rate regime, demonstrating effective second-order convergence control.}
    \label{fig:second_order_optimizers}
\end{figure}

\subsubsection{Hessian Analysis}

To explain why SpecDef remains effective even against second-order optimization, we analyze the full Hessian before and after deformation. \cref{fig:hessian-headmap-histo-comparison} shows both the Hessian heatmap and its eigenvalue spectrum.

Before SpecDef, the spectrum follows a distribution consistent with random matrix theory \citep{baskerville2022appearance}. After SpecDef (with $k=4$), the spectrum spreads across many orders of magnitude: most curvature directions remain relatively flat, but a small number become extremely sharp, both positively and negatively. This heavy-tailed spectrum directly instantiates the bound in \cref{thm:hessian-singular-value-bound}.

\begin{figure}[h]
    \centering
    \includegraphics[width=1\linewidth]{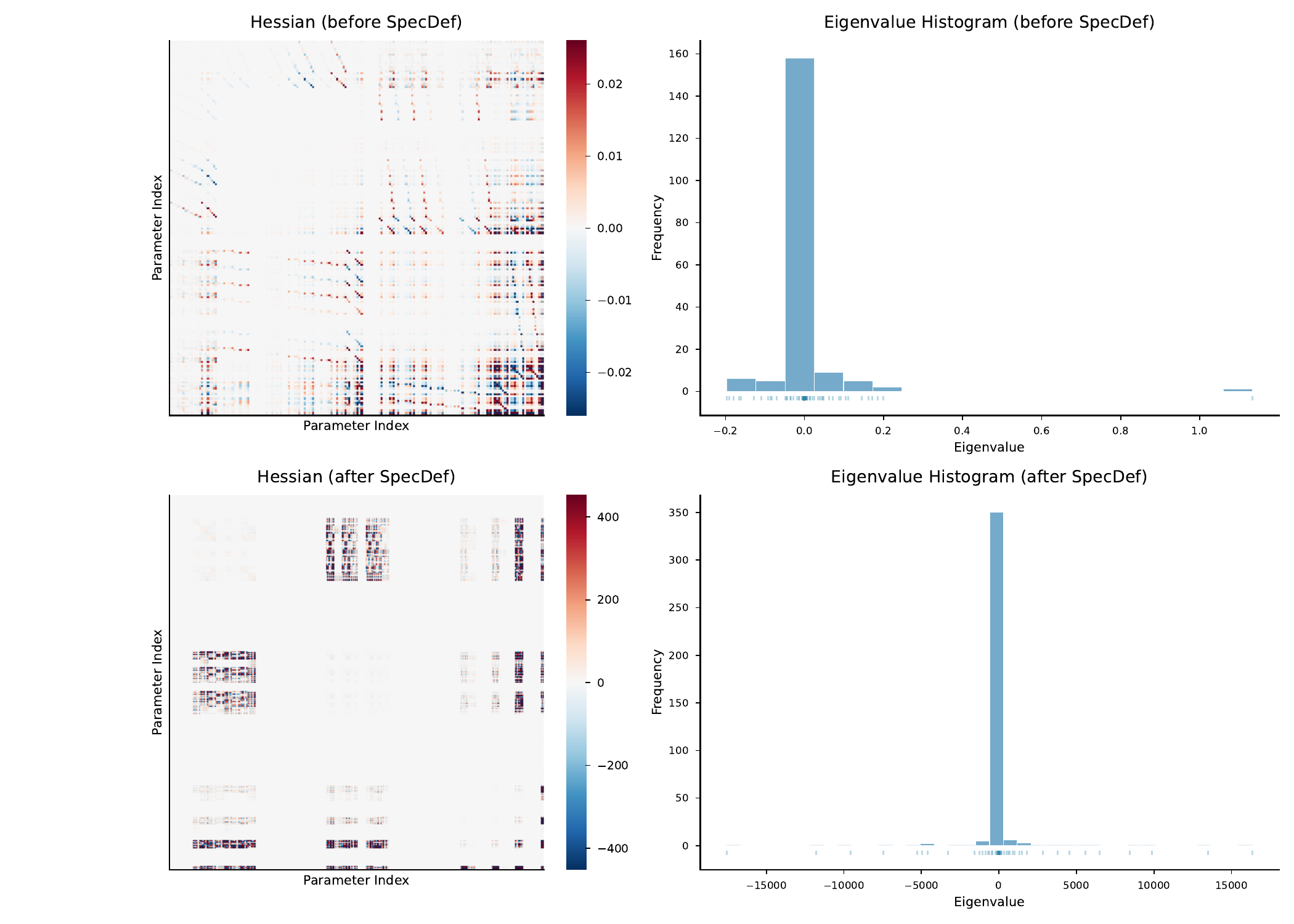}
    \caption{Hessian heatmaps and eigenvalue distributions before and after SpecDef. SpecDef's most pronounced effect is in the off-diagonals.}
    \label{fig:hessian-headmap-histo-comparison}
\end{figure}

Crucially, the dominant curvature after SpecDef concentrates in off-diagonal Hessian blocks. This explains the empirical failure of diagonal and block-diagonal curvature approximations: the sharpest directions are in places these methods do not approximate.

We further vary both the number of modified singular values and the choice of reparameterized layers. As shown in \cref{fig:hessian-heatmap}, increasing the number of modified singular values systematically sharpens the landscape, while changing which layers are deformed redistributes where curvature concentrates. These patterns provide a second, independent empirical validation that manipulating weight spectra alone suffices to control second-order curvature.

\begin{figure}[h]
    \centering
    \includegraphics[width=1\linewidth]{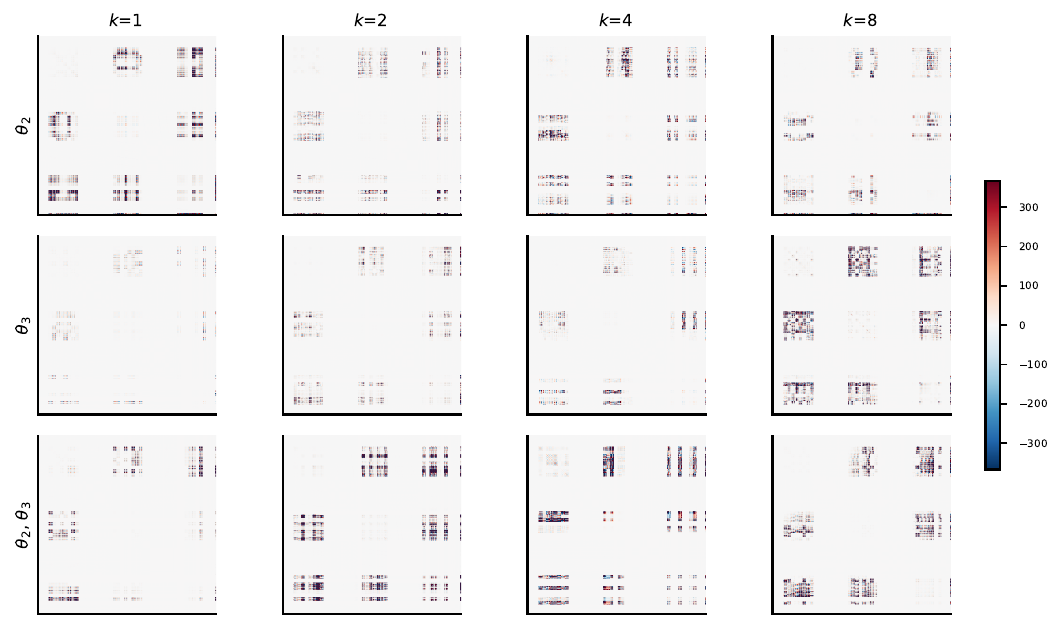}
    \caption{Hessian heatmaps as a function of the number and location of modified singular values. The pattern is consistent regardless of selected $k$ or layer.}
    \label{fig:hessian-heatmap}
\end{figure}

\clearpage
\subsection{Theoretical Analysis of Muon}
\label{subsec:muon-analysis}

In order to explain why Muon \citep{jordan2024muon} fails to converge given a sufficient $\alpha$ used for SpecDef, we briefly review the Muon optimizer and its known convergence rate. We will use the notation from \citet{li2025muon} rather than our paper to better connect our results with theirs.

\begin{definition}[Muon optimizer in heavy-ball form]
\label{def:muon}
Following \citet{li2025muon}, we consider minimizing an objective function $f(X)$ over $X\in\mathbb{R}^{m\times n}$ where $X$ represents a parameter matrix.
At iteration $t$, Muon samples a mini-batch $\{\xi_{t,i}\}_{i=1}^B$ and forms the stochastic gradient
\begin{equation}
G_t \;\gets\; \frac{1}{B}\sum_{i=1}^B \nabla f(X_t,\xi_{t,i}) \in \mathbb{R}^{m\times n}.
\end{equation}
Muon maintains a momentum matrix (heavy-ball style)
\begin{equation}
B_t \;\gets\; \beta B_{t-1} + (1-\beta)G_t,
\end{equation}
where $\beta\in(0,1)$.

Let $B_t = U_t S_t V_t^\top$ be an SVD result (with $U_t\in\mathbb{R}^{m\times r}$, $S_t\in\mathbb{R}^{r\times r}$, $V_t\in\mathbb{R}^{n\times r}$, and $r=\mathrm{rank}(B_t)$).
Muon discards singular values and uses only the singular directions:
\begin{equation}
O_t \;\gets\; U_t V_t^\top,
\end{equation}
then updates
\begin{equation}
X_{t+1} \;\gets\; X_t - \eta_t O_t,
\end{equation}
where $\eta_t>0$ is the step size.
(Compared to a Nesterov-type presentation, we use the above heavy-ball form where $G_t$ is explicitly multiplied by $(1-\beta)$ in the momentum update.)
\end{definition}

\begin{theorem}[Convergence of Muon optimizer]
\label{thm:muon-conv}
Assume $f$ is $L$-smooth in Frobenius norm, i.e.,
$\|\nabla f(X)-\nabla f(Y)\|_F \le L\|X-Y\|_F$,
and the stochastic gradient is unbiased with bounded variance:
$\mathbb{E}[\nabla f(X,\xi)] = \nabla f(X)$ and
$\mathbb{E}\|\nabla f(X,\xi)-\nabla f(X)\|_F^2 \le \sigma^2$.
Let $R = f(X_1)-f^*$, where $f^* = \inf_X f(X)$ denotes the optimal objective value. All expectations are taken with respect to the stochasticity of the mini-batch sampling.

Take $\beta = 1-\alpha$ with $\alpha = \min\!\left(\frac{\sqrt{RL}}{\sigma\sqrt{T}},\,1\right)$,
use a constant step size $\eta_t=\eta$ with $\eta = \sqrt{\frac{4R}{(10/\alpha + 2n)\,TL}},$ and set $B=1$. Then the Muon iterates in Definition~\ref{def:muon} satisfy
\begin{equation}
\frac{1}{T}\sum_{t=1}^T \mathbb{E}\big[\|\nabla f(X_t)\|_F\big]
\;\le\;
\mathcal{O}\!\left(
\frac{\sqrt{nRL}}{\sqrt{T}}
\;+\;
\frac{\sigma^2}{\sqrt{RLT}}
\;+\;
\frac{\sqrt{\sigma}\,(RL)^{1/4}}{T^{1/4}}
\right).
\end{equation}

If $\beta$ is an arbitrary fixed constant, take $B=T$. Then one obtains
\begin{equation}
\frac{1}{T}\sum_{t=1}^T \|\nabla f(X_t)\|_F
\;\le\;
\mathcal{O}\!\left(
\frac{\sqrt{nRL}}{\sqrt{T}}
\;+\;
\frac{\sigma}{T^{3/2}}
\;+\;
\frac{\sigma}{\sqrt{T}}
\right).
\end{equation}

\smallskip
\noindent
\emph{Proof.} The proof follows directly from the analysis in \citet{li2025muon}.
\end{theorem}

At first glance, one might expect Muon to perform well as the model spectra is increased or deformed, since its SVD-based update removes singular value magnitudes and appears robust to ill-conditioning. However, the convergence rate reveals a more fundamental limitation.

Specifically, the leading term in the convergence bound scales as $\mathcal{O}(\sqrt{nRL/T})$, where $L$ denotes the Lipschitz smoothness constant of the objective, capturing the overall curvature of the loss landscape. Increasing the SpecDef strength $\alpha$ applies stronger spectral deformation to the model parameters, which in turn increases the effective curvature of the objective function and thus increases $L$ \cref{thm:hessian-singular-value-bound}. As a result, the step size required for stable optimization becomes smaller, and convergence slows down under a fixed number of training iterations. This effect cannot be addressed by the SVD-based update used in Muon, which normalizes the update direction but does not change the global curvature of the objective.

This behaviour is reflected in Table~\ref{tab:optimizer-comparison-main}.
As $\alpha$ increases, Muon requires more iterations and achieves lower success rates, consistent with the curvature-dependent limitation predicted by the convergence analysis.

\subsection{Theoretical Analysis of Second-Order Methods}

In this section we will show how $L_2$ relates to the spectral values of the weight matrices of neural networks $\theta_i$. We will begin with some preliminary definitions necessary for understanding the iteration complexity of Newton's method which will be our canonical second order method we use for analysis. We will then employ a tensor analysis specifically using the T-SVD of \citet{zhang2021note} to show a number of analagous results to our first-order ones: an $L_2$ bound in $\nabla^3_{\theta}f$, versions of the matrix singular value inequalities for the tensors, and finally applications to neural networks which will explain the empirical results we obtained above.

\subsubsection{Preliminaries}
\begin{definition}[$L_2$-Lipschitz Continuous Hessian]
\label{def:lip-cont-hes}
A twice continuously differentiable function $f:\mathbb{R}^n\to\mathbb{R}$ 
is said to have an \emph{$L_2$-Lipschitz continuous Hessian} if there exists a constant $L_2>0$ such that
for all $x,y\in\mathbb{R}^n$,
\[
\|\nabla^{2} f(x)-\nabla^{2} f(y)\|\le L_2\|x-y\|.
\]
in which case we call $L_2$ the \emph{Hessian Lipschitz constant}.
\end{definition}

\begin{theorem}[Jensen's Spectral Norm Inequality]
\label{theorem:jensen-spect}
Let $S(u): [0,1] \to \mathbb{R}^{n \times n}$ be a integrable matrix-valued function. 
Then its spectral norm satisfies
\[
\Bigl\| \int_0^1 S(u)\,du \Bigr\| 
\le \int_0^1 \|S(u)\|\,du.
\]
\end{theorem}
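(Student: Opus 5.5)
The plan is to reduce the statement to the triangle inequality for the spectral norm applied to a Riemann-sum approximation of the integral. The integral of a matrix-valued function is taken entrywise, so it is the limit of Riemann sums. Integrability then gives, for partitions $0=u_0<u_1<\dots<u_N=1$ with mesh tending to zero and tags $\xi_j\in[u_{j-1},u_j]$,
\[
\int_0^1 S(u)\,du=\lim_{N\to\infty}\sum_{j=1}^N S(\xi_j)\,(u_j-u_{j-1}),
\]
with convergence entrywise, hence in any matrix norm because we are in finite dimensions.

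First I would apply subadditivity and absolute homogeneity of the spectral norm, the same property used in the proof of \cref{thm:layer-injection-reparameterization}, to each Riemann sum:
\[
\Bigl\|\sum_{j} S(\xi_j)(u_j-u_{j-1})\Bigr\|\le\sum_j \|S(\xi_j)\|\,(u_j-u_{j-1}).
\]
The right-hand side is a Riemann sum for the scalar function $u\mapsto\|S(u)\|$.

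Next I would pass to the limit on both sides. The left side converges to $\|\int_0^1 S\|$ because the norm is continuous. The right side converges to $\int_0^1\|S(u)\|\,du$ provided $u\mapsto\|S(u)\|$ is integrable. For Riemann integrability I would argue this as follows: each entry of $S$ is bounded with a null set of discontinuities, and $\|\cdot\|$ is Lipschitz with respect to the entries, so $\|S(\cdot)\|$ is also bounded with the same null set of discontinuities. The inequality is preserved in the limit, which gives the claim.

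The only real obstacle is this integrability and limit-exchange step. If the intended notion of integrability is Lebesgue or Bochner rather than Riemann, I would instead use duality: write $\|M\|=\max_{\|x\|=\|y\|=1}y^\top Mx$. For the maximizing unit vectors $x,y$ of $\int S$,
\[
\Bigl\|\int S\Bigr\|=\int y^\top S(u)x\,du\le\int\|S(u)\|\,du,
\]
which needs only linearity of the integral and measurability of $\|S(\cdot)\|$. This is cleaner, and I would present it as the main argument, keeping the Riemann sum version as intuition.
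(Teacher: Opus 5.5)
Your proof is correct. The paper itself gives no argument for this statement, only a pointer to \citet{zhang2023notes}, so there is no in-paper route to compare against; your proposal supplies the self-contained proof the paper omits.

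\textbf{The duality argument.} You are right to make it the main proof. Fix unit vectors $x,y$ attaining $\bigl\|\int_0^1 S\bigr\| = y^\top\bigl(\int_0^1 S\bigr)x$. Linearity moves the bilinear form inside the integral, and the pointwise bound $y^\top S(u)x \le \|S(u)\|$ plus monotonicity of the integral finishes it. This needs no limit exchange and works in whichever integration theory is intended. In the Lebesgue or Bochner setting, $\|S(\cdot)\|$ is measurable and dominated by $\sum_{i,j}|S_{ij}(\cdot)|$, so the right-hand side is finite automatically.

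\textbf{The Riemann-sum argument.} This is also valid. Its appeal is that it is literally the block triangle inequality from the proof of \cref{thm:layer-injection-reparameterization}, passed to a limit. Its extra cost is showing $u\mapsto\|S(u)\|$ is Riemann integrable. Your Lebesgue-criterion argument handles this, with one small correction: the discontinuity set of $\|S(\cdot)\|$ is \emph{contained in} the union of the entries' discontinuity sets. It is not ``the same'' null set. Note also that in the Riemann setting the duality proof still needs this same integrability fact for its right-hand side to be defined. So in that setting the two versions differ only in whether a limit exchange is required.

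For the paper's only use of the statement, the Newton remainder $\mathcal{R}_k$ in \cref{theorem:newton-local-quad}, the integrand is continuous in $u$. Either version therefore applies there without subtlety.
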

This result is a matrix version of Jensen’s inequality for the spectral norm. 
A detailed proof can be found in \citet{zhang2023notes}. 

\begin{lemma}[Lipschitz Hessian Bound]
\label{lemma:lip-hes-bound}
Let $f:\mathbb{R}^n \to \mathbb{R}$ be a twice continuously differentiable 
function with an $L_2$-Lipschitz continuous Hessian as defined in 
Definition~\ref{def:lip-cont-hes}. Then, for any two points 
$x, y \in \mathbb{R}^n$, the corresponding Hessians satisfy
\[
\nabla^{2} f(x) - L_2 \|x - y\|\, I 
\;\preceq\; \nabla^{2} f(y) 
\;\preceq\; \nabla^{2} f(x) + L_2 \|x - y\|\, I.
\]
\end{lemma}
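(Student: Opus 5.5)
The plan is to derive the two-sided Loewner bound directly from the Lipschitz condition in \cref{def:lip-cont-hes}, with no integration along a path needed. The key observation is that for symmetric matrices, an operator-norm bound translates into a Loewner sandwich.

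Let $E := \nabla^2 f(y) - \nabla^2 f(x)$. Since $f$ is twice continuously differentiable, both Hessians are symmetric, so $E$ is symmetric. By \cref{def:lip-cont-hes}, with $\|\cdot\|$ taken as the spectral norm, we get $\|E\|_2 \le L_2\|x-y\|$.

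Next, for any unit vector $v$, Cauchy--Schwarz together with the variational definition of the spectral norm gives
\[
|v^\top E v| \le \|E\|_2 \le L_2\|x-y\|.
\]
This is the same step used in the proof of \cref{prop:L-lower-bound-app}, where a norm bound on a symmetric matrix yields $-LI \preceq H \preceq LI$. Applied here, it means every eigenvalue of $E$ lies in $[-L_2\|x-y\|,\, L_2\|x-y\|]$. Equivalently,
\[
-L_2\|x-y\| I \preceq E \preceq L_2\|x-y\| I.
\]

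Finally, adding $\nabla^2 f(x)$ to all three sides preserves the Loewner order, because $A \preceq B$ implies $A + C \preceq B + C$. This gives exactly
\[
\nabla^2 f(x) - L_2\|x-y\| I \preceq \nabla^2 f(y) \preceq \nabla^2 f(x) + L_2\|x-y\| I.
\]

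There is no real obstacle in this argument. The one point to state explicitly is that the norm in \cref{def:lip-cont-hes} is the operator (spectral) norm, or any norm that dominates it, since the eigenvalue bound relies on $\max_{\|v\|=1}|v^\top E v| = \|E\|_2$ for symmetric $E$. \cref{theorem:jensen-spect} would only be needed if one first wrote the Hessian difference as an integral of the third derivative, and the direct route above avoids that.
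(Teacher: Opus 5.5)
Your argument is correct. Symmetry of $E=\nabla^2 f(y)-\nabla^2 f(x)$, the bound $|v^\top E v|\le\|E\|_2\le L_2\|x-y\|$ for unit $v$, and adding $\nabla^2 f(x)$ to each side give exactly the stated sandwich. Your caveat that the matrix norm in \cref{def:lip-cont-hes} must be, or dominate, the spectral norm is the right one to flag.

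The paper gives no in-text proof of this lemma; it defers to \citet{zhang2023notes}. Your direct route is the standard argument, and it is the same norm-to-Loewner step the paper uses in the proof of \cref{prop:L-lower-bound-app}.
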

This lemma provides upper and lower bounds on the variation of the Hessian 
under the Lipschitz continuity assumption, quantifying how much the curvature 
may change between two nearby points. A detailed proof is given in 
\citet{zhang2023notes}.

\subsubsection{Convergence Rate of Newton’s Method on Lipschitz-Hessian Functions}

We now state the convergence rate of Newton's method and it's iteration complexity to parallel our analysis above with first-order methods.

\begin{theorem}[Local quadratic convergence of Newton’s method]
\label{theorem:newton-local-quad}
Let $\mathcal{L}:\mathbb{R}^n\to\mathbb{R}$ be a twice continuously differentiable function,
and suppose $\nabla \mathcal{L}(\theta_*) = 0$ and 
$\nabla^2 \mathcal{L}(\theta_*) \succeq \mu I$ for some $\mu > 0$, 
where $\mu$ denotes the local eigenvalue lower bound of the Hessian at $\theta_*$.
Assume further that $\mathcal{L}$ has an $L_2$-Lipschitz continuous Hessian as defined in Definition~\ref{def:lip-cont-hes}.

Then, for any constant $\rho \in (0, \mu/L_2)$, if an iterate satisfies $\|\theta_k - \theta_*\| \le \rho$, the Newton update
\[
\theta_{k+1}=\theta_k-[\nabla^2 \mathcal{L}(\theta_k)]^{-1}\nabla \mathcal{L}(\theta_k)
\]
satisfies the local quadratic convergence bound:
\[
\|\theta_{k+1} - \theta_*\|
\;\le\;
\psi(\rho)\,\|\theta_k - \theta_*\|^2,
\qquad
\psi(\rho) := \frac{L_2}{2(\mu - L_2 \rho)}.
\]
Furthermore, if $\psi(\rho)\rho < 1$ (equivalently, $\rho < 2\mu/(3L_2)$), then the neighborhood $\|\theta - \theta_*\| \le \rho$ is strictly invariant and the iteration is contractive:
\[
\|\theta_{k+1} - \theta_*\| < \|\theta_k - \theta_*\|.
\]
Under this condition, Newton’s method converges quadratically with a monotonically decreasing error norm.
\end{theorem}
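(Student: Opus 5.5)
The plan is the classical Taylor-remainder argument for Newton's method. First I will write the error after one step in integral form, then bound the numerator using the Lipschitz Hessian and the inverse Hessian using \cref{lemma:lip-hes-bound}. Set $e_k := \theta_k - \theta_*$ and use $\nabla \mathcal{L}(\theta_*) = 0$. The fundamental theorem of calculus along the segment $\theta_* + u e_k$, $u \in [0,1]$, gives
\[
\nabla \mathcal{L}(\theta_k) = \int_0^1 \nabla^2 \mathcal{L}(\theta_* + u e_k)\, e_k \, du .
\]
Substituting this into the Newton update yields
\[
e_{k+1} = [\nabla^2 \mathcal{L}(\theta_k)]^{-1}\int_0^1 \bigl(\nabla^2 \mathcal{L}(\theta_k) - \nabla^2 \mathcal{L}(\theta_* + u e_k)\bigr)\, e_k\, du .
\]

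Next I bound the integral. I take spectral norms and apply Jensen's spectral norm inequality (\cref{theorem:jensen-spect}) to move the norm inside the integral. The Lipschitz Hessian assumption (\cref{def:lip-cont-hes}) gives $\|\nabla^2\mathcal{L}(\theta_k) - \nabla^2\mathcal{L}(\theta_*+ue_k)\| \le L_2(1-u)\|e_k\|$. Integrating $(1-u)$ over $[0,1]$ contributes the factor $1/2$, so the norm of the integral term is at most $\tfrac{L_2}{2}\|e_k\|^2$.

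The third step, which I expect to be the only delicate one, is bounding the inverse Hessian. I apply \cref{lemma:lip-hes-bound} with $x=\theta_*$, $y=\theta_k$, which gives $\nabla^2\mathcal{L}(\theta_k) \succeq \nabla^2\mathcal{L}(\theta_*) - L_2\|e_k\| I$. Combining this with $\nabla^2\mathcal{L}(\theta_*) \succeq \mu I$ and $\|e_k\|\le\rho<\mu/L_2$ gives $\nabla^2\mathcal{L}(\theta_k)\succeq (\mu - L_2\rho) I \succ 0$. Hence the Hessian at $\theta_k$ is invertible, the Newton step is well-defined, and $\|[\nabla^2\mathcal{L}(\theta_k)]^{-1}\| \le 1/(\mu - L_2\rho)$. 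The care needed is that the Loewner lower bound relies on the Hessian being symmetric, so that its smallest eigenvalue controls the norm of the inverse. Multiplying this bound by the one from the previous step gives $\|e_{k+1}\| \le \psi(\rho)\|e_k\|^2$ with $\psi(\rho) = L_2/(2(\mu-L_2\rho))$.

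Finally I treat invariance and contraction. Writing $\|e_{k+1}\| \le \psi(\rho)\|e_k\|\cdot\|e_k\| \le \psi(\rho)\rho\,\|e_k\|$ shows that $\psi(\rho)\rho<1$ implies strict contraction whenever $e_k\neq 0$. Strict contraction in turn keeps $\theta_{k+1}$ inside the ball of radius $\rho$, so the ball is invariant. The equivalence with $\rho < 2\mu/(3L_2)$ is algebra: $L_2\rho < 2(\mu - L_2\rho)$ rearranges to $3L_2\rho<2\mu$. Inducting over $k$ then gives monotone quadratic convergence.
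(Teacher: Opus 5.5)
Your proposal is correct and follows essentially the same route as the paper's proof. Both use the fundamental theorem of calculus along $\theta_* + u(\theta_k-\theta_*)$, Jensen's spectral norm inequality with the Lipschitz bound $L_2(1-u)\|\theta_k-\theta_*\|$ to get the factor $\tfrac{L_2}{2}$, \cref{lemma:lip-hes-bound} with $x=\theta_*$, $y=\theta_k$ to bound the inverse Hessian by $1/(\mu-L_2\rho)$, and finally $\|\theta_{k+1}-\theta_*\|\le\psi(\rho)\rho\,\|\theta_k-\theta_*\|$ for contraction and invariance; the only cosmetic difference is that the paper factors $\theta_k-\theta_*$ out into a matrix remainder $\mathcal{R}_k$ before applying Jensen, whereas you apply it to the vector-valued integrand directly, which is equally valid.
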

\begin{proof}
Consider the straight-line path
$\Gamma : \theta(u) = \theta_* + u(\theta_k - \theta_*)$, for $u \in [0,1]$. By the Fundamental Theorem of Calculus, we obtain:
\begin{equation}\label{eq:g.4.1}
\nabla \mathcal{L}(\theta_k)-\nabla \mathcal{L}(\theta_*) 
= \int_{\Gamma} \nabla^{2} \mathcal{L}(\theta)\, d\theta 
= \int_{0}^{1} \nabla^{2} \mathcal{L}(\theta(u)) \frac{d\theta}{du}\, du
= \int_0^1 \nabla^2 \mathcal{L}\bigl(\theta_*+u(\theta_k-\theta_*)\bigr)(\theta_k-\theta_*)\,du .
\end{equation}

Similarly, $\theta_k - \theta_*$ can be expressed as:
\begin{equation}\label{eq:g.4.2}
\theta_k - \theta_* 
= [\nabla^2 \mathcal{L}(\theta_k)]^{-1}\int_{0}^{1} \nabla^2 \mathcal{L}(\theta_k)(\theta_k-\theta_*)\,du .
\end{equation}

From the Newton update rule:
\[
\theta_{k+1} - \theta_* = \theta_k - \theta_* - [\nabla^2 \mathcal{L}(\theta_k)]^{-1}\bigl[\nabla \mathcal{L}(\theta_k) - \nabla \mathcal{L}(\theta_*)\bigr].
\]
Substituting \eqref{eq:g.4.1} and \eqref{eq:g.4.2} yields:
\[
\theta_{k+1} - \theta_*
= [\nabla^2 \mathcal{L}(\theta_k)]^{-1}\mathcal{R}_k(\theta_k-\theta_*),
\quad
\mathcal{R}_k = \int_{0}^{1} [\nabla^2 \mathcal{L}(\theta_k) - \nabla^{2} \mathcal{L}(\theta_* + u(\theta_k - \theta_*))]\,du.
\]

By Definition~\ref{def:lip-cont-hes} and Theorem~\ref{theorem:jensen-spect}, there exists $L_2>0$ such that
\[
\|\mathcal{R}_k\|
\le \int_0^1 \|\nabla^2 \mathcal{L}(\theta_k) - \nabla^2 \mathcal{L}(\theta_*+u(\theta_k-\theta_*))\|\,du
\le \int_0^1 L_2(1-u)\|\theta_k-\theta_*\|\,du
= \tfrac{1}{2}L_2\|\theta_k-\theta_*\|.
\]

Furthermore, applying Lemma~\ref{lemma:lip-hes-bound} with $x=\theta_*$ and $y=\theta_k$, we obtain:
\[
\nabla^{2} \mathcal{L}(\theta_*) - L_2\|\theta_k-\theta_*\|\,I 
\;\preceq\; \nabla^{2} \mathcal{L}(\theta_k) 
\;\preceq\; \nabla^{2} \mathcal{L}(\theta_*) + L_2\|\theta_k-\theta_*\|\,I.
\]
Since $\nabla^{2} \mathcal{L}(\theta_*) \succeq \mu I$, it follows that
\[
\nabla^{2} \mathcal{L}(\theta_k) \;\succeq\; \bigl(\mu - L_2\|\theta_k-\theta_*\|\bigr)I.
\]
Thus, when $\|\theta_k-\theta_*\|<\mu/L_2$:
\[
\bigl\|[\nabla^{2} \mathcal{L}(\theta_k)]^{-1}\bigr\| 
\;\le\; \frac{1}{\mu - L_2\|\theta_k-\theta_*\|} 
\quad.
\]

Taking norms in the relation $\theta_{k+1}-\theta_*=[\nabla^2 \mathcal{L}(\theta_k)]^{-1}\mathcal{R}_k(\theta_k-\theta_*)$ yields
\[
\|\theta_{k+1}-\theta_*\|
\le \bigl\|[\nabla^2 \mathcal{L}(\theta_k)]^{-1}\bigr\|\,\|\mathcal{R}_k\|\,\|\theta_k-\theta_*\|
\le \frac{L_2}{2\bigl(\mu - L_2\|\theta_k-\theta_*\|\bigr)}\,\|\theta_k-\theta_*\|^2.
\]

Note that the bound $\|\theta_k - \theta_*\| < \mu/L_2$ ensures that the Hessian remains
positive definite and invertible within the local neighborhood of $\theta_*$,
since $\nabla^2 \mathcal{L}(\theta_k) \succeq (\mu - L_2\|\theta_k-\theta_*\|)I \succ 0$. 
To formalize this region of guaranteed invertibility, we fix a constant
$\rho \in (0, \mu/L_2)$ and suppose $\|\theta_k-\theta_*\|\le \rho$, then:
\[
\|\theta_{k+1}-\theta_*\|
\;\le\; \frac{L_2}{2(\mu-L_2\rho)}\,\|\theta_k-\theta_*\|^2
\;=\; \psi(\rho)\,\|\theta_k-\theta_*\|^2,
\qquad 
\psi(\rho):=\frac{L_2}{2(\mu-L_2\rho)}.
\]
This is the stated local quadratic bound. Moreover, if $\psi(\rho)\rho<1$, then:
\[
\|\theta_{k+1}-\theta_*\| \;\le\; \psi(\rho)\,\rho^2 \;<\; \rho
\quad\text{and}\quad
\frac{\|\theta_{k+1}-\theta_*\|}{\|\theta_k-\theta_*\|} \;\le\; \psi(\rho)\,\|\theta_k-\theta_*\| 
\;\le\; \psi(\rho)\,\rho \;<\; 1,
\]
which shows that the neighborhood $\{\theta:\|\theta-\theta_*\|\le \rho\}$ is strictly invariant and the iteration is
contractive with a monotonically decreasing error norm. 
\end{proof}
This theorem demonstrates that Newton's method achieves local quadratic convergence when the objective function has a strongly positive definite Hessian at the optimum and the Hessian varies smoothly in a neighborhood of that point. This proof is a restatement and detailed derivation of Theorem 3.3 in \citet{zhang2023notes}.

\subsubsection{Iteration Complexity of Newton’s Method}

From Theorem~\ref{theorem:newton-local-quad}, the local quadratic convergence relation is given by
\[
e_{k+1} \le \psi(\rho)\, e_k^2,
\qquad
\psi(\rho) = \frac{L_2}{2(\mu - L_2\rho)},
\]
where $e_k = \|\theta_k - \theta_*\|$ and $\rho < \mu/L_2$ ensures that the Hessian remains positive definite and the iteration stays within the local invariant region.

We now derive the corresponding \emph{iteration complexity}, that is, the number of steps $T$ required to achieve $e_T \le \varepsilon$ for a given accuracy $\varepsilon > 0$.

Unrolling the recurrence relation gives
\[
e_k 
\leq \psi(\rho)e_{k-1}^2 
\leq \psi(\rho)^{2^0 + 2^1} e_{k-2}^{2^2}
\leq \cdots
\leq \psi(\rho)^{\sum_{i=0}^{k-1} 2^i} e_0^{2^k}
= \psi(\rho)^{2^k - 1} e_0^{2^k}.
\]

To determine $T$ such that $e_T \le \varepsilon$, we require
\[
\psi(\rho)^{2^T - 1} e_0^{2^T} \le \varepsilon
\quad \Leftrightarrow \quad
\bigl(\psi(\rho)e_0\bigr)^{2^T} \le \psi(\rho)\varepsilon.
\]
Taking natural logarithms on both sides, and noting that by the local quadratic convergence condition we already have 
$e_1 = \psi(\rho)e_0^2 < e_0$ (hence $\psi(\rho)e_0<1$), we obtain 
\[
2^T \ln\!\bigl(\psi(\rho)e_0\bigr)\le \ln\!\bigl(\psi(\rho)\varepsilon\bigr).
\]
Since $\ln(\psi(\rho)e_0)<0$, dividing both sides by $\ln(\psi(\rho)e_0)$ reverses
the inequality, giving
\[
2^T \;\ge\; \frac{\ln\!\bigl(\psi(\rho)\varepsilon\bigr)}{\ln\!\bigl(\psi(\rho)e_0\bigr)}
\;=\; \frac{\ln\!\bigl(1/(\psi(\rho)\varepsilon)\bigr)}{\ln\!\bigl(1/(\psi(\rho)e_0)\bigr)}.
\]
Finally, taking $\log_2$ on both sides (monotonicity of $\log_2$) yields
\[
T \;\ge\;
\log_2\!\left(
\frac{
\ln\!\bigl(1 / (\psi(\rho)\varepsilon)\bigr)
}{
\ln\!\bigl(1 / (\psi(\rho)e_0)\bigr)
}
\right)
\;=\;
\log_2\!\left(
\frac{
\ln\!\bigl(1 / (\psi(\rho)\varepsilon)\bigr)
}{
\ln\!\bigl(1 / (\psi(\rho)\|\theta_0-\theta_*\|)\bigr)
}
\right).
\]
This gives an explicit lower bound on the number of iterations required for Newton’s method to reach a target accuracy~$\varepsilon$ within the local quadratic convergence region and since $\psi(\rho) = \frac{L_2}{2(\mu -L_2\rho)}$ there is a clear dependency of the iteration complexity on $L_2$.

\subsubsection{Third Order Analysis: Preliminaries}
\label{subsec:third-order-b3}

Our first goal to parallel the first-order analysis in the paper is to extend \cref{prop:L-lower-bound-app} to the third-order derivative tensor
$\mathcal{T}(x):=\nabla^3 f(x)\in\mathbb{R}^{n\times n\times n}$,
which represents the derivative of the Hessian of a thrice continuously differentiable function $f:\mathbb{R}^n\to\mathbb{R}$. 

Since we work within the T-product and T-SVD framework from \citet{zhang2021note}, we will present a number of preliminary definitions below.

\begin{definition}[Unfold, Fold, and Block-Circulant Representation]
Let $\mathcal{A}\in\mathbb{R}^{m\times n\times p}$ be a third-order tensor with frontal slices 
$A_k=\mathcal{A}(:,:,k)\in\mathbb{R}^{m\times n}$ for $k=1,\dots,p$.
The \emph{unfolding} of $\mathcal{A}$ is defined as the block column concatenation
\[
\mathrm{unfold}(\mathcal{A}) =
\begin{bmatrix}
A_1\\
A_2\\
\vdots\\
A_p
\end{bmatrix}
\in\mathbb{R}^{mp\times n},
\]

The \emph{folding} operator $\mathrm{fold}(\cdot)$ is the inverse mapping that reconstructs
a tensor from its unfolded matrix:
\[
\mathrm{fold}(\mathrm{unfold}(\mathcal{A}))=\mathcal{A}.
\]

The \emph{block-circulant matrix} of $\mathcal{A}$, denoted by $\mathrm{bcirc}(\mathcal{A})\in\mathbb{R}^{mp\times np}$, is defined as
\[
\mathrm{bcirc}(\mathcal{A})=
\begin{bmatrix}
A_1 & A_p & A_{p-1} & \cdots & A_2\\
A_2 & A_1 & A_p & \cdots & A_3\\
\vdots & \ddots & \ddots & \ddots & \vdots\\
A_p & A_{p-1} & \cdots & A_2 & A_1
\end{bmatrix}.
\]
\end{definition}
 The block-circulant structure is fundamental to the T-product framework, as it enables the extension of matrix algebraic operations to third-order tensors through the use of discrete Fourier transforms.

\begin{definition}[Discrete Fourier Transform along the third dimension]
\label{def:dft}
For a tensor $\mathcal{A}\in\mathbb{R}^{m\times n\times p}$, 
the Discrete Fourier Transform (DFT) along the third dimension is defined as
\[
\widehat{\mathcal{A}}(:,:,k)
= \sum_{t=1}^{p}\mathcal{A}(:,:,t)\,e^{-2\pi i (t-1)(k-1)/p}, 
\quad k=1,\dots,p.
\]
The inverse DFT is given by
\[
\mathcal{A}(:,:,t)
= \frac{1}{p}\sum_{k=1}^{p}\widehat{\mathcal{A}}(:,:,k)\,e^{2\pi i (t-1)(k-1)/p}.
\]

Under this transform, the block-circulant matrix of $\mathcal{A}$ is block-diagonalized as
\[
(F_p\otimes I_m)\,\mathrm{bcirc}(\mathcal{A})\,(F_p^{-1}\otimes I_n)
= \mathrm{diag}\big(\widehat{\mathcal{A}}(:,:,1),\dots,\widehat{\mathcal{A}}(:,:,p)\big),
\]
where $F_p$ is the $p\times p$ DFT matrix with entries $[F_p]_{jk} = e^{-2\pi i (j-1)(k-1)/p}$, and $\otimes$ denotes the Kronecker product.
\end{definition}

In practice, the DFT and inverse DFT are efficiently computed using the \emph{Fast Fourier Transform (FFT)} and \emph{Inverse Fast Fourier Transform (IFFT)}. For simplicity, we use the notations $\widehat{\mathcal{A}}=\mathrm{fft}(\mathcal{A},[],3)$ 
and $\mathcal{A}=\mathrm{ifft}(\widehat{\mathcal{A}},[],3)$ 
throughout the paper \citep{zhang2021note}.

Throughout this appendix, we adopt the \emph{unitary} version of the above DFT, i.e., we normalize the DFT matrix $F_p$ by $1/\sqrt{p}$ so that $F_p$ is unitary and the associated Fourier basis vectors have unit $\ell_2$ norm.
With this convention, all block-diagonalization identities remain valid, and tensor singular values are unchanged up to a consistent scaling.

\begin{definition}[Tensor Product (T-product)]
\label{def:t-product}
Let $\mathcal{A}\in\mathbb{R}^{m\times n\times p}$ and $\mathcal{B}\in\mathbb{R}^{n\times t\times p}$.
The \emph{T-product} between $\mathcal{A}$ and $\mathcal{B}$ is defined by
\[
\mathcal{A} * \mathcal{B}
= \mathrm{fold}\big(\mathrm{bcirc}(\mathcal{A})\,\mathrm{unfold}(\mathcal{B})\big)
\in\mathbb{R}^{m\times t\times p}.
\]
According to Definition~\ref{def:dft}, 
if $\widehat{\mathcal{A}}=\mathrm{fft}(\mathcal{A},[],3)$ and 
$\widehat{\mathcal{B}}=\mathrm{fft}(\mathcal{B},[],3)$,
then
\[
\widehat{\mathcal{C}}(:,:,k)
= \widehat{\mathcal{A}}(:,:,k)\,\widehat{\mathcal{B}}(:,:,k),
\quad
\forall k=1,\dots,p,
\]
where $\widehat{\mathcal{C}}=\mathrm{fft}(\mathcal{A}*\mathcal{B},[],3)$.
\end{definition}
This definition follows \citet{zhang2021note} and establishes the foundation for tensor algebra operations, allowing tensors to perform matrix-like multiplication in the Fourier domain that can be efficiently computed using FFT.

We will now present a singular value decomposition for tensors known as the T-SVD.

\begin{theorem}[Tensor Singular Value Decomposition (T-SVD) and Tensor Singular Values]
\label{thm:tsvd}
Let $\mathcal{A}\in\mathbb{R}^{m\times n\times p}$ be a third-order tensor. 
Then $\mathcal{A}$ admits the following factorization under the T-product:
\[
\mathcal{A} = \mathcal{U} * \mathcal{S} * \mathcal{V}^\top,
\]
where $\mathcal{U}\in\mathbb{R}^{m\times m\times p}$ and 
$\mathcal{V}\in\mathbb{R}^{n\times n\times p}$ are orthogonal tensors satisfying 
$\mathcal{U}^\top * \mathcal{U} = \mathcal{I}$ and $\mathcal{V}^\top * \mathcal{V} = \mathcal{I}$, 
and $\mathcal{S}\in\mathbb{R}^{m\times n\times p}$ is an \emph{f-diagonal tensor}, 
that is, each frontal slice $\mathcal{S}(:,:,k)$ is diagonal.
According to Definition~\ref{def:dft}, if 
$\widehat{\mathcal{A}}=\mathrm{fft}(\mathcal{A},[],3)$, 
then each frontal slice of $\widehat{\mathcal{A}}$ admits a standard matrix SVD:
\[
\widehat{\mathcal{A}}(:,:,k)
= \widehat{U}_k\,\widehat{S}_k\,\widehat{V}_k^\top, 
\quad k=1,\dots,p,
\]
where $\widehat{U}_k,\widehat{V}_k$ are orthogonal matrices and 
$\widehat{S}_k=\mathrm{diag}\big(\sigma_1^{(k)},\dots,\sigma_{r}^{(k)}\big)$ with $r=\min(m,n)$. Applying the inverse transform defined in Definition~\ref{def:dft} yields
\[
\mathcal{U}=\mathrm{ifft}(\widehat{\mathcal{U}},[],3), \quad
\mathcal{S}=\mathrm{ifft}(\widehat{\mathcal{S}},[],3), \quad
\mathcal{V}=\mathrm{ifft}(\widehat{\mathcal{V}},[],3).
\]
The \bf{tensor singular values} of $\mathcal{A}$ are defined as the diagonal entries of the first frontal slice $\mathcal{S}(:,:,1)$, denoted by
\[
\varsigma_i(\mathcal{A}) := \mathcal{S}(i,i,1)
= \frac{1}{p}\sum_{k=1}^{p}\sigma_i(\widehat{\mathcal{A}}(:,:,k)), 
\quad i=1,\dots,r,
\]
Hence, the tensor singular values satisfy the monotonicity
\[
\varsigma_1(\mathcal{A}) \ge \varsigma_2(\mathcal{A}) \ge \cdots 
\ge \varsigma_r(\mathcal{A}) \ge 0.
\]
\end{theorem}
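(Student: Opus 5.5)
The plan is to work entirely in the Fourier domain along the third mode. There the T-product becomes slice-wise matrix multiplication (Definition~\ref{def:t-product}), so the existence of the T-SVD reduces to $p$ ordinary matrix SVDs. I would then transport the factors back with the inverse DFT and check the required properties one at a time.

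\textbf{Step 1: slice-wise SVDs in the Fourier domain.} Set $\widehat{\mathcal{A}}=\mathrm{fft}(\mathcal{A},[],3)$. For each $k$, take a matrix SVD $\widehat{\mathcal{A}}(:,:,k)=\widehat{U}_k\widehat{S}_k\widehat{V}_k^{H}$ with the singular values in decreasing order. Because $\mathcal{A}$ is real, the slices satisfy the conjugate symmetry $\widehat{\mathcal{A}}(:,:,k)=\overline{\widehat{\mathcal{A}}(:,:,p-k+2)}$ for $k\ge 2$. The SVDs therefore have to be chosen consistently:
\begin{itemize}
\item for the self-conjugate slices ($k=1$, and $k=p/2+1$ when $p$ is even), which are real matrices, take real SVDs;
\item for each conjugate pair, compute the SVD of one member and define the other by complex conjugation.
\end{itemize}
This choice guarantees that $\mathcal{U}=\mathrm{ifft}(\widehat{\mathcal{U}},[],3)$, and likewise $\mathcal{S}$ and $\mathcal{V}$, are real tensors. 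It also gives $\widehat{S}_k=\widehat{S}_{p-k+2}$, since the singular values in each pair are equal and real. I expect this realness bookkeeping to be the main obstacle. A naive independent SVD per slice generally produces complex factors after the inverse transform, so the paired construction is essential.

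\textbf{Step 2: factorization and orthogonality.} By Definition~\ref{def:t-product}, $\mathcal{U}*\mathcal{S}*\mathcal{V}^\top$ has Fourier slices $\widehat{U}_k\widehat{S}_k\,\widehat{(\mathcal{V}^\top)}_k$. I would check directly from the DFT formula that the tensor transpose (transpose each slice and reverse the order of slices $2,\dots,p$) maps to the slice-wise conjugate transpose, i.e. $\widehat{(\mathcal{V}^\top)}_k=\widehat{V}_k^{H}$. The Fourier slices of the product are then exactly $\widehat{\mathcal{A}}(:,:,k)$, and invertibility of the DFT gives $\mathcal{A}=\mathcal{U}*\mathcal{S}*\mathcal{V}^\top$. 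By the same identity, $\mathcal{U}^\top*\mathcal{U}$ has Fourier slices $\widehat{U}_k^H\widehat{U}_k=I$. The identity tensor (first slice $I$, all other slices zero) has every Fourier slice equal to $I$, so $\mathcal{U}^\top*\mathcal{U}=\mathcal{I}$, and similarly for $\mathcal{V}$. Finally, $\mathcal{S}$ is f-diagonal because the inverse DFT acts entrywise across slices: an off-diagonal entry of $\mathcal{S}$ is a linear combination of off-diagonal entries of the $\widehat{S}_k$, all of which are zero.

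\textbf{Step 3: formula and monotonicity of $\varsigma_i$.} Evaluate the inverse DFT at $t=1$. All the exponentials equal $1$, so
\[
\mathcal{S}(i,i,1)=\tfrac{1}{p}\sum_{k}\widehat{S}_k(i,i)=\tfrac1p\sum_k\sigma_i(\widehat{\mathcal{A}}(:,:,k)).
\]
Under the unitary normalization adopted above, the factor becomes $1/\sqrt{p}$ up to the consistent rescaling noted there. I would flag this normalization explicitly, since it is the only place where the constant differs, and it does not affect ordering. Monotonicity is then immediate: for each $k$ we have $\sigma_i^{(k)}\ge\sigma_{i+1}^{(k)}\ge 0$, and averaging with the same nonnegative weights preserves these inequalities.
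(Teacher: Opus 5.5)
Your proposal is correct and follows the same route as the paper. The paper's statement already describes the construction (slice-wise SVDs of the Fourier slices, pulled back by the inverse DFT) and defers the proof to Theorem 2.3 of \citet{zhang2021note}. The details you add are ones the paper leaves implicit, and you handle them correctly: the conjugate-paired SVDs that keep $\mathcal{U},\mathcal{S},\mathcal{V}$ real, unitary rather than orthogonal Fourier-domain factors, the identity sending the tensor transpose to the slice-wise conjugate transpose, and the caveat that the $\frac{1}{p}$ formula is exact for the unnormalized DFT of Definition~\ref{def:dft} but becomes $\frac{1}{\sqrt{p}}\sum_k$ when written in terms of unitary slices.
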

This theorem extends the classical matrix SVD to third-order tensors through the T-product framework. See the proof in Theorem 2.3 from \citet{zhang2021note}.

\subsubsection{Third Order Analysis: $L_2$ bounds}

We not present a tensor Version of \cref{prop:L-lower-bound-app} where for the tensor singular values $\varsigma_i$, we have $\varsigma_1\!\big(\mathcal{T}(x)\big) \;\le\; L_2$.

\begin{theorem}[Tensor Version of \cref{prop:L-lower-bound-app}]
\label{thm:tensor-b3}
Let $f:\mathbb{R}^n\to\mathbb{R}$ be thrice continuously differentiable with an
$L_2$-Lipschitz continuous Hessian as defined in
Definition~\ref{def:lip-cont-hes}. Let $\mathcal{T}(x)=\nabla^3 f(x)\in\mathbb{R}^{n\times n\times n}$ denote the third-order derivative tensor.
Then the largest tensor singular value of $\mathcal{T}(x)$ under the T--SVD framework satisfies
\[
\varsigma_1\!\big(\mathcal{T}(x)\big) \;\le\; L_2.
\]
\end{theorem}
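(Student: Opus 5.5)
The plan is to reduce the tensor statement to the matrix fact of \cref{prop:L-lower-bound-app}, slice by slice in the Fourier domain. By \cref{thm:tsvd}, $\varsigma_1(\mathcal{T}(x)) = \frac{1}{p}\sum_{k=1}^{p}\sigma_1(\widehat{\mathcal{T}}(:,:,k))$ with $p=n$, so it suffices to show $\sigma_1(\widehat{\mathcal{T}}(:,:,k)) \le L_2$ for every $k$; the average then inherits the bound.

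The first step identifies what a Fourier slice is. Using the unitary DFT convention adopted above, $\widehat{\mathcal{T}}(:,:,k) = \sum_{t=1}^{n}\mathcal{T}(:,:,t)\,\omega_k(t)$, where $\omega_k$ is the $k$-th Fourier vector with unit $\ell_2$ norm. Since $\mathcal{T}(:,:,t) = \partial_t \nabla^2 f(x)$, this slice is the directional derivative of the Hessian along the complex unit vector $w_k := \overline{\omega_k}$ (up to conjugation convention):
\[
\widehat{\mathcal{T}}(:,:,k) = D\nabla^2 f(x)[w_k] = \lim_{s\to 0}\frac{\nabla^2 f(x+s w_k)-\nabla^2 f(x)}{s},
\]
interpreted by complex-linear extension of the real derivative. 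The second step applies \cref{def:lip-cont-hes}: for any real unit direction $v$, $\|D\nabla^2 f(x)[v]\|_2 \le \lim_{s\to0}\frac{L_2 s\|v\|}{s} = L_2$. Thus the real trilinear map $(u,v,w)\mapsto \mathcal{T}(x)[u,v,w]$ has operator norm at most $L_2$ in its last argument, uniformly over the first two.

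The third step, which I expect to be the main obstacle, passes from real to complex directions. Write $w_k = a + ib$ with $a,b$ real and $\|a\|^2+\|b\|^2=1$. The triangle inequality only gives $\|D\nabla^2 f[a]+iD\nabla^2 f[b]\| \le L_2(\|a\|+\|b\|)\le \sqrt2 L_2$, which is off by $\sqrt2$. To remove it, I would exploit the structure of the DFT vectors. For $k=1$ (and $k=n/2+1$ when $n$ is even) the vector is real, so the bound is immediate. For the other slices, conjugate pairs $k$ and $n-k+2$ give conjugate slices with equal singular values. For such a slice I would write $\sigma_1(\widehat{\mathcal{T}}_k)=\max_{\|u\|=\|v\|=1,\,u,v\in\mathbb{C}^n}|u^*\widehat{\mathcal{T}}_k v|$ and use the symmetry of $\mathcal{T}$ to rewrite this as a multilinear form $\mathcal{T}[\bar u,v,w_k]$. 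Then I would invoke the known fact that for symmetric multilinear forms the complex (Banach--Bohnenblust-type) norm equals the real one with constant $1$ for Hilbert spaces; this is the bound $|\mathcal{T}[z_1,z_2,z_3]|\le L_2\prod\|z_i\|$, obtained from the polarization-free Banach theorem that the norm of a symmetric form is attained on the diagonal. If invoking that theorem is too heavy, the fallback is to state the result with the constant $\sqrt2$, or to restrict to the real slices. With the slice bound in hand, averaging completes the proof.
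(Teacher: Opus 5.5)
The first two steps are fine: the reduction to Fourier slices and the real-direction bound $\|\nabla^3 f(x)[\cdot,\cdot,h]\|_2\le L_2$ both hold. You also correctly located the crux. The fact you plan to invoke in the third step, however, is false.

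Banach's theorem says that on a Hilbert space the norm of a symmetric multilinear form equals the supremum of its polynomial over the unit sphere of that \emph{same} space. It says nothing about passing from $\mathbb{R}^n$ to $\mathbb{C}^n$, and complexifying a real symmetric trilinear form can increase its norm. Take $P(y)=y_1^3-3y_1y_2^2=\mathrm{Re}\,(y_1+iy_2)^3$. On the real unit circle $P=\cos 3\theta$, so its supremum is $1$, but at the complex unit vector $z=(1,-i)/\sqrt2$ one gets $P(z)=\sqrt2$. In matrix form, $S=\nabla^3(P/6)$ gives $S[\cdot,\cdot,h]=\begin{pmatrix}h_1&-h_2\\-h_2&-h_1\end{pmatrix}$, whose norm is $\|h\|$ for real $h$. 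Yet $S[\cdot,\cdot,z]$ sends the unit vector $z$ to $(1,i)$, so $\sigma_1(S[\cdot,\cdot,z])=\sqrt2$. Hence the bound $|\mathcal{T}[z_1,z_2,z_3]|\le L_2\prod\|z_i\|$ fails for complex arguments, and your triangle-inequality constant $\sqrt2$ is actually attained.

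This cannot be repaired, because the statement itself is false. Every non-real unitary Fourier vector $\phi_k=a+ib$ satisfies $\phi_k^\top\phi_k=0$, so $a\perp b$ and $\|a\|=\|b\|=1/\sqrt2$. A real rotation therefore carries $\phi_k$ onto the bad vector $z$.

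Concretely, take $n=3$, $\phi_1=\frac{1}{\sqrt3}(1,1,1)$ and $\phi_2=a+ib=\frac{1}{\sqrt3}(1,\omega,\omega^2)$ with $\omega=e^{-2\pi i/3}$. Let $Q$ be the orthogonal matrix with rows $\sqrt2\,a^\top$, $-\sqrt2\,b^\top$, $\phi_1^\top$, and put $f(x)=g(Qx)$ with $g(y)=\frac16(y_1^3-3y_1y_2^2+y_3^3)$. Then $\nabla^2 g(y)$ is block diagonal with blocks $\begin{pmatrix}y_1&-y_2\\-y_2&-y_1\end{pmatrix}$ and $(y_3)$. Its norm is $\max\{(y_1^2+y_2^2)^{1/2},|y_3|\}\le\|y\|$, so $L_2=1$. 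On the other hand $Q\phi_1=e_3$, $Q\phi_2=(z,0)$ and $Q\phi_3=\overline{Q\phi_2}$. So the Fourier slices $Q^\top\nabla^2 g(Q\phi_k)Q$ have $\sigma_1$ equal to $1,\sqrt2,\sqrt2$, giving $\varsigma_1(\mathcal{T})=(1+2\sqrt2)/3>1=L_2$. The unnormalized DFT in the T--SVD definition only multiplies this by $\sqrt n$.

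The paper's own proof has the same hole: it disposes of this step by ``identifying $\mathbb{C}^n$ with $\mathbb{R}^{2n}$'', which does not justify it.

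What is true is your fallback. Real slices are bounded by $L_2$, and non-real ones by $L_2(\|a\|+\|b\|)=\sqrt2\,L_2$, so $\varsigma_1(\mathcal{T})\le\sqrt2\,L_2$. The constant $\sqrt2$ cannot be improved as $n\to\infty$: place a copy of the $P$-block in each of the mutually orthogonal planes spanned by the real and imaginary parts of conjugate Fourier pairs.
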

\begin{proof}
Let $f$ satisfy the $L_2$-Lipschitz Hessian condition in 
Definition~\ref{def:lip-cont-hes}, namely
\[
\|\nabla^{2} f(u)-\nabla^{2} f(v)\|\le L_2\|u-v\|,
\qquad \forall\,u,v\in\mathbb{R}^n.
\]
Fix a point $x\in\mathbb{R}^n$ and a unit direction $h\in\mathbb{R}^n$ with 
$\|h\|_2=1$.  
Define the matrix-valued function
\[
G(t) := \nabla^2 f(x + t h), \qquad t\in [0,1].
\]
Then by Definition~\ref{def:lip-cont-hes},
\[
\|G(t)-G(s)\|
= \|\nabla^2 f(x+th)-\nabla^2 f(x+sh)\|
\le L_2|t-s|.
\]
Hence $G$ is an $L_2$-Lipschitz function of $t$.  
Since $f$ is $C^3$ and $G$ is differentiable then
\[
G'(t)
= \frac{d}{dt}\nabla^2 f(x+th)
= \nabla^3 f(x+th)[\,\cdot,\cdot,h\,].
\]
Since $G$ is differentiable and its derivative $G'(t)$ is continuous,
for each fixed $t$ we have
\[
\|G'(t)\|
= \Bigl\|\lim_{s\to t}\frac{G(t)-G(s)}{t-s}\Bigr\|
\le \limsup_{s\to t}\frac{\|G(t)-G(s)\|}{|t-s|}
\le L_2, \qquad  \forall\,t\in[0,1].
\]
Therefore,
\begin{equation}\label{eq:g.9.1}
\big\|\nabla^3 f(x+th)[\,\cdot,\,\cdot,\,h\,]\big\|_2
= \|G'(t)\|
\le L_2,
\qquad \forall\,t\in[0,1],\ \|h\|_2=1.
\end{equation}
Applying the discrete Fourier transform along the third tensor dimension 
(Definition~\ref{def:dft}) gives
\[
\widehat{\mathcal{T}}
= \mathrm{fft}(\mathcal{T}(x),[],3),
\qquad 
\mathcal{T}(x)=\nabla^3 f(x).
\]
Its $k$-th frontal slice satisfies
\[
\widehat{\mathcal{T}}(:,:,k)
=\mathcal{T}(x)[\,\cdot,\cdot,\phi_k\,],
\]
where $\phi_k$ denotes the $k$-th unit-norm Fourier basis vector under the unitary DFT convention adopted throughout this appendix. 
Moreover, the bound in \eqref{eq:g.9.1} extends to complex unit vectors by identifying $\mathbb{C}^n$ with $\mathbb{R}^{2n}$ and using the same operator norm.

Since $\|\phi_k\|_2=1$, substituting $h=\phi_k$ into 
\eqref{eq:g.9.1} yields
\[
\sigma_{1}\!\big(\widehat{\mathcal{T}}(:,:,k)\big) =
\big\|\widehat{\mathcal{T}}(:,:,k)\big\|_2
\le L_2,
\qquad k=1,\dots,p.
\]
From Theorem~\ref{thm:tsvd}, the largest tensor singular values of $\mathcal{T}(x)$ are
\[
\varsigma_1\!\big(\mathcal{T}(x)\big)
= \frac{1}{p}\sum_{k=1}^{p}\sigma_{1}\!\big(\widehat{\mathcal{T}}(:,:,k)\big)
\le \frac{1}{p}\sum_{k=1}^{p}L_2
= L_2.
\]
Therefore the largest tensor singular value of the third-order derivative tensor
$\nabla^3 f(x)$ is bounded above by the Hessian Lipschitz constant $L_2$.
\end{proof}

\subsubsection{Third Order Analysis: Singular Value Inequalities in Tensors}

Before we proceed with connecting the above $L_2$ bounds with increases to weight singular values as we did with $L_1$ previously, we will need to state some tensor singular value inequalities analogous to the matrix singular values we presented in \cref{app:singular-value-inequalities}. 

\begin{theorem}[Poincaré separation theorem for tensor singular values]
\label{thm:tubal-poincare}
Let $\mathcal{A}\in\mathbb{R}^{m\times n\times p}$ be a third-order tensor, and let 
$\mathcal{A}_{\mathrm{sub}}\in\mathbb{R}^{m'\times n'\times p}$ be any subtensor of $\mathcal{A}$ 
with $1 \le m' \le m$ and $1 \le n' \le n$. 
Let the tensor singular values (under the T--SVD in Theorem~\ref{thm:tsvd}) be
\[
\varsigma_1(\mathcal{A}) \ge \varsigma_2(\mathcal{A}) \ge \cdots \ge \varsigma_{r}(\mathcal{A}) \ge 0,
\quad r = \min\{m,n\},
\]
and
\[
\varsigma_1(\mathcal{A}_{\mathrm{sub}}) \ge \varsigma_2(\mathcal{A}_{\mathrm{sub}}) \ge \cdots 
\ge \varsigma_{r'}(\mathcal{A}_{\mathrm{sub}}) \ge 0,
\quad r' = \min\{m',n'\}.
\]
Then, for all $k = 1,\dots, r'$, we have
\begin{equation}
\label{eq:tubal-sep-simple}
\varsigma_k(\mathcal{A})
\;\ge\;
\varsigma_k(\mathcal{A}_{\mathrm{sub}})
\;\ge\;
\varsigma_{\,k + (m - m') + (n - n')}(\mathcal{A}),
\end{equation}
with the convention that $\varsigma_j(\mathcal{A}) = 0$ if $j > r$.
\end{theorem}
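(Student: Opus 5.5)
The plan is to reduce the statement slice by slice to the matrix Poincaré separation theorem (\cref{thm:poincare-seperation}) in the Fourier domain, and then average. The T-SVD singular values of \cref{thm:tsvd} are defined as averages $\varsigma_i(\mathcal{A}) = \frac{1}{p}\sum_{k=1}^p \sigma_i(\widehat{\mathcal{A}}(:,:,k))$. Both inequalities in \eqref{eq:tubal-sep-simple} are linear and preserved by averaging. So it suffices to establish the matrix inequalities for each Fourier slice with the same shift $r := (m-m')+(n-n')$.

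\emph{Step 1 (commutation of subtensor extraction with the DFT).} Let $\mathcal{A}_{\mathrm{sub}}$ be obtained by keeping row index set $I\subseteq\{1,\dots,m\}$ with $|I|=m'$ and column index set $J$ with $|J|=n'$, and all $p$ frontal slices. The DFT of \cref{def:dft} acts only along the third mode. It is therefore applied entrywise to each tube $\mathcal{A}(i,j,:)$. Hence
\[
\widehat{\mathcal{A}_{\mathrm{sub}}}(:,:,k) = \widehat{\mathcal{A}}(I,J,k), \qquad k=1,\dots,p .
\]
This says that the $k$-th Fourier slice of the subtensor is the submatrix of the $k$-th Fourier slice of $\mathcal{A}$ obtained by deleting $m-m'$ rows and $n-n'$ columns. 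I will verify this directly from the sum formula in \cref{def:dft}. It is just linearity of the transform in each tube.

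\emph{Step 2 (slice-wise separation).} For each $k$, apply \cref{thm:poincare-seperation} to the pair $\widehat{\mathcal{A}}(:,:,k)$ and $\widehat{\mathcal{A}}(I,J,k)$, with a total of $r$ deleted rows and columns. This gives
\[
\sigma_i(\widehat{\mathcal{A}}(:,:,k)) \;\ge\; \sigma_i(\widehat{\mathcal{A}}(I,J,k)) \;\ge\; \sigma_{i+r}(\widehat{\mathcal{A}}(:,:,k))
\]
for $i=1,\dots,r'$, with the convention that $\sigma_j \equiv 0$ beyond the matrix dimension. That convention is exactly the convention $\varsigma_j(\mathcal{A})=0$ for $j>r$ adopted in the statement.

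\emph{Step 3 (averaging).} Sum the Step 2 inequalities over $k=1,\dots,p$ and divide by $p$. By the averaging formula in \cref{thm:tsvd}, applied to both $\mathcal{A}$ and $\mathcal{A}_{\mathrm{sub}}$, this yields \eqref{eq:tubal-sep-simple} directly. Monotonicity of the $\varsigma_i$ is not needed for this argument; it is inherited anyway.

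\emph{Main obstacle.} The only delicate point is that the Fourier slices $\widehat{\mathcal{A}}(:,:,k)$ are in general complex matrices, whereas \cref{thm:poincare-seperation} was stated for real matrices. I would handle this by noting that the Courant--Fischer characterization (\cref{thm:courant-fischer}), and hence the interlacing proof in \citet[3.1.3]{Horn_Johnson_1991}, holds verbatim over $\mathbb{C}$ with unitary in place of orthogonal matrices. Alternatively, I would use the real embedding $X+iY \mapsto \begin{bmatrix} X & -Y\\ Y & X\end{bmatrix}$, which doubles each singular value's multiplicity. However, that embedding also doubles the index shift, so the direct complex version is the cleaner route. I would also confirm that the unitary normalization of the DFT does not matter: any consistent scaling multiplies all $\sigma_i$ of all slices, for both tensors, by the same constant, so it cancels in each inequality.
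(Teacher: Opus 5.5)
Your proposal is correct and is essentially the paper's proof: apply the DFT along the third mode, invoke the matrix Poincaré separation (\cref{thm:poincare-seperation}) on each pair $\widehat{\mathcal{A}}(:,:,k)$, $\widehat{\mathcal{A}_{\mathrm{sub}}}(:,:,k)$, and average using the definition of $\varsigma_i$ in \cref{thm:tsvd}. Your extra checks are all sound, and the paper leaves them implicit: that subtensor extraction commutes with the DFT, that interlacing holds for the complex Fourier slices, and that the normalization cancels. Incidentally, the real-embedding route would also work, since the doubled index shift is matched by the doubled multiplicity of each singular value.
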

\begin{proof}
Apply the discrete Fourier transform along the third mode (Definition~\ref{def:dft}) to both 
$\mathcal{A}$ and $\mathcal{A}_{\mathrm{sub}}$:
\[
\widehat{\mathcal{A}} = \mathrm{fft}(\mathcal{A},[],3),
\qquad
\widehat{\mathcal{A}_{\mathrm{sub}}} = \mathrm{fft}(\mathcal{A}_{\mathrm{sub}},[],3).
\]
For each $k=1,\dots,p$, the $k$-th frontal slices 
$\widehat{\mathcal{A}}(:,:,k)$ and $\widehat{\mathcal{A}_{\mathrm{sub}}}(:,:,k)$ are matrices with
sizes $m\times n$ and $m'\times n'$, respectively, and 
$\widehat{\mathcal{A}_{\mathrm{sub}}}(:,:,k)$ is a principal submatrix of $\widehat{\mathcal{A}}(:,:,k)$.
By the classical Poincaré separation (Horn \& Johnson, 1991, Th.~3.1.2), we have for each $k=1,\dots,p$:
\[
\sigma_j\big(\widehat{\mathcal{A}}(:,:,k)\big)
\;\ge\;
\sigma_j\big(\widehat{\mathcal{A}_{\mathrm{sub}}}(:,:,k)\big)
\;\ge\;
\sigma_{\,j + (m - m') + (n - n')}\big(\widehat{\mathcal{A}}(:,:,k)\big),
\qquad j=1,\dots,r'.
\]
Averaging over $k=1,\dots,p$ and using the definition of tensor singular values in 
Theorem~\ref{thm:tsvd},
\[
\varsigma_j(\mathcal{A})
= \frac{1}{p}\sum_{k=1}^{p} \sigma_j\big(\widehat{\mathcal{A}}(:,:,k)\big),
\qquad
\varsigma_j(\mathcal{A}_{\mathrm{sub}})
= \frac{1}{p}\sum_{k=1}^{p} \sigma_j\big(\widehat{\mathcal{A}_{\mathrm{sub}}}(:,:,k)\big),
\]
we obtain \eqref{eq:tubal-sep-simple}. 
The result is thus a direct tensor analogue of the matrix Poincaré separation theorem 
(see also \citealp[Theorem 3.1]{zhang2021note}).
\end{proof}

\begin{theorem}[Additive Tensor Singular Value Bound]
\label{thm:tensor-sing-sv-simple}
Let $\mathcal A,\mathcal B\in\mathbb R^{m\times n\times p}$ and let
$q=\min\{m,n\}$. Denote the tensor singular values under the T--SVD
(Definition~\ref{thm:tsvd}) by
$\varsigma_1(\cdot)\ge\cdots\ge \varsigma_q(\cdot)\ge 0$.
Then for all $i=1,\dots,q$,
\begin{equation}
\label{eq:tensor-add-simple}
\varsigma_i(\mathcal A)-\varsigma_1(\mathcal B)
\ \le\
\varsigma_i(\mathcal A+\mathcal B)
\ \le\
\varsigma_i(\mathcal A)+\varsigma_1(\mathcal B).
\end{equation}
\end{theorem}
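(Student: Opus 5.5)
The plan is to reduce the statement to the matrix Weyl-type inequality (\cref{thm:additive-singular-value-bounds}), applied to each Fourier-domain slice, and then average. This mirrors the proof of \cref{thm:tubal-poincare}.

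\textbf{Step 1 (linearity of the transform).} The DFT along the third mode (Definition~\ref{def:dft}) is linear, so for every $k=1,\dots,p$,
\[
\widehat{\mathcal A+\mathcal B}(:,:,k)=\widehat{\mathcal A}(:,:,k)+\widehat{\mathcal B}(:,:,k).
\]
This holds under the unitary normalization adopted in this appendix, since that normalization is a common scalar applied to both tensors. Write $\widehat A_k$ and $\widehat B_k$ for these $m\times n$ complex slices.

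\textbf{Step 2 (slicewise Weyl).} For each fixed $k$ and each $i\le q$, apply the additive singular value bound to $\widehat A_k$ and $\widehat B_k$:
\[
\sigma_i(\widehat A_k)-\sigma_1(\widehat B_k)\le \sigma_i(\widehat A_k+\widehat B_k)\le \sigma_i(\widehat A_k)+\sigma_1(\widehat B_k).
\]

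\textbf{Step 3 (average).} Sum over $k$ and divide by $p$. By the definition of tensor singular values in Theorem~\ref{thm:tsvd}, $\varsigma_i(\cdot)=\frac1p\sum_k\sigma_i(\widehat{\cdot}_k)$, so
\begin{itemize}
\item the middle term averages to $\varsigma_i(\mathcal A+\mathcal B)$;
\item the $\sigma_i(\widehat A_k)$ terms average to $\varsigma_i(\mathcal A)$;
\item the $\sigma_1(\widehat B_k)$ terms average to exactly $\varsigma_1(\mathcal B)$.
\end{itemize}
Averaging preserves both inequalities, which gives \eqref{eq:tensor-add-simple}.

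\textbf{Main obstacle.} The only point needing care is in Step 2. \cref{thm:additive-singular-value-bounds} was stated for real matrices, but the Fourier slices $\widehat A_k$ are in general complex. I would handle this by citing the complex version of the inequality in \citet[Th.~3.3.16]{Horn_Johnson_1991}, which is proved for complex matrices. Alternatively, one can rederive it in a few lines from Courant--Fischer for the unitary-invariant operator norm:
\[
\|(\widehat A_k+\widehat B_k)x\|\le\|\widehat A_k x\|+\sigma_1(\widehat B_k)\,\|x\|,
\]
then take the max--min over $i$-dimensional subspaces. For the lower bound, apply the same argument to $\widehat A_k=(\widehat A_k+\widehat B_k)-\widehat B_k$.

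I would avoid passing through the real $2m\times 2n$ realification. That route duplicates singular values and would require separate bookkeeping of the indices.
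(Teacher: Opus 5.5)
Your proposal is correct and matches the paper's proof: linearity of the DFT along the third mode, the slicewise Weyl inequality (\cref{thm:additive-singular-value-bounds}), then averaging through $\varsigma_i(\mathcal X)=\frac1p\sum_k\sigma_i(\widehat X_k)$; the paper only differs cosmetically by writing the slicewise step as $|\sigma_i(\widehat A_k+\widehat B_k)-\sigma_i(\widehat A_k)|\le\sigma_1(\widehat B_k)$ before averaging. Your observation that the Fourier slices are complex, so the Weyl inequality must be taken in its complex form, addresses a point the paper passes over without comment.
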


\begin{proof}
Let $\widehat{\mathcal A}=\mathrm{fft}(\mathcal A,[],3)$ and
$\widehat{\mathcal B}=\mathrm{fft}(\mathcal B,[],3)$.
By linearity of the DFT along the third mode,
\[
\widehat{\mathcal A+\mathcal B}(:,:,k)
=
\widehat{\mathcal A}(:,:,k)+\widehat{\mathcal B}(:,:,k),
\qquad k=1,\dots,p.
\]
For each slice $k$, applying Theorem~\ref{thm:additive-singular-value-bounds}
to the matrices $\widehat A_k$ and $\widehat B_k$ yields
\[
\sigma_i(\widehat A_k)-\sigma_1(\widehat B_k)
\ \le\
\sigma_i(\widehat A_k+\widehat B_k)
\ \le\
\sigma_i(\widehat A_k)+\sigma_1(\widehat B_k),
\qquad i=1,\dots,q.
\]
Equivalently,
\[
\bigl|\sigma_i(\widehat A_k+\widehat B_k)-\sigma_i(\widehat A_k)\bigr|
\ \le\
\sigma_1(\widehat B_k),
\qquad i=1,\dots,q.
\]

Summing over $k=1,\dots,p$, dividing by $p$, and using the definition of
tensor singular values under the T--SVD yields
\[
\bigl|\varsigma_i(\mathcal A+\mathcal B)-\varsigma_i(\mathcal A)\bigr|
\le
\frac{1}{p}\sum_{k=1}^p \sigma_1(\widehat{\mathcal B}(:,:,k))
=
\varsigma_1(\mathcal B),
\]
which is equivalent to \eqref{eq:tensor-add-simple}.
\end{proof}

\begin{theorem}[Multiplicative Tensor Singular Value Bounds]
\label{thm:tensor-mult-sv-both}
Let $\mathcal{A}\in\mathbb{R}^{m\times \ell\times p}$ and 
$\mathcal{B}\in\mathbb{R}^{\ell\times n\times p}$, and let 
$q=\min\{m,n\}$ and $r=\min\{\ell,n\}$.  
Denote the tensor singular values under the T--SVD 
(Definition~\ref{thm:tsvd}) by
\(\varsigma_1(\cdot)\ge\cdots\ge \varsigma_q(\cdot)\ge 0\).
Let the Fourier slices be
\[
\widehat{A}_k = \widehat{\mathcal{A}}(:,:,k), 
\qquad 
\widehat{B}_k = \widehat{\mathcal{B}}(:,:,k), 
\qquad k=1,\dots,p.
\]
Then, for each $i=1,\dots,\min\{q,r\}$, the following bounds hold:
\begin{equation}
\label{eq:tensor-mult-sv-both}
\varsigma_i(\mathcal{A})\,
\min_{1\le k\le p}\sigma_r(\widehat{B}_k)
\;\le\;
\varsigma_i(\mathcal{A} * \mathcal{B})
\;\le\;
\varsigma_i(\mathcal{A})\,
\max_{1\le k\le p}\sigma_1(\widehat{B}_k).
\end{equation}
\end{theorem}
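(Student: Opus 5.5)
The plan is to work slice-by-slice in the Fourier domain, exactly as in the proofs of \cref{thm:tubal-poincare} and \cref{thm:tensor-sing-sv-simple}. By \cref{def:t-product}, the T-product diagonalizes under the DFT along the third mode. Writing $\mathcal{C}=\mathcal{A}*\mathcal{B}$, this gives
\[
\widehat{C}_k=\widehat{A}_k\widehat{B}_k \quad\text{for every } k=1,\dots,p.
\]
The tensor singular values are averages of slice singular values, $\varsigma_i(\mathcal{C})=\frac1p\sum_k\sigma_i(\widehat{C}_k)$, by \cref{thm:tsvd}. So it suffices to bound each $\sigma_i(\widehat{A}_k\widehat{B}_k)$ by a matrix inequality and then average.

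\textbf{Slice-wise matrix bounds.} For each fixed $k$, I apply \cref{thm:multiplicative-singular-value-bounds} to $\widehat{A}_k\in\mathbb{C}^{m\times\ell}$ and $\widehat{B}_k\in\mathbb{C}^{\ell\times n}$. The result is stated over $\mathbb{R}$, but the Courant--Fischer argument behind it holds verbatim for complex matrices with unitary SVDs. This yields
\[
\sigma_i(\widehat{A}_k)\,\sigma_r(\widehat{B}_k)\le\sigma_i(\widehat{A}_k\widehat{B}_k)\le\sigma_i(\widehat{A}_k)\,\sigma_1(\widehat{B}_k).
\]
The upper inequality is immediate from $\|\widehat{A}_k\widehat{B}_kx\|\le\|\widehat{A}_k\|\,\|\widehat{B}_kx\|$ combined with the min–max characterization. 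For the lower inequality I would argue directly with \cref{thm:courant-fischer} rather than rely on the paper's index convention. Take $\sigma_i(\widehat{A}_k\widehat{B}_k)=\sigma_i(\widehat{B}_k^{*}\widehat{A}_k^{*})$ and restrict to the top-$i$ left singular subspace $S$ of $\widehat{A}_k$, so that $\widehat{A}_k^{*}y$ has norm at least $\sigma_i(\widehat{A}_k)$ for unit $y\in S$. Then use $\|\widehat{B}_k^{*}z\|\ge\sigma_{\min}(\widehat{B}_k^{*})\|z\|$, where this smallest singular value equals $\sigma_r(\widehat{B}_k)$ when $\ell\le n$.

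\textbf{Averaging step.} Next I replace the slice-dependent factor of $\widehat{B}_k$ by its worst or best case over $k$. Using $\min_k\sigma_r(\widehat{B}_k)\le\sigma_r(\widehat{B}_k)\le\max_j\sigma_1(\widehat{B}_j)$ pulls a $k$-independent constant out of the sum. Summing over $k$ and dividing by $p$, the average of $\sigma_i(\widehat{A}_k)$ is by definition $\varsigma_i(\mathcal{A})$, which gives \eqref{eq:tensor-mult-sv-both}.

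\textbf{Main obstacle.} I expect the hard part to be the lower bound's dependence on dimensions. The inequality $\|\widehat{B}_k^{*}z\|\ge\sigma_r(\widehat{B}_k)\|z\|$ holds for all $z\in\mathbb{C}^\ell$ only when $\widehat{B}_k^{*}$ is injective on $\mathbb{C}^\ell$, that is, when $\ell\le n$ so that $r=\ell$. If $\ell>n$, then $\widehat{B}_k^{*}$ has a nontrivial kernel. In that case I would either note that the lower bound should be read with $\sigma_r$ interpreted as $\sigma_\ell:=0$, making it trivially true, or restrict the range of $i$. If that fails, the fallback is to state the lower bound under the hypothesis $\ell\le n$, which covers the Hessian-factor applications where the weight factor has full row rank. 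A secondary point is checking that the unitary DFT normalization fixed earlier in the appendix does not rescale slice singular values differently for $\mathcal{A}$, $\mathcal{B}$, and $\mathcal{C}$. Since the product identity $\widehat{C}_k=\widehat{A}_k\widehat{B}_k$ must hold under the chosen convention, I would verify it once at the start.
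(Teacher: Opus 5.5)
This is the same route as the paper's proof: pass to Fourier slices, use $\widehat{C}_k=\widehat{A}_k\widehat{B}_k$, apply the matrix multiplicative bound slice by slice, and average after pulling out $\max_k\sigma_1(\widehat{B}_k)$ or $\min_k\sigma_r(\widehat{B}_k)$. Your upper bound is fine.

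The obstacle you flag is real. It is an error in the statement, not in your argument. The paper's proof misses it because it just invokes \cref{thm:multiplicative-singular-value-bounds}, whose lower bound is indexed by the outer dimensions of the product and is not valid as stated either. For $\ell>n$ the lower bound is false. Take $p=1$, let $\mathcal{A}$ have the single slice $[\,1\ \ 0\,]$ and $\mathcal{B}$ the single slice $[\,0\ \ 1\,]^{\top}$. Then $q=r=1$ and $\varsigma_1(\mathcal{A})\min_k\sigma_1(\widehat{B}_k)=1$, whereas $\mathcal{A}*\mathcal{B}=0$. Because this already fails at $i=1$, restricting the range of $i$ cannot help.

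The valid repairs are the ones you list as fallbacks:
\begin{itemize}
\item Assume $\ell\le n$. Then $r=\ell$, $\|\widehat{B}_k^{*}z\|\ge\sigma_\ell(\widehat{B}_k)\|z\|$ for all $z$, and your Courant--Fischer argument is complete.
\item Read $\sigma_r$ as $\sigma_\ell:=0$, so the bound is vacuous for $\ell>n$.
\item Alternatively, compress $\widehat{A}_k$ to $\mathrm{range}(\widehat{B}_k)$. This gives the correct shifted bound $\varsigma_{i+\ell-n}(\mathcal{A})\min_k\sigma_n(\widehat{B}_k)$.
\end{itemize}

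Your normalization check is also warranted. The appendix declares the unitary DFT. Under it, the circular-convolution structure of the T-product gives $\widehat{C}_k=\sqrt{p}\,\widehat{A}_k\widehat{B}_k$. If the averaging formula of \cref{thm:tsvd} is applied to unitary slices, the upper bound fails by a factor $\sqrt{p}$. For instance, take $m=\ell=n=1$, $p=2$, and $\mathcal{A}=\mathcal{B}$ the tube $(1,0)$. The slice identity in \cref{def:t-product} and the $\frac{1}{p}\sum_k$ formula for $\varsigma_i$ are both exact for the unnormalized DFT of \cref{def:dft}. That is the convention in which to run your argument.
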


\begin{proof}
Let $\mathcal{C}=\mathcal{A} * \mathcal{B}$. 
By the definition of the T--product and the DFT along the third mode
(Definition~\ref{def:dft}), the Fourier slices satisfy
\[
\widehat{\mathcal{C}}(:,:,k)=\widehat{A}_k\widehat{B}_k,
\qquad k=1,\dots,p.
\]
For each fixed $k$, applying the matrix multiplicative singular-value
bounds (Theorem~\ref{thm:multiplicative-singular-value-bounds}) gives
\[
\sigma_i(\widehat{A}_k)\,\sigma_r(\widehat{B}_k)
\;\le\;
\sigma_i(\widehat{A}_k\widehat{B}_k)
\;\le\;
\sigma_i(\widehat{A}_k)\,\sigma_1(\widehat{B}_k),
\qquad i=1,\dots,q.
\]
By the definition of tensor singular values (Theorem~\ref{thm:tsvd}),
\[
\varsigma_i(\mathcal{A})
= \frac{1}{p}\sum_{k=1}^{p}\sigma_i(\widehat{A}_k),
\qquad
\varsigma_i(\mathcal{C})
= \frac{1}{p}\sum_{k=1}^{p}\sigma_i(\widehat{A}_k\widehat{B}_k).
\]
Noting that for every $k$ we have: 
\[
\sigma_1(\widehat B_k)\le \max_{1\le j\le p}\sigma_1(\widehat B_j),
\]
\[
\sigma_r(\widehat B_k)\ge \min_{1\le j\le p}\sigma_r(\widehat B_j),
\]
Therefore, summing the inequalities over all slices, we obtain:
\[
\varsigma_i(\mathcal{C})
\le
\frac{1}{p}\sum_{k=1}^{p}\sigma_i(\widehat{A}_k)\,\sigma_1(\widehat{B}_k)
\;\le\;
\left(\max_{1\le k\le p}\sigma_1(\widehat{B}_k)\right)
\frac{1}{p}\sum_{k=1}^{p}\sigma_i(\widehat{A}_k).
\]
Similarly,
\[
\varsigma_i(\mathcal{C})
\ge
\frac{1}{p}\sum_{k=1}^{p}\sigma_i(\widehat{A}_k)\,\sigma_r(\widehat{B}_k)
\;\ge\;
\left(\min_{1\le k\le p}\sigma_r(\widehat{B}_k)\right)
\frac{1}{p}\sum_{k=1}^{p}\sigma_i(\widehat{A}_k).
\]
Combining both inequalities gives the result.
\end{proof}

In particular, if $\widehat{B}_k$ is rank-deficient for some $k$, then $\sigma_r(\widehat{B}_k)=0$ and the lower bound in \eqref{eq:tensor-mult-sv-both} becomes trivial. Future work could consider developing analogous principal angles that tighten these in practice but that is out of scope of our paper. 

\paragraph{Tensor singular value bounds for T-products.}
We now combine the additive and multiplicative tensor singular value bounds to characterize the behaviour of tensors of the form
\[
\mathcal{H} = \mathcal{A} * \mathcal{B} + \mathcal{C},
\]
which we will later show captures the product structure existing in the third order derivatives of a neural network.

By the additive tensor singular value bound
(Theorem~\ref{thm:tensor-sing-sv-simple}), for any $i=1,\dots,q$ we have
\[
\varsigma_i(\mathcal{A} * \mathcal{B}) - \varsigma_1(\mathcal{C})
\;\le\;
\varsigma_i(\mathcal{A} * \mathcal{B} + \mathcal{C})
\;\le\;
\varsigma_i(\mathcal{A} * \mathcal{B}) + \varsigma_1(\mathcal{C}).
\]

Applying the multiplicative tensor singular value bounds
(Theorem~\ref{thm:tensor-mult-sv-both}) to the T--product term
$\mathcal{A} * \mathcal{B}$ yields
\[
\varsigma_i(\mathcal{A})\,
\min_{1\le k\le p}\sigma_r(\widehat{B}_k)
\;\le\;
\varsigma_i(\mathcal{A} * \mathcal{B})
\;\le\;
\varsigma_i(\mathcal{A})\,
\max_{1\le k\le p}\sigma_1(\widehat{B}_k).
\]

Combining the two inequalities gives the composite bounds
for all $i = 1,\dots,\min\{q,r\}$:
\begin{equation}
\label{eq:ab-plus-c-upper}
\varsigma_i(\mathcal{A} * \mathcal{B} + \mathcal{C})
\;\le\;
\varsigma_i(\mathcal{A})\,
\max_{1\le k\le p}\sigma_1(\widehat{B}_k)
\;+\;
\varsigma_1(\mathcal{C}),
\end{equation}
and
\begin{equation}
\label{eq:ab-plus-c-lower}
\varsigma_i(\mathcal{A} * \mathcal{B} + \mathcal{C})
\;\ge\;
\varsigma_i(\mathcal{A})\,
\min_{1\le k\le p}\sigma_r(\widehat{B}_k)
\;-\;
\varsigma_1(\mathcal{C}).
\end{equation}

As in the purely multiplicative case, the lower bound
\eqref{eq:ab-plus-c-lower} may become trivial if one or more Fourier slices
$\widehat{B}_k$ are rank-deficient, in which case
$\sigma_r(\widehat{B}_k)=0$. Nevertheless, the upper bound
\eqref{eq:ab-plus-c-upper} always holds and shows that the leading tensor
singular value of $\mathcal{A} * \mathcal{B} + \mathcal{C}$ is controlled by
the spectrum of $\mathcal{A}$ and $\mathcal{B}$, up to an additive contribution
from $\mathcal{C}$.

These bounds provide the key mechanism by which spectral properties of
individual weight matrices can be used to control the tensor singular values
of third-order derivative blocks. We now illustrate this structure explicitly
for MLP.

\subsubsection{Third Order Analysis: Application to Neural Networks}

We now present an example analysis of a four-layer MLP to illustrate how modifying the spectral values of the weights of a network related to $L_2$ and thereby the convergence rate of second-order methods.

\begin{example}[Multi-layer Perceptron(MLP)]
\label{example:mlp-tsvd}
Consider the four-layer MLP:
\[
f(x)
= \theta_{4}^{\top}\,
\phi_{3}\!\Bigl(
\theta_{3}\,
\phi_{2}\!\bigl(
\theta_{2}\,
\phi_{1}(\theta_{1}x)
\bigr)
\Bigr),
\]
where the weight matrices have dimensions
\[
\theta_{1}\in\mathbb{R}^{m_{1}\times d},\qquad
\theta_{2}\in\mathbb{R}^{m_{2}\times m_{1}},\qquad
\theta_{3}\in\mathbb{R}^{m_{3}\times m_{2}},\qquad
\theta_{4}\in\mathbb{R}^{m_{3}}.
\]
Define the layer pre-activations and activations as
\[
h_{1}=\theta_{1}x, \qquad z_{1}=\phi(h_{1}), 
\]
\[
h_{2}=\theta_{2}z_{1}, \qquad z_{2}=\phi(h_{2}),
\]
\[
h_{3}=\theta_{3}z_{2}, \qquad z_{3}=\phi(h_{3}),
\]
so that the network output simplifies to 
\[
f(x)=\theta_{4}^{\top}z_{3}.
\]
Let the diagonal Jacobian matrices of the nonlinearity be
\[
D_{z_{1}}=\mathrm{Diag}(\phi'(h_{1})) \in \mathbb{R}^{m_{1}\times m_{1}},\qquad
D_{z_{2}}=\mathrm{Diag}(\phi'(h_{2})) \in \mathbb{R}^{m_{2}\times m_{2}},
\]
\[
D_{z_{3}}=\mathrm{Diag}(\phi'(h_{3})) \in \mathbb{R}^{m_{3}\times m_{3}},\qquad
D'_{z_{3}}=\mathrm{Diag}(\phi''(h_{3})) \in \mathbb{R}^{m_{3}\times m_{3}}.
\]
We consider the mixed third derivative
\[
\nabla^{3}_{\theta_{3},\,\theta_{1},\,\theta_{4}} f(x)
\in
\mathbb{R}^{m_{3} \times (m_{1}d) \times (m_{3}m_{2})},
\]
whose frontal slices correspond to derivatives with respect to each entry $(\theta_{3})_{ab}$. 

To begin, differentiate $f$ with respect to $\theta_{4}$. Since $f = \theta_{4}^{\top} z_{3}$, it follows that:
\[
\frac{\partial f}{\partial \theta_{4}} = z_{3}.
\]
Next, we differentiate $f$ with respect to $\theta_{1}$ and $\theta_{4}$.  
Applying the chain rule,
\[
\frac{\partial z_{3}}{\partial z_{2}} = D_{z_{3}}\theta_{3},\qquad
\frac{\partial z_{2}}{\partial z_{1}} = D_{z_{2}}\theta_{2},\qquad
\frac{\partial z_{1}}{\partial \theta_{1}}
= D_{z_{1}} (x^{\top} \otimes I_{m_{1}}),
\]
and therefore:
\[
\frac{\partial^{2} f}{\partial \theta_{1}\, \partial \theta_{4}}
=
D_{z_{3}}\theta_{3} D_{z_{2}}\theta_{2} D_{z_{1}}\,
(x^{\top}\otimes I_{m_{1}}),
\]
\[
\theta_{3} =
\begin{bmatrix}
(\theta_{3})_{11} & (\theta_{3})_{12} & \cdots & (\theta_{3})_{1m_{2}}\\[3pt]
(\theta_{3})_{21} & (\theta_{3})_{22} & \cdots & (\theta_{3})_{2m_{2}}\\
\vdots & \vdots & \ddots & \vdots \\
(\theta_{3})_{m_{3}1} & (\theta_{3})_{m_{3}2} & \cdots & (\theta_{3})_{m_{3}m_{2}}
\end{bmatrix},
\qquad
\theta_{3} \in \mathbb{R}^{m_{3}\times m_{2}}.
\]
Finally, we differentiate $f$ with respect to $(\theta_{3}, \theta_{1}, \theta_{4})$.  
For each entry $(a,b)$ define the basis matrix
\[
E_{ab} \in \mathbb{R}^{m_{3}\times m_{2}},
\qquad
[E_{ab}]_{ij} =
\begin{cases}
1, & i=a,\ j=b,\\
0, & \text{otherwise}.
\end{cases}
\]
Thus, differentiating gives:
\[
\frac{\partial \theta_{3}}{\partial (\theta_{3})_{ab}} = E_{ab} 
\in \mathbb{R}^{m_3 \times m_2},
\qquad
\frac{\partial D_{z_3}}{\partial (\theta_3)_{ab}}
= \operatorname{Diag}\!\big(D'_{z_3}(E_{ab} z_2)\big)
\;\in\; \mathbb{R}^{m_3 \times m_3}
\]
where $\operatorname{Diag}(\cdot)$ denotes the diagonal matrix formed from a vector.

Consequently, each frontal slice of the third derivative tensor admits the decomposition
\begin{equation}
\label{eq:mlp-third-slice}
\frac{\partial^{3} f}{\partial(\theta_{3})_{ab}\,\partial\theta_{1}\,\partial\theta_{4}}
=
\mathcal{C}(:,:,k)
\;+\;
\mathcal{A}(:,:,k)\,\mathcal{B}(:,:,k),
\end{equation}
where
\[
\mathcal{A}(:,:,k)=D_{z_{3}}E_{ab}D_{z_{2}},\qquad
\mathcal{B}(:,:,k)=\theta_{2}D_{z_{1}}(x^{\top}\otimes I_{m_{1}}),
\]
and
\[
\mathcal{C}(:,:,k)
=
\operatorname{Diag}\!\big(D'_{z_{3}}(E_{ab}z_{2})\big)\,
\theta_{3}D_{z_{2}}\theta_{2}D_{z_{1}}
(x^{\top}\otimes I_{m_{1}}).
\]
Here $k\leftrightarrow(a,b)$ indexes the entries of $\theta_3$, and
$p=m_3 m_2$ is the total number of such slices.

Stacking the matrices over all $(a,b)$ along the third dimension yields the third-order tensor
\[
\mathcal{H}
=
\nabla^{3}_{\theta_{3}, \theta_{1}, \theta_{4}} f(x)
\in \mathbb{R}^{m_{3} \times (m_{1}d) \times p}.
\]
We emphasize that $\mathcal{H}$ represents the coordinate-slice (basis-dependent)
representation of the third-order derivative tensor with respect to the standard
basis $\{E_{ab}\}$ of $\theta_3$.

Let $w := \mathrm{vec}(\theta_3)\in\mathbb{R}^{p}$ with $p=m_3 m_2$.
Since $\nabla^3_{\theta_3,\theta_1,\theta_4} f(x)$ is trilinear in $\theta_3$,
it admits a directional representation.

Given a set of unit vectors $\{\phi_k\}_{k=1}^p \subset \mathbb{C}^p$,
we define the directionally parameterized third-order tensor
$\mathcal{T}(x)\in\mathbb{R}^{m_3\times(m_1d)\times p}$ by
\[
\mathcal{T}(:,:,k)
:=
\nabla^{3}_{w,\theta_1,\theta_4} f(x)[\phi_k]
=
\sum_{j=1}^{p} \mathcal{H}(:,:,j)\,(\phi_k)_j .
\]

We emphasize that this construction does not change the order of
differentiation: $\mathcal{T}$ represents a change of basis of the third-order
derivative tensor along the $\theta_3$ mode, rather than a different
differential object.

Throughout this example, we take $\{\phi_k\}$ to be the unitary Fourier basis associated with the normalized DFT matrix $F_p$.
Under this choice, $\mathcal{T}$ is exactly the unitary DFT of $\mathcal{H}$ along the third mode.

Accordingly, the tensors $\mathcal{A}$, $\mathcal{B}$, and $\mathcal{C}$ are
interpreted as the directional lifts of their coordinate-slice counterparts
under the same unitary basis, so that their Fourier slices correspond to the
associated linear combinations along the third mode.

In the Fourier-domain representation associated with the third mode, we have
\[
\widehat{\mathcal{T}}(:,:,k)
=
\widehat{\mathcal{A}}(:,:,k)\,\widehat{\mathcal{B}}(:,:,k)
+
\widehat{\mathcal{C}}(:,:,k),
\qquad k=1,\dots,p.
\]

By the definition of the T--product, this identity is equivalent to the exact
tensor decomposition
\[
\mathcal{T} = \mathcal{A} * \mathcal{B} + \mathcal{C}.
\]

As a consequence, the tensor singular values of the directionally parameterized tensor $\mathcal{T}$ can be bounded by combining the multiplicative tensor singular value bounds for $\mathcal{A}*\mathcal{B}$
(Theorem~\ref{thm:tensor-mult-sv-both}) with the additive tensor singular value bounds for the perturbation $\mathcal{C}$
(Theorem~\ref{thm:tensor-sing-sv-simple}).

Since $\mathcal{T}$ is obtained from $\mathcal{H}$ via a unitary transform along the third mode, their tensor singular values coincide. Therefore, the same bound applies to the original third-order derivative
tensor $\mathcal{H}$.

\end{example}

\cref{tab:mlp-lb-ub-tsvd} reports a numerical verification of the T--SVD bounds derived in the MLP example (Example~\ref{example:mlp-tsvd}). To test the effect of spectral scaling, we introduce a scalar factor
$\alpha>0$ and construct a scaled tensor
\[
\mathcal{H}(\alpha)
\;:=\;
(\alpha\,\mathcal{A}) * \mathcal{B} + \mathcal{C},
\]
where the scaling is applied exclusively to $\mathcal{A}$, while
$\mathcal{B}$ and $\mathcal{C}$ are held fixed.
For each value of $\alpha$, the tensor $\mathcal{H}(\alpha)$ is explicitly
reconstructed via the T--product.

For each $\mathcal{H}(\alpha)$, we compute the maximum tensor singular value
$\varsigma_1(\mathcal{H}(\alpha))$ using the T--SVD definition, together with
the corresponding lower and upper bounds
\[
\mathrm{LB}_1(\alpha)
=
\varsigma_1(\mathcal{A})\,
\min_{k}\sigma_r(\widehat{B}_k)
-
\varsigma_1(\mathcal{C}),
\qquad
\mathrm{UB}_1(\alpha)
=
\varsigma_1(\mathcal{A})\,
\max_{k}\sigma_1(\widehat{B}_k)
+
\varsigma_1(\mathcal{C}),
\]
as given by \eqref{eq:ab-plus-c-lower} and \eqref{eq:ab-plus-c-upper}.

From this table we observe that, as the scaling factor $\alpha$ increases,
both the lower bound $\mathrm{LB}_1(\alpha)$ and the upper bound
$\mathrm{UB}_1(\alpha)$ increase correspondingly.
This behavior is consistent with the composite tensor singular value bounds
in \eqref{eq:ab-plus-c-upper}--\eqref{eq:ab-plus-c-lower}, which predict that
scaling the multiplicative component $\mathcal{A}$ proportionally enlarges
the admissible range of $\varsigma_1(\mathcal{H}(\alpha))$ when $\mathcal{B}$
and $\mathcal{C}$ are held fixed.

\begin{table}[h]
\centering
\caption{Verification of lower and upper bounds for the top tensor singular value
$\varsigma_1(\mathcal{H}(\alpha))$ under consistent scaling of $\mathcal{A}$.}
\label{tab:mlp-lb-ub-tsvd}
\begin{tabular}{llll}
\hline
$\alpha$
& $\varsigma_1(\mathcal{H}(\alpha))$
& $\mathrm{LB}_1(\alpha)$
& $\mathrm{UB}_1(\alpha)$ \\
\hline
$1$      & $1.27$     & $-0.20$   & $2.15$       \\
$10$     & $13.34$    & $0.18$    & $19.27$      \\
$100$    & $134.15$   & $3.94$    & $190.52$     \\
$1000$   & $1342.27$  & $41.62$   & $1903.03$    \\
$10000$  & $13423.46$ & $418.43$  & $19028.13$   \\
$100000$ & $134235.38$& $4186.45$ & $190279.08$  \\
\hline
\end{tabular}
\end{table}

Based on this example we can provide a number of remarks on when control of the spectral values of the weights alone are useful for controlling $L_2$.

\begin{remark}[Conditions for Effective $L_2$ Control via Spectral Inflation]
\label{rem:L2-control-conditions}
The preceding analysis establishes that the Hessian Lipschitz constant $L_2$ 
upper bounds the largest tensor singular value of $\nabla^3 f$ 
(Theorem~\ref{thm:tensor-b3}), and that the tensor singular values of 
composite expressions $\mathcal{A} * \mathcal{B} + \mathcal{C}$ satisfy 
the bounds \eqref{eq:ab-plus-c-upper}--\eqref{eq:ab-plus-c-lower}. 
We now discuss the conditions under which inflating weight singular values 
translates into effective control of $L_2$.

\paragraph{When spectral inflation is effective}
The upper bound \eqref{eq:ab-plus-c-upper} shows that 
$\varsigma_i(\mathcal{A} * \mathcal{B} + \mathcal{C}) \le 
\varsigma_i(\mathcal{A})\max_k \sigma_1(\widehat{B}_k) + \varsigma_1(\mathcal{C})$.
Inflating the singular values of weight matrices appearing in $\mathcal{A}$ 
and $\mathcal{B}$ directly increases the right-hand side, thereby 
\emph{permitting} a larger $L_2$. This upper bound argument is tight when:
\begin{enumerate}[label=(\roman*),leftmargin=2em]
    \item The activation functions have bounded second derivatives 
    (e.g., $\|\phi''\|_\infty < \infty$), ensuring that the additive term 
    $\mathcal{C}$ containing $D'_{z_\ell}$ remains controlled;
    \item The network operates in a regime where the multiplicative 
    structure $\mathcal{A} * \mathcal{B}$ dominates the third-order 
    derivative tensor.
\end{enumerate}

\paragraph{Limitations of the lower bound}
The lower bound \eqref{eq:ab-plus-c-lower} is subtractive: 
$\varsigma_i(\mathcal{A} * \mathcal{B} + \mathcal{C}) \ge 
\varsigma_i(\mathcal{A})\min_k \sigma_r(\widehat{B}_k) - \varsigma_1(\mathcal{C})$.
This bound becomes vacuous when either (i) some Fourier slice $\widehat{B}_k$ 
is rank-deficient, or (ii) the perturbation term $\varsigma_1(\mathcal{C})$ 
dominates. In such cases, inflating weight spectra does not guarantee a 
proportional increase in the tensor singular values of $\nabla^3 f$, 
and hence does not guarantee increased $L_2$.

\paragraph{Sufficient conditions for guaranteed $L_2$ increase.}
A sufficient condition for spectral inflation to increase $L_2$ is that 
the perturbation term satisfies
\begin{equation}
\label{eq:C-dominance-condition}
\varsigma_1(\mathcal{C}) \le 
\varsigma_1(\mathcal{A})\min_{k}\sigma_r(\widehat{B}_k) - \delta
\end{equation}
for some margin $\delta > 0$, and all Fourier slices $\widehat{B}_k$ 
have full column rank. Under this condition, increasing 
$\varsigma_1(\mathcal{A})$ by a factor $\alpha > 1$ yields
\[
\varsigma_1(\mathcal{A} * \mathcal{B} + \mathcal{C}) 
\ge \alpha \cdot \varsigma_1(\mathcal{A})\min_k \sigma_r(\widehat{B}_k) 
- \varsigma_1(\mathcal{C})
\ge (\alpha - 1)\delta + \varsigma_1(\mathcal{A}*\mathcal{B}+\mathcal{C})\big|_{\alpha=1},
\]
demonstrating a strict increase in the tensor singular value and hence in $L_2$.

\paragraph{Practical implications}
In practice, condition \eqref{eq:C-dominance-condition} is most likely 
satisfied in the following regimes:
\begin{itemize}[leftmargin=2em]
    \item \emph{Smooth activations}: When $\phi'' \approx 0$ 
    (e.g., near-linear regions of tanh or GELU), the term $\mathcal{C}$ 
    containing $D'_{z_\ell}$ is small.
    \item \emph{Moderate depth}: In shallow networks, fewer compositional 
    terms appear in $\mathcal{C}$, reducing its magnitude relative to 
    $\mathcal{A} * \mathcal{B}$.
    \item \emph{Well-conditioned intermediate representations}: When 
    $\min_k \sigma_r(\widehat{B}_k)$ is bounded away from zero, the 
    multiplicative term provides a strong baseline.
\end{itemize}

We acknowledge that our theoretical framework provides \emph{permissive} 
rather than \emph{prescriptive} guarantees: inflating weight spectra 
\emph{allows} $L_2$ to be larger but does not unconditionally \emph{force} 
it to be larger. The empirical effectiveness of SpecDef observed in our 
experiments suggests that practical networks often operate in regimes 
where condition \eqref{eq:C-dominance-condition} approximately holds, 
though a complete characterization of these regimes remains an important 
direction for future theoretical work.
\end{remark}


\end{document}